\documentclass[10pt,twoside, a4paper, english, reqno]{amsart}
\usepackage[dvips]{epsfig}
\usepackage{amscd}
\usepackage{amssymb}
\usepackage{amsthm}
\usepackage{amsmath}
\usepackage{latexsym}

\usepackage{upref}
\usepackage{hyperref}
\usepackage{color}
\usepackage[usenames,dvipsnames]{xcolor}
\setlength{\topmargin}{-.5cm}
\setlength{\textheight}{23cm}
\setlength{\evensidemargin}{0cm}
\setlength{\oddsidemargin}{0cm}
\setlength{\textwidth}{16cm}
\theoremstyle{plain}
\newtheorem{thm}{Theorem}[section]
\theoremstyle{plain}
\newtheorem{lem}[thm]{Lemma}
\newtheorem{prop}[thm]{Proposition}
\newtheorem{cor}[thm]{Corollary}

\theoremstyle{definition}
\newtheorem{defi}{Definition}[section]
\newtheorem*{rem}{Remark}

\newenvironment{Assumptions}
{
\setcounter{enumi}{0}

\begin{enumerate}}
{\end{enumerate} }

\newenvironment{Assumptions2}
{
\setcounter{enumi}{0}

\begin{enumerate}}
{\end{enumerate} }

\newcommand{\eps}{\ensuremath{\varepsilon}}
\newcommand{\R}{\ensuremath{\mathbb{R}}}

\newcommand{\con} {\ast}

\newcommand{\dz}{\ensuremath{\, dz}}

\newcommand{\rd}{\ensuremath{\mathbb{R}^d}}
\newcommand{\supp}{\ensuremath{\mathrm{supp}\,}}
\newcommand{\goto}{\ensuremath{\rightarrow}}
\newcommand{\grad}{\ensuremath{\nabla}}

\numberwithin{equation}{section} \allowdisplaybreaks

\title[L\'{e}vy driven conservation laws]{Conservation laws 
driven by L\'{e}vy white noise}
\date{}

\author[Imran H. Biswas]{Imran H. Biswas}
\address[Imran H. Biswas]{\newline
 Centre for Applicable Mathematics,
 Tata Instiute of Fundamental Research,
  P.O.\ Box 6503, GKVK Post Office,
  Bangalore 560065, India}
\email[]{imran@math.tifrbng.res.in}

\author[K. H. Karlsen]{Kenneth H. Karlsen}
\address[Kenneth Hvistendahl Karlsen]{\newline
Centre of Mathematics for Applications,
University of Oslo,
P.O.\ Box 1053, Blindern,
NO--0316 Oslo, Norway}
\email[]{kennethk@math.uio.no}
\urladdr{folk.uio.no/kennethk}

\author[Ananta K. Majee]{Ananta K. Majee}
\address[Ananta K. Majee]{\newline
 Centre for Applicable Mathematics,
 Tata Instiute of Fundamental Research,
  P.O.\ Box 6503, GKVK Post Office,
  Bangalore 560065, India}
\email[]{ananta@math.tifrbng.res.in}

\subjclass[2000]{45K05, 46S50, 49L20, 49L25, 91A23, 93E20}

\keywords{Conservation laws, stochastic forcing, L\'{e}vy noise, entropy inequalities, 
stochastic partial differential equations,Young measure, existence, uniqueness.}
\thanks{}

\begin{document}
\begin{abstract}
We consider multidimensional conservation laws perturbed 
by multiplicative L\'{e}vy noise. We establish  
existence and uniqueness results for entropy solutions. 
The entropy inequalities are formally obtained by the It\'{o}-L\'{e}vy chain rule. 
The multidimensionality requires a generalized interpretation of 
the entropy inequalities to accommodate Young measure-valued solutions. 
We first establish the existence of entropy solutions in 
the generalized sense via the vanishing viscosity method, and then 
establish the $L^1$-contraction principle. Finally, the $L^1$ contraction 
principle is used to argue that the generalized entropy solution 
is indeed the classical entropy solution. 
\end{abstract}

\maketitle
\tableofcontents
\section{Introduction}\label{intro}
We are interested in stochastic perturbations 
of nonlinear conservation laws. A conservation law with source 
term (balance law) is an equation of the type 
\begin{align}
    \label{eq:balance laws} \frac{\partial u(t,x)}{\partial t} 
    + \mbox{div}_x F(u(t,x)) 
    = q(t,x, u(t,x)), 
    \qquad t>0, ~ x\in \R^d,
\end{align} 
where $F$ is known as the flux function. In a deterministic context 
the source term $q(t,x,u)$ is given by a nicely behaved function 
and Kru{\v{z}}kov's entropy solution framework provides a comprehensive 
understanding of the related Cauchy problem. There are multiple ways of 
interpreting $q$ and we are particularly interested in the scenario where the 
source $q(t,x, u)$ represents a multiplicative white noise.  
This would make \eqref{eq:balance laws} a stochastic 
balance law and this equation has attracted significant attention in 
recent years. However, all studies have been limited 
to the case where the source $q(t,x,u)$ represents a Brownian multiplicative 
white noise i.e $ q(t,x, u) = \sigma(t,x,u) \frac{dB_t}{dt}$, 
where $(B_t)_{t\ge 0}$ is a Brownian motion. 

In this paper, we intend to study the Cauchy problem related 
to  \eqref{eq:balance laws}  where the source term $q(t,x,u)$ 
represents a multiplicative L\'{e}vy white noise. 
A more precise description of our problem is as follows. 
Let $\big(\Omega, P, \mathcal{F}, \{\mathcal{F}_t\}_{t\ge 0} \big)$ be 
a filtered probability space satisfying the usual hypothesis. 
We are looking for a  $L^2(\R^d)$-valued predictable 
process $u(t)$ satisfying
\begin{align} du(t,x) + \mbox{div}_x F(u(t,x)) \,dt
=\int_{|z|> 0} \eta(x, u(t,x);z)\, \tilde{N}(dz,dt), 
\quad t>0, ~ x\in \R^d,
 \label{eq:levy_stochconservation_laws}
\end{align}
with the initial condition    
\begin{align}\label{initial_cond} 
u(0,x) = u_0(x), \quad \quad x\in \R^d.
\end{align} 
In \eqref{eq:levy_stochconservation_laws}, $F:\R\rightarrow \R^d$ is a 
given nonlinear flux function, and 
$\tilde{N}(dz,dt)= N(dz,dt)-\, m(dz)\,dt$, where  $N$ is 
a Poisson random measure on $ \R\times (0,\infty) $ 
with intensity measure $m(dz)$ such that $\int
(1\wedge |z|^2)\,m(dz) <  + \infty$. Moreover, $\eta(x,u; z)$ is a 
real valued function defined on the domain $\R^d\times\R\times \R$.  
We point out that adding a Brownian component to the 
white noise term on the right hand side of \eqref{eq:levy_stochconservation_laws} 
would make it more general, and the results of this 
paper are still valid under appropriate conditions. 

The equation \eqref{eq:levy_stochconservation_laws} becomes a 
multidimensional deterministic conservation law 
if $\eta =0$. It is well-documented
that solutions of deterministic conservation laws develop 
discontinuities (shocks) in finite time. 
Therefore the solutions must be interpreted in the 
weak sense and a so-called entropy condition is required to 
identify the physically relevant (unique) solution \cite{dafermos,godu}. 

The study of stochastic balance laws has so far been limited 
to equations driven by Brownian white noise. 
For some first results in that direction, see Holden and Risebro \cite{risebroholden1997}. 
E, Khanin, Mazel, and Sinai \cite{E:2000lq} described 
the statistical properties of the Burgers equation with Brownian noise. 
Kim \cite{Kim2003} extended the Kru{\v{z}}kov well-posedness theory 
to one dimensional balance laws that are driven by 
additive Brownian noise. This approach does not apply to 
the multiplicative noise case. Indeed, a straightforward adaptation 
of the deterministic ``doubling technique" leads to anticipating 
stochastic integrands, and so the standard route 
leading to the $L^1$-contraction principle cannot be followed. 
In a recent work, Feng and Nualart \cite{nualart:2008} came up with 
a way to address this issue, giving raise to what they referred to 
as {\em strong} entropy solutions, which in turn are intimately connected to
vanishing viscosity solutions.  In \cite{nualart:2008}, the authors 
established the uniqueness of strong entropy solutions in a 
multidimensional $L^p$-framework. The existence, however, was restricted to 
one space dimension. We refer to 
Vovelle and Debussche \cite{Vovelle2010} (see 
also Chen {\em et al.}~\cite {Chen:2012fk}) for an existence result 
in the multidimensional case. In \cite{Vovelle2010} the authors 
obtain the existence via the kinetic formulation, while \cite{Chen:2012fk} uses 
the $BV$ framework. Another recent contribution to the 
multidimensional problem is Bauzet, Vallet, and Wittbold \cite{BaVaWit}, where the  
question of existence is settled via the Young measure approach. 
We also mention the very recent contributions \cite{Lions2013, Lions2014} by 
Lions, Perthame, and Souganidis on conservation 
laws with rough (stochastic) fluxes.

During the last decade there has been many contributions in the larger area of 
stochastic partial differential equations that are driven by L\'{e}vy noise. 
An worthy reference on this subject is \cite{peszat}. 
However, there are few results on the specific problem 
of conservation laws with L\'{e}vy noise. The present article marks a first step 
in our endeavor to build a comprehensive theory of mixed hyperbolic-parabolic 
equations driven by noise containing both diffusion and jump effects. 
We draw inspiration from \cite{BaVaWit, Eymard1995, panov} and the 
notion of {\em entropy process solutions} when utilizing the theory of 
Young measures as a tool to prove the existence of entropy solutions 
to L\'{e}vy driven conservation laws. The presence of L\'{e}vy noise asks for 
solutions that have discontinuous sample paths. 
Also, the entropy inequalities will have non-localities in them as 
a consequence of the It\^{o}-L\'{e}vy chain rule. 
As a result the ``strong entropy" approach 
of Feng and Nualart \cite{nualart:2008} seems 
difficult to adapt to the present situation.

The remaining part of the paper is organized as follows.  We state the assumptions, 
detail the technical framework, and state the main 
results in Section \ref{technical}.  In Section \ref{existence-apriori-estimate}, we 
establish the wellposedness and derive apriori estimates for the 
viscous approximations. Section \ref{sec:existence-of-gen-solution} deals with 
the existence of entropy solutions via Young measure 
valued limits of viscous approximations.  
Finally, Section \ref{uniqueness} is devoted to the 
question of uniqueness of entropy solutions. 

\section{Technical framework and statements of the main results}\label{technical}
            
Here and in the sequel we use the letters
$C,K$, etc.~to denote various generic constants. There are situations 
where constants may change from line to line, but the notation is kept 
unchanged so long as it does not impact the central idea. 
The Euclidean norm on any $\mathbb{R}^d$-type space is denoted by $|\cdot|$. 
The space $C^n(\R^d)$ consist of the real valued functions on $\R^d$ 
that are $n$-times continuously differentiable. For a constant $T>0$, the 
space-time cylinder $[0,T)\times \R^d$ is 
denoted by $\Pi_T$ and the symbol $\Pi_{\infty}$ stands for 
$[0,\infty)\times \R^d$. The spaces $C_c^{1,2}(\Pi_T)$ 
and $C_c^{1,2}(\Pi_\infty)$ contain the compactly supported 
functions on $\Pi_T$ and $\Pi_\infty$, respectively, which 
are continuously differentiable in the time variable and 
twice continuously differentiable in the space variable.

\subsection{Entropy inequalities}
We begin this section with a formal derivation of the entropy 
inequalities \'{a} la Kru{\v{z}}kov, keeping in mind the need to replace 
the traditional chain rule of deterministic calculus by the It\^{o}-L\'{e}vy chain rule. 
Let $0\le \beta\in C^2(\R)$ be a real valued convex function, 
and $\zeta$ be such that $\zeta'(r) = \beta'(r)F'(r)$.
For a small positive number $\eps > 0$, assume that 
the parabolic perturbation 
\begin{align}
 d u(t,x) + \mbox{div}_x F(u(t,x))\,dt & 
 =  \int_{|z|> 0} \eta(x,u(t,x);z)\,\tilde{N}(dz,dt) + \eps\Delta u(t,x)\,dt
\notag
\end{align} 
of \eqref{eq:levy_stochconservation_laws} has a 
strong (predictable) solution $u_\eps(t,x)$.  Now we apply the It\^{o}-L\'{e}vy 
formula to $\beta(u_\eps(t,x))$, yielding
\begin{align}
  &d\beta(u_\eps(t,x) ) + \mbox{div}_x \zeta(u_\eps(t,x))\,dt \notag \\
 = &\int_{|z|>0} \Big(\beta(u_\eps(t,x) +\eta(x,u_\eps(t,x);z))
 - \beta(u_\eps(t,x))\Big)\tilde{N}(dz,dt) \notag \\
&\quad+ \int_{|z|>0}\Big(\beta(u_\eps(t,x) +\eta(x,u_\eps(t,x);z))
- \beta(u_\eps(t,x))-\eta(x,u_\eps(t,x);z)\beta^\prime(u_\eps(t,x))\Big)m(dz)\,dt 
\notag \\
&\qquad + \Big(\eps \Delta_{xx}\beta(u_\eps(t,x)) 
-\eps \beta''(u_\eps(t,x))|\nabla_x u_\eps(t,x)|^2\Big)\,dt .\notag
\end{align} 
Given a nonnegative test function 
$\psi\in C_{c}^{1,2}([0,\infty)\times \rd) $, we apply the 
It\'{o}-L\'{e}vy product rule to $\beta(u_\eps(t,\cdot) )\psi(t,\cdot)$, arriving at 
\begin{align}
  &d\big[\beta(u_\eps(t,x))\psi(t,x)\big] 
  = \partial_t\psi(t,x) \beta(u_\eps(t,x)) \,dt -\psi(t,x)\mbox{div}_x \zeta(u_\eps(t,x))\,dt \notag \\
& + \int_{|z|>0} \psi(t,x)\Big(\beta(u_\eps(t,x) +\eta(x,u_\eps(t,x);z))
-\beta(u_\eps(t,x))\Big)\tilde{N}(dz,dt) \notag \\
&+ \int_{|z|>0}\psi(t,x)\Big(\beta(u_\eps(t,x) +\eta(x,u_\eps(t,x);z))
- \beta(u_\eps(t,x))-\eta(x,u_\eps(t,x);z)\beta^{\prime}(u_\eps(t,x))\Big)m(dz)\,dt 
\notag \\
& +\psi(t,x) \Big(\eps \Delta_{xx}\beta(u_\eps(t,x)) 
-\eps \beta''(u_\eps(t,x))|\nabla_x u_\eps(t,x)|^2\Big)\,dt .\notag
\end{align}
We integrate the above equality with respect to $(t,x)$ and 
use  $\langle \cdot,\cdot\rangle$ to denote inner product in $L^2(\R^d)$. 
The result is
\begin{align}
 \notag 
 0 & \le  \langle \beta(u_\eps(T,.)),\psi(T,\cdot)\rangle \\  
 &\leq   \langle \beta(u_\eps(0,.)),\psi(0,\cdot)\rangle+ 
 \int_{0}^{T} \langle \zeta(u_\eps(r,.)),\nabla_x \psi(r,\cdot)\rangle\,dr
 \notag \\&+\int_{0}^T \langle \beta(u_\eps(r,\cdot)),\partial_t \psi(r,\cdot)\rangle\,dr  
 + \mathcal{O}(\eps)\notag \\
& +\int_{0}^T\int_{|z|>0} \langle \beta(u_\eps(r,.) +\eta(.,u_\eps(r,.);z))
- \beta(u_\eps(r,.)),\psi(r,\cdot)\rangle \tilde{N}(dz,dr)  \notag \\
\label{eq:entropy_derivation}& 
+\int_{0}^T\int_{|z|>0} \langle \beta(u_\eps(r,\cdot)+\eta(.,u_\eps(r,.);z))
- \beta(u_\eps(r,.))-\eta(.,u_\eps(r,.);z)\beta^{\prime}(u_\eps(r,.)),\psi(r,\cdot)\rangle\,m(dz)\,dr. 
\end{align} 
The notation $\mathcal{O}(\eps)$ is used to denote quantities 
that depend on $\eps$ and are bounded above by $C\eps$.
Clearly, the above inequality is stable under the limit $\eps \goto 0 $, if 
the family $\{u_\eps \}_{\eps >0}$ has $ L_{\text{loc}}^p $-type stability.
Just as the deterministic equations, the above 
inequality \eqref{eq:entropy_derivation} provides us with the entropy condition. 
We now formally define the entropy solutions.

\begin{defi}[entropy flux pair]
A pair $(\beta,\zeta) $ is called an entropy flux pair 
if $ \beta \in C^2(\R) $ and $\beta \ge0$, and 
$\zeta = (\zeta_1,\zeta_2,....\zeta_d):\R \mapsto\rd $ is a vector field satisfying
$\zeta'(r) = \beta'(r)F'(r)$ for all $r$. An entropy flux pair $(\beta,\zeta)$ is called 
convex if $ \beta^{\prime\prime}(\cdot) \ge 0$.  
\end{defi}

\begin{defi} [entropy solution]\label{defi:stochentropsol}
A $ L^2(\rd)$-valued $\{\mathcal{F}_t: t\geq 0 \}$-predictable 
stochastic process $u(t)= u(t,x)$ is called a stochastic 
entropy solution of \eqref{eq:levy_stochconservation_laws} if\\
(1) For each $ T>0$, $ p=2,3,4,\ldots$, 
$$
\sup_{0\leq t\leq T} E\Big[||u(t)||_{p}^{p}\Big] <\infty.
$$ 

\noindent (2) Given any non-negative test function 
$\psi\in C_{c}^{1,2}([0,\infty )\times\rd)$ 
and any convex entropy pair $(\beta,\zeta)$ 
with $\beta^{\prime}$ bounded, it holds that 
\begin{align}
 & \langle \psi(0,\cdot), \beta(u(0,\cdot))\rangle 
 + \int_{t=0}^T\langle \partial_t \psi(t,\cdot), \beta(u(t,\cdot)) \rangle \,dt
+ \int_{r=0}^T\langle \zeta(u(r,.)),\nabla_x \psi(r,\cdot)\rangle\,dr \notag \\ 
& \qquad +\int_{r=0}^T\int_{|z|>0} \langle \beta(u(r,.) +\eta(.,u(r,.);z))
-\beta(u(r,.)),\psi(r,\cdot)\rangle \tilde{N}(dz,dr)  \notag \\
& \qquad +\int_{r=0}^T\int_{|z|>0} \langle \beta(u(r,.) +\eta(.,u(r,.);z))- \beta(u(r,.))
-\eta(.,u(r,.);z)\,\beta^{\prime}(u(r,.)),\psi(r, \cdot)\rangle\, m(dz)\,dr \notag \\
& \qquad\qquad 
\ge 0 \quad \text{$P$-almost surely.}\notag
\end{align}
\end{defi} 

The aim of this paper is to establish the 
existence and uniqueness of entropy solutions according 
to Definition \ref{defi:stochentropsol}, and we will
do so under the following assumptions:

\vspace{.3cm}

\begin{Assumptions}
\item \label{A1} For $ k = 1,2,\ldots,d$, the functions 
$F_k(s) \in C^2(\R)$; and $F_k(s)$, $F_k^\prime(s)$, 
and $F_k^{\prime\prime}(s)$ have at 
most polynomial growth in $s$. 

\item \label{A2} There exist positive constants 
$K > 0$ and $\lambda^* \in [0,1)$  such that 
\begin{align*}| \eta(x,u;z)-\eta(y,v;z)|  
\leq  (\lambda^* |u-v|+ K |x-y|) ( |z|\wedge 1) 
~\text{for all}~ x,y \in \R^d;~ u,v \in \R;~~z\in \R.
 \end{align*}

 \item \label{A3} The L\'{e}vy measure $m(dz)$ is a Radon measure 
 on $\R\backslash \{0\}$ with a possible singularity at $z=0$, which satisfies
\begin{align*}
    \int_{\R_z}(|z|^2\wedge 1)\, m(dz) < \infty. 
\end{align*}

\item \label{A4} There exists a nonnegative function 
$g\in L^\infty(\R^d)\cap L^2(\R^d)$  such that 
\begin{align*}
	|\eta(x,u;z)| \le g(x)(1+|u|)(|z|\wedge 1)
\end{align*} 
{for all}~ $(x,u,z)\in \R^d \times \R\times \R.$
\end{Assumptions}

\begin{rem}
We are able to accommodate polynomially growing flux 
function as a result of the requirement that the entropy solutions 
satisfy $L^p$ bounds for all $p\ge 2$. This in turn forces us to choose 
initial data that are in $L^p$ for all $p$.  It is possible to accommodate 
initial conditions which are only $L^2$, but we would then 
require the flux function to be globally Lipschitz. 
Furthermore, the assumption \ref{A2} is needed
to handle the nonlocal nature of the 
entropy inequalities. 
\end{rem}

\subsection{Generalized entropy solutions} 
The focus of this paper is well-posedness for multidimensional problems. 
Contrary to one dimensional problems \cite{BisMaj, nualart:2008}, 
compensated compactness is not applicable and 
securing proper compactness for vanishing 
viscosity approximations requires an alternative viewpoint. 
One option is to further weaken the notion of entropy solutions to 
accommodate solutions that are parametrized  measures (Young measures). 
However, in view of \cite{BaVaWit, panov} (and Lemma \ref{lem:measure-conversion}), we 
can equivalently look for generalized entropy solutions that 
are $L^{2}(\R^d\times (0,1))$-valued processes.  

\begin{defi} [Generalized entropy solution]\label{defi: young_stochentropsol}
An $L^2(\rd \times (0,1))$-valued $\{\mathcal{F}_t: t\geq 0 \}$-predictable 
stochastic process $v(t)= v(t,x,\alpha)$ is
called a generalized stochastic entropy solution 
of \eqref{eq:levy_stochconservation_laws} if \\
(1) for each $ T>0$, $ p=2,3,4, \ldots$, 
$$
\sup_{0\leq t\leq T} E\Big[||v(t,\cdot,\cdot)||_{p}^{p}\Big] 
<\infty. 
$$

\noindent (2) For $0\leq \psi\in C_{c}^{1,2}([0,\infty )\times\rd)$ 
and each convex entropy pair $(\beta,\zeta) $ with $\beta^{\prime}$ 
bounded, it holds that
\begin{align}
& \langle \psi(0,\cdot), \beta(v(0,\cdot))\rangle 
+ \int_{0}^T \int_{\alpha=0}^1\langle \partial_t \psi(r,\cdot), \beta(v(r,\cdot,\alpha)) \rangle \,d\alpha\,dr
+ \int_{0}^T \int_{\alpha=0}^1 \langle \zeta(v(r,.,\alpha)),\nabla_x \psi(r,\cdot)\rangle\,d\alpha\,dr \notag \\ 
& \qquad +\int_{0}^T\int_{|z|>0} \int_{\alpha=0}^1 \langle \beta(v(r,.,\alpha) +\eta(.,v(r,.,\alpha);z))
-\beta(v(r,.,\alpha)),\psi(r,\cdot)\rangle\,d\alpha\, \tilde{N}(dz,dr) \notag\\
& \qquad 
+\int_{0}^T\int_{|z|>0} \int_{\alpha=0}^1 \langle \beta(v(r,.,\alpha)+\eta(.,v(r,.,\alpha);z))
- \beta(v(r,.,\alpha)) \notag 
\\ & \qquad \qquad \qquad \qquad \qquad \qquad\qquad 
-\eta(.,v(r,.,\alpha);z)\beta^{\prime}(v(r,.,\alpha)),\psi(r, \cdot)\rangle\,d\alpha\, m(dz)\,dr \notag 
\\ &\qquad\qquad \ge 0 \quad P-\text{a.s}\notag
\end{align}
\end{defi}

We can now state the main results of this paper. 

\begin{thm}[existence]\label{thm:existenc}
Suppose assumptions \ref{A1}-\ref{A4} hold, and that the 
$\bigcap_{p=1,2,\ldots} L^p(\rd)$-valued 
$\mathcal{F}_0$-measurable random 
variable  $ u_0$ satisfies
\begin{equation}\label{eq:initass}
E \Big[||u_0||_{p}^{p} + ||u_0||_{2}^{p}\Big] 
< \infty,  \qquad\text{for}~ p=1,2,\ldots~.
\end{equation}
Then  there exists a generalized entropy solution 
of \eqref{eq:levy_stochconservation_laws}-\eqref{initial_cond} in the 
sense of Definition \ref{defi: young_stochentropsol}.
\end{thm}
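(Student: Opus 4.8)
The plan is to construct the generalized solution $v$ as a Young-measure limit of vanishing-viscosity approximations and to pass to the limit in the approximate entropy inequality \eqref{eq:entropy_derivation}, the delicate point being the limit passage in the stochastic integral. First I would invoke the results of Section \ref{existence-apriori-estimate}: for each $\eps>0$ the viscous problem
\[
du_\eps + \mbox{div}_x F(u_\eps)\,dt = \int_{|z|>0}\eta(x,u_\eps;z)\,\tilde{N}(dz,dt) + \eps\Delta u_\eps\,dt,\qquad u_\eps(0)=u_0,
\]
admits a predictable strong solution $u_\eps$ satisfying, by \ref{A1}-\ref{A4} and \eqref{eq:initass}, the uniform bounds $\sup_{0\le t\le T} E[\norm{u_\eps(t)}_p^p]\le C_{p,T}$ for every $p=2,3,\dots$ with $C_{p,T}$ independent of $\eps$, together with the dissipation estimate $\eps\, E\int_0^T\norm{\grad u_\eps(t)}_2^2\,dt\le C$. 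The It\^o--L\'evy computation leading to \eqref{eq:entropy_derivation} shows that each $u_\eps$ satisfies, for every convex entropy pair $(\beta,\zeta)$ with $\beta'$ bounded and every $0\le\psi\in C_c^{1,2}(\Pi_\infty)$, the inequality obtained after discarding the nonpositive dissipation term $-\eps\int_0^T\langle\beta''(u_\eps)|\grad u_\eps|^2,\psi\rangle\,dr$; the remaining viscous contribution is $\mathcal{O}(\eps)$ in $L^1(\Omega)$ and hence negligible.

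Since strong compactness is unavailable in several space dimensions, I would treat $(\omega,t,x)$ as the base variable and apply the fundamental theorem of Young measures to the $L^p(\Omega\times\Pi_T)$-bounded family $\{u_\eps\}$. Along a subsequence there is a Young measure $\nu=\nu_{\omega,t,x}$ such that $\phi(u_\eps)\rightharpoonup\int_\R\phi(\lambda)\,d\nu_{\omega,t,x}(\lambda)$ weakly for every Carath\'eodory $\phi$ of polynomial growth; by Lemma \ref{lem:measure-conversion} the measure $\nu$ is represented by an $L^2(\rd\times(0,1))$-valued process $v(t,x,\alpha)$. Property (1) of Definition \ref{defi: young_stochentropsol} follows from the uniform moment bounds and weak lower semicontinuity, while the predictability of $v$ must be checked: it is inherited from that of the $u_\eps$ because the defining convergences may be tested against predictable integrands.

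Next I would pass to the limit term by term. The initial term is unchanged ($u_\eps(0)=u_0$); the flux term $\int_0^T\langle\zeta(u_\eps),\grad\psi\rangle\,dr$ and the time-derivative term converge to their Young-measure averages by \ref{A1} and the above convergence; the compensator term converges likewise once one notes, using \ref{A2}-\ref{A4} and the Taylor/convexity bound $|\beta(u+\eta)-\beta(u)-\eta\beta'(u)|\le C\,\eta^2$, that its integrand is dominated uniformly in $\eps$ by an $m(dz)\otimes dr$-integrable function, where \ref{A3} controls the singularity of $m$ at $z=0$. The genuinely delicate step is the martingale term
\[
M_\eps=\int_0^T\!\!\int_{|z|>0}\big\langle \beta(u_\eps+\eta(\cdot,u_\eps;z))-\beta(u_\eps),\psi\big\rangle\,\tilde{N}(dz,dr).
\]
Its integrand $G_\eps$ is predictable and, since $\beta'$ is bounded, $|G_\eps|\le\norm{\beta'}_\infty\langle g(1+|u_\eps|),|\psi|\rangle(|z|\wedge1)$, so by \ref{A3}, \ref{A4} and the uniform $L^2$-bound the family $\{G_\eps\}$ is bounded in the Hilbert space $\mathcal{P}^2:=L^2(\Omega\times[0,T]\times\R;\,dP\otimes dr\otimes m)$ of predictable integrands. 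The key observation is that the It\^o--L\'evy integral $G\mapsto\int_0^T\!\int_{|z|>0}G\,\tilde{N}(dz,dr)$ is a linear isometry of $\mathcal{P}^2$ into $L^2(\Omega)$, hence a bounded operator, hence continuous from the weak topology of $\mathcal{P}^2$ to that of $L^2(\Omega)$. Passing to a further subsequence, $G_\eps\rightharpoonup G$ weakly in $\mathcal{P}^2$, and Young-measure convergence identifies the limit as $G=\big\langle\int_0^1[\beta(v+\eta(\cdot,v;z))-\beta(v)]\,d\alpha,\psi\big\rangle$ (automatically predictable as a weak $\mathcal{P}^2$-limit), whence $M_\eps\rightharpoonup\int_0^T\!\int_{|z|>0}G\,\tilde{N}(dz,dr)$ weakly in $L^2(\Omega)$, which is precisely the martingale term in Definition \ref{defi: young_stochentropsol}.

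Collecting the limits, the full right-hand side $X_\eps$ of the dissipation-free inequality converges weakly in $L^2(\Omega)$ to the expression $X$ in Definition \ref{defi: young_stochentropsol}, while $X_\eps\ge-C\eps$ almost surely. Since the cone $\{X\ge0\}$ is closed and convex in $L^2(\Omega)$, it is weakly closed, so $X\ge0$ $P$-a.s.; as the extracted subsequence and hence $v$ are independent of $(\beta,\zeta,\psi)$, this yields the generalized entropy inequality for all admissible data. I expect the main obstacle to be the rigorous limit passage in $M_\eps$, specifically establishing the weak $\mathcal{P}^2$-convergence of the predictable integrands and the predictability of the Young-measure limit $v$, since only weak (Young-measure) convergence of the $u_\eps$ is available.
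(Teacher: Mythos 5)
Your proposal is correct and follows essentially the same route as the paper: viscous approximation with uniform $L^p$ and dissipation estimates, Young-measure extraction and its representation as an $L^2(\rd\times(0,1))$-valued predictable process, and—crucially—passage to the limit in the martingale term by exploiting that the It\^{o}--L\'{e}vy integral is an isometry on predictable integrands and hence weak-to-weak continuous, which is exactly the content of Lemma \ref{weak-conv-integrand} and Corollary \ref{cor:martingale}. The only cosmetic differences are that the paper first mollifies $F$, $\eta$, $u_0$ to fit its viscous well-posedness theory (assumptions \ref{B1}--\ref{B4}), producing the extra $o(\eps_n)$ errors, and that it deduces the a.s.\ sign of the limit inequality by testing against ${\bf 1}_B$ for $B\in\mathcal{F}_T$ rather than by your equivalent weak-closedness-of-the-cone argument.
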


\begin{thm}[uniqueness]\label{thm:uniqueness}
Suppose assumptions \ref{A1}-\ref{A4} hold, and that the 
$\bigcap_{p=1,2,\ldots} L^p(\rd)$-valued 
$\mathcal{F}_0$-measurable random variable $u_0$ satisfies \eqref{eq:initass}. 
Then the generalized entropy solution of 
\eqref{eq:levy_stochconservation_laws}-\eqref{initial_cond} 
is unique. Moreover, it is the unique stochastic entropy solution.
\end{thm}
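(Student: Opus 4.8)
The plan is to establish uniqueness for generalized entropy solutions via a stochastic adaptation of Kru\v{z}kov's doubling-of-variables technique, and then deduce that the generalized solution reduces to a classical stochastic entropy solution. Suppose $v(t,x,\alpha)$ and $w(t,y,\beta)$ are two generalized entropy solutions, possibly with different initial data $u_0$ and $\tilde{u}_0$. The first step is to write the Kru\v{z}kov entropy inequality for each solution using the convex entropy pair generated by a smooth approximation $\beta_\delta$ of the absolute value $|\cdot - k|$ (so $\beta_\delta(r) \to |r-k|$ and $\beta_\delta'$ is bounded), with the constant $k$ replaced by the value of the \emph{other} solution. That is, I would test the inequality for $v$ with the value $w(s,y,\beta)$ frozen, and symmetrically for $w$ with $v(t,x,\alpha)$ frozen, then integrate against a product test function $\psi(t,x)\varrho_\varepsilon(t-s)\varrho_{\varepsilon_0}(x-y)$ that approximates the diagonal. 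The crucial point is that, because $v$ and $w$ live on independent Young-measure variables $\alpha,\beta$, the ``doubled'' variable does not create anticipating integrands: the stochastic integrals against $\tilde N(dz,dr)$ remain well-defined It\^o--L\'evy integrals since the frozen value is adapted in its own time variable. This is precisely the device that circumvents the anticipativity obstruction noted in the introduction for the naive doubling approach.

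Next I would add the two inequalities and take expectations, so that the martingale parts driven by $\tilde N(dz,dr)$ vanish. The heart of the estimate is controlling the nonlocal L\'evy terms: after adding, one obtains a combination of the form $\beta_\delta(v+\eta(\cdot,v;z)-w-\eta(\cdot,w;z)) - \beta_\delta(v-w)$ integrated against $m(dz)$, together with the first-order correction terms. Here assumption \ref{A2}, namely the Lipschitz bound $|\eta(x,u;z)-\eta(y,v;z)| \le (\lambda^*|u-v|+K|x-y|)(|z|\wedge 1)$ with $\lambda^* < 1$, is essential: expanding $\beta_\delta$ to second order and using $\int (|z|^2\wedge 1)\,m(dz) < \infty$ from \ref{A3}, the quadratic $m(dz)$-term produces a factor controlled by $(\lambda^*)^2$ or $\lambda^*$, and the strict inequality $\lambda^* < 1$ is what allows the resulting Gr\"onwall-type feedback to close rather than blow up. I expect the delicate bookkeeping here to be the main obstacle: one must show that as $\delta \to 0$ the second-order remainder terms, which formally scale like $\beta_\delta''$ times $\eta^2$, do not survive, or are absorbed into a Lipschitz constant times $E\int |v-w|\,\psi$, uniformly in the viscosity-free limit.

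Having arranged the entropy terms, I would pass to the limit in the spatial mollifier $\varepsilon_0 \to 0$ and the temporal mollifier $\varepsilon \to 0$ in the standard Kru\v{z}kov order, collapsing the doubled variables onto the diagonal. The off-diagonal flux terms $\zeta(v)-\zeta(w)$ contracted against $\nabla\psi$ cancel in the usual way once the test function concentrates on $x=y$, using the entropy-flux relation $\zeta'(r)=\beta'(r)F'(r)$ together with the $L^p$ bounds from part (1) of Definition \ref{defi: young_stochentropsol} and the polynomial growth in \ref{A1} to justify integrability and dominated convergence. The outcome is an integral inequality of the form
\begin{align}
\frac{d}{dt} E\!\int_{\rd}\!\int_0^1\!\int_0^1 |v(t,x,\alpha)-w(t,x,\beta)|\,\psi(t,x)\,d\alpha\,d\beta\,dx \le C\, E\!\int_{\rd}\!\int_0^1\!\int_0^1 |v-w|\,\psi\,d\alpha\,d\beta\,dx \notag
\end{align}
in the distributional sense in $t$, after also handling the finite-speed-of-propagation geometry through a suitable choice of $\psi$.

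Finally, a Gr\"onwall argument yields the $L^1$-contraction: if the two initial data agree then $E\int\int_0^1\int_0^1 |v-w|\,d\alpha\,d\beta\,dx = 0$ for all $t$. Taking $v=w$ to be the \emph{same} generalized solution indexed by the two independent variables $\alpha$ and $\beta$ forces $v(t,x,\alpha)=v(t,x,\beta)$ for a.e.\ $\alpha,\beta$, so $v$ is independent of the Young-measure variable $\alpha$; hence $v(t,x,\alpha)=u(t,x)$ for some $L^2(\rd)$-valued predictable process $u$, which is then a \emph{classical} stochastic entropy solution in the sense of Definition \ref{defi:stochentropsol}. This is the step that upgrades Theorem \ref{thm:existenc} (existence of a generalized solution) into genuine well-posedness. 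Uniqueness of the classical entropy solution then follows by applying the same contraction estimate to two classical solutions (viewed as degenerate generalized ones), giving $E\|u(t)-\tilde u(t)\|_{L^1} \le e^{Ct}\,E\|u_0-\tilde u_0\|_{L^1}$ and in particular uniqueness when $u_0=\tilde u_0$.
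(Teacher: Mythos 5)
There is a genuine gap at the very first step, and it is exactly the obstruction the paper is organized around. When you double the time variable between two arbitrary generalized entropy solutions $v(t,x,\alpha)$ and $w(s,y,\beta)$ and freeze the value of one inside the entropy inequality of the other, the It\^{o}--L\'{e}vy integral $\int_0^T\int_{|z|>0}\langle \beta(v(t,\cdot)+\eta(\cdot,v;z)-w(s,\cdot))-\beta(v(t,\cdot)-w(s,\cdot)),\psi\rangle\,\tilde N(dz,dt)$ has an integrand depending on $w(s,\cdot)$; whichever way you orient the temporal mollifier, one of the two symmetric stochastic integrals involves the other solution at a \emph{later} time and is therefore anticipating. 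The independence of the Young-measure parameters $\alpha$ and $\beta$ is irrelevant here --- the anticipativity lives in the time variables $t$ and $s$, not in $\alpha,\beta$ --- so your claim that the doubled variable ``does not create anticipating integrands'' is false, and you cannot then take expectations to kill the martingale parts, since the integrals are not even defined as It\^{o} integrals. This is precisely why the paper does \emph{not} compare two arbitrary generalized solutions. Instead it compares an arbitrary generalized solution $v$ against the \emph{viscous approximation} $u_\eps$, which is a pointwise strong solution of \eqref{eq:levy_stochconservation_laws-viscous-new}: one applies the It\^{o}--L\'{e}vy formula directly to $\varsigma_l(u_\eps(\sigma,y)-k)$, uses the one-sided mollifier $\rho_{\delta_0}$ with $\supp\rho\subset[-1,0]$ together with the martingale identity \eqref{eq:conditional_indep} to show the genuinely stochastic cross terms either vanish ($J_3=0$) or converge to an explicit bilinear expression (Claim 4 in Lemma \ref{stochastic_lemma_3}), and controls the remaining error via the $W^{1,p}$/$L^\infty$ bounds on $J[\beta,\phi_{\delta,\delta_0}]$ of Lemma \ref{lem:L-infinity estimate}. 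Only after sending $\eps\to 0$ through the Young-measure limit does one obtain the Kato-type inequality \eqref{eq:kato} between $v$ and the viscous-limit solution $u$.

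Two further points. First, your use of \ref{A2} is in the right neighborhood but the mechanism is different: in the paper the condition $\lambda^*<1$ enters through the pointwise bound $0\le a\le (1-\lambda^*)^{-1}(a+\theta b+K|x-y|)$, which combined with $\sup_r r^2\beta_\vartheta''(r)\le C\vartheta$ shows the entire nonlocal contribution is $O(\vartheta+\vartheta^{-1}\delta^2)$ and hence \emph{vanishes} when $\vartheta\to0$, $\delta\to0$, $\vartheta^{-1}\delta^2\to0$; there is no Gr\"{o}nwall feedback term and no exponential factor in the final contraction (the flux term is killed by the cutoff $\phi_n$ with a $1/n$ decay). Second, your concluding step --- comparing a generalized solution with itself over two independent parameters to deduce it is parameter-free --- presupposes the contraction between two arbitrary generalized solutions, which is exactly what the anticipativity problem prevents you from proving; the paper instead concludes from $E\int\int_\gamma\int_\alpha|v(t,x,\alpha)-u(t,x,\gamma)|=0$ that every generalized solution coincides with the specific viscous-limit solution and is independent of its extra variable, which yields both uniqueness and the reduction to a classical entropy solution in one stroke.
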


The above definitions do not say anything explicit about how a 
solution satisfies the initial condition. 
However, it follows after simple considerations that it 
satisfies the initial condition in a 
certain weak sense (see \cite{malek, vallet2000}). 

\begin{lem}\label{lem:initial-cond}
Any generalized entropy solution $u(t,\cdot,\cdot)$ of 
\eqref{eq:levy_stochconservation_laws}-\eqref{initial_cond} satisfies the 
initial condition in the following sense: for every non negative 
test function $\psi\in C_c^2(\R^d)$ such that $\supp(\psi) = K$,
\begin{align}
  \lim_{h\rightarrow 0}E \Big[\frac 1h \int_{t=0}^h\int_K 
  \int_{\lambda=0}^1|u(t,x,\lambda) -u_0(x)| 
  \psi(x)\,d\lambda\,dx\, dt \Big]= 0\notag
\end{align}
\end{lem}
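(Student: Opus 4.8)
The plan is to test the generalized entropy inequality of Definition~\ref{defi: young_stochentropsol} against a test function that concentrates all its mass near $t=0$, built from a smoothed Kru\v{z}kov entropy, then take expectations so that the compensated jump integral disappears, and finally send $h\goto0$ followed by the smoothing parameter to zero. This produces a one-sided bound against a frozen constant, which is afterwards upgraded to the genuine initial datum $u_0(x)$ by localization.

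Fix $c\in\R$ and, for $\delta>0$, set $\beta_\delta(r)=\sqrt{(r-c)^2+\delta^2}-\delta$. Then $\beta_\delta\in C^2(\R)$ is convex and nonnegative, $|\beta_\delta'|\le1$, and $\beta_\delta(r)\uparrow|r-c|$ as $\delta\downarrow0$ with $|r-c|\le\beta_\delta(r)+\delta$; let $\zeta_\delta(r)=\int_c^r\beta_\delta'(s)F'(s)\ds$ be an associated flux. Take $0\le\phi\in C_c^2(\rd)$ with $\supp\phi\subseteq K$ and let $\theta_h$ be a smoothing of the tent $t\mapsto(1-t/h)^+$, so that $\theta_h(0)=1$, $\theta_h\equiv0$ on $[h,\infty)$, and $\theta_h'\approx-\tfrac1h\mathbf 1_{(0,h)}$. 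Inserting $\psi(t,x)=\phi(x)\theta_h(t)$ and $(\beta_\delta,\zeta_\delta)$ into the inequality (the $t=0$ boundary term being $\langle\phi,\beta_\delta(u_0)\rangle$) and taking expectations is the basic step.

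The integrand of the $\tilde N(dz,dr)$-integral is predictable and, by~\ref{A2}, \ref{A4} together with $|\beta_\delta'|\le1$, is bounded by $C(|z|\wedge1)\int g\,(1+|u|)\phi\dx$; splitting $\{|z|\le1\}$, where~\ref{A3} gives $\int z^2\,m(dz)<\infty$, from $\{|z|>1\}$, where $m$ is finite, shows it lies in $L^2(dP\times dr\times m)$, so its expectation vanishes by the martingale property. The flux term, the It\^o--L\'evy compensator term, and the error from replacing $\theta_h'$ by $-\tfrac1h\mathbf 1_{(0,h)}$ are each $\mathcal{O}_\delta(h)$: here one uses the $L^p$ moment bound of Definition~\ref{defi: young_stochentropsol}, the polynomial growth~\ref{A1} of $\zeta_\delta$, and for the compensator the estimate $|\beta_\delta(u+\eta)-\beta_\delta(u)-\eta\beta_\delta'(u)|\le\min\{\|\beta_\delta''\|_\infty\eta^2,\,2|\eta|\}$ combined with the same splitting. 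Letting $h\goto0$ and then $\delta\downarrow0$, and invoking $|r-c|\le\beta_\delta(r)+\delta$, we arrive at the frozen-constant estimate, valid for every $c\in\R$,
\[
\limsup_{h\goto0}\frac1h E\Big[\int_0^h\!\!\int_0^1\!\!\int_K\phi(x)\,|u(t,x,\lambda)-c|\dx\,d\lambda\dt\Big]\le E\Big[\int_K\phi(x)\,|u_0(x)-c|\dx\Big].
\]

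It remains to promote the constant $c$ to the datum $u_0(x)$, so that the right-hand side vanishes; since the left-hand integrand is nonnegative, this forces the limit to be $0$. To do so, approximate $u_0$ in $L^1(\Om\times\rd)$ by $\mathcal F_0$-measurable simple functions $u_0^n=\sum_j c_j\mathbf 1_{E_j}$ (finitely many values, simple in both $\omega$ and $x$, with spatial cutoffs smoothed at negligible boundary cost). Writing $|u-u_0(x)|\le|u-u_0^n(x)|+|u_0^n(x)-u_0(x)|$, the last term contributes $E[\int_K\phi\,|u_0^n-u_0|\dx]$ (the time--$\lambda$ average equals $1$), which is small for large $n$; on each piece $E_j$ the displayed estimate applies with $c=c_j$ and spatial weight $\phi\mathbf 1_{E_j}$, and summing the finitely many pieces yields $\limsup_{h\goto0}\frac1h E[\int_0^h\!\int_0^1\!\int_K\phi\,|u-u_0^n|]\le E[\int_K\phi\,|u_0-u_0^n|\dx]$. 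Since each $c_j$ and $E_j$ are $\mathcal F_0$-measurable, one may multiply the estimate by the relevant $\mathcal F_0$-indicators before taking expectation, which is legitimate because $\mathcal F_0\subseteq\mathcal F_t$ so the martingale property is preserved. The main obstacle is precisely this transfer: passing from the frozen constant to the merely $L^p$ and random datum $u_0(x)$ while keeping the nonlocal jump and compensator contributions uniformly $\mathcal{O}(h)$. The deterministic localization machinery carries over, and it is the integrability furnished by \ref{A3}--\ref{A4} that renders the two nonlocal terms harmless in the limit, giving $\limsup\le0$ and hence, with the trivial $\liminf\ge0$, the asserted limit $0$.
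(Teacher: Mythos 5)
Your proof is correct in substance, but it follows a genuinely different route from the paper's. The paper reduces to an $L^2$ statement (legitimate since $K$ has finite measure), doubles the spatial variable, and tests the generalized entropy inequality with the \emph{quadratic} entropy $\beta(u)=(u-u_0(y))^2$ and the weight $\gamma(t)\psi(x)\varrho_\delta(x-y)$, $\gamma(t)=\frac{h-t}{h}$; the constant is frozen at the shifted datum $u_0(y)$, the mollifier $\varrho_\delta$ does the localization, and the final $\delta\goto 0$ limit is absorbed by $L^2$-continuity of translation applied to $E[\int\int|u_0(y)-u_0(x)|^2\psi\varrho_\delta]$. You instead stay in $L^1$, use the smoothed Kru\v{z}kov entropy $\beta_\delta(\cdot-c)$ with a deterministic constant $c$, and upgrade $c$ to $u_0(x)$ by approximating $u_0$ with $\mathcal{F}_0\otimes\mathcal{B}(\R^d)$-simple functions. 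Each choice has a payoff: your entropies have $|\beta_\delta'|\le 1$, so they are literally admissible in Definition \ref{defi: young_stochentropsol}, whereas the paper's quadratic entropy has unbounded derivative and strictly needs a truncation the paper does not spell out; conversely $\beta''\equiv 2$ gives the paper the compensator bound with no $\delta^{-1}$ loss, and the doubling device sidesteps your simple-function step entirely. Two points in your write-up deserve more care if this were to be written out: (i) the spatial indicators $\mathbf 1_{B_j}$ are not admissible test weights, so you must smooth them and then remove the smoothing \emph{after} $h\goto 0$, which requires choosing the $B_j$ (e.g.\ dyadic cubes) with Lebesgue-null boundaries and approximating from outside so the inequality passes to the limit in the right direction; (ii) the multiplication by $\mathbf 1_{A_j}$ with $A_j\in\mathcal{F}_0$ must be performed on the pathwise inequality before taking expectations, with the martingale term killed via $E[\mathbf 1_{A_j}E(M_T|\mathcal F_0)]=0$ --- you state this correctly, and it is in fact the same measurability issue the paper's use of the random entropy $\beta(u)=(u-u_0(y))^2$ relies on implicitly.
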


\begin{proof}
Since $K$ is of finite measure, it is enough to prove 
\begin{align}
  \lim_{h\rightarrow 0}E \Big[\frac 1h 
  \int_{t=0}^h\int_K\int_{\lambda=0}^1 |u(t,x,\lambda) -u_0(x)|^2 \psi(x)\, d\lambda\,dx\, dt \Big]
  = 0. \notag
\end{align}
For $\delta\in (0,1)$, let $K_\delta = \{x: \text{dist}(x,K) \le \delta \}$.  
Note that, for any $\delta > 0$,
\begin{align*}
&E \Big[\int_{K} \int_{\lambda=0}^1 |u(t,x,\lambda) -u_0(x)|^2\psi(x) \,d\lambda\,dx \Big] 
   \\  & \qquad 
   \le 2 E \Big[\int_{y\in K_\delta}\int_{x\in K}\int_{\lambda=0}^1 |u(t,x,\lambda) -u_0(y)|^2\psi(x)  
     \varrho_{\delta}(x-y)\,d\lambda\,dx\,dy\Big]\notag
     \\ &\qquad\qquad 
     +2 E \Big[\int_{y\in K_\delta}\int_{x\in K} |u_0(y) -u_0(x)|^2 \psi(x) 
     \varrho_{\delta}(x-y)\,dx\,dy\Big],
\end{align*} 
where $\{\varrho_{\delta}\}_{\delta >0}$ is the sequence 
of standard mollifiers in $\R^d$. In other words,
\begin{align}
& E \Big[\frac{1}{h}\int_{t=0}^h\int_{K} \int_{\lambda=0}^1 |u(t,x,\lambda) -u_0(x)|^2
\psi(x) \,d\lambda\,dx \,dt\Big]\notag
\\ \qquad 
& \qquad \le 2 E \Big[\frac{1}{h}\int_{t=0}^h\int_{y\in K_\delta}\int_{x\in K}
\int_{\lambda=0}^1 |u(t,x,\lambda) -u_0(y)|^2
\psi(x)  \varrho_{\delta}(x-y)\,d\lambda\,dx\,dy\,dt\Big]\notag
\\ &\qquad\qquad
+2 E \Big[\int_{y\in K_\delta}\int_{x\in K} |u_0(y) -u_0(x)|^2 
\psi(x) \varrho_{\delta}(x-y)\,dx\,dy\Big].\label{eq:intial_cond_weak-4}
\end{align} 
Let $\psi(t,x)= \gamma(t)\psi(x)\varrho_\delta(x-y)$, 
where $\gamma(t)= \frac{h-t}{h}$ for $0\le t\le h$. 
Now, let $\beta(u) = (u -u_0(y))^2$ and $\xi(u)= \int_0^u 2 (r-u_0(y)) 
F^\prime(r)\, dr = 2\int_0^u r F^\prime(r)\, dr -2u_0(y)(F(u)-F(0))
\le C (1+|u_0(y)|^2+ |u|^p)$ for some positive integer $p$. 
We now apply Definition \ref{defi: young_stochentropsol} with the 
entropy flux pair $(\beta,\xi)$, obtaining
\begin{align*}
& 0 \le E \Big[\int_{y\in K_\delta}\int_{x\in K} |u_0(y) -u_0(x)|^2 
\psi(x) \varrho_{\delta}(x-y)\,dx\,dy \Big]
\\ &\qquad\quad -E \Big[\frac{1}{h}\int_{t=0}^h
\int_{y\in K_\delta}\int_{x\in K}\int_{\lambda=0}^1 |u(t,x,\lambda) -u_0(y)|^2\psi(x)
\varrho_{\delta}(x-y)\,d\lambda\,dx\,dy\,dt\Big]
\\&\qquad\quad\qquad+ C\delta^{-2}\int_{r=0}^hE\Big[ \int_{y\in K_\delta}\int_{x\in K} 
\int_{\lambda=0}^1(1+|u(r,x,\lambda)|^p+|u_0(y)|^2)\,d\lambda \,dx\,dy\Big]\,dr 
\\ & \qquad\quad+\frac{C^{\prime\prime}}{\delta}\int_{r=0}^h E\Big[ 
\int_{x\in K}\int_{\lambda=0}^1\int_{|z|>0}|\eta(x, u(r,x,\lambda), z)|^2\,m(dz)\,d\lambda \,dx\Big]\,dr,
\end{align*} 
and so
\begin{align*}
& E \Big[\frac{1}{h}\int_{t=0}^h\int_{y\in K_\delta}\int_{x\in K}\int_{\lambda=0}^1 
|u(t,x,\lambda) -u_0(y)|^2\psi(x)  \varrho_{\delta}(x-y)\,d\lambda\,dx\,dy\,dt\Big]
\\ &\qquad \le   E \Big[\int_{y\in K_\delta}\int_{x\in K} |u_0(y) -u_0(x)|^2 \psi(x) \varrho_{\delta}(x-y)\,dx\,dy \Big]
\\ &\qquad\qquad
+ C\delta^{-2}\int_{r=0}^h E  \Big[\int_{y\in K_\delta}\int_{x\in K}
\int_{\lambda=0}^1(1+|u(r,x,\lambda)|^p+|u_0(y)|^2)\,d\lambda \,dx\,dy\Big]\,dr 
\\ & \qquad\qquad 
+ \frac{C^{\prime\prime}}{\delta}\int_{r=0}^h E \Big[ 
\int_{x\in K}\int_{\lambda=0}^1\int_{|z|>0} | \eta(x, u(r,x,\lambda), z)|^2\,m(dz)\,d\lambda \,dx\Big]\,dr.
\end{align*} 
Hence, by passing to the limit $h\rightarrow 0$, 
\begin{align}
&\limsup_{h\rightarrow 0} E \Big[\frac{1}{h}\int_{t=0}^h\int_{y\in K_\delta}\int_{x\in K}
\int_{\lambda=0}^1 |u(t,x,\lambda) -u_0(y)|^2\psi(x)  \varrho_{\delta}(x-y)\,d\lambda\,dx\,dy\,dt\Big]\notag
\\ & \qquad 
\le   E \Big[\int_{y\in K_\delta}\int_{x\in K} |u_0(y) -u_0(x)|^2 \psi(x) \varrho_{\delta}(x-y)\,dx\,dy\Big].\label{eq:intial_cond_weak-5}
\end{align} 
Combining \eqref{eq:intial_cond_weak-4} 
and \eqref{eq:intial_cond_weak-5} yields
\begin{align}
\notag
& \limsup_{h\rightarrow 0} E \Big[\frac{1}{h}
\int_{t=0}^h\int_{K} \int_{\lambda=0}^1 |u(t,x,\lambda) -u_0(x)|^2\psi(x) \,d\lambda\,dx \,dt\Big]
\\ & \qquad \le 4 E \Big[\int_{y\in K_\delta}\int_{x\in K} 
|u_0(y) -u_0(x)|^2 \psi(x) \varrho_{\delta}(x-y)\,dx\,dy\Big]
\quad \text{for all} ~\delta > 0.
\label{eq:intial_cond_weak-6}
\end{align}
We now  let $\delta \rightarrow 0$ in 
the right-hand side of 
\eqref{eq:intial_cond_weak-6}, which gives
\begin{align}
\notag& \limsup_{h\rightarrow 0} E \Big[\frac{1}{h}\int_{t=0}^h\int_{K} 
\int_{\lambda=0}^1 |u(t,x,\lambda) -u_0(x)|^2\psi(x) \,d\lambda\,dx \,dt\Big]\le 0;
\end{align} 
the proof is complete since $\psi \ge 0$.
\end{proof}

Before concluding this section, we introduce a class of entropy functions. 
Let $\beta:\R \rightarrow \R$ be a $C^\infty$ function satisfying 
\begin{align*}
      \beta(0) = 0,\quad \beta(-r)= \beta(r),\quad 
      \beta^\prime(-r) = -\beta^\prime(r),\quad \beta^{\prime\prime} \ge 0,
\end{align*} 
and 
\begin{align*}
	\beta^\prime(r)=\begin{cases} -1\quad \text{when} ~ r\le -1,\\
                               \in [-1,1] \quad\text{when}~ |r|<1,\\
                               +1 \quad \text{when} ~ r\ge 1.
                 \end{cases}
\end{align*} 
For any $\vartheta > 0$, define $\beta_\vartheta:\R \rightarrow \R$ by 
$\beta_\vartheta(r) = \vartheta \beta(\frac{r}{\vartheta})$. 
Then
\begin{align}\label{eq:approx to abosx}
 |r|-M_1\vartheta \le \beta_\vartheta(r) \le |r|\quad 
 \text{and} \quad |\beta_\vartheta^{\prime\prime}(r)| 
 \le \frac{M_2}{\vartheta} {\bf 1}_{|r|\le \vartheta},
\end{align} 
where $M_1 = \sup_{|r|\le 1}\big | |r|-\beta(r)\big |$ and 
$M_2 = \sup_{|r|\le 1}|\beta^{\prime\prime} (r)|$.

By simply dropping $\vartheta$, for $\beta= \beta_\vartheta$ ~ we define 
\begin{align*}
&F_k^\beta(a,b)=\int_{b}^a \beta^\prime(\sigma-b)F_k^\prime(\sigma)\,d(\sigma), \\
&F^\beta(a,b)=(F_1^\beta(a,b),F_2^\beta(a,b),\ldots,F_d^\beta(a,b)),\\
&F_k(a,b)= \text{sign}(a-b)(F_k(a)-F_k(b)) ,\\
&F(a,b)= (F_1(a,b),F_2(a,b),\ldots,F_d(a,b)).
\end{align*}

\section{Existence and a-priori estimates for the viscous problem}\label{existence-apriori-estimate}
The entropy inequalities, and the corresponding well-posedness result, are 
reliant on the fact that one can (spatially) regularize the solution of
\eqref{eq:levy_stochconservation_laws} by adding small diffusion operator. 
Therefore, in this section, we will provide a detailed 
analysis of the following viscous problem:
\begin{align} 
du(t,x) + \mbox{div}_x F(u(t,x)) \,dt
& = \int_{|z|> 0} \eta(x,u(t,x);z)\, \tilde{N}(dz,\,dt)
+ \eps \Delta_{xx} u \,dt, 
\quad t>0, ~ x\in \R^d,
\label{eq:levy_stochconservation_laws-viscous}
\end{align}
with initial condition \eqref{initial_cond}. 
To the best of our knowledge, the answers to the wellposedness questions 
for L\'{e}vy driven SPDEs are not readily available in its full generality to cover \eqref{eq:levy_stochconservation_laws-viscous}. However, a relevant 
reference is \cite{xu}, where the one-dimensional viscous Burgers 
equation with L\'{e}vy noise is studied. 

Throughout this section, we impose the 
following regularity assumptions:
\begin{Assumptions2}
    \item \label{B1} The function $F: \R\goto\R^d$ is 
    smooth, i.e., $ F_k\in C^\infty$, and the $n$-th derivative 
    satisfies $|\partial_u^nF_k(u)| \le K_n$ for some constant $K_n$ and 
    for all $n\in \mathbb{N}$ and $k=1,\ldots,d.$
    \item \label{B2} For every $n\in\mathbb{N}$, $\partial_u^n \eta(x,u; z)$  
    and $D_x^n \eta(x,u;z)$ exist and are continuous. 
    Moreover,\\ $\eta(\cdot, u; z)\in \mathcal{S}(\R^d)$.
  
    \item \label{B3} For every $n\in \mathbb{N}$, there exists 
    $K_n(x)\in L^2(\R^d)\cap L^{\infty}(\R^d)$ such that
    $$
    |\partial_u^n\eta(x,u;z)|+ |D_x^n \eta(x,u,z)|\le K_n(x)(1\wedge |z|).
    $$
    
    \item \label{B4} The initial condition $u_0$ belongs to $\mathcal{S}(\R^d)$.                
 \end{Assumptions2}
 
\vspace{.4cm}
It is implied by \ref{B4} that $E[||u_0||_{2}^2]< \infty$, and we 
introduce the following Picard-type iterates: for any natural 
number $n\ge 0$, define 
\begin{align}
  & u^0(t,x) = u_0(x),\notag\\
  \label{eq:picard}& 
  du^n(t,x) + \mbox{div}_x F(u^{n-1}(t,x))dt 
  = \eps \Delta u^n(t,x) dt+ \int_{|z|> 0} \eta(x, u^{n-1}(t,x);z)\,\tilde{N}(dz,dt)\\
    \notag &u^n(0,x) = u_0(x).
\end{align} 
Let $G_\eps(t,x)$ be the heat kernel 
associated with operator $\eps \Delta_{xx}$ i.e 
\begin{align*}
 G(t,x)\equiv G_\eps(t,x) = \frac{1}{(4\pi\eps t)^{\frac{d}{2}}} 
 e^{\frac{-|x|^2}{4\eps t}},\quad t> 0.
\end{align*} 
We are looking for a $L^2(\R^d)$-valued predictable 
process $u^n(t,x)$ that qualifies as the mild solution to \eqref{eq:picard}. 
In other words, we want a predictable process $u^n(t,x)$ that satisfies  
\begin{equation}
\label{eq:iterate_representation} 
   \begin{split}
   u^n(t,x) = & \int_{\rd_y} G(t,x-y) u_0(y) \, dy 
   - \int_{s=0}^t \int_{\rd_y} G(t-s,x-y)\sum_{i=1}^d \partial_{y_i}F_i(u^{n-1}(s,y))\, dy\,ds
   \\ & \qquad 
   + \int_{s=0}^t \int_{|z| > 0}\int_{\rd_y} G(t-s, x-y) \eta(y, u^{n-1}(s,y);z) \,dy\,\tilde{N}(dz, ds),
   \end{split}
\end{equation} 
almost surely, for every $t$. 
Note that the c\'{a}dl\'{a}g solution 
$v(t,x)$ to \eqref{eq:picard} is given by 
\begin{equation}\label{eq:iterate_representation-1} 
\begin{split}
v(t,x) = &\int_{\rd_y} G(t,x-y)u_0(y)\, dy 
- \int_{s=0}^t \int_{\rd_y} G(t-s,x-y)\sum_{i=1}^d \partial_{y_i}F_i(u^{n-1}(s,y)) \,dy\,ds
\\ 
& + \int_{s=0}^t \int_{|z| > 0}\int_{\rd_y} G(t-s, x-y) 
\eta(y, u^{n-1}(s,y);z) \,dy \,\tilde{N}(dz, ds).
\end{split}
\end{equation} 
Moreover, the martingale term on the right-hand 
side of \eqref{eq:iterate_representation-1} 
is stochastically continuous and c\'{a}dl\'{a}g. 
Therefore, $u^n(t,\cdot)= v(t-,\cdot)$ would definitely exist and 
for any fixed $t$, $u^n(t,x)= v(t-,x)$ almost surely. 
In other words, $u^n(t,\cdot)= v(t-,\cdot)$ is 
c\'{a}gl\'{a}d (hence predictable) and 
satisfies \eqref{eq:iterate_representation}.
 
If $u^0(x)$ is assumed to be smooth, then the first iterate $u^1(t,x)$ is 
immediately well defined. However, in order to make sense of $u^n(t,x)$ 
for any $n$, one needs to establish some essential regularity properties for $u^{n-1}$. 
The assumptions \ref{B1}-\ref{B4} will be used for this purpose.
\begin{lem}\label{lem:heat kernel-convolution}
Let $h = h(s) =h(s,x)$ be a predictable 
process with trajectories in $L^2\big([0,T]; H^p(\R^d)\big)$, for 
all $p=1,2,~\ldots$ and $h(s,\cdot) \in \mathcal{S}(\R^d)$. 
Furthermore, let 
\begin{align*}
     V(t,x)=\int_{s=0}^t \int_{\R^d_y} G(t-s,x-y) h(s,y)\,dy\, ds
\end{align*} 
Then $V(t)= V(t,x)$ is a predictable process with paths 
in $L^2\big([0,T]; H^p(\R^d)\big)\cap C\big([0,T]; H^p(\R^d)\big) $.  
In particular,
\begin{align*}
     \partial_{x_k}V(t,x)=
     \int_{s=0}^t \int_{\R^d_y} G(t-s,x-y) \partial_{y_k}h(s,y)\,dy\, ds.
   \end{align*} 
\end{lem}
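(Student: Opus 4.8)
The plan is to recognize $V$ as the deterministic (Bochner) convolution of $h$ against the heat semigroup and to exploit the fact that this semigroup acts as a contraction on every Sobolev space $H^p(\R^d)$. Write $S(\tau)f = G(\tau,\cdot)\con f$ for $\tau>0$ and $S(0)=\mathrm{Id}$, so that $V(t)=\int_0^t S(t-s)h(s)\,ds$. On the Fourier side $S(\tau)$ acts by multiplication by $e^{-\eps\tau|\xi|^2}$, and since the $H^p$-norm is $\|f\|_{H^p}^2=\int (1+|\xi|^2)^p|\hat f(\xi)|^2\,d\xi$ with $0< e^{-\eps\tau|\xi|^2}\le 1$, two facts are immediate: the contraction estimate $\|S(\tau)f\|_{H^p}\le\|f\|_{H^p}$, and the strong-continuity estimate $\|[S(\tau)-\mathrm{Id}]f\|_{H^p}\to 0$ as $\tau\downarrow 0$, the latter by dominated convergence because the integrand $(1+|\xi|^2)^p|e^{-\eps\tau|\xi|^2}-1|^2|\hat f(\xi)|^2$ tends to $0$ pointwise and is dominated by $(1+|\xi|^2)^p|\hat f(\xi)|^2$.

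First I would establish the spatial regularity together with the $L^2\big([0,T];H^p(\R^d)\big)$ membership. By Minkowski's integral inequality and the contraction property,
\[
\|V(t)\|_{H^p}\le\int_0^t\|S(t-s)h(s)\|_{H^p}\,ds\le\int_0^t\|h(s)\|_{H^p}\,ds\le\sqrt{T}\,\Big(\int_0^T\|h(s)\|_{H^p}^2\,ds\Big)^{1/2},
\]
which is finite for every $p$ by hypothesis. Hence $V(t)\in H^p(\R^d)$ for each $t$ and $\sup_{t\in[0,T]}\|V(t)\|_{H^p}<\infty$, so $V\in L^\infty\big([0,T];H^p\big)\subset L^2\big([0,T];H^p\big)$.

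Next I would prove the time-continuity $V\in C\big([0,T];H^p(\R^d)\big)$. For $\delta>0$ I split
\[
V(t+\delta)-V(t)=\int_t^{t+\delta}S(t+\delta-s)h(s)\,ds+\int_0^t[S(\delta)-\mathrm{Id}]S(t-s)h(s)\,ds.
\]
The first term is bounded by $\int_t^{t+\delta}\|h(s)\|_{H^p}\,ds$, which vanishes as $\delta\downarrow 0$ by absolute continuity of the Lebesgue integral. In the second term the integrand tends to $0$ in $H^p$ for almost every $s$ by strong continuity and is dominated by $2\|h(s)\|_{H^p}\in L^1([0,T])$, so dominated convergence applies; the decrements $V(t)-V(t-\delta)$ are treated symmetrically. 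This yields continuity in $H^p$ for every $p$.

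For the differentiation formula I would differentiate under the integral and integrate by parts in $y$: since $h(s,\cdot)\in\mathcal{S}(\R^d)$ and $G(t-s,\cdot)$ together with all its derivatives decays rapidly, the boundary terms vanish and $\partial_{x_k}G(t-s,x-y)=-\partial_{y_k}G(t-s,x-y)$ gives the stated identity; equivalently $\partial_{x_k}$ is Fourier multiplication by $i\xi_k$, which commutes with $S(t-s)$, so $\partial_{x_k}V=\int_0^t S(t-s)\,\partial_{x_k}h(s)\,ds$, and the two preceding steps applied to $\partial_{x_k}h$ (again in $L^2([0,T];H^p)$) justify all interchanges. Predictability then follows because $V$ is $H^p$-valued, adapted (the Bochner integral over $[0,t]$ of the measurable adapted integrand $s\mapsto S(t-s)h(s)$ is $\mathcal{F}_t$-measurable), and has continuous paths, a continuous adapted process being predictable. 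The main obstacle is the time-continuity step, where the smoothing of the semigroup must be combined with its strong continuity uniformly at the $H^p$ level and the dominated-convergence argument must be carried past the kernel singularity at $s=t$; it is precisely the contraction bound $\|S(t-s)h(s)\|_{H^p}\le\|h(s)\|_{H^p}$ that absorbs that singularity and renders the dominating function integrable.
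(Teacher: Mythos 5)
Your proof is correct and is essentially a complete write-up of the argument the paper merely gestures at ("a simple consequence of properties of convolution"): the contraction and strong continuity of the heat semigroup on $H^p$, Minkowski's inequality to absorb the kernel, dominated convergence for time-continuity, and integration by parts (equivalently, commuting the Fourier multiplier $i\xi_k$ with the semigroup) for the differentiation identity. All the interchanges you invoke are justified by the standing hypothesis that $h(s,\cdot)\in\mathcal{S}(\R^d)$ with trajectories in $L^2([0,T];H^p)$, and the predictability claim via adaptedness plus path continuity is standard, so nothing is missing.
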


\begin{proof}
The proof is a simple consequence of properties of convolution.
\end{proof} 

In addition, for a predictable process 
$g(\cdot,\cdot; z)\in L^2([0, T]; H^p(\R^d))$ with 
$p=1, 2,\ldots$, and
 \begin{align*}
      E\Big[\int_{s=0}^T \int_{\R^d_y}\int_{|z|>0}
      \big(g^2(s,y;z)+|D_y^n g(s,y;z)|^2\big)\,m(dz)\,dy\, ds\Big] <\infty,
 \end{align*} 
for every $n\in \mathbb{N}$ and 
$g(s,\cdot, z)\in \mathcal{S}(\R^d)$, the quantity
\begin{align}\notag
N_L(t,x)& =   \int_{s=0}^t \int_{|z|> 0}
\int_{\R_y^d} G(t-s,x-y)g(s,y; z)\,dy\, \tilde{N}(dz, ds) 
\end{align}
satisfies the following property for each $T > 0$.

\begin{lem}\label{lem:diff-integration}
The process $ N_L(t,x) \in  L^2([0, T]; H^p(\R^d))$ 
for $p= 1, 2, 3,\ldots$.  Moreover, $N_L(t,x)$ is c\'{a}dl\'{a}g and stochastically 
continuous and hence admits predictable version. Furthermore,
\begin{align}\label{eq:existence-differentialtion-2.2} 
\partial_{x_k}N_L(t,x)& 
= \int_{s=0}^t \int_{|z|> 0}\int_{\R_y^d} 
G(t-s,x-y)\partial_{y_k}g(s,y;z) \,dy\,\tilde{N}(dz, ds) 
\end{align} 
and $N_L(t,\cdot)\in C^\infty(\R^d)$.
\end{lem}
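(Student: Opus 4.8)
The plan is to treat $N_L(t,\cdot)$, for each fixed $t$, as an $H^p(\R^d)$-valued stochastic integral against the compensated Poisson measure $\tilde{N}$, with integrand $\Phi(s;z):=G(t-s,\cdot)\ast g(s,\cdot;z)$ (which is predictable, since $g$ is predictable and convolution with the deterministic kernel $G$ preserves predictability). First I would invoke the stochastic Fubini theorem to interchange the spatial integral $\int_{\R^d_y}\,dy$ with the stochastic integral $\tilde{N}(dz,ds)$; this is legitimate because the assumed bound $E\int_0^T\int_{\R^d_y}\int_{|z|>0}\big(g^2(s,y;z)+|D_y^n g(s,y;z)|^2\big)\,m(dz)\,dy\,ds<\infty$ supplies the required integrability, and it identifies the scalar process $x\mapsto N_L(t,x)$ with the $H^p$-valued integral of $\Phi$. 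The two workhorses are then the It\^{o} isometry for $\tilde{N}$ and the elementary bound $\|G(\tau)\ast h\|_{H^p}\le\|h\|_{H^p}$, valid for every $\tau>0$ because $G(\tau,\cdot)$ is a probability density (so Young's inequality gives $\|G(\tau)\ast h\|_{L^2}\le\|h\|_{L^2}$) and because differentiation commutes with convolution.

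For the membership and derivative claims I would proceed as in Lemma \ref{lem:heat kernel-convolution}. Applying the It\^{o} isometry gives $E\|N_L(t,\cdot)\|_{H^p}^2 = E\int_0^t\int_{|z|>0}\|G(t-s)\ast g(s;z)\|_{H^p}^2\,m(dz)\,ds \le E\int_0^t\int_{|z|>0}\|g(s;z)\|_{H^p}^2\,m(dz)\,ds$, which is finite by hypothesis; integrating in $t$ yields paths in $L^2([0,T];H^p(\R^d))$. For \eqref{eq:existence-differentialtion-2.2}, I would note that $\partial_{x_k}$ is a continuous linear map $H^p\to H^{p-1}$, hence commutes with the stochastic integral, and that inside the convolution $\partial_{x_k}(G(t-s)\ast g(s;z)) = G(t-s)\ast\partial_{y_k}g(s;z)$ (move the derivative onto $g$ using $\partial_{x_k}G=-\partial_{y_k}G$ and integrating by parts, with no boundary contribution since $g(s,\cdot;z)\in\mathcal{S}(\R^d)$). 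Since this holds for every $p$, Sobolev embedding $H^p(\R^d)\hookrightarrow C^k(\R^d)$ for $p>k+d/2$ shows $N_L(t,\cdot)\in C^\infty(\R^d)$ almost surely.

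The remaining, and most delicate, point is the temporal path regularity. Stochastic (mean-square) continuity I would obtain by writing, for $t'>t$, the increment $N_L(t')-N_L(t)$ as the sum of $\int_t^{t'}\!\int G(t'-s)\ast g\,\tilde{N}$ and $\int_0^t\!\int (G(t'-s)-G(t-s))\ast g\,\tilde{N}$; the It\^{o} isometry bounds the first by $E\int_t^{t'}\int\|g\|_{H^p}^2\,m\,ds\to0$, and the second by $E\int_0^t\int\|(G(t'-s)-G(t-s))\ast g(s;z)\|_{H^p}^2\,m(dz)\,ds$, which tends to $0$ by strong continuity of the heat semigroup on $H^p$ together with dominated convergence (dominating function $4\|g(s;z)\|_{H^p}^2$). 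Stochastic continuity plus adaptedness then furnishes a predictable modification in the standard way.

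The c\'{a}dl\'{a}g property is the main obstacle: because the integrand $G(t-s,\cdot)$ depends on the upper limit $t$, the process $t\mapsto N_L(t,\cdot)$ is a stochastic convolution rather than a genuine martingale, so the usual c\'{a}dl\'{a}g regularity of Poisson stochastic integrals does not apply directly. I expect to settle this either by the factorization method adapted to compensated-Poisson convolutions or by appeal to the general path-regularity theory for L\'{e}vy-driven stochastic convolutions (cf.\ \cite{peszat}); this is the step requiring the most care, the earlier estimates being essentially routine once the $H^p$-valued framework and the isometry are in place.
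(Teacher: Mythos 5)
Your argument is correct and, in substance, follows the same route as the paper: the It\^{o}--L\'{e}vy isometry combined with Young's inequality for convolutions gives the $H^p$ bounds, the derivative is moved from the heat kernel onto $g$ using $\partial_{x_k}G=-\partial_{y_k}G$ together with the Schwartz-class decay of $g(s,\cdot;z)$, and iteration over $p$ plus Sobolev embedding yields $N_L(t,\cdot)\in C^\infty(\R^d)$. The two points where you diverge are minor but worth recording. First, you identify $\partial_{x_k}N_L$ by regarding $\partial_{x_k}$ as a bounded operator $H^p\to H^{p-1}$ that commutes with the Hilbert-space-valued stochastic integral, whereas the paper tests against $\varphi\in C_c^\infty(\R^d)$ and shows, via Fubini and the isometry, that the expected square of the difference between the candidate derivative and the distributional derivative vanishes; these are the same computation in different packaging, and your stochastic-Fubini step plays the role of the paper's interchange of $dx$ with $\tilde{N}(dz,ds)$. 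Second, and more substantively, you are right to single out the c\'{a}dl\'{a}g property as the delicate step: since the integrand $G(t-s,\cdot)$ depends on the upper limit $t$, the process is a stochastic convolution rather than a martingale, and path regularity does not follow from the c\'{a}dl\'{a}g property of Poisson stochastic integrals alone. The paper simply asserts that the c\'{a}dl\'{a}g property is ``the direct inheritance of being a stochastic integral,'' which elides exactly this issue; your plan to invoke the factorization method or the path-regularity theory for L\'{e}vy-driven stochastic convolutions (as in \cite{peszat} or \cite{Hausenblas2008}) is the honest way to close that step, and in this respect your write-up is more careful than the paper's own proof.
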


\begin{proof}
Once the representation \eqref{eq:existence-differentialtion-2.2} 
is established, the proof of the fact that $N_L(t,x) \in  L^2([0, T]; H^p(\R^d))$ 
is a straightforward application of It\^{o}-L\'{e}vy isometry. 
Moreover, the c\'{a}dl\'{a}g property of the right-hand side is the direct 
inheritance of being a stochastic integral, and the stochastic 
continuity is a direct consequence of the It\^{o}-L\'{e}vy isometry. 
   
In order to prove  the representation \eqref{eq:existence-differentialtion-2.2}, we 
have to show that the distributional derivative coincides with the right hand side. 
Let $\varphi\in C_c^\infty (\R^d)$ be a test function. As a consequence 
of Fubini's theorem and It\^{o}-L\'{e}vy isometry 
\begin{align*}
&E\Big[\big| \int_{\rd_x} \int_{s=0}^t\int_{|z|>0} \big\{-G(t-s)*_{x}g)(s,x;z)
\partial_{x_k}\varphi(x)\\&\hspace{5cm}
-G(t-s)*_{x}(\partial_{x_k} g)(s,x;z)\varphi(x) \big\}
\tilde{N}(dz, ds) \,dx\big|^2\Big] \\
& \quad = E\Big[\int_{s=0}^t\int_{|z|> 0} \big|\int_{\rd_x} \big\{\partial_{x_k}G(t-s)*_{x}g)(s,x;z)\varphi(x)\\
&\hspace{5cm}-G(t-s)*_{x}(\partial_{x_k} g)(s,x;z)\varphi(x) \big\}\,dx
\big|^2\,m(dz)\,ds\Big]\\
 & \quad = 0,
\end{align*} 
where we have used the integration by parts 
along with properties of convolution. 
In the above, $*_x$ signifies convolution in $x$ only. 
This representation shows that 
 $\partial_{x_k}N_L(t,\cdot)$ has trajectories 
 in $L^2([0, T]; L^2(\R^d)) $ and it has a predictable version. 

Replace  $g$ by  $\partial_{x_k}g$,  and repeat the above argument 
to conclude that $N_L$ has trajectories in\newline $L^2([0, T]; H^p(\R^d))$ 
and $N_L(t,\cdot) \in C^{p}(\R^d)$ for $p= 1, 2,\ldots$. 
\end{proof}

\begin{lem}\label{lem:scwartz-class}
 $N_L(t,\cdot)\in \mathcal{S}(\R^d)$ almost surely for all $t\in [0,T]$.
 \end{lem}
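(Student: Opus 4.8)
The plan is to show that $N_L(t,\cdot)$ lies in the Schwartz class $\mathcal{S}(\R^d)$ almost surely, which means establishing that for every pair of multi-indices $(\alpha,\beta)$, the seminorm $\sup_{x\in\R^d}|x^\alpha D_x^\beta N_L(t,x)|$ is finite almost surely. Lemma~\ref{lem:diff-integration} already provides the crucial differentiation identity \eqref{eq:existence-differentialtion-2.2}, so the derivatives $D_x^\beta N_L(t,x)$ can be represented as stochastic convolutions against $D_y^\beta g(s,y;z)$; the remaining work is to control the polynomial weight $x^\alpha$ and to upgrade the established $H^p$-regularity to pointwise decay estimates.

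First I would fix a multi-index $\beta$ and use Lemma~\ref{lem:diff-integration} to write $D_x^\beta N_L(t,x)$ as the stochastic convolution with kernel $G(t-s,x-y)$ against $D_y^\beta g(s,y;z)$. To bring in the weight $x^\alpha$, I would write $x^\alpha = (x-y+y)^\alpha$ and expand by the binomial theorem, so that $x^\alpha D_x^\beta N_L(t,x)$ becomes a finite sum of terms where some powers of $(x-y)$ are absorbed into the Gaussian kernel (using that $(x-y)^\gamma G(t-s,x-y)$ is still integrable in $y$ with a bound depending on $t-s$) and the remaining powers $y^{\alpha-\gamma}$ multiply $D_y^\beta g(s,y;z)$. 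Because assumption \ref{B2} guarantees $g(s,\cdot;z)\in\mathcal{S}(\R^d)$ and \ref{B3} provides the bound $|D_y^\beta g(s,y;z)|\le K_{|\beta|}(y)(1\wedge|z|)$ with $K_{|\beta|}\in L^2(\R^d)\cap L^\infty(\R^d)$, the product $y^{\alpha-\gamma} D_y^\beta g(s,y;z)$ is again a Schwartz-type function in $y$ with an $L^2$-in-$y$, $L^2$-in-$z$ (against $m$) bound uniform in $s$.

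The key step is then an It\^{o}--L\'{e}vy isometry estimate: I would bound the second moment $E[\,|x^\alpha D_x^\beta N_L(t,x)|^2\,]$ by a constant independent of $x$, obtained by applying the isometry and then using the $L^2$-integrability of the weighted Schwartz functions $y^{\alpha-\gamma}D_y^\beta g$ together with the integrability of $(1\wedge|z|^2)$ against $m(dz)$ from \ref{A3}. Since the weighted kernel factors $(x-y)^\gamma G(t-s,x-y)$ can be controlled uniformly in $x$ after the $y$-integration, this yields a uniform-in-$x$ bound on the second moment of each weighted derivative. To pass from an $x$-pointwise second-moment bound to the almost-sure finiteness of the supremum, I would apply a Sobolev embedding argument analogous to the one already used in Lemma~\ref{lem:diff-integration}: having shown that $x^\alpha D_x^\beta N_L(t,\cdot)$ lies in $H^p(\R^d)$ for all $p$ (by combining the weighted estimates across all derivative orders), the embedding $H^p(\R^d)\hookrightarrow C^0(\R^d)$ for $p>d/2$ delivers the pointwise supremum bound, hence the finiteness of every Schwartz seminorm.

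The main obstacle I expect is the bookkeeping in the binomial expansion of $x^\alpha=(x-y+y)^\alpha$ combined with the verification that each resulting term still satisfies the integrability hypotheses needed to invoke the It\^{o}--L\'{e}vy isometry, since one must simultaneously absorb powers of $(x-y)$ into the heat kernel (which introduces $t$-dependent but $x$-uniform constants, potentially singular as $t\to 0$) and retain enough decay in the $y$-variable from the Schwartz bounds on $g$. Care is needed to ensure the constants in the weighted kernel estimates do not blow up in $x$, and that the interchange of the weight with the stochastic integral is justified; both should follow from the standing assumptions \ref{B2}--\ref{B3} and \ref{A3}, but verifying them cleanly for arbitrary multi-indices is the technical heart of the argument.
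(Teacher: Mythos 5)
Your overall architecture (differentiate under the stochastic integral via Lemma \ref{lem:diff-integration}, control weighted derivatives in a Sobolev norm, then embed to get the supremum) matches the paper's. The divergence is in how the polynomial weight $x^\alpha$ is handled, and this is where your argument has a genuine gap. You split $x^\alpha=(x-y+y)^\alpha$ and let the powers $y^{\alpha-\gamma}$ fall on $D_y^\beta g$, and you then assert that $y^{\alpha-\gamma}D_y^\beta g(s,y;z)$ is ``again a Schwartz-type function in $y$ with an $L^2$-in-$y$, $L^2$-in-$z$ bound uniform in $s$.'' That bound is not available. The standing hypotheses on $g$ give only \emph{unweighted} estimates: $|D_y^n g(s,y;z)|\le K_n(y)(1\wedge|z|)$ with $K_n\in L^2(\R^d)\cap L^\infty(\R^d)$, and $E\big[\int_0^T\int_{\R^d}\int_{|z|>0}(g^2+|D^n g|^2)\,m(dz)\,dy\,ds\big]<\infty$. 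Multiplying such a function by $|y|^{|\alpha-\gamma|}$ can destroy square-integrability, and the pathwise statement $g(s,\cdot;z)\in\mathcal{S}(\R^d)$ is purely qualitative (it holds for each fixed $(s,z,\omega)$ separately) and does not yield the integrated weighted second moments $E\big[\int_0^t\int_{|z|>0}\|\,|\cdot|^{|\alpha-\gamma|}D^\beta g(s,\cdot;z)\|_2^2\,m(dz)\,ds\big]$ that your It\^{o}--L\'{e}vy isometry step requires. Without those, the isometry produces no bound.

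The paper closes exactly this hole by never putting a weight on $g$. It uses the heat-kernel identity $t^n\partial_{x_k}^nG(t,x-y)=\big(C_0x_k^n+C_1(t)x_k^{n-1}+\cdots+C_n(t)\big)G(t,x-y)$: multiplication by powers of $x_k$ is traded for $t$-weighted $x$-derivatives of $G$, which are then transferred (by the convolution/integration-by-parts step already established in Lemma \ref{lem:diff-integration}) onto $g$ as $\partial_{y_k}^n g$. The assumed derivative moment bounds then apply verbatim, and the extra factors $|t-s|^j$ are harmless on $[0,T]$; Morrey/Sobolev embedding finishes as in your plan. (Incidentally, your worry about singular constants as $t\to 0$ is unfounded --- $\|(\cdot)^\gamma G(\tau,\cdot)\|_{L^1}\sim\tau^{|\gamma|/2}$ is bounded --- but that is not where the difficulty lies.) If you wish to keep the binomial decomposition, you would need to add, and then verify for the specific $g=\eta(\cdot,u^{n-1}(s,\cdot);z)$, polynomial-weighted analogues of \ref{B3}; as the assumptions stand, your route does not close.
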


\begin{proof}
From Lemma \ref{lem:diff-integration}, we already know 
that $N_L(t,\cdot)\in C^\infty(\R^d).$ All we have to show is
\begin{align*}
\sup_{x\in \R^d} \big(|x|^n |N_L(t,x)|\big) < \infty\quad  a.s. 
\end{align*} for all $n\in \mathbb{N}$. 
On one hand, by Morrey's inequality there 
exists a universal constant $C > 0$ and $p > d$ such that 

\begin{align*}
\sup_{x\in \R^d} \big(|x|^n |N_L(t,x)|\big) \le C || |\cdot|^n |N_L(t,\cdot)| ||_{W^{1,p}},
\end{align*} 
for every positive integer $n$. On the other 
hand, direct computation reveals that, for $t> 0$, there exist 
$n$-th order polynomials $C_j(t)$ of $t$ and a non-zero constant $C_0$ such that 
\begin{align*}
 t^n \partial_{x_k}^n G(t,x-y) =\big( C_0x_k^{n} 
 + C_1(t) x_k^{n-1}+\cdots+ C_n(t) \big) G(t,x-y).
\end{align*} 
Therefore, by induction, it is sufficient to show 
that for all $j = 0,1,\ldots, n$,
\begin{align}
 \notag &\Big\|\int_{s=0}^t\int_{|z|> 0}  \int_{\rd_y} |t-s|^j 
 \partial_{x_k}^n G(t-s, x-y) g(s,y;z)\,dy\, \tilde{N}(dz, ds)\Big\|_{W_x^{1,p}}\\
& \qquad 
= \Big\|\int_{s=0}^t\int_{|z|> 0} \int_{\rd_y} |t-s|^j G(t-s, x-y) 
\partial_{y_k}^n g(s,y;z)\,dy\, \tilde{N}(dz, ds) \Big\|_{W_x^{1,p}} < \infty \quad a.s.,\notag
\end{align} 
for some $p > d$.  It follows from the Sobolev 
inequality that if $\ell > \frac{d}{2}$ then there is $p>d$ such that  
\begin{align*}
\big \|\cdot \big\|_{W^{1,p}} \le C \big\|\cdot \big\|_{H^{1+\ell}},
\end{align*}
along with the fact that, for all  $\ell \in \mathbb{N}$,
\begin{align*}
 &E\Big [\int_{\rd_x}\big|\int_{s=0}^t\int_{|z|> 0} 
 \int_{\rd_y} |t-s|^j G(t-s, x-y) \partial_{y_k}^{\ell+1} 
 g(s,y;z)\,dy \,\tilde{N}(dz, ds)\big|^2\,dx\Big]\\
& \qquad =  \int_{\rd_x}\int_{s=0}^t\int_{|z|> 0} E\Big[ |t-s|^{2j} |G(t-s, \cdot)*_x 
\partial_{y_k}^{\ell+1} g(s,\cdot;z)|^2\,m(dz)\,ds\,dx\Big]\\
 & \qquad \le C \int_{s=0}^t\int_{|z|> 0} 
E\Big[ \big\|\partial_{y_k}^{\ell+1} g(s,y;z)\big\|_2^2\Big] \,m(dz)\,ds < \infty.
\end{align*} 
In the above we have used Young's inequality for convolution. 
\end{proof}
 We finally conclude:
\begin{lem}\label{welldefinedness-iteration}
For  each $n = 1, 2,\ldots$, the process 
$u^n(t,\cdot)\in L^2\big([0,T]; H^p(\R^d)\big)$ 
for $p = 1,2,\ldots$ and $u^n(t,\cdot) \in \mathcal{S}(\R^d)$. 
Moreover, $u^n(t,\cdot)$ is stochastically continuous 
and has a c\'{a}gl\'{a}d (hence predictable) version. 
 \end{lem}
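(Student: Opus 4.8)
The plan is to argue by induction on $n$, using the mild representation \eqref{eq:iterate_representation} to split $u^n$ into three building blocks whose regularity is already controlled by Lemmas \ref{lem:heat kernel-convolution}, \ref{lem:diff-integration}, and \ref{lem:scwartz-class}. The base case $n=0$ is immediate: $u^0=u_0\in\mathcal{S}(\R^d)$ by \ref{B4}, and a deterministic Schwartz function trivially enjoys every asserted property. For the inductive step I assume that $u^{n-1}(t,\cdot)$ is predictable, has trajectories in $L^2([0,T];H^p(\R^d))$ for all $p$, lies in $\mathcal{S}(\R^d)$ almost surely for each $t$, and satisfies the relevant finite-expectation bounds; I then establish the same for $u^n$ by treating the three terms on the right of \eqref{eq:iterate_representation} one at a time.

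The first term $\int_{\R^d_y}G(t,x-y)u_0(y)\,dy$ is deterministic, and since the heat semigroup maps $\mathcal{S}(\R^d)$ into itself and acts boundedly and strongly continuously on every $H^p(\R^d)$, this term lies in $C([0,T];H^p(\R^d))$ and is Schwartz for each $t$. The drift term is handled by Lemma \ref{lem:heat kernel-convolution} applied with $h(s,y)=\sum_{i}\partial_{y_i}F_i(u^{n-1}(s,y))$, so the content is to verify that $h$ meets the hypotheses of that lemma. Writing $\partial_{y_i}F_i(u^{n-1})=F_i'(u^{n-1})\,\partial_{y_i}u^{n-1}$, predictability is inherited from $u^{n-1}$ and continuity of $F_i'$, while the $H^p$ and Schwartz properties follow from \ref{B1} together with the induction hypothesis: by the chain and Leibniz rules every spatial derivative of $h$ is a finite sum of products of derivatives of $F$ evaluated at $u^{n-1}$ (all bounded by \ref{B1}) times products of derivatives of $u^{n-1}$, and these are controlled in $L^2$ by the $H^p$-norms of $u^{n-1}$ after using the Sobolev embedding to bound the lower-order factors in $L^\infty$. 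Note that $F_i(u^{n-1})$ itself need not decay, but its gradient does; the rapid decay of $h$ (hence $h(s,\cdot)\in\mathcal{S}$) and, in turn, the Schwartz membership of the drift term, is obtained from the deterministic analogue of the weighted estimate carried out in Lemma \ref{lem:scwartz-class}, exploiting the identity $t^n\partial_{x_k}^nG(t,x-y)=\big(C_0x_k^n+C_1(t)x_k^{n-1}+\cdots+C_n(t)\big)G(t,x-y)$ to trade powers of $|x|$ for derivatives transferred onto $h$.

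The stochastic term is governed by Lemmas \ref{lem:diff-integration} and \ref{lem:scwartz-class}, applied with $g(s,y;z)=\eta(y,u^{n-1}(s,y);z)$, so I must check the integrability hypotheses preceding Lemma \ref{lem:diff-integration}, namely that $g(s,\cdot,z)\in\mathcal{S}(\R^d)$, that $g\in L^2([0,T];H^p(\R^d))$, and that
\[
E\Big[\int_{s=0}^T\int_{\R^d_y}\int_{|z|>0}\big(g^2(s,y;z)+|D_y^n g(s,y;z)|^2\big)\,m(dz)\,dy\,ds\Big]<\infty.
\]
Differentiating $y\mapsto\eta(y,u^{n-1}(s,y);z)$ via the chain and product rules yields a finite sum of terms built from $D_x^k\eta$ and $\partial_u^j\eta$ evaluated at $(y,u^{n-1}(s,y);z)$ multiplied by derivatives of $u^{n-1}$; by \ref{B3} each such factor of $\eta$ is bounded by $K_n(y)(1\wedge|z|)$ with $K_n\in L^2(\R^d)\cap L^\infty(\R^d)$, and the $u^{n-1}$-factors are Schwartz by induction, so $g(s,\cdot,z)\in\mathcal{S}(\R^d)$ with controlled $H^p$-norm. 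The displayed bound then follows by separating the variables: the $z$-integral $\int_{|z|>0}(1\wedge|z|)^2\,m(dz)$ is finite by \ref{A3}, the $y$-integral is finite since $K_n\in L^2(\R^d)$, and the surviving expectation is dominated by the $L^2([0,T];H^p)$ norm of $u^{n-1}$ from the induction hypothesis. Hence Lemmas \ref{lem:diff-integration} and \ref{lem:scwartz-class} apply and give that the stochastic term lies in $L^2([0,T];H^p(\R^d))$, is almost surely in $\mathcal{S}(\R^d)$, and is stochastically continuous and c\'{a}dl\'{a}g with a predictable version.

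Finally, $u^n$ is the sum of these three terms. Since $\mathcal{S}(\R^d)$ and $H^p(\R^d)$ are vector spaces and predictability and stochastic continuity are preserved under finite sums, the regularity and measurability assertions follow, and the c\'{a}gl\'{a}d (hence predictable) version is produced exactly as described just before the lemma by passing from the c\'{a}dl\'{a}g process $v$ to $v(t-,\cdot)$. I expect the main obstacle to be the pair of chain-rule/Faà di Bruno estimates in the inductive step: showing that the nonlinear compositions $F(u^{n-1})$ and $\eta(\cdot,u^{n-1};\cdot)$ do not destroy membership in $\mathcal{S}(\R^d)\cap H^p(\R^d)$ and that the resulting $z$-dependence is integrable against the L\'{e}vy measure. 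Once these are in place, the three cited lemmas supply the remaining analytic and probabilistic regularity.
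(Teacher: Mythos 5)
Your proposal is correct and follows exactly the route the paper intends: the paper offers no written proof (it simply states "We finally conclude" after Lemmas \ref{lem:heat kernel-convolution}, \ref{lem:diff-integration}, and \ref{lem:scwartz-class}), and the implicit argument is precisely your induction on $n$ applied to the three terms of the mild representation \eqref{eq:iterate_representation}, with \ref{B1}--\ref{B4} used to verify the hypotheses of those lemmas and the c\'{a}gl\'{a}d modification obtained as in the discussion preceding the lemma. Your write-up in fact supplies more detail (the chain-rule/Fa\`a di Bruno bookkeeping and the integrability check against the L\'{e}vy measure) than the paper records.
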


\subsection{Equivalence of mild, weak, and strong solutions} 
It is well-known in the context of SPDEs 
governed by diffusions that, under moderate 
conditions, mild solutions coincide with weak solutions. For SPDEs driven by 
jump-diffusions, mild solutions can also be shown to 
coincide with weak solutions under moderate conditions. 
Moreover, Lemma \ref{welldefinedness-iteration} 
ensures that $u^{n}(t,x)$ has the sufficient smoothness 
to be the strong solution of \eqref{eq:picard}. 
In our context, the next lemma states this fact. 
A detailed proof can be given, for example, by adapting the 
arguments given in \cite{xu}.    

\begin{lem}\label{lem:weak-mild}
For each $\varphi\in C_c^\infty (\R^d)$,
\begin{align*}
   &\langle u^n(t), \varphi\rangle - \langle u^n(0), \varphi\rangle \\
   & \qquad = \int_{s=0}^t  \langle F(u^{n-1}(s)), \nabla \varphi\rangle ds
    + \int_{r=0}^t \int_{|z|> 0} \int_{\rd_x} \eta(x, u^{n-1}(r,x); z)
    \varphi(x)\,dx\,\tilde{N}(dz, dr)\\
  &\hspace{6cm}  +\eps \int_{r=0}^t \langle\Delta\varphi, u^{n}(r)\rangle dr,
\end{align*} 
almost surely, for almost all $t\in [0,T]$.
\end{lem}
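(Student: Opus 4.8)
The plan is to start from the mild representation \eqref{eq:iterate_representation} of $u^n$, pair it against $\varphi$, and reduce the three constituent terms to the claimed weak form using the single structural property of the heat kernel that $\partial_t G = \eps \Delta_x G$. The backbone is the deterministic semigroup identity: writing $S(t)f(x) = \int_{\rd_y} G(t,x-y) f(y)\,dy$ and using that $G$ solves the heat equation together with the self-adjointness of convolution and integration by parts (legitimate since $\varphi \in C_c^\infty(\rd)$ and the relevant profiles are smooth and rapidly decaying by Lemmas \ref{lem:scwartz-class} and \ref{welldefinedness-iteration}), one has
\begin{align*}
\langle S(t) f, \varphi \rangle = \langle f, \varphi \rangle + \eps \int_{s=0}^t \langle S(s) f, \Delta \varphi \rangle \, ds.
\end{align*}

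First I would dispatch the initial-data term $\langle S(t) u_0, \varphi \rangle$ by a direct application of this identity, noting that $\langle u^n(0),\varphi\rangle = \langle u_0,\varphi\rangle$ since $G(0,\cdot)$ acts as the identity. Next, for the drift term I would use the analogous Duhamel identity for $w(t) = \int_{s=0}^t S(t-s) h(s)\,ds$, namely $\langle w(t), \varphi\rangle = \int_{s=0}^t \langle h(s), \varphi\rangle\,ds + \eps \int_{s=0}^t \langle w(s), \Delta\varphi\rangle\,ds$, applied with $h(s) = -\Div F(u^{n-1}(s))$; integration by parts converts $\langle -\Div F(u^{n-1}(s)), \varphi\rangle$ into $\langle F(u^{n-1}(s)), \nabla\varphi\rangle$. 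Both of these steps are purely deterministic and routine given the regularity already in hand.

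The main obstacle is the stochastic convolution term
\[
M(t,x) = \int_{s=0}^t \int_{|z|>0} \int_{\rd_y} G(t-s,x-y)\,\eta(y, u^{n-1}(s,y);z)\,dy\,\tilde{N}(dz,ds).
\]
Here I would invoke a stochastic Fubini theorem to interchange the spatial pairing $\langle\cdot,\varphi\rangle$ with the $\tilde{N}$-stochastic integral, the requisite integrability being furnished by assumptions \ref{B1}--\ref{B4} and the It\^o--L\'evy isometry estimates already used in Lemma \ref{lem:diff-integration}. After the interchange, the inner object $\langle S(t-s)\eta(\cdot, u^{n-1}(s);z), \varphi\rangle$ expands by the same deterministic semigroup identity, and a second application of stochastic Fubini reassembles the term $\eps\int_{s=0}^t\langle M(s),\Delta\varphi\rangle\,ds$, leaving the leading contribution $\int_{s=0}^t\int_{|z|>0}\langle \eta(\cdot, u^{n-1}(s);z),\varphi\rangle\,\tilde{N}(dz,ds)$. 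The delicate point is verifying the hypotheses of the stochastic Fubini theorem (joint measurability, predictability, and square-integrability against $m(dz)\,ds$) uniformly in the spatial variable; this is precisely where the Schwartz-class regularity of Lemma \ref{lem:scwartz-class} and the $H^p$-bounds of Lemma \ref{welldefinedness-iteration} are essential, as they guarantee the integrands decay fast enough for every manipulation to be justified.

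Finally I would add the three contributions. The initial-data and Duhamel $\eps\Delta\varphi$-terms combine with the stochastic $\eps\Delta\varphi$-term into $\eps\int_{r=0}^t\langle u^n(r),\Delta\varphi\rangle\,dr = \eps\int_{r=0}^t\langle \Delta\varphi, u^n(r)\rangle\,dr$, the drift yields $\int_{s=0}^t\langle F(u^{n-1}(s)),\nabla\varphi\rangle\,ds$, and the stochastic leading term is exactly the claimed noise integral. Since the stochastic Fubini interchange holds almost surely for each fixed $t$ and both sides are measurable in $t$, this produces the asserted identity for almost all $t\in[0,T]$, $P$-almost surely. As noted after the statement, the entire argument can alternatively be carried out by adapting the mild-to-weak equivalence established in \cite{xu}.
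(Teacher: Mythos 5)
The paper does not actually prove Lemma \ref{lem:weak-mild}: it asserts the mild--weak equivalence as a standard fact and refers the reader to an adaptation of the arguments in \cite{xu}. Your proposal supplies precisely the argument that such a reference would contain, and it is correct. The three-way decomposition of the mild formula \eqref{eq:iterate_representation}, the deterministic identity $\langle S(t)f,\varphi\rangle=\langle f,\varphi\rangle+\eps\int_0^t\langle S(s)f,\Delta\varphi\rangle\,ds$ for the initial-data term, the Duhamel identity with $h(s)=-\mbox{div}\,F(u^{n-1}(s))$ followed by integration by parts for the drift, and the stochastic Fubini interchange over the simplex $\{0\le s\le r\le t\}$ for the compensated Poisson convolution are exactly the standard route; the integrability hypotheses for stochastic Fubini are indeed available from Lemmas \ref{lem:diff-integration}, \ref{lem:scwartz-class} and \ref{welldefinedness-iteration}, and summing the three $\eps\Delta\varphi$-contributions reconstitutes $\eps\int_0^t\langle\Delta\varphi,u^n(r)\rangle\,dr$ via the representation of $u^n(r)$. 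Two minor points worth making explicit if you write this up: first, your argument yields the identity for each fixed $t$ almost surely, and the ``for almost all $t$'' phrasing of the lemma then follows by Fubini (this also absorbs the distinction between the c\'adl\'ag version $v(t,\cdot)$ and its c\'agl\'ad modification $u^n(t,\cdot)=v(t-,\cdot)$, which agree for a.e.\ $t$ almost surely); second, in the inner bracket after the first stochastic Fubini you should state the identity $\eps\int_s^t\langle S(r-s)g,\Delta\varphi\rangle\,dr=\langle S(t-s)g,\varphi\rangle-\langle g,\varphi\rangle$ applied with $g=\eta(\cdot,u^{n-1}(s,\cdot);z)$, since that is the step that converts the double integral back into $\langle M(t),\varphi\rangle$ minus the claimed noise term. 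With those details in place your proof is complete and self-contained, which is more than the paper offers.
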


As in Feng and Nualart \cite{nualart:2008}, we 
also define the energy functional 
$e_{2r} :L^2(\rd)\mapsto [0,\infty] $ as follows:
$$
e_{2r}(u) = \frac{1} {2}||\Delta^r u||_2^2,\qquad  
r=0,1,2,3,\ldots.
$$

\begin{lem} \label{lem:energyestimate}
There exists a finite constant $C_{\eps,r,T} >0$, 
independent of $ n$, such that
\begin{align}\label{eq:energy-iterates}
E\left[e_{2r}(u^n(t))\right] \leq C_{\eps,r,T} 
\left( 1+ \sum_{k=0}^{r} E\left[ e_{2k}(u_0)\right]\right), 
\qquad t\leq T. 
\end{align}
\end{lem}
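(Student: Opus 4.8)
The plan is to run a stochastic energy estimate directly on $e_{2r}(u^n(t))=\tfrac12\|\Delta^r u^n(t)\|_2^2$ via the It\^o--L\'evy formula, and to close the resulting inequality by a Picard-type induction on the iteration index $n$. By Lemma~\ref{welldefinedness-iteration} each $u^n(t,\cdot)$ lies in $\mathcal{S}(\R^d)$ and is predictable with paths in $L^2([0,T];H^p)$ for every $p$, so all spatial derivatives below are legitimate and every integration by parts has vanishing boundary terms. First I would apply $\Delta^r$ to \eqref{eq:picard} --- using that $\Delta^r$ commutes with $\mbox{div}_x$ and with $\Delta$ --- and then apply the It\^o--L\'evy formula to $u\mapsto\tfrac12\|u\|_2^2$ evaluated at $w:=\Delta^r u^n$. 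Taking expectations annihilates the $\tilde N$-martingale and, after using $\langle w,\Delta w\rangle=-\|\nabla w\|_2^2$ and one integration by parts in the flux term, gives
\begin{align}
\frac{d}{dt}E\big[e_{2r}(u^n)\big]
&= -\eps\,E\big[\|\nabla\Delta^r u^n\|_2^2\big]
+ E\big[\langle\nabla\Delta^r u^n,\,\Delta^r F(u^{n-1})\rangle\big]\notag\\
&\quad
+ \tfrac12\int_{|z|>0} E\big[\|\Delta^r\eta(\cdot,u^{n-1};z)\|_2^2\big]\,m(dz).\notag
\end{align}

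Next I would dispose of the three terms. The dissipation $-\eps E[\|\nabla\Delta^r u^n\|_2^2]$ is nonpositive and is retained; a Young inequality splits the flux term and lets $\tfrac\eps2 E[\|\nabla\Delta^r u^n\|_2^2]$ be absorbed into the dissipation, leaving $\tfrac1{2\eps}E[\|\Delta^r F(u^{n-1})\|_2^2]$. Expanding $\Delta^r F(u^{n-1})$ by the Fa\`a di Bruno formula and using that \ref{B1} bounds every $F^{(j)}$, the top-order contribution is $\le K_1^2\|\Delta^r u^{n-1}\|_2^2=2K_1^2 e_{2r}(u^{n-1})$, which is exactly linear in the target energy, while the remaining products of lower-order derivatives are estimated by Gagliardo--Nirenberg interpolation (with the Sobolev embedding $H^s\hookrightarrow L^\infty$, $s>d/2$) in terms of $\{e_{2k}(u^{n-1})\}_{0\le k\le r}$. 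The jump term is handled in the same spirit: differentiating $\eta(x,u^{n-1}(x);z)$ and invoking \ref{B3} (with $K_n\in L^2\cap L^\infty$), the pure-$x$-derivative piece is bounded by $\|K_{2r}\|_2^2(1\wedge|z|^2)$, which upon integration against $m(dz)$ is finite by \ref{A3} and supplies the additive constant in the statement, whereas the $u$-dependent pieces are controlled linearly (top order) and by interpolation (lower order) exactly as for the flux.

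Assembling these bounds produces a differential inequality $\frac{d}{dt}E[e_{2r}(u^n(t))]\le C_{\eps,r}\big(1+\sum_{k=0}^r E[e_{2k}(u^{n-1}(t))]\big)$. Writing $S^n(t)=\sum_{k=0}^r E[e_{2k}(u^n(t))]$ and summing over $0\le k\le r$, this integrates to $S^n(t)\le S^0+C'\int_0^t\big(1+S^{n-1}(s)\big)\,ds$ with $S^0=\sum_{k\le r}E[e_{2k}(u_0)]$; since $u^0\equiv u_0$, a standard Picard/Gronwall iteration in $n$ then bounds $S^n(t)\le (S^0+C'T)e^{C'T}$ uniformly in $n$, and recalling $\sum_{k\le r}\|\Delta^k u\|_2^2\simeq\|u\|_{H^{2r}}^2$ this is the asserted estimate.

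The genuinely hard point is the taming of the nonlinear products hidden in $\Delta^r F$ and $\Delta^r\eta$. These are \emph{super-linear} in the energies --- e.g.\ $\big\||\nabla u|^2\big\|_2^2$ scales like the fourth power of a Sobolev norm of $u$ --- so they cannot be bounded by $\sum_{k\le r}E[e_{2k}(u^{n-1})]$ with an $n$-independent constant from the first-moment estimate alone; the interpolation necessarily introduces $L^\infty$-type factors that are themselves random. Closing the recursion therefore requires propagating control of all higher moments $E[e_{2k}(u^{n-1})^p]$ simultaneously (a parallel induction over the moment order $p$, with the interpolated factors fed through H\"older's inequality), and this bootstrapping, rather than the energy identity itself, is the crux of the argument.
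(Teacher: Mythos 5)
Your proposal follows essentially the same route as the paper: apply $\Delta^r$ to \eqref{eq:picard}, run the It\^o--L\'evy formula on $\|\Delta^r u^n(t)\|_2^2$, absorb or discard the dissipation, bound the flux and jump contributions, and close with a Gronwall/Picard iteration in $n$. The only cosmetic difference is that the paper works with the time-integrated identity and uses Cauchy's inequality to split $\langle\nabla\Delta^r u^n,\Delta^r F(u^{n-1})\rangle$ rather than writing a differential inequality; the resulting recursion $M_n(t)\le C M(0)+C\int_0^t M_{n-1}(s)\,ds$ is identical to your recursion for $S^n$.

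The one substantive point is the ``genuinely hard point'' you flag at the end, and you are right about it. $\Delta^r F(u^{n-1})$ contains lower-order products such as $F''(u^{n-1})\,|\nabla u^{n-1}|^2$ whose squared $L^2$ norms are superlinear in the Sobolev energies, so $E\big[\|\Delta^r F(u^{n-1})\|_2^2\big]$ cannot be bounded by $C\big(1+\sum_{k\le r}E[e_{2k}(u^{n-1})]\big)$ with an $n$-independent constant by interpolation alone; one must propagate higher moments $E[e_{2k}(u^{n-1})^p]$ through the iteration in parallel, exactly as you describe. The paper does not address this: it passes directly from ``$F$ and $\eta$ are smooth with bounded derivatives'' (assumptions \ref{B1}, \ref{B3}) to the linear-in-energy bound, which is precisely the step you identify as needing the moment bootstrap. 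So your account is, if anything, more careful than the paper's at the one delicate point of the argument; but since you describe the bootstrap without carrying it out, your write-up, like the paper's, leaves that step asserted rather than proved.
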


\begin{proof} 
We have seen that, for each 
$n=1,2,3,\ldots$, $u^n(t,\cdot) \in  \mathcal{S}(\rd)$, 
where $u^n$ is defined by
\begin{align}
u^n(t,x)&= \int_{\rd_y} G(t,x-y)u_0(y)\,dy 
- \int_{s=0}^{t}\int_{\rd_y} G(t-s,x-y)
\sum_{i=1}^{d} \partial_{y_i}F_i(u^{n-1}(s,y))\,dy\,ds \notag \\
& + \int_{s=0}^t  \int_{|z|>0}  \int_{\R^d_y} G(t-s,x-y)
\,\eta(y,u^{n-1}(s,y);z)\,dy\,\tilde{N}(dz,ds).\notag 
\end{align}
As $u^n(t,\cdot)\in\mathcal{S}(\rd)$, using a property of 
convolution, for $r=0,1,2,3,\ldots$, we have
\begin{align}
\Delta^r u^n(t,x)&= \int_{\rd_y} 
G(t,x-y)\Delta^r u_0(y)\,dy - \int_{s=0}^{t} \int_{\R^d_y} 
G(t-s,x-y)\Delta^r\Big(\sum_{i=1}^{d} 
\partial_{y_i}F_i(u^{n-1}(s,y))\Big)\,dy\,ds \notag \\
& + \int_{s=0}^t  \int_{|z|>0} \int_{\R^d_y}G(t-s,x-y)
\Delta^r\eta(y,u^{n-1}(s,y);z)\,dy\,\tilde{N}(dz,ds).\notag 
\end{align}
Therefore,  $\Delta^r u^n(t,x)$ solves 
the stochastic differential equation
\begin{align}
d\Delta^ru^n(t,x) + \grad \cdot \Delta^rF(u^{n-1}(t,x))dt &
=\eps\Delta(\Delta^r u^n)(t,x)dt 
+ \int_{|z|>0} \Delta^r\eta(x,u^{n-1}(t,x);z)\,\tilde{N}(dz,dt).\notag
\end{align}

Now we apply the It\^{o}-L\'{e}vy formula to the 
function $\phi(u)=u^2$, and integrate with 
respect to $x$, returning 
\begin{align}
&\int_{\rd_x} |\Delta^ru^n(t,x)|^2\,dx \notag \\
 & \qquad =\int_{\rd_x} |\Delta^ru_0(x)|^2\,dx 
 + 2\int_{\rd_x}\int_{s=0}^t \grad(\Delta^ru^n(s,x))\cdot
 \Delta^rF(u^{n-1}(s,x))\,ds\,dx\notag\\
 &\hspace{4.5cm}-2\eps\int_{\rd_x}\int_{s=0}^t \grad(\Delta^ru^n(s,x)) \cdot
 \grad(\Delta^ru^n(s,x))\,ds\,dx \notag \\
 &\qquad \qquad +\int_{s=0}^t  \int_{|z|>0} \int_{\rd_x}\left[\Big(\Delta^ru^n(s,x) 
 + \Delta^r\eta(x,u^{n-1}(s,x);z)\Big)^2 -(\Delta^ru^n(s,x))^2\right] 
 \,dx\,\tilde{N}(dz,ds)\notag \\
 &\qquad \qquad + \int_{s=0}^t  
 \int_{|z|>0} \int_{\rd_x} \Big[ \Big(\Delta^ru^n(s,x) 
 + \Delta^r\eta(x,u^{n-1}(s,x);z)\Big)^2 -(\Delta^ru^n(s,x))^2 \notag \\
 &\hspace{3cm}
 -2 \Delta^r\eta(x,u^{n-1}(s,x);z)\Delta^ru^n(s,x)\Big]\,dx\,m(dz)\,ds.\notag
\end{align}
Taking expectation and using Cauchy's inequality, we obtain
\begin{align*}
E\Big[e_{2r}(u^n(t))\Big] & \leq E\Big[ e_{2r}(u^n(0)) 
+ C_\eps \int_{s=0}^{t} ||\Delta^r F(u^{n-1}(s))||_2^2\,ds\Big] \\
& + E\Big[\int_{s=0}^t\int_{|z|>0}\int_{\rd_x} 
|\Delta_x^r\eta(x,u^{n-1}(s,x); z)|^2\,dx\, m(dz)\,ds\Big]. 
\end{align*}
Since $ F$ and $ \eta $ are smooth and $ |F_k^r(s)|\leq C_r$, 
for $r=0,1,2,\ldots$, and $|D_x^r \eta(x,u;z)| \le K_r(x) \min(|z|, 1)$ for some 
$K_r(x)\in L^2(\R^d)\cap L^{\infty}(\R^d)$, there exists a 
finite constant $\tilde{C}_{\eps,r,T} >0$, 
independent of $n$, such that
\begin{align*}
E\Big[e_{2r}(u^n(t))\Big] & \leq E\Big[ e_{2r}(u^n(0))\Big] 
+ \tilde{C}_{\eps,r,T}\Big(1+\int_{s=0}^t 
\sum_{k=1}^{r}E \Big[e_{2k}(u^{n-1}(s))\Big]\,ds \Big).
\end{align*}

Denote
$$
M_n(t)=\left( 1+\sum_{k=0}^{r} E\Big[e_{2k}(u^n(t))\Big]\right), 
\quad 
M(0)=\left(1+\sum_{k=0}^{r} E\Big[e_{2k}(u(0))\Big]\right). 
$$
Then, we have $M_n(t)\leq C M(0) + C \int_{s=0}^{t} M_{n-1}(s)\,ds$, for 
some constant $ C>0$, which is independent of $n$. 
By induction on $n$, we 
conclude that  there is a constant $K > 0$ such that $M_n(t) 
\leq C M(0) e^{KT}$ for every $t\in [0,T]$. 
Therefore, \eqref{eq:energy-iterates} follows.
\end{proof}

We now show that $u^n$ converges, in an appropriate sense, to 
a limiting process. This is done by a classical fixed point argument.

\begin{lem} \label{lem:convergence_proof} 
There exists a $L^2(\R^d)$-valued,
$\mathcal{F}_t$-predictable (and c\'{a}gl\'{a}d) process $u$ satisfying 

\begin{align}\label{eq:convergence_statement} 
\lim_{n\rightarrow \infty}
E \Big[ \sup_{0\le t\le T} ||u(t)-u^n(t)||_2\Big] = 0, 
\end{align}
and for $\ell=0,1,2,\ldots$,
\begin{align}\label{eq:convergence_statement_I}   
E\Big[ e_{2\ell}\big(u(t)\big)\Big] 
\le C_{\eps, \ell, T}\Big(1+\sum_0^\ell E\Big[e_{2k}(u_0)\Big]\Big), 
\qquad t\le T.
\end{align} 
In addition,
\begin{align}\label{eq:convergence_statement_II}  
\sup_{0\le t\le T} E \Big[ ||u(t)||_p^p\Big] < \infty.
\end{align} 
Furthermore, $u$ is a mild solution 
of \eqref{eq:levy_stochconservation_laws-viscous} 
in the following sense: 
\begin{align}
   \notag u(t,x) & = \int_{\rd_y} G(t,x-y)u_0(y)\, dy 
   - \int_{s=0}^t \int_{\rd_y} G(t-s,x-y)
   \sum_{i=1}^d \partial_{y_i}F_i(u(s,y)) \,dy\,ds\\
    \label{eq:iterate_representation-mild} 
    & \qquad 
    + \int_{s=0}^t \int_{|z| > 0}\int_{\rd_y} G(t-s, x-y) \eta(y, u(s,y);z) 
    \,dy\,\tilde{N}(dz,ds),
\end{align} 
almost surely, for every $t$.
\end{lem}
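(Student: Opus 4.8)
The plan is to realize $u$ as the limit of the Picard iterates $u^n$ by proving that $(u^n)$ is a Cauchy sequence in $L^1\big(\Omega; L^\infty([0,T]; L^2(\rd))\big)$, i.e. in the norm appearing in \eqref{eq:convergence_statement}. Writing $w^n := u^n - u^{n-1}$ and subtracting two instances of the mild representation \eqref{eq:iterate_representation}, the difference $w^n(t,x)$ is the sum of a deterministic flux-convolution term built from $F(u^{n-1}) - F(u^{n-2})$ and a stochastic-convolution term built from $\eta(\cdot,u^{n-1};z) - \eta(\cdot,u^{n-2};z)$, the initial-data contribution cancelling. I will estimate each piece in terms of $w^{n-1}$ and then work with $\psi_n(t) := E\big[\sup_{0\le s\le t}\|w^n(s)\|_2^2\big]$, treating $w^1 = u^1 - u^0$ as a bounded base case.

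For the flux term I would first move the spatial derivative off $F$ and onto the heat kernel via the convolution identity of Lemma \ref{lem:heat kernel-convolution}, so that the integrand involves $\partial_{y_i}G(t-s,\cdot)$, whose $L^1(\rd)$-norm is of order $(\eps(t-s))^{-1/2}$. Young's inequality for convolutions together with the global Lipschitz bound on $F$ (a consequence of \ref{B1}) then gives a pointwise bound of the form $C\int_0^t (\eps(t-s))^{-1/2}\|w^{n-1}(s)\|_2\,ds$; squaring, applying Cauchy--Schwarz in $s$, taking the supremum in $t$ and the expectation yields a bound $C\int_0^t (t-s)^{-1/2}\psi_{n-1}(s)\,ds$. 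For the stochastic term I would use the Burkholder--Davis--Gundy inequality for the stochastic convolution, the $L^2$-contractivity of $G$, the Lipschitz estimate $|\eta(x,u;z)-\eta(x,v;z)| \le \lambda^*|u-v|(|z|\wedge 1)$ from \ref{A2}, and $\int(|z|^2\wedge 1)\,m(dz) < \infty$ from \ref{A3}, obtaining $E\big[\sup_{s\le t}\|(\text{noise part})(s)\|_2^2\big] \le C\int_0^t \psi_{n-1}(s)\,ds$. Combining the two pieces produces the recursion $\psi_n(t) \le C\int_0^t \big(1 + (t-s)^{-1/2}\big)\psi_{n-1}(s)\,ds$, with $\psi_0$ bounded on $[0,T]$ by the estimates already at hand.

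The hard part will be exploiting this recursion despite the weakly singular kernel $(t-s)^{-1/2}$, for which an ordinary Gronwall argument is unavailable. I would iterate the inequality and use that the $n$-fold convolution of the integrable kernel $1 + (t-s)^{-1/2}$ decays factorially (its mass over $[0,T]$ is controlled by $(C\sqrt{T})^n/\Gamma(n/2+1)$-type Mittag--Leffler bounds), so that $\psi_n(T) \le A\,a^n/\Gamma(n/2+1)$ for suitable constants $A,a$. This makes $\sum_n \psi_n(T)^{1/2}$ convergent, hence $\sum_n E\big[\sup_{0\le t\le T}\|w^n(t)\|_2\big] < \infty$ and $(u^n)$ is Cauchy, producing a limit $u$ satisfying \eqref{eq:convergence_statement}. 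Predictability and the c\`agl\`ad property are inherited from the $u^n$ (each c\`agl\`ad and predictable by Lemma \ref{welldefinedness-iteration}) because convergence holds in the uniform-in-time norm.

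It then remains to upgrade $u$ to the asserted regularity. For \eqref{eq:convergence_statement_I} I would pass to the limit in the uniform energy bounds of Lemma \ref{lem:energyestimate}: the uniform-in-$n$ control of $E[e_{2\ell}(u^n(t))]$ lets me extract weak $L^2$ limits of $\Delta^\ell u^n(t)$, identified with $\Delta^\ell u(t)$ through the $L^2$-convergence $u^n\to u$, and weak lower semicontinuity of the $L^2$-norm (Fatou) gives the stated inequality. For the $L^p$-bound \eqref{eq:convergence_statement_II} I would apply the It\^o--L\'evy formula to $\int_{\rd}|u^n(t,x)|^p\,dx$ using the strong form of \eqref{eq:picard} guaranteed by Lemmas \ref{welldefinedness-iteration} and \ref{lem:weak-mild}: the viscosity term produces a nonpositive contribution, the flux term is absorbed into it after an integration by parts and Young's inequality (using $|F(u)|\le C(1+|u|)$), and the compensated jump term is dominated via a second-order Taylor expansion by $\int_{|z|>0}\int_{\rd}(|u^n|+|\eta|)^{p-2}|\eta|^2\,dx\,m(dz)$, which the growth bound \ref{A4} (with $g\in L^2\cap L^\infty$) and \ref{A3} render $\le C\big(1+E\|u^n\|_p^p+E\|u^{n-1}\|_p^p\big)$. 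A Gronwall/induction argument then bounds $E\|u^n(t)\|_p^p$ uniformly in $n$, and Fatou yields \eqref{eq:convergence_statement_II}. Finally, I would obtain the mild identity \eqref{eq:iterate_representation-mild} by letting $n\to\infty$ in \eqref{eq:iterate_representation}: the deterministic convolutions converge by the $L^2$-convergence and the Lipschitz continuity of $F$, while the stochastic convolution converges in $L^2(\Omega)$ by the It\^o--L\'evy isometry and the Lipschitz bound on $\eta$.
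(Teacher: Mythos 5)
Your proposal is correct, and its analytic core coincides with the paper's: both arguments move the spatial derivative off $F$ and onto the heat kernel, exploit $\|\partial_{x_i}G(t,\cdot)\|_{1}\sim (\eps t)^{-1/2}$ together with Young's inequality for the flux convolution, and invoke a maximal inequality for the stochastic convolution with respect to $\tilde N$ combined with \ref{A2}--\ref{A3}. Where you diverge is in the fixed-point machinery. The paper works with the unsquared metric $|||X-Y|||_2^{T}=E[\sup_{t\le T}\|X(t)-Y(t)\|_2]$, obtains a contraction factor $C\sqrt{T_0}<1$ on a short interval $[0,T_0]$ with $T_0$ independent of the data, applies the Banach fixed-point theorem there, and pastes intervals to cover $[0,T]$. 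You instead work with the squared quantity $\psi_n(t)=E[\sup_{s\le t}\|w^n(s)\|_2^2]$, arrive at the recursion $\psi_n(t)\le C\int_0^t(1+(t-s)^{-1/2})\psi_{n-1}(s)\,ds$, and close it by the iterated-kernel (Mittag--Leffler/singular Gronwall) estimate giving factorial decay, so that $\sum_n\psi_n(T)^{1/2}<\infty$ and the telescoping series converges on all of $[0,T]$ at once. This trades the pasting step for the beta-function computation controlling the $n$-fold convolution of $1+t^{-1/2}$; both are standard and both deliver \eqref{eq:convergence_statement}. Two smaller differences: for \eqref{eq:convergence_statement_II} the paper simply cites Sobolev embedding together with \eqref{eq:convergence_statement_I}, whereas you rerun the It\^{o}--L\'{e}vy computation on $\|u^n\|_p^p$ (which is essentially the argument the paper defers to Lemma \ref{lem:uniform estimates}); and for \eqref{eq:convergence_statement_I} your route via weak $L^2$ limits of $\Delta^\ell u^n(t)$ and lower semicontinuity is in fact a more careful rendering of the paper's one-line appeal to Fatou. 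No gaps.
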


\begin{proof} 
Let $\mathbb{L}([0,T]: L^p(\R^d))$ be the space of c\'{a}gl\'{a}d and 
adapted $L^p(\R^d)$-valued processes on $[0,T]$.  The distance 
function between two processes $X$ and $Y$ is defined as
\begin{align}
|||X-Y|||^T_p= E\big[ \sup_{0\le t \le T} ||X(t)-Y(t)||_{L^p}\big]
\label{eq:metric}
\end{align}
It is well-known (see \cite{peszat,Protter1990}) that  the 
space $\mathbb{L}([0,T]: L^p(\R^d))$ equipped 
with the metric \eqref{eq:metric}  is complete.  
By Lemma \ref{welldefinedness-iteration}, it is easily seen 
that $u^n(t,\cdot) \in \mathbb{L}([0,T]: L^2(\R^d))$ and we want to 
show that $ \{u^n(t,\cdot)\}_n$ converges in this space. 
At first, by direct integration,
\begin{align*}
	\big\|\partial_{x_i}G(t,\cdot)\big\|_1 
	= \int_{\rd_x} |\partial_{x_i} G(t,x)| dx = C t^{\frac{-1}{2}},
	\qquad\text{for}~ t > 0.
\end{align*} 
We denote 
\begin{align*}
	&\mathcal{I}_1(u^n)(t,x) =  \int_{s=0}^t \int_{\rd_y} G(t-s,x-y)
	\sum_{i=1}^d \partial_{y_i}F_i(u^n(s,y)) \,dy\,ds\\
	&\mathcal{I}_2(u^n)(t,x) =    \int_{s=0}^t \int_{|z| > 0}\int_{\rd_y} 
	G(t-s, x-y) \eta(y, u^n(s,y);z) \,dy\,\tilde{N}(dz, ds).
\end{align*}                 
  
We define a deterministic measure on $[0,t]$ by
$$
\gamma(ds)=\gamma_t(ds)
=\big\|\partial_{x_i} G(t-s,.)\big\|_1\,ds 
= 2C d(t^{\frac{1} {2}}-(t-s)^{\frac{1} {2}}).
$$
Then
\begin{align*}
&E \Big[ \sup_{0\le s\le t}
\big\|\mathcal{I}_1(u^n)(s,.)-\mathcal{I}_1(u^k)(s,.)\big\|_2\Big]\notag 
\\ & \quad
=E \Big[\sup_{0\le s\le t} \Big( \int_{\rd_x}\Big|\int_{r=0}^{s}\int_{\rd_y}\sum_{i=1}^{d} 
\big[G(s-r,x-y)F'_i(u^n(r,y))\partial_{y_i}u^n(r,y) \\
 &\hspace{4cm}- G(s-r,x-y)F'_i(u^k(r,y))
 \partial_{y_i}u^k(r,y)\big]\,dr\,dy\Big|^2\,dx\Big)^{\frac 12} \Big]\notag\\
& \quad 
\leq C_1\sum_{i=1}^{d} E\left[\sup_{0\le s\le t} \Big(\int_{\R_x^d}\left |\int_{r=0}^{s} 
\big(|\partial_{x_i}G(s-r)|\con
|(u^n(r)-u^k(r)|\big)(x)\,dr\right |^2\, dx \Big)^{\frac 12} \right]\notag\\
& \quad 
\leq C_2 \sum_{i=1}^{d} E\left[ \sup_{0\le s\le t}   \Big(\int_{r=0}^{s} 
\big\|\big(|\partial_{x_i}G(s-r)|\con
|(u^n(r)-u^k(r))|\big)\big\|_2\,dr \Big)\right]\notag\\
& \quad \leq C_3 E\left[  \int_{r=0}^{t} 
\big\|u^n(r)-u^k(r)\big\|_2 \,\gamma(dr)  \right] \notag \\
& \quad \leq C_4  E\left[ \sup_{0\leq s\leq t} 
 \big\|u^n(s)-u^k(s)\big\|_2 \right]\sqrt{t}. \notag 
\end{align*}
The first inequality follows from integration by parts and 
$|F_k^\ell(r)| \leq C_\ell$, for $\ell=0,1,2,\ldots$, the second one 
follows from Minkowski inequality, while the third inequality 
follows from Young's inequality for convolutions. 
Therefore we obtain, 
\begin{align}\label{eq:convergence_I}
E\Big[ \sup_{0\le s\le t} \big\|\mathcal{I}_1(u^n)(s,.)-\mathcal{I}_1(u^k)(s,.)\big\|_2\Big]
\leq Ct^{\frac{1} {2}} E\Big[ \sup_{0\leq s\leq t} 
 \big\|u^n(s)-u^k(s)\big\|_2 \Big]. 
\end{align}  
We want a similar estimate for $\mathcal{I}_2(u^n)$. 
This requires maximal inequalities for stochastic convolutions with 
respect to a compensated Poisson measure, and relevant results 
are available in \cite{Hausenblas2008,marinelli2010}:
\begin{align}
& E \Big[ \sup_{0\le s\le t} \big\|\mathcal{I}_2(u^n)(s,\cdot)
-\mathcal{I}_2(u^k)(s,\cdot)\big\|_2\Big]  \notag\\
& \big(\text{by maximal inequality for stochastic 
convolution (see \cite[Example 3.1, Prop 1.3]{Hausenblas2008}}\big) \notag \\
& \quad \le C E \Big[\Big( \int_{s=0}^t 
\int_{|z|> 0} \int_{\rd_y} |\eta (y, u^n(s,y); z)-\eta(y,u^k(s,y);z)\big)|^2\,dy 
\, m(dz)\,ds\Big)^{\frac 12} \Big] \notag \\
& \quad \le C  E \Big[\Big( \int_{s=0}^t 
\int_{|z|> 0} \int_{\rd_y} |u^n(s,y)-u^k(s,y)|^2\,dy \min(1, |z|^2) 
\, m(dz)\,ds \Big)^{\frac 12}\Big] \notag \\
& \quad \le C \sqrt{t}\, E\big[\sup_{0\le s \le t}
||u^n(s)-u^k(s)||_2\big].\label{eq:convergence_III}
\end{align} 
Combine estimates \eqref{eq:convergence_I} 
and \eqref{eq:convergence_III}, and use \eqref{eq:iterate_representation} 
to conclude that there exist numbers $\alpha \in (0,1)$ and $T_0 > 0$,
which are independent of the initial condition $u_0$, such that 
\begin{align*}
    |||u^n -u^k|||_2^{T_0}  \le  \alpha ||| u^{n-1}-u^{k-1}|||_2^{T_0}.
\end{align*} 
Hence, by the Banach fixed point argument, we 
have short time existence in $\mathbb{L}([0,T_0]: L^2(\R^d))$, and 
pasting the short time existence one 
can argue for the existence in $\mathbb{L}([0, T]: L^2(\R^d))$. 
In other words, we have shown the existence of 
a c\'{a}gl\'{a}d and adapted process $u$ such 
that \eqref{eq:convergence_statement} holds.  
To conclude \eqref{eq:convergence_statement_I}, we simply apply 
Fatou's lemma and let $n\rightarrow \infty$ in \eqref{eq:energy-iterates}. 
In addition, \eqref{eq:convergence_statement_II} holds as a simple 
consequence of Sobolev embedding and \eqref{eq:convergence_statement_I}. 
The mild solution property \eqref{eq:iterate_representation-mild} is 
automatic once we note that $u$ is a fixed point of the 
right-hand side of \eqref{eq:iterate_representation}.

\begin{rem} 
While the type of convergence in  \eqref{eq:convergence_statement} is 
enough for our existence result, we also point out that in 
view of \eqref{eq:convergence_statement_II}, it is 
easily seen that $ \underset{n\rightarrow \infty} {\lim}
\sup_{0\le t\le T}E \Big[ ||u(t)-u^n(t)||_p\Big] = 0$ for $p =2,3,\ldots$.
\end{rem}

In view of Lemma \ref{lem:convergence_proof}, we pass to 
the limit $n\rightarrow \infty$ in 
Lemma \ref{lem:weak-mild}. The result is 
\begin{lem}\label{lem:weak-sol}
For each $\varphi\in C_c^\infty (\R^d)$,
\begin{align*}
   &\langle u(t), \varphi\rangle - \langle u(0), \varphi\rangle \\
   & \quad 
   = \int_{s=0}^t  \langle F(u(s)), \nabla \varphi\rangle ds 
   + \int_{r=0}^t \int_{|z|> 0} \int_{\rd_x} 
   \eta(x, u(r,x); z)\varphi(x)\,dx\,\tilde{N}(dz, dr)
   + \eps \int_{r=0}^t \langle\Delta\varphi, u(r)\rangle dr,
\end{align*}
almost surely, for almost every $t$.
\end{lem}
\end{proof} 

\begin{lem}\label{lem:classical-solution} 
Suppose that $E\big[ e_{2\ell}(u_0)\big] < \infty$ 
for $2\ell \ge [\frac d2]+ 3$, and let $u = u(t)$ be the 
limit process given by Lemma \ref{lem:convergence_proof}. 
Then $u=u(t)\in L_{\text{loc}}^\infty\big([0,\infty);L^{2}(\R^d) \big)$ and 
it is an $\mathcal{F}_{t}$-predictable (and c\'{a}dl\'{a}g) process that satisfies
\begin{itemize}
	 \item[(1)] $e_{2\ell}\big(u(t)\big) < \infty$, for all $t> 0$.
	 \item[(2)] $\partial_{ij} u = \partial_{x_i\,x_j}u(t,\cdot) \in\, C(\R^d)$ 
	 for all $i, j = 1,\ldots,d.$
\end{itemize}
In other words, the SPDE \eqref{eq:levy_stochconservation_laws-viscous} 
holds in the classical sense, i.e., \eqref{eq:levy_stochconservation_laws-viscous} 
is satisfied as an one dimensional L\'{e}vy driven SDE for every fixed $x$. 
\end{lem}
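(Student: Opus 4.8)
The plan is to upgrade the mild/weak solution $u$ furnished by Lemma \ref{lem:convergence_proof} to a classical one by first extracting enough spatial regularity from the energy estimate and then removing the test function from the weak formulation of Lemma \ref{lem:weak-sol}.

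First I would read off the spatial regularity. By \eqref{eq:convergence_statement_I} with the given $\ell$,
\[
E\big[e_{2\ell}(u(t))\big] = \tfrac12 E\big[\|\Delta^\ell u(t)\|_2^2\big] \le C_{\eps,\ell,T}\Big(1+\sum_{k=0}^{\ell} E[e_{2k}(u_0)]\Big) < \infty ,
\]
where the right-hand side is finite because the intermediate energies $E[e_{2k}(u_0)]$, $k<\ell$, are controlled by $E[e_0(u_0)]$ and $E[e_{2\ell}(u_0)]$ through interpolation. Thus $e_{2\ell}(u(t))<\infty$ almost surely for every $t$, which is assertion (1), and, combined with $u(t,\cdot)\in L^2(\R^d)$, this gives $u(t,\cdot)\in H^{2\ell}(\R^d)$ almost surely. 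Since $2\ell \ge [\frac d2]+3 > \frac d2 + 2$, the Sobolev embedding $H^{2\ell}(\R^d)\hookrightarrow C^2(\R^d)$ applies pathwise, so $u(t,\cdot)\in C^2(\R^d)$ almost surely and each $\partial_{ij}u(t,\cdot)$ is continuous, which is assertion (2). Finally $u\in L^\infty_{\text{loc}}\big([0,\infty);L^2(\R^d)\big)$ follows from \eqref{eq:convergence_statement}, as $E[\sup_{0\le t\le T}\|u(t)\|_2]<\infty$ forces $\sup_{0\le t\le T}\|u(t)\|_2<\infty$ almost surely for every $T$; predictability and the c\'{a}dl\'{a}g property are inherited from the construction in Lemma \ref{lem:convergence_proof} and the c\'{a}dl\'{a}g nature of the stochastic convolution (Lemma \ref{lem:diff-integration}).

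Next I would convert the weak identity of Lemma \ref{lem:weak-sol} into a pointwise one. Fixing $\varphi\in C_c^\infty(\R^d)$ and using that $u(t,\cdot)\in C^2(\R^d)$ decays at infinity (being in $H^{2\ell}$), I integrate by parts in the flux and viscosity terms,
\[
\langle F(u(s)),\nabla\varphi\rangle = -\langle \Div F(u(s)),\varphi\rangle, \qquad \langle \Delta\varphi, u(r)\rangle = \langle \varphi, \Delta u(r)\rangle,
\]
the first using $u(s,\cdot)\in C^1$ together with $|F_k'|\le K_1$. I then apply a stochastic Fubini theorem to interchange the pairing against $\varphi$ with the compensated Poisson integral,
\[
\int_{0}^t\!\int_{|z|>0}\langle \eta(\cdot,u(r,\cdot);z),\varphi\rangle\,\tilde{N}(dz,dr) = \Big\langle \int_{0}^t\!\int_{|z|>0}\eta(\cdot,u(r,\cdot);z)\,\tilde{N}(dz,dr),\,\varphi\Big\rangle,
\]
which is licensed by \ref{B2}--\ref{B3} and the It\^{o}--L\'{e}vy isometry. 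Collecting terms, Lemma \ref{lem:weak-sol} becomes
\[
\Big\langle u(t,\cdot)-u_0+\!\int_0^t\!\Div F(u(s,\cdot))\,ds-\eps\!\int_0^t\!\Delta u(r,\cdot)\,dr-\!\int_0^t\!\int_{|z|>0}\!\eta(\cdot,u(r,\cdot);z)\,\tilde{N}(dz,dr),\,\varphi\Big\rangle=0
\]
for every $\varphi\in C_c^\infty(\R^d)$, almost surely.

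Finally, since $\varphi$ is arbitrary, the bracketed function of $x$ vanishes for almost every $x$. All the deterministic terms there are continuous in $x$ (using $u(t,\cdot)\in C^2$ and the continuity of $F',F''$ from \ref{B1}); hence the stochastic integral term agrees almost everywhere with a continuous function of $x$, and the identity in fact holds for every $x\in\R^d$. Read for each fixed $x$, this is exactly the one-dimensional L\'{e}vy-driven SDE claimed by the lemma. The main obstacle I anticipate is the rigorous stochastic Fubini step, together with the accompanying claim that $x\mapsto \int_0^t\int_{|z|>0}\eta(x,u(r,x);z)\,\tilde{N}(dz,dr)$ is almost surely continuous in $x$; I would avoid a direct Kolmogorov-type continuity estimate for this parametrized integral and instead use the displayed identity to conclude that it must coincide almost everywhere with the continuous deterministic combination, so that no independent regularity argument for the noise term is required.
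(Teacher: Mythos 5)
Your proof is correct and follows essentially the same route as the paper: assertion (1) is read off from the energy bound \eqref{eq:convergence_statement_I} of Lemma \ref{lem:convergence_proof}, and assertion (2) follows by the Sobolev embedding $H^{2\ell}(\R^d)\hookrightarrow C^2(\R^d)$, which is exactly the paper's two-line argument. The additional material you supply --- integrating by parts in Lemma \ref{lem:weak-sol}, the stochastic Fubini interchange, and the ``continuous version'' argument identifying the stochastic integral with a continuous function of $x$ --- fills in the weak-to-classical upgrade that the paper treats as immediate, and it is sound.
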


\begin{proof}
The proof of (1) is immediate from 
Lemma \ref{lem:convergence_proof}. 
The proof of (2) is also immediate if we apply 
the Sobolev embedding \cite{evansweak} along with (1). 
\end{proof} 

\subsection{A priori estimates for $\{u_\eps(t,x)\}_{\eps>0}$} 
We need to approximate the functions $u_0(x), \eta$ 
and $F$ from \ref{A1}-\ref{A4} by appropriate functions 
satisfying the assumptions \ref{B1}-\ref{B4}
Let $J\in C_c^\infty(\R)$ be a one dimensional 
mollifier and $\varphi\in C_c^\infty(\R)$ 
be a cut-off function such that 
\begin{align*}
  \varphi(r)=\begin{cases}
               0\quad\text{for}\quad |r| \ge 2\\
               1 \quad \text{for}\quad |r| \le 1.
             \end{cases}
\end{align*} 
For $\eps > 0$, define the approximations 
$F_\eps$, $\eta_\eps(x,u; z)$, $u^{\eps}_0(x)$ as follows:
\begin{align*}
F_\eps(r) &= \varphi(\eps |r|^2) F(r)*J_\eps(r),\\
 \eta_\eps(x,u;z)& = \int_{\rd_y}
 \int_{\R_v} \Big(\prod_{k=1}^{d} 
 J_\eps\big(x_k-y_k\big) J_\eps(u-v)\Big)
\varphi(\eps(|y|^2+|v|^2))\eta(y,v; z) \,dv\,dy,\\
u_0^\eps(x) &= \int_{\rd_y} 
\Big(\prod_{k=1}^{d} J_\eps\big(x_k-y_k\big) u_0(y) 
\varphi(\eps |y|^2)\Big) \,dy.
\end{align*}   
It follows from direct computations that 
\begin{align}
 |F_\eps(r)-F(r)| &\le C\eps(1+|r|^{2 p_0})
 \quad \text{for some}\quad p_0 \in \mathbb{N}, \notag\\
 |\eta_\eps(x,u;z) -\eta(x,u;z)| &\le C\eps(1+|x|+|u|) (1\wedge |z|)
 \label{eq:regu_error-eta}.
\end{align} 
Obviously, $u^\eps_{0}(\cdot)\in C_c^\infty(\R^d)$ 
and for $p \ge 2$
\begin{align}\notag
\sup_{\eps > 0} E\Big[ ||u^\eps_0(\cdot)||_p^p\Big] <  + \infty.
\end{align} 
Clearly, the functions $F_\eps$ and $\eta_\eps$ 
depend on $\eps$ and satisfy the regularity 
assumptions \ref{B1}-\ref{B3} 
Furthermore, we also have following facts: 
\begin{Assumptions}
    \item[(C.1)] $F_\eps$ satisfies same conditions as $F$.
    \item[(C.2)] $\sup_{\eps > 0} |\eta_\eps(x,u;z)| 
    \le g(x)(1+|u|)(1\wedge |z|)$ 
    where $g\in L^\infty(\R^d)\cap L^2(\R^d)$.
    \item[(C.3)] $ \sup_{\eps > 0} E\Big[||u^\eps_0||_p^p 
    + ||u^\eps_0||_2^p \Big] < \infty$, for $p =1,2,\ldots$.
\end{Assumptions}

We now focus on the equation
\begin{align}
du_\eps(t,x) + \mbox{div}_x F_\eps(u_\eps(t,x)) \,dt 
=  \int_{|z|> 0} \eta_\eps(x,u_\eps(t,x);z)\, \tilde{N}(dz,\,dt)
+\eps \Delta_{xx} u_\eps(t,x) \,dt, ~ t>0, ~ x\in \R^d,
\label{eq:levy_stochconservation_laws-viscous-new}
\end{align} 
with initial condition $u_\eps(0,x)= u_0^\eps(x)$. 
Clearly, by Lemma \ref{lem:classical-solution}, this problem 
possesses a unique strong solution $u_{\eps}(t)$.

\begin{lem}\label{lem:uniform estimates}
For even positive integers, $p=2, 4, 6,\ldots$,
$$
\sup_{\eps > 0} \sup_{0\le t\le T} 
E\Big[ ||u_\eps (t,\cdot)||_p^p\Big] < \infty.
$$
\end{lem}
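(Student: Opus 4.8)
The plan is to apply the It\^o--L\'evy formula to the convex function $\phi(u)=|u|^p=u^p$ (recall $p$ is even), integrate in $x$ over $\rd$, take expectations, and close the resulting estimate by Gronwall's lemma. Since Lemma~\ref{lem:classical-solution} guarantees that $u_\eps(t,\cdot)\in\mathcal{S}(\rd)$ solves \eqref{eq:levy_stochconservation_laws-viscous-new} classically (as a one-dimensional L\'evy SDE for each fixed $x$), applying the chain rule pointwise in $x$ yields
\begin{align*}
d\phi(u_\eps(t,x)) &= \phi'(u_\eps)\big(-\mbox{div}_x F_\eps(u_\eps)+\eps\Delta u_\eps\big)\dt \\
&\quad+\int_{|z|>0}\big(\phi(u_\eps+\eta_\eps)-\phi(u_\eps)\big)\tilde N(dz,dt)\\
&\quad+\int_{|z|>0}\big(\phi(u_\eps+\eta_\eps)-\phi(u_\eps)-\eta_\eps\phi'(u_\eps)\big)m(dz)\dt .
\end{align*}
Integrating over $x$ and taking expectation, three simplifications occur. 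First, the flux contribution vanishes: writing $\phi'(u_\eps)\,\mbox{div}_x F_\eps(u_\eps)=\sum_i\partial_{x_i}\Psi_i(u_\eps)$ with $\Psi_i'=\phi'F_{\eps,i}'$, the Schwartz decay of $u_\eps(t,\cdot)$ makes $\int_{\rd}\partial_{x_i}\Psi_i(u_\eps)\dx=0$. Second, the viscous term is non-positive, since $\eps\int_{\rd}\phi'(u_\eps)\Delta u_\eps\dx=-\eps\int_{\rd}\phi''(u_\eps)|\grad u_\eps|^2\dx\le0$ by convexity of $\phi$; it may therefore be discarded. Third, the compensated stochastic integral is a mean-zero martingale (its integrability following from the moment bounds of Lemmas~\ref{lem:convergence_proof}--\ref{lem:classical-solution}).

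What remains is to control the L\'evy compensator term. By a second-order Taylor expansion, $|\phi(u+\eta)-\phi(u)-\eta\phi'(u)|\le \tfrac12\sup_\xi|\phi''(\xi)|\,\eta^2\le C(|u|^{p-2}+|\eta|^{p-2})\eta^2$, which I would split into $C|u|^{p-2}\eta^2+C|\eta|^p$. Using assumption (C.2), namely $|\eta_\eps(x,u;z)|\le g(x)(1+|u|)(1\wedge|z|)$ with $g\in L^2(\rd)\cap L^\infty(\rd)$, together with $\int_{|z|>0}(1\wedge|z|^2)\,m(dz)<\infty$ from \ref{A3} (and $1\wedge|z|^p\le1\wedge|z|^2$ for $p\ge2$), the $x$-integral of these two pieces is bounded by
$$
C\int_{\rd}\big(|u_\eps|^{p-2}+|u_\eps|^{p-1}+|u_\eps|^p\big)g^2\dx+C\int_{\rd}(1+|u_\eps|^p)g^p\dx .
$$
The crucial point is that the subcritical powers of $|u_\eps|$ can be absorbed by Young's inequality, e.g. $g^2|u_\eps|^{p-1}\le |u_\eps|^p+C g^{2p}$ and $g^2|u_\eps|^{p-2}\le|u_\eps|^p+Cg^{p}$, where the weights $g^{2p},g^p,g^2$ all lie in $L^1(\rd)$ because $g\in L^2\cap L^\infty$ forces $g^q=g^2 g^{q-2}\le\|g\|_\infty^{q-2}g^2\in L^1$ for every $q\ge2$. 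Hence the whole compensator term is dominated by $C\big(1+\|u_\eps(s)\|_p^p\big)$ for a constant $C$ independent of $\eps$.

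Collecting the estimates gives
$$
E\big[\|u_\eps(t)\|_p^p\big]\le E\big[\|u_0^\eps\|_p^p\big]+C\int_0^t\Big(1+E\big[\|u_\eps(s)\|_p^p\big]\Big)\ds,
$$
and since (C.3) provides $\sup_{\eps>0}E[\|u_0^\eps\|_p^p]<\infty$, Gronwall's inequality yields the desired bound, uniform in $\eps$ and in $t\in[0,T]$. I expect the main obstacle to be the rigorous justification of the compensator estimate---specifically the Young-inequality absorption of the lower-order powers $|u_\eps|^{p-2},|u_\eps|^{p-1}$ using only the $L^2\cap L^\infty$ integrability of $g$---and, at a technical level, confirming that the compensated Poisson integral is a genuine (not merely local) martingale so that it drops out under expectation; if needed this is handled by a standard localization in terms of stopping times together with the a priori moment bounds already established.
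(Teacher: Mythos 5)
Your proposal is correct and follows essentially the same route as the paper: apply the It\^{o}--L\'{e}vy formula to $|u|^p$, integrate in $x$, discard the flux and viscous contributions and the martingale term, bound the compensator via the second-order Taylor remainder together with (C.2) and the integrability of $g$, and close with Gronwall using (C.3). The paper compresses the compensator estimate into the phrase ``use (C.1)--(C.3)''; your Young-inequality absorption of the lower-order powers $|u_\eps|^{p-2},|u_\eps|^{p-1}$ with weights $g^q\in L^1$ is exactly the detail being elided there.
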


\begin{proof}
We already know that $ \sup_{0\le t\le T} 
E\Big[ ||u_\eps (t,\cdot)||_p^p\Big] < \infty$ for every $p \ge 2$ and $T > 0$. 
Let $\beta(u)= \frac{1}{p}|u|^p$, and apply the It\^{o}-L\'{e}vy formula 
and integrate over the spatial variable $x$:
\begin{align*}
   &E \Big [||u_\eps(t)||_p^p\Big]- E \Big [||u_\eps(0)||_p^p\Big]\\
  & \quad \le 
   p(p-1) \int_{s=0}^t E\Big[\int_{\rd_x}\int_{|z| > 0}
   \int_{\lambda=0}^1 |u_\eps(s,x)+ \lambda \eta_\eps(x, u_\eps(s,x); z)|^{p-2}
   \eta_\eps^2(x,u_\eps(s,x); z)\,d\lambda\,m(dz) \,dx \Big] \,ds.
\end{align*} 
We now use (C.1)-(C.3) and apply the Gronwall's inequality to 
arrive the conclusion
\begin{align*}
\sup_{0\le t\le T} E\Big[ ||u_\eps (t,\cdot)||_p^p\Big] 
\le C_T \sup_{\eps > 0} E \Big[||u_\eps(0)||_p^p\Big],
\end{align*} 
thereby proving the claim.
\end{proof}

\begin{lem}\label{lem:gradient-estimate}
For each $p = 1, 2,\ldots$,
\begin{align*}
	\sup_{\eps > 0} E\Big[ \big|\eps \int_{s=0}^t 
	\int_{\R_x^d} |\nabla_x u_\eps(s,x)|^2 \,dx\,ds\big|^p\Big]< \infty,
\end{align*} 
for all $t >0$.
\end{lem}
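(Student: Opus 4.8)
The plan is to derive an energy identity for $\tfrac12\norm{u_\eps(t)}_2^2$, isolate the dissipation term $\eps\int_0^t\int_{\rd}|\nabla_x u_\eps|^2\,dx\,ds$, and then bound its $p$-th moment term by term using the uniform bounds of Lemma \ref{lem:uniform estimates} together with a Burkholder--Davis--Gundy inequality for the compensated Poisson integral. First I would apply the It\^o--L\'evy formula to $\beta(u)=\tfrac12|u|^2$ and integrate in $x$. The deterministic flux contribution $\int_{\rd}\mathrm{div}_x\zeta(u_\eps)\,dx$, with $\zeta_i(r)=\int_0^r sF_{\eps,i}'(s)\,ds$, vanishes because it is the integral of a perfect divergence and $u_\eps(t,\cdot)\in\mathcal{S}(\R^d)$ decays at infinity; likewise $\int_{\rd}\eps\Delta\beta(u_\eps)\,dx=0$. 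The diffusion term yields the dissipation $-\eps\int|\nabla_x u_\eps|^2\,dx$, and since $\tfrac12(u+\eta)^2-\tfrac12u^2-\eta u=\tfrac12\eta^2$, the It\^o--L\'evy compensator contributes $\tfrac12\int\int_{|z|>0}\eta_\eps^2\,m(dz)\,dx$. Rearranging and discarding the nonnegative final energy $\tfrac12\norm{u_\eps(t)}_2^2$, I obtain the pathwise bound
\begin{align*}
\eps\int_0^t\int_{\rd}|\nabla_x u_\eps|^2\,dx\,ds
&\le \tfrac12\norm{u_0^\eps}_2^2
+ \tfrac12\int_0^t\int_{\rd}\int_{|z|>0}\eta_\eps^2(x,u_\eps;z)\,m(dz)\,dx\,ds + M(t),
\end{align*}
where $M(t)=\int_0^t\int_{|z|>0}\int_{\rd}\big(u_\eps\eta_\eps+\tfrac12\eta_\eps^2\big)\,dx\,\tilde{N}(dz,ds)$ is a martingale.

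Next I would raise this inequality to the $p$-th power and take expectation, splitting the right-hand side into three contributions. The initial term is controlled by $E[\norm{u_0^\eps}_2^{2p}]$, which is uniformly bounded in $\eps$ by (C.3). For the compensator term, the growth bound (C.2) gives $\eta_\eps^2\le g(x)^2(1+|u_\eps|)^2(1\wedge|z|^2)$; integrating in $z$ uses $\int(1\wedge|z|^2)\,m(dz)<\infty$ from \ref{A3}, and the remaining spatial integral $\int_{\rd}g^2(1+|u_\eps|)^2\,dx$ is handled by $g\in L^2\cap L^\infty$ together with the uniform moment bounds of Lemma \ref{lem:uniform estimates}. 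A H\"older inequality in time then bounds its $p$-th moment uniformly in $\eps$.

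The main work is the martingale term, for which I would invoke a BDG/Kunita-type inequality for stochastic integrals against the compensated Poisson measure,
\begin{align*}
E\big[|M(t)|^p\big]\le C_p\,E\Big[\Big(\int_0^t\int_{|z|>0}|H(s,z)|^2\,m(dz)\,ds\Big)^{p/2}\Big]
+ C_p\,E\Big[\int_0^t\int_{|z|>0}|H(s,z)|^p\,m(dz)\,ds\Big],
\end{align*}
with $H(s,z)=\int_{\rd}\big(u_\eps\eta_\eps+\tfrac12\eta_\eps^2\big)\,dx$. Using (C.2) I would bound $|H(s,z)|\le (1\wedge|z|)A(s)+(1\wedge|z|^2)B(s)$, where $A(s)=\int_{\rd}|u_\eps|\,g\,(1+|u_\eps|)\,dx$ and $B(s)=\tfrac12\int_{\rd}g^2(1+|u_\eps|)^2\,dx$ are spatial integrals with finite moments of every order (again by $g\in L^2\cap L^\infty$ and Lemma \ref{lem:uniform estimates}). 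Since $1\wedge|z|^q\le 1\wedge|z|^2$ for every $q\ge 2$, all the $z$-integrals $\int(1\wedge|z|^q)\,m(dz)$ are finite by \ref{A3}, so both terms on the right reduce to time integrals of moments of $A$ and $B$, which are uniformly bounded in $\eps$. I expect this martingale estimate, together with the bookkeeping needed to reduce every spatial integral to the known even-integer moment bounds via H\"older's inequality, to be the main obstacle; the remaining steps are direct consequences of the energy identity and the structural assumptions.
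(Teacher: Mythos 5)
Your proposal is correct and follows essentially the same route as the paper: the $L^2$ energy identity from the It\^{o}--L\'{e}vy formula isolates the dissipation term, and its $p$-th moment is then controlled via the uniform moment bounds of Lemma \ref{lem:uniform estimates}, assumption (C.2)--(C.3), and a BDG-type estimate for the compensated Poisson integral. The only cosmetic difference is that you invoke a two-term Kunita/BDG inequality where the paper iterates the BDG inequality ``perhaps more than once''; both devices serve the same purpose.
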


\begin{proof}
Taking $p= 2$ in Lemma \ref{lem:uniform estimates} gives
\begin{align*}
& ||u_\eps(t)||^2_2- ||u_\eps(0)||_2^2
\\ & \quad 
=  \int_{s=0}^t \int_{\rd_x} \int_{|z| > 0} 
\eta_\eps^2(x,u_\eps(s,x); z)\,m(dz) \,dx\,ds 
-2\eps \int_{s=0}^t \int_{\R^d} |\nabla_x u_{\eps}(s,x)|^2 dx\, ds 
\\ & \quad\qquad
+ \int_{s=0}^t \int_{|z|> 0}\int_{\rd_x}\eta_\eps(x,u_\eps(s,x); z)(2u_\eps(s,x)
+ \eta_\eps(x, u_\eps(s,x); z))\,dx \,\tilde{N}(dz, ds).
\end{align*} 
We next apply the It\^{o}-L\'{e}vy formula 
to $||u_\eps(t)||_2^{2p}$ and use the moment estimates from 
Lemma \ref{lem:uniform estimates} with (C.3) to obtain 
\begin{align*}
E\Big[||u_\eps(t)||^{2p}_2\Big]
+E\Big[||u_\eps(0)||_2^{2p}\Big] < \infty.
\end{align*} 
We apply the moment estimates 
once again and conclude
\begin{align*}
& \sup_{\eps > 0} E\Big[\big|\int_{s=0}^t \int_{\rd_x} \int_{|z| > 0} 
\eta_\eps^2(x,u_\eps(s,x); z)\,m(dz)\,ds\,dx\big|^p\Big] < \infty.
\end{align*}
We can now simply use (C.2) along with uniform moment estimates in Lemma \ref{lem:uniform estimates}
and 
apply the BDG inequality, perhaps more 
than once, to conclude
\begin{align*}
&\sup_{\eps > 0} E\Big[\Big|\int_{s=0}^t \int_{|z|> 0} 
\int_{\rd_x}\eta_\eps(x, u_\eps(s,x); z)
\big(2u_\eps(s,x)+\eta_\eps(x,u_{\eps}(s,x);z)\big)\,dx\,
\tilde{N}(dz,ds) \Big |^p\Big] < \infty,
\end{align*} 
and hence the proof follows.
\end{proof}

There is a generalized version of the above lemma.
\begin{lem}
Let $ \beta \in C^2(\R)$ be a 
function with $ \beta,\beta',\beta''$ having at most
polynomial growth. Then
\begin{align}\label{eq:gendisest}
	\sup_{\eps >0} 
	E\left[\left|\eps\int_{t=0}^{T}\int_{\R_x^d} \beta''(u_\eps(t,x))
	|\grad_x u_\eps(t,x)|^2\,dx\,dt\right|^p\right] < \infty, 
	\qquad p=1,2,\ldots,\quad T>0.
\end{align}
\end{lem}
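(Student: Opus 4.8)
The plan is to mimic the proof of Lemma~\ref{lem:gradient-estimate}, which is exactly the case $\beta(u)=\tfrac12 u^2$, but now carrying the weight $\beta''(u_\eps)$ through every term and replacing the quadratic moment bounds by their polynomial analogues. Since only $\beta''$ enters the left-hand side of \eqref{eq:gendisest}, I first subtract the affine part and assume without loss of generality that $\beta(0)=\beta'(0)=0$ (this changes neither the left-hand side nor the polynomial-growth hypothesis). Integrating the bound $|\beta''(u)|\le C(1+|u|^{q-2})$ for a suitable integer $q\ge2$ then gives
\[
|\beta(u)|\le C(|u|^2+|u|^q),\qquad |\beta'(u)|\le C(|u|+|u|^{q-1}),\qquad |\beta''(u)|\le C(1+|u|^{q-2}).
\]
Because $u_\eps$ is a genuine classical solution of \eqref{eq:levy_stochconservation_laws-viscous-new} with $u_\eps(t,\cdot)\in\mathcal{S}(\R^d)$ (Lemma~\ref{lem:classical-solution}), I apply the It\^{o}--L\'{e}vy formula to $\beta(u_\eps(t,x))$ exactly as in the formal entropy derivation, then integrate in $x$ over $\rd$ and in $t$ over $[0,T]$. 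The Schwartz decay forces the two exact-divergence contributions $\int_{\rd}\mathrm{div}_x\zeta(u_\eps)\,dx$ and $\eps\int_{\rd}\Delta_{xx}\beta(u_\eps)\,dx$ to vanish, leaving, with the abbreviation $\eta_\eps=\eta_\eps(x,u_\eps(t,x);z)$,
\begin{align*}
\eps\int_0^T\!\!\int_{\rd}\beta''(u_\eps)|\grad_x u_\eps|^2\,dx\,dt
&=\int_{\rd}\beta(u_\eps(0,x))\,dx-\int_{\rd}\beta(u_\eps(T,x))\,dx + M_T\\
&\quad+\int_0^T\!\!\int_{\rd}\!\int_{|z|>0}\!\big(\beta(u_\eps+\eta_\eps)-\beta(u_\eps)-\eta_\eps\beta'(u_\eps)\big)\,m(dz)\,dx\,dt,
\end{align*}
where $M_T$ is the compensated Poisson integral of $\int_{\rd}\big(\beta(u_\eps+\eta_\eps)-\beta(u_\eps)\big)\,dx$ against $\tilde{N}$.

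It then suffices to bound the $p$-th moment of each of the three groups on the right uniformly in $\eps$. For the energy differences I use $|\beta(u)|\le C(|u|^2+|u|^q)$, so they are controlled by $\|u_\eps\|_2^2+\|u_\eps\|_q^q$. For the compensator I Taylor-expand, $|\beta(u+\eta)-\beta(u)-\eta\beta'(u)|\le C(1+|u|^{q-2}+|\eta|^{q-2})\eta^2$, insert the bound (C.2), $|\eta_\eps|\le g(x)(1+|u|)(1\wedge|z|)$ with $g\in L^2\cap L^\infty$, and use $\int(1\wedge|z|^2)\,m(dz)<\infty$ from \ref{A3} to integrate out $z$. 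For the martingale I apply the BDG inequality for the compensated Poisson stochastic integral and then the pointwise estimate $|\beta(u+\eta)-\beta(u)|\le C|\eta|(|u|+|\eta|+|u|^{q-1}+|\eta|^{q-1})$. In every case, after integrating in $x$ (using $g\in L^2\cap L^\infty$) and in $t$ (Jensen), the resulting bound is a polynomial in the spatial norms $\|u_\eps(t)\|_k^{k}$ with $2\le k\le q$.

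Consequently the whole estimate reduces to the uniform higher-order moment bounds
\[
\sup_{\eps>0}\ \sup_{0\le t\le T} E\big[\|u_\eps(t)\|_k^{k\,p}\big]<\infty,\qquad 2\le k\le q,\ p=1,2,\ldots,
\]
which are the polynomial counterparts of Lemma~\ref{lem:uniform estimates} (the case $k=2$ was already used, in the form $E[\|u_\eps\|_2^{2p}]<\infty$, inside the proof of Lemma~\ref{lem:gradient-estimate}). These follow by the same route: apply the It\^{o}--L\'{e}vy formula to $\big(\|u_\eps(t)\|_k^{k}\big)^p$, note that the compensator and quadratic-variation terms are, thanks to (C.2) and \ref{A3}, dominated by powers of $\|u_\eps\|_k^{k}$ (the dissipation term carrying the favourable sign), and close the loop with Gronwall's inequality, the base integrability being supplied by Lemma~\ref{lem:uniform estimates} and (C.3). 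Granting these, the three groups above all have finite, $\eps$-independent $p$-th moments, and \eqref{eq:gendisest} follows. I expect the main obstacle to be bookkeeping rather than conceptual: keeping all the polynomial-growth estimates uniform in $\eps$ and verifying that the higher moments of the $L^k$-norms genuinely propagate through the Gronwall step; a secondary point is the careful justification, from the Schwartz regularity of $u_\eps$ provided by Lemma~\ref{lem:classical-solution}, that the divergence and Laplacian contributions integrate to zero.
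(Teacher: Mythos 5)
Your proposal is correct and follows essentially the same route as the paper: apply the It\^{o}--L\'{e}vy formula to $\beta(u_\eps)$, isolate the dissipation term, and control the terminal/initial, martingale (via BDG), and compensator contributions using (C.2), \ref{A3}, and the uniform moment estimates. The only organizational difference is that the paper first proves the bound for the monomials $\beta(u)=C|u|^{2(\ell+1)}$ and then deduces the general case from $|\beta''(u)|\le C(1+u^{2\ell})$ together with Lemma \ref{lem:gradient-estimate}, whereas you normalize $\beta$ at the origin and treat general $\beta$ directly; your explicit flagging of the higher moments $E\big[(\|u_\eps(t)\|_k^{k})^p\big]$ is, if anything, more careful than the paper's.
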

\begin{proof} 
Let $(\beta,\zeta )$ be an entropy-entropy 
flux pair. Let $ \psi_N\in C_c^2(\rd) $ 
be such that
$$
\psi_N(x)=
\begin{cases}
1,&\text{if $ |x|\leq N$},\notag \\
0,&\text{if $ |x|>N+1$.}
\end{cases}
$$
By the It\^{o}-L\'{e}vy formula, we have
\begin{align}
 & \langle \beta(u_\eps(T,.)),\psi_N\rangle
 -\langle \beta(u_0^\eps),\psi_N\rangle \notag 
 \\ & = \int_{r=0}^{T} \langle \zeta(u_\eps(r,.)),\grad_x \psi_N\rangle\,dr
 + \eps\int_{r=0}^{T}\Big(\langle \beta(u_\eps(r,.)),\Delta \psi_N\rangle
 - \langle \beta''(u_\eps(r,.))|\grad_x u_\eps(r,.)|^2 ,\psi_N\rangle\Big)\,dr \notag \\
 & + \int_{r=0}^T \int_{|z|>0}\langle\beta(u_\eps(r,.)
 +\eta_\eps(.,u_\eps(r,.);z))-\beta(u_\eps(r,.)),\psi_N\rangle\,\tilde{N}(dz,dr) \notag \\
 &
 + \int_{r=0}^T \int_{|z|>0}\langle\beta(u_\eps(r,.)
 +\eta_\eps(.,u_\eps(r,.);z))-\beta(u_\eps(r,.))
 -\eta_\eps(.,u_\eps(r,.);z)\beta'(u_\eps(r,.)),\psi_N\rangle
 \,m(dz)\,dr.\notag
\end{align}
Taking expectation and sending $N\goto \infty$ result in
\begin{align}
& E\Big[\Big|\int_{r=0}^T\int_{\R_x^d} \eps\beta''(u_\eps(r,x))
|\grad_xu_\eps(r,x)|^2\,dx\,dr\Big|^p\Big]\notag \\
& \quad 
\leq ||\beta(u_{\eps}(T,\cdot))||_1^p 
+ ||\beta(u_0^{\eps}(\cdot))||_1^p\notag\\
& \quad \quad
+ C_1 E\Big[\Big| \int_{r=0}^T \int_{|z|>0}\int_{\rd_x}\Big(\beta(u_\eps(r,x)
 +\eta_\eps(x,u_\eps(r,x);z))-\beta(u_\eps(r,x))\Big)
 \,dx \,\tilde{N}(dz,dr)\Big|^p\Big] \label{eq:sec} \\
& \quad\quad 
+ C_2E \Big[\Big| \int_{r=0}^T \int_{|z|>0}\int_{\rd_x}\Big(\beta(u_\eps(r,x)
+\eta_\eps(x,u_\eps(r,x);z))-\beta(u_\eps(r,x)) \notag\\
& \hspace{5.5cm}
-\eta_\eps(x,u_\eps(r,x);z)\beta'(u_\eps(r,x))\Big)
\,dx \,m(dz)\,dr\Big|^p\Big].\label{eq:last}
\end{align}
Since $|\eta_\eps(x,u;z)|\leq g(x)(1+|u|)(|z|\wedge 1)$, for 
some $g\in L^2(\rd)\cap L^{\infty}(\rd)$, and $\beta, \beta',\beta''$ 
have at most polynomial growth and 
$\sup_{\eps>0}\sup_{0\leq t\leq T} 
E \Big[||u_\eps(t,\cdot)||_p^p\Big] < \infty$, the 
term \eqref{eq:last} is finite.

Next, we want to estimate \eqref{eq:sec}. 
Using the BDG inequality we obtain, for any $ p\geq 2$,
\begin{align}
&\mathcal{I}(\eps)\notag\\
&:=E \Big[\Big| \int_{r=0}^T \int_{|z|>0}\int_{\rd_x}
\Big(\beta(u_\eps(r,x)+\eta_\eps(x,u_\eps(r,x);z))-\beta(u_\eps(r,x))\Big)
\,dx\,\tilde{N}(dz,dr)\Big|^p\Big]\notag\\
& = E\Big[\Big| \int_{r=0}^T \int_{|z|>0}\int_{\rd_x}
\int_{\lambda=0}^1\beta'(u_\eps(r,x)+\lambda\eta_\eps(x,u_\eps(r,x);z))
\eta_\eps(x,u_\eps(r,x);z) \,d\lambda\,dx\,\tilde{N}(dz,dr)\Big|^p\Big]\notag \\
& \leq C E\Big[\Big| \int_{r=0}^T \int_{|z|>0}\int_{\rd_x}
\int_{\lambda=0}^1\beta'^2(u_\eps(r,x)
+\lambda\eta_\eps(x,u_\eps(r,x);z))\eta_\eps^2(x,u_\eps(r,x);z)
\,d\lambda\,dx\,N(dz,dr)\Big|^{\frac{p} {2}}\Big].\notag
\end{align}
Since $\tilde{N}(dz,dr) = N(dz,dr)-m(dz)\,dr$, 
\begin{align}
\mathcal{I}(\eps)\notag\leq &  E\Big[\Big| \int_{r=0}^T \int_{|z|>0}
\int_{\rd_x}\int_{\lambda=0}^1\beta'^2(u_\eps(r,x)
+\lambda\eta_\eps(x,u_\eps(r,x);z))\\
&\hspace{6cm}\times\eta_\eps^2(x,u_\eps(r,x);z)
\,d\lambda\,dx
\tilde{N}(dz,dr)\Big|^{\frac{p} {2}}\Big]+ C. 
\notag 
\end{align}
Using the BDG inequality, perhaps repeatedly, we see 
that $\sup_{\eps>0}\mathcal{ I}(\eps) < \infty $. 

Finally, assuming $\beta(u) = C |u|^{2(\ell+1)}$ for 
some $\ell \in \mathbb{N}$, the 
above estimates imply
\begin{align}\label{eq:half-gradient-moment}
\sup_{\eps >0} E\left[\left|\eps\int_{t=0}^{T}
\int_{\rd_x} (u_{\eps}(t,x))^{2\ell}|\grad_x
u_\eps(t,x)|^2\,dx\,dt\right|^p\right] < \infty, 
\qquad p=1,2,\ldots,\quad T>0.
\end{align}
For general $\beta$ with polynomially growing 
derivatives, there is $\ell \in \mathbb{N}$ such that 
$|\beta^{\prime\prime}(u)|\le C(1+u^{2\ell})$. We use this 
information along with \eqref{eq:half-gradient-moment} 
and Lemma \ref{lem:gradient-estimate} 
to conclude \eqref{eq:gendisest}.
\end{proof} 

The achieved results can be 
summarized into the following proposition:

\begin{prop} \label{prop-moment-estimate}
Suppose assumptions \ref{A1}-\ref{A4} hold and fix any $\eps>0$. 
Then there exists a unique $C^2(\R^d)$-valued predictable 
process $u_\eps(t,\cdot)$ which solves 
\eqref{eq:levy_stochconservation_laws-viscous} with 
initial data $u_\eps(0,x)= u_0^\eps(x)$. Moreover,
\begin{itemize}
 \item[1)]  For even positive integers $p=2,4,6,\ldots$,
 \begin{align}
  \sup_{\eps>0}\sup_{0\le t\le T} 
  E\Big[ ||u_\eps(t,\cdot)||_p^p\Big] < + \infty. \label{uni:moment-esti}
 \end{align}
\item[2)]  For $\phi \in C^2(\R)$ with $ \phi,\phi^\prime, 
\phi^{\prime\prime}$ having at most polynomial growth,
\begin{align}
 \sup_{\eps>0}E\Big[\Big|\eps \int_{t=0}^T \int_{\R_x^d} 
 \phi^{\prime\prime}(u_\eps(t,x))|\grad_x u_\eps(t,x)|^2 
 \,dx\,dt\Big|^p\Big] < \infty,
 \quad p=1,2,\ldots,\quad T>0.
 \label{gradient-esti}
\end{align}
\end{itemize}
\end{prop}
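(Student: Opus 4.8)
The plan is to read this proposition as a consolidation of the lemmas already established in this section, so that essentially all of the analytic work is done and what remains is to assemble the pieces while carefully tracking the dependence on $\eps$. First I would settle existence and uniqueness. Because the regularized data $F_\eps$, $\eta_\eps$ and $u_0^\eps$ satisfy the regularity hypotheses \ref{B1}--\ref{B3} (as recorded just before the statement) and $u_0^\eps\in C_c^\infty(\R^d)$ supplies \ref{B4}, the whole chain of Lemmas \ref{welldefinedness-iteration}--\ref{lem:classical-solution} applies verbatim to the regularized viscous problem \eqref{eq:levy_stochconservation_laws-viscous-new}. In particular the Picard iterates are well defined and Schwartz-class in $x$, they converge in $\mathbb{L}([0,T]: L^2(\R^d))$ to a mild solution, and since $u_0^\eps\in C_c^\infty$ forces $E[e_{2\ell}(u_0^\eps)]<\infty$ for every $\ell$, Lemma \ref{lem:classical-solution} upgrades this mild solution to a predictable $C^2(\R^d)$-valued process solving the equation in the classical (pointwise-in-$x$) sense. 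Uniqueness is inherited from the uniqueness of the fixed point in Lemma \ref{lem:convergence_proof}.

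Next I would obtain \eqref{uni:moment-esti} directly from Lemma \ref{lem:uniform estimates}: applying the It\^{o}--L\'{e}vy formula to $\beta(u)=\frac1p|u|^p$ and integrating in $x$, the viscous term $-2\eps\int|\grad_x u_\eps|^2$ carries a favorable sign and is discarded, the compensated martingale part has zero mean, and the remaining drift is controlled by $\eta_\eps^2$. Using (C.1)--(C.3) and Gronwall then yields a bound of the form $\sup_{0\le t\le T}E[\|u_\eps(t)\|_p^p]\le C_T\sup_{\eps>0}E[\|u_0^\eps\|_p^p]$. For \eqref{gradient-esti} I would invoke the generalized dissipation estimate \eqref{eq:gendisest}: for $\phi$ with polynomially growing derivatives one tests the entropy balance against a cutoff $\psi_N$, lets $N\to\infty$, and absorbs the compensated-Poisson terms by the BDG inequality, the finiteness of the resulting right-hand side being guaranteed by the moment bound just established together with $|\eta_\eps|\le g(x)(1+|u|)(|z|\wedge1)$.

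The genuine point — and the only place where care is needed beyond citing the lemmas — is that both \eqref{uni:moment-esti} and \eqref{gradient-esti} demand a supremum over $\eps$, whereas the existence theory of Lemma \ref{lem:convergence_proof} produces constants that degenerate as $\eps\to0$. The remedy, which I would emphasize, is that neither a priori estimate uses any $\eps$-uniform control of $\grad_x u_\eps$ as an input: the dissipation enters only with its good sign in the moment estimate, and in the dissipation estimate it is precisely the quantity being bounded, extracted from the entropy balance. Consequently the constants depend only on $T$, on the L\'{e}vy measure through \ref{A3}, and on the $\eps$-\emph{independent} envelopes (C.1)--(C.3) for the regularized coefficients and data; this is exactly what renders the two suprema over $\eps$ finite.
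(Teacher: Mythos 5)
Your proposal is correct and follows exactly the route the paper takes: the proposition is introduced there as a summary of the preceding results, with existence, uniqueness and $C^2$ regularity supplied by Lemma \ref{lem:classical-solution} applied to the regularized data, the moment bound \eqref{uni:moment-esti} by Lemma \ref{lem:uniform estimates}, and the dissipation bound \eqref{gradient-esti} by the generalized estimate \eqref{eq:gendisest}. Your added remark that the $\sup_{\eps>0}$ is legitimate because the a priori estimates depend only on the $\eps$-independent envelopes (C.1)--(C.3) (and not on the $\eps$-degenerate constants of the existence theory) is exactly the right point to emphasize and is consistent with the statements of those lemmas.
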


\section{Existence of generalized entropy solution}\label{sec:existence-of-gen-solution}
The proof of existence depends largely on the 
appropriate compactness of the family $\{u_{\eps}(t,x)\}_{\eps > 0}$. 
The moment estimates \eqref{uni:moment-esti} only guarantee 
weak compactness, which is inadequate in view of 
the nonlinearities in the equation. Drawing inspiration from 
deterministic conservation laws, we look for compactness in 
the space of Young measures. We also mention here that 
similar strategies have been adopted by 
Bauzet, Vallet, and Wittbold \cite{BaVaWit} in 
the context of pure diffusion driven conservation laws. 
Before we proceed further, let us  define the 
Young measures and the notion of narrow convergence. 
We refer to \cite{dafermos,evansweak} for more on the topic of 
Young measures in deterministic settings and 
to \cite{Balder} for the stochastic version of the theory.

 \subsection{A few facts about Young measures}
Let $\big(\Theta, \Sigma, \mu\big)$ be a $\sigma$-finite 
measure space and $\mathcal{P}(\R)$ be 
the space of probability measures on $\R$.
  
\begin{defi}[Young measure]
 A Young measure from $\Theta$ into $\R$ is 
a map $\nu \mapsto \mathcal{P}(\R)$ such that $\nu(\cdot): \theta\mapsto \nu(\theta)(B)$ 
is $\Sigma$-measurable for every Borel subset $B$ of $\R$. The set of all 
Young measures from $\Theta$ into $\R$ is denoted 
by $\mathcal{R}\big(\Theta, \Sigma, \mu \big)$ or simply by  $\mathcal{R}$.
\end{defi}

\begin{defi}[narrow convergence]
A sequence of Young measures $\{\nu_n\}_n$ in $\mathcal{R}$ is 
said to converge $narrowly$ to $\nu_0$ iff for every  $A\in \Sigma$ 
and $h\in C_b(\R)$,
 $$  
 \lim_{n\rightarrow \infty} \int_A
 \Big[\int_{\R_\xi} h(\xi)\nu_n(\theta)(d\xi)\Big]\mu(d\theta) 
 = \int_A \Big[\int_{\R_\xi} h(\xi)\nu_0(\theta)(d\xi)\Big]\,\mu(d \theta). 
$$
\end{defi}

\begin{rem}
Young measures can be viewed as a parametrized family of probability 
measures where the parametrization is measurable. 
Clearly, if $u(\theta)$ is a real-valued measurable 
function on $\big(\Theta, \Sigma, \mu\big)$, 
then $\nu(\theta) = \delta(\xi-u(\theta)) $ 
defines a Young measure on $\Theta$. In other words, with 
an appropriate choice of $\big(\Theta, \Sigma, \mu\big)$, the 
family $\{u_\eps(t,x)\}_{\eps> 0}$ can be thought of as 
a family of Young measures and we are interested 
in extracting a subsequence which converges $narrowly$ 
in $\mathcal{R}$. This requires setting up 
suitable tightness criterion. 
\end{rem}

\begin{defi}[tightness]
A family of Young measures $\{\nu_n\}_n$ in $\mathcal{R}$ is 
called tight if there exists an inf-compact 
integrand $h$ on $\Theta \times \R$ such that 
$$
\sup_{n} \int_{\Theta}\Big[\int_{\R_\xi} h(\theta ,\xi)
\nu_n(\theta)(d\xi)\Big]\,\mu(d\theta)< \infty.
$$ 
\end{defi}

\begin{rem}
Without getting into details about the whole class 
of inf-compact functions, we point out 
that $h(\theta, \xi) = \xi^2$ is one such example. 
With this choice of $h$ and an 
appropriate choice of $\big(\Theta, \Sigma, \mu\big)$, 
by \eqref{uni:moment-esti} the family $\{u_\eps(t,x)\}_{\eps> 0}$ is tight 
when viewed as a family of Young measures.  
\end{rem}

The tightness condition enables us to 
extract a subsequence from 
a tight family and we have the following version 
of Prohorov's theorem to this end, a 
detailed proof which  could be found in \cite{Balder}.
 
 \begin{thm}[Prohorov's theorem] \label{thm:prohorov}
(1) Let $\big(\Theta, \Sigma, \mu\big)$ be a finite 
measure space and $\{\nu_n\}_n $ be a tight family of Young 
measures in $\mathcal{R}$. Then there exists a 
subsequence $\{\nu_{n^\prime}\}$ of $\{\nu_n\}_n $ and 
$\nu_0\in \mathcal{R}$ such that $\{\nu_{n^\prime}\}$ 
converges $narrowly$ to $\nu_0$. 
 
(2) Moreover, if $\nu_n = \delta_{f_n(\theta)}(\xi)$ and given a 
Caratheodory function $h(\theta, \xi)$  on $\Theta\times \R$, 
if $h(\theta, f_{n^\prime}(\theta))$ is uniformly integrable, then
$$
\lim_{n^\prime\rightarrow \infty} \int_{\Theta} 
h(\theta, f_{n^\prime}(\theta))\, \mu(d\theta) 
= \int_\Theta\Big[\int_{\R_\xi} h(\theta,\xi)
\, \nu_0(\theta)(d\xi)\Big]\, \mu(d \theta).
$$
\end{thm}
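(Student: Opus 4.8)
The plan is to realize the narrow topology on $\mathcal{R}$ as a weak-$*$ topology and extract a convergent subsequence by a Banach--Alaoglu/separability argument, invoking the tightness (inf-compactness of $h$) only at the end to guarantee that the limit is a genuine Young measure valued in $\mathcal{P}(\R)$. First I would normalize $\mu$ to a probability measure, which is permissible since it is finite, and pass to the one-point compactification $\hat\R=\R\cup\{\infty\}$, so that each $\nu_n(\theta)$, extended by $\nu_n(\theta)(\{\infty\})=0$, becomes a probability measure on the compact space $\hat\R$. The Banach space $L^1\big(\Theta;C(\hat\R)\big)$ is separable, and every Young measure $\nu$ into $\hat\R$ defines a continuous linear functional
\[
\Lambda_\nu(g)=\int_\Theta\int_{\hat\R}g(\theta,\xi)\,\nu(\theta)(d\xi)\,\mu(d\theta)
\]
of norm at most $1$ on this space. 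Thus $\{\nu_n\}$ lies in the unit ball of the dual, which is weak-$*$ sequentially compact by separability, and I extract a subsequence $\{\nu_{n'}\}$ with $\Lambda_{\nu_{n'}}\to\Lambda$ weak-$*$.

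Second, I would identify the limit functional $\Lambda$ with a measure-valued map. Testing against $g(\theta,\xi)=\mathbf{1}_A(\theta)\,h(\xi)$ shows that, for each fixed $h\in C(\hat\R)$, $\Lambda$ acts as integration against a bounded measurable function of $\theta$; a disintegration (measurable selection) argument then yields a map $\theta\mapsto\nu_0(\theta)$ into the sub-probability measures on $\hat\R$ that represents $\Lambda$. Taking $h\equiv 1$ and using that each $\nu_{n'}(\theta)$ is a probability measure gives $\int_A\nu_0(\theta)(\hat\R)\,\mu(d\theta)=\mu(A)$ for all $A\in\Sigma$, so $\nu_0(\theta)$ is a probability measure on $\hat\R$ for $\mu$-a.e.\ $\theta$.

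Third, and this is the crux, I would use tightness to rule out escape of mass to $\infty$. Since $h$ is inf-compact it is lower semicontinuous with $h(\theta,\cdot)\to+\infty$ as $|\xi|\to\infty$; approximating $h$ from below by bounded continuous integrands on $\Theta\times\hat\R$ and using the weak-$*$ convergence gives
\[
\int_\Theta\int_{\hat\R}h(\theta,\xi)\,\nu_0(\theta)(d\xi)\,\mu(d\theta)\le\liminf_{n'}\int_\Theta\int_{\R}h(\theta,\xi)\,\nu_{n'}(\theta)(d\xi)\,\mu(d\theta)<\infty.
\]
Finiteness of the left-hand side forces $\nu_0(\theta)(\{\infty\})=0$ for $\mu$-a.e.\ $\theta$, so $\nu_0$ is a Young measure into $\R$. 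The same aggregate tightness controls the tail mass $\sup_{n'}\int_\Theta\nu_{n'}(\theta)(\{|\xi|>R\})\,\mu(d\theta)$ uniformly in $n'$, which lets me upgrade convergence against $C(\hat\R)$ to convergence against all of $C_b(\R)$, yielding narrow convergence $\nu_{n'}\to\nu_0$ and proving part (1). I expect the measurability of the disintegration and precisely this no-escape-of-mass step to be the main obstacles; the latter is where the finiteness of $\mu$ and the inf-compactness of $h$ are indispensable.

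Finally, for part (2) I would upgrade narrow convergence, which a priori handles only bounded continuous integrands, to the Carath\'eodory integrand $h$ under the uniform integrability of $h(\theta,f_{n'}(\theta))$. The scheme is a Vitali-type truncation: for $M>0$ split $h=h_M+(h-h_M)$ with $h_M$ a bounded continuous truncation, pass to the limit in the term $\int_\Theta\int h_M\,d\nu_{n'}\,d\mu$ using narrow convergence (approximating the Carath\'eodory $h_M$ in $\theta$ via Scorza--Dragoni), and control the remainder uniformly in $n'$ by uniform integrability before letting $M\to\infty$. Since $\nu_{n'}=\delta_{f_{n'}(\theta)}$ gives $\int_\R h\,d\nu_{n'}=h(\theta,f_{n'}(\theta))$, this delivers the claimed convergence of $\int_\Theta h(\theta,f_{n'}(\theta))\,\mu(d\theta)$ to $\int_\Theta\int_\R h\,d\nu_0\,d\mu$.
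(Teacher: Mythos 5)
The paper does not actually prove this statement: Theorem \ref{thm:prohorov} is quoted as a known result, with the explicit remark that ``a detailed proof \ldots could be found in \cite{Balder}.'' So there is no in-paper argument to compare against; what you have reconstructed is, in substance, the standard proof of Balder's version of Prohorov's theorem for Young measures, and your outline identifies all the right ingredients: compactification of the fibre $\R\cup\{\infty\}$, weak-$*$ sequential compactness of the unit ball of the dual of $L^1\big(\Theta;C(\R\cup\{\infty\})\big)$, disintegration of the limit functional into a measurable family of (sub)probability measures, lower semicontinuity of the inf-compact integrand under narrow convergence to rule out escape of mass to $\infty$ (this is indeed where finiteness of $\mu$ and tightness are used), and a Vitali/uniform-integrability truncation combined with Scorza--Dragoni to pass from $C_b$ integrands to Carath\'eodory integrands in part (2). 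Since $\nu_{n'}(\theta)=\delta_{f_{n'}(\theta)}$ turns the double integral into $\int_\Theta h(\theta,f_{n'}(\theta))\,\mu(d\theta)$, part (2) follows as you say.

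One step you should not wave through: the separability of $L^1\big(\Theta;C(\R\cup\{\infty\})\big)$, which you invoke to get weak-$*$ \emph{sequential} compactness, requires $\Sigma$ to be $\mu$-essentially countably generated, and in this paper $\Theta=\Omega\times(0,T)\times B_M$ with $\Sigma$ containing the predictable $\sigma$-field of an abstract filtered probability space, so this is not automatic. The standard repair is to fix a countable dense family $\{h_k\}\subset C(\R\cup\{\infty\})$, pass to the countably generated sub-$\sigma$-algebra $\Sigma_0\subset\Sigma$ generated by the countably many maps $\theta\mapsto\int h_k\,d\nu_n(\theta)$, run your argument on $L^1(\Theta,\Sigma_0,\mu;C(\R\cup\{\infty\}))$, and then recover narrow convergence tested against arbitrary $A\in\Sigma$ by conditioning: $\int_A\int h\,d\nu_n\,d\mu=\int_\Theta E[\mathbf{1}_A\,|\,\Sigma_0]\int h\,d\nu_n\,d\mu$, and the limit integrand $\int h\,d\nu_0$ is $\Sigma_0$-measurable. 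With that amendment (and the routine sign-splitting of $h$ in part (2)), your argument is complete and is the same route as the cited source.
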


\subsection{Extraction of an inviscid Young measure limit}
The predictable $\sigma$-field of $\Omega\times(0,T)$ with 
respect to $\{\mathcal{F}_t\}$ is 
denoted by $\mathcal{P}_T$, and we set 
\begin{align*}
	\Theta = \Omega\times (0,T)\times \R^d,\quad \Sigma 
	= \mathcal{P}_T \times \mathcal{L}(\R^d)\quad \text{and} 
	\quad \mu= P\otimes \lambda_t\otimes \lambda_x,
\end{align*} 
where $\lambda_t$ and $\lambda_x$ are respectively the 
Lebesgue measures on $(0,T)$ and $\R^d$. 
Moreover, for $M\in \mathbb{N}$, let 
\begin{align*}
\Theta_M = \Omega\times (0,T)\times B_M,
\quad \Sigma_M = \mathcal{P}_T \times \mathcal{L}(B_M)
\quad \text{and} \quad \mu_M= \mu\big|_{\Theta_M},
\end{align*}
where $B_M$ is the ball of radius $M$ around zero in $\R^d$ 
and $\mathcal{L}(B_M)$ is the Lebesgue sigma algebra on $B_M$.  
Clearly $(\Theta_M, \Sigma_M, \mu_M)$ is a finite measure 
space and $\{u_\eps(\omega; t,x)\}_{\eps >0}$ is a 
tight family of Young measures in 
$\mathcal{R}(\Theta_M, \Sigma_M, \mu_M)$. 
Therefore by Theorem \ref{thm:prohorov} there exists 
a subsequence $\eps_n \goto 0$ and $\nu^M \in 
\mathcal{R}(\Theta_M, \Sigma_M, \mu_M)$ such 
that  $\{u_{\eps_n}(\omega; t,x)\}$ converges 
narrowly to $\nu^M$.

Furthermore, for $\bar{M}>M$, the sequence 
$\{u_{\eps_n}(\omega; t,x)\}$ is tight in $\mathcal{R}(\Theta_{\bar{M}}, 
\Sigma_{\bar{M}}, \mu_{\bar{M}})$, and hence admits a further subsequence, 
say $\{u_{\eps_{n^\prime}}(\omega; t,x)\}$, 
and $\nu^{\bar{M}}\in \mathcal{R}(\Theta_{\bar{M}}, \Sigma_{\bar{M}}, \mu_{\bar{M}})$ 
such that $\{u_{\eps_{n^\prime}}(\omega; t,x)\}$ 
converges narrowly to $\nu^{\bar{M}}$.  
We now invoke diagonalization and conclude that 
there exist a subsequence  $\{u_{\eps_{n^\prime}}(\omega; t,x)\}$ 
with $\eps_n\goto 0$ and a Young mesures $\nu^{M}\in
\mathcal{R}(\Theta_{M}, \Sigma_{M}, \mu_M)$, $M =1,2, 3,\ldots$, such 
that $\{u_{\eps_n}(\omega; t,x)\}$ converges 
narrowly to $\nu^M$ in $\mathcal{R}(\Theta_{M}, \Sigma_{M}, \mu_M)$ 
for every $M=1,2,\ldots$. It is also trivial to prove that 
$$
\text{if} ~ \bar{M} > M~\text{then}~\nu^M
=\nu^{\bar{M}}~\text{on}~(\Theta_{M}, \Sigma_{M}, \mu).
$$ 
Now we define
\begin{align}\label{eq:Young-extracted}
\nu(\theta) = \nu^M(\theta)~\text{if}~\,\theta \in \Theta_M. 
\end{align}
Clearly, $\nu$ is well defined and $\nu$ is a 
Young measure belonging to $\mathcal{R}(\Theta, \Sigma, \mu) $. 

We summarize the findings in a next lemma.
  
\begin{lem}\label{conv-young-measure}
Let  $\{u_\eps(t,x)\}_{\eps> 0}$ be a sequence of 
$L^p(\R^d)$-valued predictable processes 
such that \eqref{uni:moment-esti} holds. 
Then there exists a subsequence $\{\eps_n\}$ with $\eps_n\goto 0$ and 
a Young measure $\nu\in \mathcal{R}(\Theta, \Sigma, \mu) $ 
such that if  $h(\theta,\xi)$ is a Caratheodory function 
on $\Theta\times \R$ such that $\mbox{supp}(h)\subset \Theta_M\times \R$ 
for some $M \in \mathbb{N}$ and $\{h(\theta, u_{\eps_n}(\theta)\}_n$ 
(where $\theta\equiv (\omega; t, x)$) is uniformly integrable, then 
$$
\lim_{\eps_n\rightarrow 0} \int_{\Theta}
h(\theta, u_{\eps_n}(\theta))\, \mu(d\theta) 
= \int_\Theta\Big[\int_{\R_\xi} h(\theta, \xi)\, \nu(\theta)(d\xi)\Big]
\, \mu(d \theta).
$$
\end{lem}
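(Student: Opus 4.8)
The moment bound \eqref{uni:moment-esti} guarantees that $\{u_\eps\}$, regarded as a family of (Dirac) Young measures $\delta_{u_\eps(\theta)}$, is tight on each finite measure space $(\Theta_M,\Sigma_M,\mu_M)$, since the integrand $h(\theta,\xi)=\xi^2$ is inf-compact. Hence the subsequence $\{\eps_n\}$ and the limiting Young measure $\nu\in\mathcal{R}(\Theta,\Sigma,\mu)$ are already furnished by the tightness-plus-diagonalization argument carried out immediately before the statement, and the plan is simply to verify that this particular $\nu$ delivers the asserted convergence for every admissible $h$. The essential device is \emph{localization}: because $\supp(h)\subset\Theta_M\times\R$ for some fixed $M$, both sides of the claimed identity are unchanged if the $\sigma$-finite space $(\Theta,\Sigma,\mu)$ is replaced by the finite measure space $(\Theta_M,\Sigma_M,\mu_M)$. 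This reduction is what allows the full strength of Prohorov's theorem to be brought to bear and sidesteps any complication stemming from $\mu$ being only $\sigma$-finite.

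First I would record that, on $(\Theta_M,\Sigma_M,\mu_M)$, the Dirac Young measures $\delta_{u_{\eps_n}(\theta)}$ converge narrowly to $\nu^M$, and that by the gluing \eqref{eq:Young-extracted} one has $\nu=\nu^M$ on $\Theta_M$. Next, viewing $u_{\eps_n}$ as the measurable functions $f_{n}$ appearing in Theorem \ref{thm:prohorov}(2), and using that $h$ is a Caratheodory function with $\{h(\theta,u_{\eps_n}(\theta))\}_n$ uniformly integrable on the finite measure space $\Theta_M$, part (2) of Prohorov's theorem yields
\[
\lim_{\eps_n\goto 0}\int_{\Theta_M} h(\theta,u_{\eps_n}(\theta))\,\mu_M(d\theta)
= \int_{\Theta_M}\Big[\int_{\R_\xi} h(\theta,\xi)\,\nu^M(\theta)(d\xi)\Big]\,\mu_M(d\theta).
\]
Finally, invoking $\supp(h)\subset\Theta_M\times\R$ to rewrite the $\Theta_M$-integrals as integrals over $\Theta$ and substituting $\nu=\nu^M$, the displayed identity in the statement follows at once.

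The only genuinely delicate point is the consistency hidden in the diagonal construction: one must check that the single subsequence $\{\eps_n\}$ is narrowly convergent in $\mathcal{R}(\Theta_M,\Sigma_M,\mu_M)$ for \emph{every} $M$ simultaneously, and that the limits patch together, that is, $\nu^{\bar M}=\nu^M$ on $\Theta_M$ whenever $\bar M>M$. This compatibility rests on the elementary observation that narrow convergence on a larger domain restricts to narrow convergence on a smaller one, applied to test integrands of the form ${\bf 1}_A(\theta)\,g(\xi)$ with $A\subset\Theta_M$ and $g\in C_b(\R)$; it is exactly what makes the definition \eqref{eq:Young-extracted} unambiguous and, consequently, renders the choice of $M$ in the localization above immaterial. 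Everything else is a direct application of Theorem \ref{thm:prohorov} on a finite measure space, so I expect no further obstacle.
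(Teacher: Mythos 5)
Your argument is correct and coincides with the paper's own proof: both rely on the subsequence and the Young measure $\nu$ already constructed by the tightness-plus-diagonalization argument preceding the lemma, reduce the claim to the finite measure space $(\Theta_M,\Sigma_M,\mu_M)$ via the support condition on $h$, and then invoke Theorem \ref{thm:prohorov}(2). Your extra remarks on the compatibility $\nu^{\bar M}=\nu^M$ on $\Theta_M$ simply spell out what the paper dispatches as ``trivial'' in the discussion before the lemma.
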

 
\begin{proof}
The extraction of a subsequence is done as described above 
and $\nu$ is defined in \eqref{eq:Young-extracted}. 
Note that if $M\in \mathbb{N}$ such 
that $\mbox{supp}(h)\subset \Theta_M\times \R$, then 
$$
\int_{\Theta}h(\theta, u_{\eps_n}(\theta))\, \mu(d\theta)
=  \int_{\Theta_M}h(\theta, u_{\eps_n}(\theta))\, \mu_M(d\theta)
$$
and
$$
\int_\Theta
\Big[\int_{\R_\xi} h(\theta, \xi)\, \nu(\theta)(d\xi)\Big]\, \mu(d \theta)
=\int_{\Theta_M}\Big[\int_{\R_\xi} 
h(\theta, \xi)\, \nu^M(\theta)(d\xi)\Big]\, \mu_M(d \theta),
$$
and the convergence follows from Theorem \ref{thm:prohorov}.
\end{proof}

\subsection{Construction of a generalized entropy solution}
With the Young measure valued limit 
$\nu$ of $\{u_\eps(t,x)\}_{\eps > 0}$ (upto a subsequence) at 
hand, we follow the standard recipe of 
Panov \cite{panov} (and its adaptation to a 
stochastic case \cite{BaVaWit}) to turn it into a generalized
entropy solution. Define the real valued 
function $u(\theta, \lambda)$ by
\begin{align}
	u(\theta,\lambda)=\inf\Big\{ c\in \R: \nu(\theta)
	\Big((-\infty,c)\Big)>\lambda\Big\}, \qquad 
	\text{for $\lambda \in (0,1)$ and $\theta\in \Theta$}.
	\notag\end{align} 
\begin{lem}\label{lem:measure-conversion}
For fixed $\theta\in \Theta$, the function $u(\theta,\cdot)$ 
is non-decreasing and right-continuous on $(0,1)$.
Moreover, if $h(\theta,\xi)$ is a nonnegative 
Caratheodory function on $\Theta\times \R$, then 
$$
\int_{\Theta}\Big[\int_{\R_\xi} h(\theta,\xi)\, \nu(\theta)(d\xi)\Big]\, \mu(\,d\theta)
= \int_\Theta\int_{\lambda=0}^1 
h(\theta, u(\theta,\lambda))\,d\lambda\, \mu(\,d\theta).
$$
\end{lem}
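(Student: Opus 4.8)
The plan is to recognize $u(\theta,\cdot)$ as the right-continuous generalized inverse (quantile function) of the distribution function of the probability measure $\nu(\theta)$, and to show that pushing the Lebesgue measure on $(0,1)$ forward through this inverse recovers $\nu(\theta)$ fibrewise; the fibrewise identity then integrates over $\Theta$ by Tonelli's theorem.

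First, for fixed $\theta$ I would set $G_\theta(c)=\nu(\theta)\big((-\infty,c)\big)$. Since $c\mapsto(-\infty,c)$ is increasing and continuous from below, $G_\theta$ is non-decreasing and left-continuous, with right limits $G_\theta(\xi^+)=\nu(\theta)\big((-\infty,\xi]\big)$ and $G_\theta(-\infty)=0$, $G_\theta(+\infty)=1$. By definition $u(\theta,\lambda)=\inf A_\lambda$ where $A_\lambda=\{c:G_\theta(c)>\lambda\}$, and $A_\lambda$ is an up-set whose infimum is non-increasing in $\lambda$; hence $\lambda\mapsto u(\theta,\lambda)$ is non-decreasing. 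For right-continuity I would argue by contradiction: if $u(\theta,\lambda_0+)>u(\theta,\lambda_0)$, then any intermediate point $c$ satisfies $c>\inf A_{\lambda_0}$, so $G_\theta(c)>\lambda_0$, and consequently $u(\theta,\lambda)\le c$ for every $\lambda\in(\lambda_0,G_\theta(c))$, forcing $u(\theta,\lambda_0+)\le c<u(\theta,\lambda_0+)$, a contradiction.

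The core is then the distributional identity. The key equivalence is that, for every $\xi\in\R$, the set $\{\lambda\in(0,1):u(\theta,\lambda)\le\xi\}$ coincides with the interval $(0,G_\theta(\xi^+))$ up to at most the single endpoint $\{G_\theta(\xi^+)\}$: indeed $\lambda<G_\theta(\xi^+)=\inf_{c>\xi}G_\theta(c)$ gives $G_\theta(c)>\lambda$ for all $c>\xi$, whence $\inf A_\lambda\le\xi$; conversely $u(\theta,\lambda)\le\xi$ forces $G_\theta(c)>\lambda$ for all $c>\xi$, hence $\lambda\le G_\theta(\xi^+)$. As the exceptional endpoint is Lebesgue-null, the pushforward of the Lebesgue measure $\ell$ on $(0,1)$ under $u(\theta,\cdot)$ assigns mass $G_\theta(\xi^+)=\nu(\theta)\big((-\infty,\xi]\big)$ to each half-line $(-\infty,\xi]$; since such half-lines generate the Borel $\sigma$-algebra of $\R$, this pushforward equals $\nu(\theta)$. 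The layer-cake (change of variables) formula for nonnegative measurable integrands then yields, for each fixed $\theta$,
\[
\int_{0}^{1} h(\theta,u(\theta,\lambda))\,d\lambda
=\int_{\R_\xi} h(\theta,\xi)\,\nu(\theta)(d\xi).
\]

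Finally I would integrate this fibrewise identity over $\Theta$ against $\mu$; because $h\ge0$, Tonelli's theorem legitimises all interchanges of integration and delivers the claimed equality. The step I expect to be the main obstacle is the joint measurability of $(\theta,\lambda)\mapsto u(\theta,\lambda)$ required to apply Tonelli. This I would obtain from the Young-measure measurability of $\theta\mapsto\nu(\theta)$, which makes $\theta\mapsto G_\theta(\xi^+)$ measurable for each $\xi$, combined with the representation $\{(\theta,\lambda):u(\theta,\lambda)\le\xi\}=\{(\theta,\lambda):\lambda<G_\theta(\xi^+)\}$ modulo a null set, exhibiting the sublevel sets of $u$ as measurable and hence $u$ as jointly measurable.
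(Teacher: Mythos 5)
Your proof is correct. Note that the paper itself does not supply an argument here: it simply declares the result classical and refers to Lemma 3.1 of Panov \cite{panov}, so you have in effect written out the proof the authors chose to omit. Your route is the standard quantile-function (generalized inverse) construction: identify $u(\theta,\cdot)$ as the right-continuous inverse of $G_\theta(c)=\nu(\theta)\big((-\infty,c)\big)$, verify monotonicity and right-continuity, prove the fibrewise pushforward identity $u(\theta,\cdot)_{\#}\ell=\nu(\theta)$ by matching the two measures on the $\pi$-system of half-lines, apply the abstract change-of-variables formula for nonnegative integrands, and integrate over $\Theta$ by Tonelli. All of these steps check out; in particular the two-sided inclusion $(0,G_\theta(\xi^+))\subseteq\{\lambda:u(\theta,\lambda)\le\xi\}\subseteq(0,G_\theta(\xi^+)]$ is argued correctly using that $A_\lambda$ is an up-set.

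One small remark on the final measurability step: establishing joint measurability of $u$ via the sets $\{u\le\xi\}$ ``modulo a null set'' only yields measurability with respect to the completed product $\sigma$-algebra, which is enough for the integral identity but is slightly weaker than what is wanted. The cleaner route --- and the one the paper itself takes in its Lemma \ref{lem:measurability_young_solution} --- is to use the \emph{strict} sublevel sets, for which one has the exact identity $\{(\theta,\lambda):u(\theta,\lambda)<\sigma\}=\{(\theta,\lambda):\nu(\theta)\big((-\infty,\sigma)\big)>\lambda\}$ with no exceptional set, the right-hand side being product-measurable because $\theta\mapsto\nu(\theta)\big((-\infty,\sigma)\big)$ is $\Sigma$-measurable by the definition of a Young measure. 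Since that lemma is already available in the paper, you may simply invoke it rather than re-deriving measurability.
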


\begin{proof}
The proof is classical, and we 
refer to [\cite{panov} Lemma 3.1 ] for the details. 
\end{proof}
 
Any prospective generalized entropy solution has to be predictable. 
The presence of L\'{e}vy noise makes this condition 
indispensable. The next lemma affirms 
that condition for $u(\omega; t,x, \lambda)$. 
\begin{lem}\label{lem:measurability_young_solution}
$u$ is $\mathcal{P}_T\times 
\mathcal{L}(\R^d\times (0,1))$ measurable.
\end{lem}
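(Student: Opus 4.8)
The plan is to reduce joint measurability of $u$ to the measurability of the left-continuous distribution function of $\nu(\theta)$, which is exactly what the definition of a Young measure in $\mathcal{R}(\Theta,\Sigma,\mu)$ provides. For each fixed $c\in\R$ I set $G_c(\theta)=\nu(\theta)\big((-\infty,c)\big)$; since $(-\infty,c)$ is Borel, $G_c$ is $\Sigma$-measurable, i.e.\ measurable with respect to $\mathcal{P}_T\times\mathcal{L}(\rd)$. Everything will be built from the family $\{G_c\}_{c\in\R}$.

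The key step is the equivalence
$$
u(\theta,\lambda)<c \iff G_c(\theta)>\lambda,\qquad (\theta,\lambda)\in\Theta\times(0,1),\ c\in\R .
$$
This is where the care lies. Because $(-\infty,c)=\bigcup_{c'<c}(-\infty,c')$, the map $c\mapsto G_c(\theta)$ is nondecreasing and left-continuous, so $\sup_{c'<c}G_{c'}(\theta)=G_c(\theta)$. Then $u(\theta,\lambda)=\inf\{c:G_c(\theta)>\lambda\}<c$ holds iff some $c'<c$ satisfies $G_{c'}(\theta)>\lambda$, and by left-continuity this is equivalent to $G_c(\theta)>\lambda$. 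I would record both inclusions explicitly, since the interplay between the strict inequalities and the infimum is the only genuinely delicate point.

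Granting the equivalence, the level set rewrites as
$$
\{(\theta,\lambda):u(\theta,\lambda)<c\}=\{(\theta,\lambda):\lambda<G_c(\theta)\}.
$$
The right-hand side is the region strictly below the graph of the measurable function $G_c$, hence it lies in $\big(\mathcal{P}_T\times\mathcal{L}(\rd)\big)\times\mathcal{B}((0,1))$: the map $(\theta,\lambda)\mapsto G_c(\theta)-\lambda$ is a difference of two functions, each measurable in a single coordinate, and the set is where it is positive. Finally I would invoke the elementary inclusion $\big(\mathcal{P}_T\times\mathcal{L}(\rd)\big)\times\mathcal{B}((0,1))\subseteq\mathcal{P}_T\times\mathcal{L}(\rd\times(0,1))$, which holds because $A\times B\in\mathcal{L}(\rd\times(0,1))$ whenever $A\in\mathcal{L}(\rd)$ and $B\in\mathcal{B}((0,1))$ (products of Lebesgue-measurable sets are Lebesgue measurable and the Lebesgue $\sigma$-field is complete). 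Letting $c$ range over the rationals, the sets $\{u<c\}$ determine all preimages of open half-lines, and measurability of $u$ follows.

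The main obstacle I anticipate is purely bookkeeping: pinning down the left-continuity of $c\mapsto G_c(\theta)$ and matching the strict versus non-strict inequalities so that the central equivalence holds on the nose. The subsequent transfer of measurability is soft, relying only on the defining property of the Young measure $\nu$ together with standard product-$\sigma$-field manipulations.
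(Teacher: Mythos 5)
Your proposal is correct and follows essentially the same route as the paper: both identify the sublevel set $\{u<c\}$ with $\{(\theta,\lambda):\nu(\theta)((-\infty,c))>\lambda\}$ via the left-continuity of $c\mapsto\nu(\theta)((-\infty,c))$, and then conclude from the $\Sigma$-measurability of $\theta\mapsto\nu(\theta)(B)$ built into the definition of a Young measure. Your write-up is, if anything, slightly more explicit than the paper's about the final product-measurability step for the region below the graph of $G_c$.
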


\begin{proof} 
We establish that $u$ satisfies the basic condition of measurability. 
Let $\sigma \in \R$ and $ E_\sigma=\{ (\theta,\lambda):u(\theta,\lambda) < \sigma\}$. 
We want to show that $E_\sigma\in  \mathcal{P}_T\times \mathcal{L}(\R^d\times (0,1))$. 
Let $H_\sigma=\{ (\theta,\lambda):\nu(\theta)\big((-\infty,\sigma)\big)>\lambda\}$. 
For $ (\theta,\lambda)\in E_\sigma $, it holds 
that $  u(\theta,\lambda)< \sigma$ i.e., there exits $c$ 
with $ u(\theta,\lambda)< c<\sigma$ such 
that $\nu (\theta)\big((-\infty,c)\big)>\lambda
$ and hence $\nu(\theta)\big((-\infty,\sigma)\big)>\lambda$, implying 
$E_\sigma\subset H_\sigma$. 
For the converse, let $(\theta,\lambda)\in H_\sigma $.
 
Note that the map  $\sigma \mapsto \nu(\theta)\big((-\infty,\sigma)\big)$ is 
left continuous and therefore $\nu(\theta)\big((-\infty,\sigma)\big)>\lambda$ 
implies that there exists $ c <\sigma $ such that
$\nu(\theta)\big((-\infty,c)\big)>\lambda$. Thus, by the 
definition of $ u$, $u(\theta,\lambda) < \sigma$ 
and hence $H_\sigma\subset E_\sigma$, implying 
$H_\sigma = E_\sigma$.  
Note that $\theta\mapsto \nu(\theta)\big((-\infty,\sigma)\big)$ 
is $\Sigma$-measurable, implying 
$H_\sigma\in  \mathcal{P}_T\times \mathcal{L}(\R^d\times (0,1)) $ 
for all $\sigma\in \R$.
\end{proof}

Let $\Gamma = \Omega\times [0,T] \times \R $, 
$\mathcal{G}= \mathcal{P}_T\times \mathcal{L}(\R)$ and 
$\varsigma = P\otimes \lambda_t \otimes m(dz)$. Then 
$L^2\big((\Gamma, \mathcal{G}, \varsigma); \R\big)$ consists 
of all square integrable predictable processes which are 
Lebesgue measurable functions of $z$-variable. In other words, if 
$\psi(t,z)\in L^2\big((\Gamma, \mathcal{G}, \varsigma); \R\big)$, then
$$
E \Big[\int_{t=0}^T \int_{\R_z} \psi^2(t,z)\, m(dz)\,dt\Big] <  + \infty.
$$
   
The space $L^2\big((\Gamma, \mathcal{G}, \varsigma); \R\big)$ 
represents the square integrable predictable integrands 
for It\^{o}-L\'{e}vy integrals with respect to the compensated 
Poisson random measure $\tilde{N}(dz,dt)$.  
Moreover, an It\^{o}-L\'{e}vy integral defines a linear 
operator from  $L^2\big((\Gamma, \mathcal{G}, \varsigma); \R\big)$ 
to $L^2\big((\Omega, \mathcal{F}_T); \R\big)$ and it preserves 
the norm, thanks to the It\^{o}-L\'{e}vy isometry. 
Furthermore, for any random variable $Y \in L^2\big((\Omega, \mathcal{F}_T); \R\big)$ 
we can invoke the {\it martingale representation theorem} for 
marked point processes and conclude that there 
exists $\psi\in L^2\big((\Gamma, \mathcal{G}, \varsigma); \R\big)$ such that 
\begin{align*}
    Y = \int_{t=0}^T \int_{\R_z} \psi(t,z)\, \tilde{N}(dz,dt) .
\end{align*} 
Hence, the It\^{o}-L\'{e}vy integral 
operator is an isometry from 
$L^2\big((\Gamma, \mathcal{G}, \varsigma); \R\big)$ 
onto $L^2\big((\Omega, \mathcal{F}_T); \R\big)$. 
To this end, note that any isometry between two 
Hilbert spaces preserves weak convergence. 
Therefore for any weakly converging sequence of integrands 
$\{\psi_n(t,z)\} \in L^2\big((\Gamma, \mathcal{G}, \varsigma); \R\big)$, the 
corresponding sequence of It\^{o}-L\^{e}vy integrals with respect 
to $\tilde{N}(dz,dt)$  will also converge weakly in 
$L^2\big((\Omega, \mathcal{F}_T); \R\big)$. Moreover, the 
weak limits are preserved under It\^{o}-L\'{e}vy integral operators. 
To see this, define
\begin{align}
 \chi_n(t,z) =\int_{\R_x^d}\Big( \beta\big( u_{\eps_n}(t,x) 
 + \eta(x, u_{\eps_n}(t,x); z) 
 - \beta\big( u_{\eps_n}(t,x))\Big)\phi(t,x) \,dx,\notag
\end{align} 
where $\beta$ is a smooth function with bounded derivatives 
and $\phi$ is a compactly supported smooth function on $\Pi_T$. 
Then clearly $\chi_n(t,z)\in L^2\big((\Gamma, \mathcal{G}, \varsigma); \R\big)$ 
and the sequence $\{\chi_n(t,z)\}_n$ is bounded in 
$L^2\big((\Gamma, \mathcal{G}, \varsigma); \R\big)$.  

We have the following lemma:
\begin{lem}\label{weak-conv-integrand}
The sequence  $\{\chi_n(t,z)\}_n$ is weakly convergent 
in $L^2\big((\Gamma, \mathcal{G}, \varsigma); \R\big)$ 
and the weak limit $\chi(t,z)$ is given by 
\begin{align}
\chi(t,z)= \int_{\R_x^d}\int_{\alpha=0}^1\Big( \beta\big( u(t,x,\alpha) 
+ \eta(x, u(t,x,\alpha); z) -  \beta\big( u(t,x, \alpha))
\Big)\phi(t,x) \,d\alpha\,dx. \notag
\end{align}
\end{lem}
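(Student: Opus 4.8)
The plan is to identify $\chi$ as the weak limit by testing against a dense family of integrands and invoking the narrow Young-measure convergence of Lemma~\ref{conv-young-measure}. Write $(u,v)_\varsigma := E\big[\int_{0}^T\int_{\R_z} u\,v\,m(dz)\,dt\big]$ for the inner product of $L^2\big((\Gamma,\mathcal{G},\varsigma);\R\big)$. Since $\{\chi_n\}_n$ is bounded in this Hilbert space, it suffices to show $(\chi_n,\Psi)_\varsigma\to(\chi,\Psi)_\varsigma$ for $\Psi$ ranging over a dense subset; boundedness then upgrades this to weak convergence of the full sequence (approximate a general $\Psi$ by a dense element and split the pairing). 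As the dense class I would take tensor products $\Psi(\omega,t,z)=X(\omega,t)\rho(z)$, with $X$ a bounded predictable process and $\rho\in C_c(\R\setminus\{0\})$. That such $\rho$ are dense in $L^2(m)$ follows from \ref{A3} by truncation near the origin ($\int_{|z|<\delta}|\rho|^2\,m(dz)\to 0$), and the resulting tensors are then dense in $L^2\big((\Gamma,\mathcal{G},\varsigma);\R\big)$.

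Fix such a $\Psi$. By Fubini's theorem $(\chi_n,\Psi)_\varsigma=\int_{\R_z}\rho(z)\,I_n(z)\,m(dz)$, where
\begin{align*}
I_n(z)= E\Big[\int_{0}^T\int_{\R^d_x}\big(\beta(u_{\eps_n}+\eta(x,u_{\eps_n};z))-\beta(u_{\eps_n})\big)\phi(t,x)\,X(\omega,t)\,dx\,dt\Big].
\end{align*}
For each fixed $z$ I would apply Lemma~\ref{conv-young-measure} to the integrand $h_z(\theta,\xi)=\big(\beta(\xi+\eta(x,\xi;z))-\beta(\xi)\big)\phi(t,x)\,X(\omega,t)$, viewed as a function of $\theta=(\omega;t,x)$ and $\xi$. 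This is a Carath\'eodory function (continuous in $\xi$ since $\beta\in C^\infty$ and $\eta(x,\cdot;z)$ is Lipschitz by \ref{A2}, measurable in $\theta$ since $X$ is predictable), with support in some $\Theta_M\times\R$ because $\phi$ is compactly supported. The required uniform integrability of $\{h_z(\theta,u_{\eps_n}(\theta))\}_n$ over the finite measure space $\Theta_M$ follows from the mean-value bound $|h_z(\theta,u_{\eps_n})|\le \|\beta'\|_\infty |\eta(x,u_{\eps_n};z)|\,|\phi|\,|X|\le C\,g(x)(1+|u_{\eps_n}|)(|z|\wedge1)|\phi||X|$ together with \eqref{uni:moment-esti}, which makes $\{1+|u_{\eps_n}|\}_n$ bounded in $L^2(\Theta_M)$ and hence uniformly integrable there. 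Using also Lemma~\ref{lem:measure-conversion} to rewrite the Young-measure integral as an $\int_0^1 d\lambda$, this gives $I_n(z)\to I_\infty(z)$ for every fixed $z$, with $I_\infty(z)$ the same expression with $u_{\eps_n}$ replaced by $u(\theta,\lambda)$.

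To pass from pointwise-in-$z$ convergence to convergence of $\int_{\R_z}\rho(z)I_n(z)\,m(dz)$, I would use dominated convergence. The same mean-value estimate and \ref{A4} give $|I_n(z)|\le C(|z|\wedge1)$ with $C$ independent of $n$ by \eqref{uni:moment-esti}; since $\rho$ is supported in $\{|z|\ge\delta\}$ and $m(\{|z|\ge\delta\})<\infty$, the dominating function $C\|\rho\|_\infty(|z|\wedge1)\mathbf{1}_{\mathrm{supp}\,\rho}$ is $m$-integrable. Hence $(\chi_n,\Psi)_\varsigma\to\int_{\R_z}\rho(z)I_\infty(z)\,m(dz)=(\chi,\Psi)_\varsigma$, the last equality being Fubini applied to the stated formula for $\chi$ (which lies in $L^2\big((\Gamma,\mathcal{G},\varsigma);\R\big)$ by the same $(|z|\wedge1)$ bound and \ref{A3}). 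The main obstacle I anticipate is precisely the $z$-integration: the linear-in-$\eta$ bound only yields an $(|z|\wedge1)$ factor, which is \emph{not} $m$-integrable near the origin, so one cannot test directly with $\rho\equiv1$; restricting to $\rho$ supported away from $0$ (still dense by \ref{A3}) is what legitimizes the dominated-convergence step, while verifying the fixed-$z$ uniform integrability against the moment bounds \eqref{uni:moment-esti} is the other point demanding genuine care.
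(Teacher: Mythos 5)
Your argument is correct and its core is the same as the paper's: Fubini in $z$, then for each fixed $z$ the narrow Young-measure convergence of Lemma \ref{conv-young-measure} together with Lemma \ref{lem:measure-conversion} identifies the limit of the inner expectation, and finally a dominated-convergence step in $z$. Where you differ is in the choice of test functions: the paper pairs $\chi_n$ directly against an \emph{arbitrary} $h\in L^2\big((\Gamma,\mathcal{G},\varsigma);\R\big)$ and then appeals tersely to \ref{A4} and the moment bounds to justify the limit of the outer $m(dz)$-integral, whereas you first reduce to a dense class of tensor products $X(\omega,t)\rho(z)$ with $\rho$ supported away from $z=0$ and use the $L^2$-boundedness of $\{\chi_n\}$ to upgrade to weak convergence. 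Your reduction is legitimate ($m$ is $\sigma$-finite on $\R\setminus\{0\}$ by \ref{A3}, so such tensors are dense, and $\chi$ is indeed in $L^2$ of $\varsigma$), and it sidesteps the point you rightly flag, namely that $|z|\wedge 1$ need not be $m$-integrable near the origin. That said, the restriction to $\rho$ supported away from $0$ is not actually forced: for a general $h$ the bound $|\chi_n(t,z)|\le C(|z|\wedge 1)Y_n(t)$ combined with Cauchy--Schwarz in $z$ and $\int(|z|\wedge1)^2\,m(dz)<\infty$ produces an $m$-integrable dominating function $C(|z|\wedge1)\big(E\int_0^T h^2(t,z)\,dt\big)^{1/2}$ directly, which is presumably what the paper has in mind. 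So your route trades one Cauchy--Schwarz for a density argument; both are sound, and yours makes explicit a step the paper leaves implicit.
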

 
\begin{proof}
Fix $h(t,z) \in L^2\big((\Gamma, \mathcal{G}, \varsigma); \R\big)$. Then  
\begin{align}
&E \Big[ \int_{t=0}^T \int_{\R_z} h(t,z) \chi_n(t,z)\,m(dz)\,dt\Big]\notag\\
\notag
& \qquad 
= \int_{\R_z} E \Big[\int_{t=0}^T\int_{\R_x^d} \Big( \beta\big( u_{\eps_n}(t,x) 
+ \eta(x, u_{\eps_n}(t,x); z)
-\beta\big( u_{\eps_n}(t,x))\Big)\phi(t,x) h(t,z)\, dx\, dt\Big] \,m(dz).
\end{align} 
For $m(\dz)$-almost every $z\in \R$, we 
apply Lemmas \ref{conv-young-measure} 
and \ref{lem:measure-conversion} and conclude that 
\begin{align*}
    & \lim_{n\rightarrow \infty}  E \Big[\int_{t=0}^T\int_{\R_x^d} 
    \Big( \beta\big( u_{\eps_n}(t,x) + \eta(x, u_{\eps_n}(t,x); z) 
    -\beta\big( u_{\eps_n}(t,x))\Big)\phi(t,x) h(t,z)\, dx\, dt\Big] \\
    & \quad 
    =   E \Big[\int_{t=0}^T\int_{\R_x^d}\int_{\alpha=0}^1\Big( \beta\big( u(t,x,\alpha) 
    + \eta(x, u(t,x,\alpha); z) -  \beta\big( u(t,x, \alpha))
    \Big)\phi(t,x)  h(t,z) \,d\alpha\,dx\,dt\Big]. 
\end{align*} 
We now invoke \ref{A4} and uniform 
moment estimates in order to apply the 
bounded convergence theorem, with the result that
\begin{align*}
&\lim_{n\rightarrow \infty} E  \Big[\int_{t=0}^T \int_{\R_z} h(t,z) \chi_n(t,z)\,m(dz)\,dt\Big]\notag\\
& \quad
=E \Big[ \int_{t=0}^T\ \int_{\R_z} \Big[\int_{\R_x^d}\int_{\alpha=0}^1 \Big( \beta\big( u(t,x,\alpha) 
+ \eta(x, u(t,x,\alpha); z) 
\\&\hspace{6cm}-  \beta\big( u(t,x, \alpha))\Big)\phi(t,x) h(t,z) 
\, d\alpha\, dx\Big] \,m(dz)\, dt\Big].
\end{align*} 
This completes the proof. 
\end{proof}

As a consequence of the discussion 
prior to Lemma \ref{weak-conv-integrand}, the 
It\^{o}-L\'{e}vy integrals $\int_{t=0}^T\int_{\R_z} \chi_n(t,z)\, \tilde{N}(dz,dt)$ 
converges weakly to $\int_{t=0}^T\int_{\R_z} \chi(t,z)\, \tilde{N}(dz,dt)$ 
in $L^2\big((\Omega, \mathcal{F}_T); \R\big)$. 
Hence, we have the following 

\begin{cor}\label{cor:martingale} 
For every $B\in \mathcal{F}_T$,
\begin{align*}
  &  \lim_{n\rightarrow \infty} E\Big[{\bf 1}_{B} \int_{t=0}^T 
  \int_{\R_z}\int_{\R_x^d} \Big( \beta\big( u_{\eps_n}(t,x) + \eta(x, u_{\eps_n}(t,x); z) 
  -  \beta\big( u_{\eps_n}(t,x))\Big)\phi(t,x)\, dx\, \tilde{N}(dz,dt) \Big]\\
    &
    = E\Big[{\bf 1}_{B} \int_{t=0}^T \int_{\R_z}\int_{\R_x^d}
    \int_{\alpha=0}^1 \Big( \beta\big( u(t,x,\alpha) 
    + \eta(x, u(t,x,\alpha); z) 
    -  \beta\big( u(t,x,\alpha))\Big)\phi(t,x)\,d\alpha\, dx\,\tilde{N}(dz,dt) \Big].
\end{align*}
\end{cor}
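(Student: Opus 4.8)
The assertion is an immediate corollary of Lemma \ref{weak-conv-integrand} once it is combined with the isometry recorded in the discussion just preceding it, so the plan is simply to spell out that combination. First I would recall that the It\^{o}-L\'{e}vy integral operator
$$
\psi \longmapsto \int_{t=0}^T\int_{\R_z}\psi(t,z)\,\tilde{N}(dz,dt)
$$
is, by the It\^{o}-L\'{e}vy isometry together with the martingale representation theorem for marked point processes, an isometric isomorphism from $L^2\big((\Gamma,\mathcal{G},\varsigma);\R\big)$ onto $L^2\big((\Omega,\mathcal{F}_T);\R\big)$. Since every isometry between Hilbert spaces carries weakly convergent sequences to weakly convergent sequences, and since Lemma \ref{weak-conv-integrand} provides the weak convergence $\chi_n \rightharpoonup \chi$ in $L^2\big((\Gamma,\mathcal{G},\varsigma);\R\big)$, I would conclude that
$$
\int_{t=0}^T\int_{\R_z}\chi_n(t,z)\,\tilde{N}(dz,dt) \;\rightharpoonup\; \int_{t=0}^T\int_{\R_z}\chi(t,z)\,\tilde{N}(dz,dt)
\quad\text{weakly in } L^2\big((\Omega,\mathcal{F}_T);\R\big).
$$

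The second step is to test this weak convergence against a suitable element of $L^2\big((\Omega,\mathcal{F}_T);\R\big)$. For $B\in\mathcal{F}_T$ the indicator $\mathbf{1}_B$ is bounded and hence, $P$ being a probability measure, lies in $L^2\big((\Omega,\mathcal{F}_T);\R\big)$ with $E[\mathbf{1}_B^2]=P(B)\le 1$. Pairing the weak limit above against $\mathbf{1}_B$ therefore yields
$$
\lim_{n\to\infty}E\Big[\mathbf{1}_B\int_{t=0}^T\int_{\R_z}\chi_n(t,z)\,\tilde{N}(dz,dt)\Big]
= E\Big[\mathbf{1}_B\int_{t=0}^T\int_{\R_z}\chi(t,z)\,\tilde{N}(dz,dt)\Big].
$$
Unwinding the definitions of $\chi_n$ and $\chi$ given just before and inside Lemma \ref{weak-conv-integrand} --- namely $\chi_n(t,z)=\int_{\R_x^d}\big(\beta(u_{\eps_n}+\eta)-\beta(u_{\eps_n})\big)\phi\,dx$ and the corresponding $\alpha$-averaged expression for $\chi$ --- reproduces verbatim the two sides of the claimed identity, completing the argument.

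I do not expect a genuine obstacle here, since the analytic heart of the matter is already contained in Lemma \ref{weak-conv-integrand}; this corollary merely transfers that weak convergence from the space of integrands to the space of It\^{o}-L\'{e}vy integrals and then reads it off along the particular test functionals $\mathbf{1}_B$. The only points meriting a word of care are that the two displayed stochastic integrals are literally $\int\chi_n\,\tilde{N}(dz,dt)$ and $\int\chi\,\tilde{N}(dz,dt)$ --- so that no extra stochastic Fubini step is needed, the spatial integral $\int_{\R_x^d}$ already sitting inside the integrand --- and that $\mathbf{1}_B$ belongs to $L^2\big((\Omega,\mathcal{F}_T);\R\big)$; both are immediate.
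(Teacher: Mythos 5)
Your argument is correct and is essentially identical to the paper's: the paper also deduces the corollary from Lemma \ref{weak-conv-integrand} via the isometric isomorphism property of the It\^{o}-L\'{e}vy integral operator (established through the It\^{o}-L\'{e}vy isometry and the martingale representation theorem), the preservation of weak convergence under that isometry, and the observation that ${\bf 1}_B\in L^2\big((\Omega,\mathcal{F}_T);\R\big)$. You have merely written out in full the steps the paper compresses into one sentence.
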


\begin{proof}
The proof is obvious in view of the above 
discussion as ${\bf 1}_{B}\in L^2\big((\Omega, \mathcal{F}_T); \R\big)$.
\end{proof}
At this point we fix a  nonnegative test 
function $ \psi\in C_c^\infty([0, \infty)\times \R^d)$,  $B\in \mathcal{F}_T$, 
and a convex entropy pair $(\beta,\zeta)$. 
Let $\zeta_\eps$ be the entropy flux based on $F_\eps$,  
and thus  $\zeta_\eps$ is approximating $\zeta$.  
We use the It\^{o}-L\'{e}vy formula to compute $\beta(u_\eps(t, x))$,  apply the 
product rule to $\psi(t,x)\beta(u_\eps(t,x))$, and then
integrate. The result is
\begin{align}
 0 & \le E\Big[{\bf 1}_{B}\int_{\R_x^d} \beta(u_0^{\eps_n}(x))\psi(0,x)\,dx\Big]
-{\eps_n} E\Big[{\bf 1}_{B}\int_{\Pi_T}\beta^\prime(u_{\eps_n}(t,x))
\grad u_{\eps_n}(t,x).\grad \psi(t,x)\,dx\,dt\Big] \notag \\
& \quad 
+E\Big[{\bf 1}_{B} \int_{\Pi_T} \Big(\beta(u_{\eps_n}(t,x))\partial_t \psi(t,x)
+\zeta_{\eps_n}(u_{\eps_n}(t,x))\cdot\grad\psi(t,x)\Big) \,dt\,dx\Big] \notag\\
& \quad + E\Big[{\bf 1}_{B}\int_{|z|>0}\int_{\Pi_T}\Big(\beta(u_{\eps_n}(t,x) 
+\eta_{\eps_n}(x,u_{\eps_n}(t,x);z))-\beta(u_{\eps_n}(t,x))\Big)
\psi(t,x)\,dx\,\tilde{N}(dz,dt)\Big]\notag\\
&\quad + E\Big[{\bf 1}_{B}\int_{|z|>0} \int_{\Pi_T}\Big(\beta(u_{\eps_n}(t,x) 
+\eta_{\eps_n}(x,u_{\eps_n}(t,x);z))-\beta(u_{\eps_n}(t,x))
 \notag \\
&\hspace{6cm} 
-\eta_{\eps_n}(x,u_{\eps_n}(t,x);z)\beta^\prime(u_{\eps_n}(t,x))\Big) 
\psi(t,x)\,dx\,m(dz)\,dt\Big].
\label{viscous-measure-inequality}
\end{align} 
With the help of uniform moment estimates and \eqref{eq:regu_error-eta}, it follows
from \eqref{viscous-measure-inequality} that
\begin{align}
0& \le E\Big[{\bf 1}_{B}\int_{\R_x^d} \beta(u_0^{\eps_n}(x))\psi(0,x)\,dx\Big]
-{\eps_n} E\Big[{\bf 1}_{B}\int_{\Pi_T}\beta^\prime(u_{\eps_n}(t,x))
\grad u_{\eps_n}(t,x).\grad \psi(t,x)\,dx\,dt\Big] \notag \\
& \quad +E\Big[{\bf 1}_{B} \int_{\Pi_T} \Big(\beta(u_{\eps_n}(t,x))\partial_t \psi(t,x)
+\zeta(u_{\eps_n}(t,x))\cdot\grad\psi(t,x)\Big) \,dt\,dx\Big] \notag\\
& \quad + E\Big[{\bf 1}_{B}\int_{t=0}^T\int_{|z|>0}\int_{\R_x^d}
\Big(\beta(u_{\eps_n}(t,x) +\eta(x,u_{\eps_n}(t,x);z))
-\beta(u_{\eps_n}(t,x))\Big)\psi(t,x)\,dx\,\tilde{N}(dz,dt)\Big]\notag\\
& \quad + E\Big[{\bf 1}_{B} \int_{t=0}^T\int_{|z|>0}\int_{\R_x^d}
\Big(\beta(u_{\eps_n}(t,x) +\eta(x,u_{\eps_n}(t,x);z))
\notag \\
&\hspace{3cm} 
 -\beta(u_{\eps_n}(t,x))-\eta(x,u_{\eps_n}(t,x);z)
 \beta^\prime(u_{\eps_n}(t,x))\Big) \psi(t,x)\,dx\,dt\,m(dz)\Big]
+ o(\eps_n).
\label{viscous-measure-inequality-2}
\end{align} 
We now pass to the limit $\eps_n \rightarrow 0$ in 
\eqref{viscous-measure-inequality-2}. 
Thanks to \eqref{gradient-esti}, 
\begin{align}\label{eq:passage-1} 
\lim_{\eps_n \rightarrow 0 } {\eps_n} E\Big[{\bf 1}_{B}\int_{\Pi_T}
\beta^\prime(u_{\eps_n}(t,x))
\grad u_{\eps_n}(t,x)\cdot \grad \psi(t,x)\,dx\,dt\Big] = 0.
\end{align} 
Moreover, it is straightforward to see that 
\begin{align}\label{eq:passage-2} 
\lim_{\eps_n \rightarrow 0 } E\Big[{\bf 1}_{B}\int_{\R_x^d} 
\beta(u_0^{\eps_n}(x))\psi(0,x)\,dx\Big]
= E\Big[{\bf 1}_{B}\int_{\R_x^d} \beta(u_0(x))\psi(0,x)\,dx\Big]. 
\end{align} 
We now recall that $ L^2\big(\Theta, \Sigma, \mu\big)$ is 
closed subspace of the larger space $L^2\big(0,T; L^2({(\Omega, \mathcal{F}_T)}, L^2(\R^d))\big)$, 
and hence weak convergence in $ L^2\big(\Theta, \Sigma, \mu\big)$ would 
imply weak convergence in $L^2\big(0,T; L^2({(\Omega, \mathcal{F}_T)}, L^2(\R^d))\big)$. 

In addition, for $B\in \mathcal{F}_T$, the functions 
${\bf 1}_B\partial_t \psi(t,x), {\bf 1}_B \partial_{x_i} \psi(t,x), {\bf 1}_B \psi(t,x)$ 
are all members of \newline$ L^2\big(0,T; L^2({(\Omega, \mathcal{F}_T)}, L^2(\R^d))\big)$, and 
hence by Lemmas \ref{conv-young-measure} 
and \ref{lem:measure-conversion}, we have 
\begin{align}
\notag
& \lim_{\eps_n \rightarrow 0 }E\Big[{\bf 1}_{B} \int_{\Pi_T} 
\Big(\beta(u_{\eps_n}(t,x))\partial_t \psi(t,x)+\zeta(u_{\eps_n}(t,x))\cdot 
\grad\psi(t,x)\Big) \,dt\,dx\Big] \\
\label{eq:passage-3} 
& \quad = E\Big[{\bf 1}_{B} \int_{\Pi_T}\int_{\alpha=0}^1 
\Big(\beta(u(t,x,\alpha))\partial_t \psi(t,x)
+\zeta(u(t,x,\alpha)).\grad\psi(t,x)\Big) \,d\alpha\,dt\,dx\Big],
\end{align}
and 
\begin{align}
\notag 
& \lim_{\eps_n \rightarrow 0 }E\Big[{\bf 1}_{B} 
\int_{\Pi_T}\int_{|z|>0}\Big(\beta(u_{\eps_n}(t,x) 
+\eta(x,u_{\eps_n}(t,x);z))-\beta(u_{\eps_n}(t,x))
 \notag \\
&\hspace{5cm} -\eta(x,u_{\eps_n}(t,x);z)\beta^\prime(u_{\eps_n}(t,x))
\Big) \psi(t,x)\,m(dz)\,dt\,dx\Big]\notag\\
& \quad = E\Big[{\bf 1}_{B} \int_{\Pi_T}\int_{\alpha=0}^1\int_{|z|>0}
\Big(\beta(u(t,x,\alpha) +\eta(x,u(t,x,\alpha);z))-\beta(u(t,x,\alpha))
 \notag \\
\label{eq:passage-4} 
&\hspace{5cm} 
-\eta(x,u(t,x,\alpha);z)\beta^\prime(u(t,x,\alpha))\Big) 
\psi(t,x)\,m(dz)\,d\alpha\,dt\,dx\Big].
\end{align} 

Now, combining \eqref{eq:passage-1}-\eqref{eq:passage-4} 
along with Corollary \ref{cor:martingale}, 
passing to the limit $\eps_n\downarrow 0$ in 
\eqref{viscous-measure-inequality-2} gives 
\begin{align}
0& \le E\Big[{\bf 1}_{B}\int_{\R_x^d} \beta(u_0 (x))\psi(0,x)\,dx\Big]\notag \\
& +E\Big[ {\bf 1}_{B}\int_{\Pi_T} \int_{\alpha=0}^1 \Big(\beta(u(t,x,\alpha))\partial_t \psi(t,x)
+\zeta(u(t,x,\alpha))\cdot\grad\psi(t,x)\Big)\,d\alpha \,dt\,dx\Big] \notag\\
& + E\Big[{\bf 1}_{B}\int_{\Pi_T}\int_{|z|>0} 
\int_{\alpha=0}^1\Big(\beta(u(t,x,\alpha) +\eta(x,u(t,x,\alpha);z))
-\beta(u(t,x,\alpha))\Big)\psi(t,x)\,d\alpha\,\tilde{N}(dz,dt)\,dx\Big]\notag\\
&+ E\Big[{\bf 1}_{B}\int_{\Pi_T}\int_{|z|>0} 
\int_{\alpha=0}^1\Big(\beta(u(t,x,\alpha) +\eta(x,u(t,x,\alpha);z))
-\beta(u(t,x,\alpha)) \notag \\
&\hspace{5cm} 
-\eta(x,u(t,x,\alpha);z)\beta^\prime(u(t,x,\alpha))\Big)   
\psi(t,x) d\alpha\,m(dz)\,dt\,dx\Big].
\label{viscous-measure_1-inequality}
\end{align} 

\begin{proof}[Proof of the Theorem \ref{thm:existenc}]
The predictability of $u(t,\cdot,\cdot)$ follows from 
Lemma \ref{lem:measurability_young_solution}, and the 
uniform moment estimate together with the 
classical Fatou lemma, which gives 
\begin{align}
\sup_{0\le t\le T} E\Big[|| u(t,\cdot,\cdot)||_p^p\Big] < \infty
\qquad \text{for $p=2,3,4$,\ldots.}
\notag.
\end{align}
For any $ 0\le \psi\in C_c^\infty([0, \infty)\times \R^d)$ and 
any convex entropy flux-pair $(\beta,\zeta)$, \eqref{viscous-measure_1-inequality} 
holds for each set $B\in \mathcal{F}_T$. Hence 
\begin{align}
&\int_{\R_x^d} \beta(u_0 (x))\psi(0,x)\,dx
+\int_{\Pi_T} \int_{\lambda=0}^1 \Big(\beta(u(t,x,\lambda))
\partial_t \psi(t,x)+\zeta(u(t,x,\lambda))\cdot\grad\psi(t,x)\Big)
\,d\lambda \, dt\,dx \notag\\
& \quad + \int_{\Pi_T}\int_{|z|>0} \int_{\lambda=0}^1\Big(\beta \big(u(t,x,\lambda) 
+\eta(x,u(t,x,\lambda);z)\big)-\beta(u(t,x,\lambda)) \notag \\
&\hspace{5cm} 
-\eta(x,u(t,x,\lambda);z)\beta^\prime(u(t,x,\lambda))\Big)   
\psi(t,x)\, d\lambda\,m(dz)\,dt\,dx \notag \\
& \quad + \int_{\Pi_T}\int_{|z|>0} \int_{\lambda=0}^1
\Big(\beta \big(u(t,x,\lambda) +\eta(x,u(t,x,\lambda);z)\big)
-\beta(u(t,x,\lambda))\Big)\psi(t,x)\,d\lambda\,\tilde{N}(dz,dt)\,dx\notag\\
& \ge 0 \quad \text{$P$-a.s.,}\notag 
\end{align} 
which completes the proof.
\end{proof}

\section{Uniqueness and existence of entropy solutions} \label{uniqueness}
A natural strategy for proving uniqueness in the presence of noise is to 
adapt the Kru{\v{z}}kov approach for deterministic equations. 
The main difficulty lies in ``doubling" the time variable, which gives rise to 
stochastic integrands that are anticipative and hence cannot be 
interpreted in the usual It\^{o} sense. One way to get around this problem 
seems to be through the vanishing viscosity regularization.  For conservation laws 
with Brownian white noise, there are two routes based on this 
strategy. The first one is by introducing the so called {\it strong entropy condition}
(see \cite{BisMaj,Chen:2012fk,nualart:2008}) and then 
showing that the vanishing viscosity limit obeys this condition.
The other one uses a more direct approach (see \cite{BaVaWit}) by 
comparing the entropy solution against the solution of the viscous problem
and subsequently sending the viscosity parameter to zero, relying 
on ``weak compactness" of the viscous approximations. In the 
presence of L\'{e}vy noise, the paths of the 
solution are discontinuous and the Feng-Nualart strategy of introducing a 
``strong entropy condition" has proven difficult to implement. 
However, as it will be detailed in the sequel, the approach
of directly comparing an entropy solution against that of a weakly converging 
sequence of viscous approximations is successful.  

Let $\rho$ and $\varrho$ be the standard nonnegative 
mollifiers on $\R$ and $\R^d$ respectively such that 
$\supp(\rho) \subset [-1,0]$ and $\supp(\varrho) = B_1(0)$.  
We define  $\rho_{\delta_0}(r) = \frac{1}{\delta_0}\rho(\frac{r}{\delta_0})$ 
and $\varrho_{\delta}(x) = \frac{1}{\delta^d}\varrho(\frac{x}{\delta})$, 
where $\delta$ and $\delta_0$ are two positive constants.  Given a  nonnegative test 
function $\psi\in C_c^{1,2}([0,\infty)\times \rd)$ and two 
positive constants $\delta$ and $ \delta_0 $, we define 
\begin{align}\notag
\phi_{\delta,\delta_0}(t,x, s,y) = \rho_{\delta_0}(t-s) 
\varrho_{\delta}(x-y) \psi(s,y).
\end{align}
Clearly $ \rho_{\delta_0}(t-s) \neq 0$ only 
if $s-\delta_0 \le t\le s$ and hence $\phi_{\delta,\delta_0}(t,x; s,y)= 0$ 
outside  $s-\delta_0 \le t\le s$. 
 
Let $v(t,x,\alpha)$ be a generalized entropy 
solution of \eqref{eq:levy_stochconservation_laws}. 
Moreover, let $\varsigma$ be the standard symmetric 
nonnegative mollifier on $\R$ with support in $[-1,1]$ 
and $\varsigma_l(r)= \frac{1}{l} \varsigma(\frac{r}{l})$ 
for $l > 0$. We use the generic $\beta$ for the 
functions $\beta_{\vartheta}$ introduced in Section \ref{technical}.
Given $k\in \R$, the function 
$\beta(\cdot-k)$ is a smooth convex function 
and $(\beta(\cdot-k), F^\beta(\cdot, k))$ is a 
convex entropy pair. 

We now write the entropy inequality 
for $v(t,x,\alpha)$, based on the 
entropy pair $(\beta(\cdot-k), F^\beta(\cdot, k))$, and 
then multiply by $\varsigma_l(u_\eps(s,y)-k)$, integrate with 
respect to $ s, y, k$ and take the expectation. The result is
\begin{align}
0\le  & E \Big[\int_{\Pi_T}\int_{\R_x^d}\int_{\R_k} \beta(v(0,x)-k)
\phi_{\delta,\delta_0}(0,x,s,y) \varsigma_l(u_\eps(s,y)-k)\,dk \,dx\,dy\,ds\Big] \notag \\
&  + E \Big[\int_{\Pi_T} \int_{\Pi_T} \int_{\alpha=0}^1 
\int_{\R_k} \beta(v(t,x,\alpha)-k)\partial_t \phi_{\delta,\delta_0}
\varsigma_l(u_\eps(s,y)-k)\,dk\, d\alpha \,dx\,dt\,dy\,ds \Big]\notag \\ 
& + E \Big[ \int_{\Pi_T}\int_{\R_k} \int_{t=0}^T\int_{|z|>0} 
\int_{\alpha=0}^1 \int_{\R_x^d}\Big(\beta \big(v(t,x,\alpha) 
+\eta(x,v(t,x,\alpha);z)-k\big)-\beta(v(t,x,\alpha)-k)\Big) \notag \\
& \hspace{5cm} 
\times \phi_{\delta,\delta_0}\,dx\,d\alpha \,\tilde{N}(dz,dt)
\varsigma_l(u_\eps(s,y)-k)\,dk \,dy\,ds \Big] \notag\\
& + E \Big[\int_{\Pi_T} \int_{t=0}^T\int_{|z|>0}\int_{\R_x^d} 
\int_{\R_k} \int_{\alpha=0}^1 \Big(\beta \big(v(t,x,\alpha) 
+\eta(x,v(t,x,\alpha);z)-k\big)-\beta(v(t,x,\alpha)-k) \notag \\
 & \hspace{4.5cm}-\eta(x,v(t,x,\alpha);z) \beta^{\prime}(v(t,x,\alpha)-k)\Big)
 \phi_{\delta,\delta_0}(t,x;s,y) \notag \\
&\hspace{6cm}\times \varsigma_l(u_\eps(s,y)-k)\,d\alpha\,dk\,dx\,m(dz)\,dt\,dy\,ds\Big]\notag \\
& + E \Big[\int_{\Pi_T}\int_{\Pi_T}\int_{\alpha=0}^1 \int_{\R_k} 
 F^\beta(v(t,x,\alpha),k) \cdot \grad_x \varrho_\delta(x-y)\,\psi(s,y)\,\rho_{\delta_0}(t-s)\notag \\
 &\hspace{4cm} \times \varsigma_l(u_\eps(s,y)-k)\,dk
\,d\alpha\,dx\,dt\,dy\,ds\Big] \notag \\
& \qquad 
=:  I_1 + I_2 + I_3 +I_4 + I_5.
\label{stochas_entropy_1}
\end{align}
 
We now apply the It\^{o}-L\'{e}vy formula 
to \eqref{eq:levy_stochconservation_laws-viscous-new}, giving
\begin{align}
 0\le  &\, E \Big[\int_{\Pi_T}\int_{\R_x^d} \int_{\alpha=0}^1\int_{\R_k} 
 \beta(u_\eps(0,y)-k)\phi_{\delta,\delta_0}(t,x,0,y) \varsigma_l(v(t,x,\alpha)-k)
 \,dk\,d\alpha \,dx\,dy\,dt\Big] \notag \\
 &  + E \Big[\int_{\Pi_T} \int_{\Pi_T} \int_{\alpha=0}^1 \int_{\R_k} 
 \beta(u_\eps(s,y)-k)\partial_s \phi_{\delta,\delta_0}
 \varsigma_l(v(t,x,\alpha)-k)\,dk\, d\alpha \,dy\,ds\,dx\,dt\Big] \notag \\ 
 & + E \Big[\int_{\Pi_T} \int_{s=0}^T\int_{|z|>0} \int_{\R_k} 
 \int_{\alpha=0}^1 \int_{\R_y^d}\Big(\beta \big(u_\eps(s,y) +\eta_\eps(y,u_\eps(s,y);z)-k\big)
 -\beta(u_\eps(s,y)-k)\Big) \notag \\
 & \hspace{6cm} 
 \times \phi_{\delta,\delta_0}
 \varsigma_l(v(t,x,\alpha)-k)\,dy\,d\alpha\,dk \,\tilde{N}(dz,ds)\,dx\,dt \Big]\notag\\
 & + E \Big[\int_{\Pi_T} \int_{s=0}^T\int_{|z|>0}\int_{\R_y^d} 
 \int_{\R_k}\int_{\alpha=0}^1 \Big(\beta \big(u_\eps(s,y) +\eta_\eps(y,(u_\eps(s,y);z)-k\big)
 -\beta(u_\eps(s,y)-k) \notag \\
  & \hspace{6cm}-\eta_\eps(y,u_\eps(s,y);z) \beta^{\prime}(u_\eps(s,y)-k)\Big)
  \phi_{\delta,\delta_0}(t,x;s,y) \notag \\
 &\hspace{6cm}
 \times \varsigma_l(v(t,x,\alpha)-k)\,d\alpha\,dk\,dy\,m(dz)\,ds\,dx\,dt\Big]\notag \\
 & + E\Big[\int_{\Pi_T}\int_{\Pi_T}\int_{\alpha=0}^1 \int_{\R_k}  
 F_\eps^\beta(u_\eps(s,y),k)\cdot \grad_y\varrho_\delta(x-y) \psi(s,y)
  \rho_{\delta_0}(t-s )  \notag \\
  & \hspace {6cm}\times \varsigma_l(v(t,x,\alpha)-k)\,dk
 \,d\alpha\,dx\,dt\,dy\,ds\Big] \notag \\
 & + E \Big[\int_{\Pi_T}\int_{\Pi_T}\int_{\alpha=0}^1 \int_{\R_k}  
 F_\eps^\beta(u_\eps(s,y),k) \cdot \grad_y \psi(s,y) \varrho_\delta(x-y) 
  \rho_{\delta_0}(t-s ) \notag \\
  & \hspace{6cm}\times \varsigma_l(v(t,x,\alpha)-k)\,dk\,d\alpha\,dx\,dt\,dy\,ds\Big] \notag \\
 & -\eps  E \Big[\int_{\Pi_T} \int_{\Pi_T} \int_{\alpha=0}^1 \int_{\R_k} 
 \beta^\prime(u_\eps(s,y)-k)\grad_y u_\eps(s,y) \cdot \grad_y  \phi_{\delta,\delta_0}
 \varsigma_l(v(t,x,\alpha)-k)\,dk\, d\alpha \,dy\,ds\,dx\,dt\Big],
\label{stochas_entropy_2}
\end{align} 
where $ F_\eps^\beta(a,b) = \int_a^b \beta^\prime(\sigma-b)F^\prime_\eps(\sigma)\,d\sigma$. 
It follows by direct computations that there is $p\in \mathbb{N}$ such that
$$
\big|F_\eps^\beta(a,b)- F^\beta(a, b) \big|\le C\eps \big(1+|a|^{2p}+|b|^{2p}\big). 
$$ 

In view of the uniform moment estimates \eqref{uni:moment-esti}, it follows from 
\eqref{stochas_entropy_2} that 
\begin{align}
0\le  & E \Big[\int_{\Pi_T}\int_{\R_x^d} \int_{\alpha=0}^1\int_{\R_k} 
\beta(u_\eps(0,y)-k)\phi_{\delta,\delta_0}(t,x,0,y) 
\varsigma_l(v(t,x,\alpha)-k)\,dk\,d\alpha \,dx\,dy\,dt\Big] \notag \\
 &  + E \Big[\int_{\Pi_T} \int_{\Pi_T} \int_{\alpha=0}^1 \int_{\R_k} 
 \beta(u_\eps(s,y)-k)\partial_s \phi_{\delta,\delta_0}
 \varsigma_l(v(t,x,\alpha)-k)\,dk\, d\alpha \,dy\,ds\,dx\,dt\Big]\notag \\ 
 & + E \Big[\int_{\Pi_T} \int_{s=0}^T\int_{|z|>0} \int_{\R_k} 
 \int_{\alpha=0}^1\int_{\R_y^d}\Big(\beta \big(u_\eps(s,y) +\eta_\eps(y,u_\eps(s,y);z)-k\big)
 -\beta(u_\eps(s,y)-k)\Big) \notag \\
 & \hspace{6cm} 
 \times \phi_{\delta,\delta_0}\varsigma_l(v(t,x,\alpha)-k)
 \,dy\,d\alpha \,dk\,\tilde{N}(dz,ds)\,dx\,dt \Big]\notag\\
 & + E \Big[\int_{\Pi_T} \int_{s=0}^T\int_{|z|>0}\int_{\R_y^d} \int_{\R_k}\int_{\alpha=0}^1 
 \Big(\beta \big(u_\eps(s,y) +\eta_{\eps}(y,(u_\eps(s,y);z)-k\big)
 -\beta(u_\eps(s,y)-k) \notag \\
 & \hspace{6cm}-\eta_{\eps}(y,u_\eps(s,y);z)  \beta^{\prime}(u_\eps(s,y)-k)\Big)
 \phi_{\delta,\delta_0}(t,x;s,y) \notag \\
 &\hspace{6cm}
 \times \varsigma_l(v(t,x,\alpha)-k)\,d\alpha\,dk\,dy\,m(dz)\,ds\,dx\,dt \Big]\notag \\
 & + E \Big[\int_{\Pi_T}\int_{\Pi_T}\int_{\alpha=0}^1 
 \int_{\R_k}  F^\beta(u_\eps(s,y),k)  \cdot \grad_y\varrho_\delta(x-y) \psi(s,y)
 \rho_{\delta_0}(t-s )  \notag \\
  & \hspace {6cm}
  \times \varsigma_l(v(t,x,\alpha)-k)\,dk\,d\alpha\,dx\,dt\,dy\,ds\Big] \notag \\
 & + E \Big[\int_{\Pi_T}\int_{\Pi_T}\int_{\alpha=0}^1\int_{\R_k}  
 F^\beta(u_\eps(s,y),k) \cdot \grad_y \psi(s,y) \varrho_\delta(x-y) 
  \rho_{\delta_0}(t-s ) \notag \\
  & \hspace{6cm}
  \times \varsigma_l(v(t,x,\alpha)-k)\,dk \,d\alpha\,dx\,dt\,dy\,ds\Big] \notag \\
  -& \eps  E\Big[ \int_{\Pi_T} \int_{\Pi_T} \int_{\alpha=0}^1 
  \int_{\R_k} \beta^\prime(u_\eps(s,y)-k)\grad_y u_\eps(s,y)\cdot \grad_y  \phi_{\delta,\delta_0}
 \varsigma_l(v(t,x,\alpha)-k)\,dk\, d\alpha \,dy\,ds\,dx\,dt\Big] \notag 
 \\ &  \hspace{4cm} +C(\delta, \beta,\psi)o(\eps) \notag \\ 
 & \qquad 
 =:  J_1 + J_2 + J_3 +J_4 + J_5 + J_6 + J_7
+C(\delta, \beta,\psi)o(\eps)
\label{stochas_entropy_3}
\end{align}
where $C(\delta, \beta, \psi)$ is a constant 
depending only the quantities in the parentheses.

We now add \eqref{stochas_entropy_1} and \eqref{stochas_entropy_3}, 
and compute limits with respect to the various parameters involved.   
\begin{lem}\label{stochastic_lemma_1}
It holds that 
\begin{align}
  I_1 + J_1   & \underset{\delta_0 \goto 0} \longrightarrow E \Big[\int_{\R_y^d}\int_{\R_x^d}\int_{\R_k} 
  \beta(v(0,x)-k)\psi(0,y)\varrho_{\delta} (x-y) \varsigma_l(u_\eps(0,y)-k)\,dk\,dx\,dy \Big]\notag\\
  &\underset{l \goto 0} \longrightarrow  E \Big[\int_{\R_y^d}\int_{\R_x^d}
  \beta(v(0,x)-u_\eps(0,y))\psi(0,y)\varrho_{\delta} (x-y)\,dx\,dy\Big]\notag\\
  &\underset{\eps \goto 0} \longrightarrow  E \Big[\int_{\R_y^d}\int_{\R_x^d} 
  \beta (v(0,x)-u(0,y))\psi(0,y)\varrho_{\delta} (x-y)\,dx \,dy\Big],\notag
\end{align}
and
\begin{align}
 \lim_{(\vartheta, \delta)\goto (0,0)} & E \Big[\int_{\R_y^d}
 \int_{\R_x^d} \beta_{\vartheta}(v(0,x)-u(0,y))
 \psi(0,y)\varrho_{\delta} (x-y)\,dx \,dy\Big] \notag \\
&= E \Big[\int_{\R_x^d} |v(0,x)-u(0,x)|\psi(0,x)\,dx\Big]. \notag
 \end{align}
\end{lem}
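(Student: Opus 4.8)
The plan is to take the limits in the indicated order, handling $I_1$ and $J_1$ separately and using the one-sided support $\supp(\rho)\subset[-1,0]$ decisively at the first stage. I would start by noting that $J_1=0$ for every $\delta_0>0$. Indeed, in $J_1$ the mollifier appears as $\rho_{\delta_0}(t-0)=\rho_{\delta_0}(t)$, which is supported in $[-\delta_0,0]$, whereas the time variable $t$ is integrated over $[0,T)$; the two sets meet only in $\{t=0\}$, a Lebesgue-null set, so the whole $t$-integral vanishes and $I_1+J_1=I_1$.

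For $I_1$ the factor is $\rho_{\delta_0}(0-s)=\rho_{\delta_0}(-s)$, supported in $s\in[0,\delta_0]$, and it acts as a one-sided approximate identity at $s=0^+$. Since $u_\eps$ has c\'{a}dl\'{a}g paths, for fixed $(\omega,y,k)$ the map $s\mapsto\psi(s,y)\varsigma_l(u_\eps(s,y)-k)$ is right-continuous at $s=0$ with value $\psi(0,y)\varsigma_l(u_0^\eps(y)-k)$, whence $\int_0^T\rho_{\delta_0}(-s)\psi(s,y)\varsigma_l(u_\eps(s,y)-k)\,ds\to\psi(0,y)\varsigma_l(u_\eps(0,y)-k)$. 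I would then move this limit past $E[\cdot]$ and the $dk\,dx\,dy$ integrations by dominated convergence; a uniform dominating function is available because $\int\rho_{\delta_0}=1$, $\psi$ and $\varrho_\delta$ have compact support, $\varsigma_l$ is bounded with $k$ confined to $|u_\eps(s,y)-k|\le l$, and the moment bounds \eqref{uni:moment-esti} control $\beta(v(0,x)-k)\le|v(0,x)-k|$. This gives the first arrow. The $l\to0$ limit is then routine: $\varsigma_l(u_\eps(0,y)-\cdot)$ is an approximate identity in $k$, so continuity of $\beta$ yields $\int_{\R_k}\beta(v(0,x)-k)\varsigma_l(u_\eps(0,y)-k)\,dk\to\beta(v(0,x)-u_\eps(0,y))$, again passed inside by dominated convergence to produce the second arrow.

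For the $\eps\to0$ limit I would use $u_\eps(0,\cdot)=u_0^\eps\to u_0=u(0,\cdot)$ together with the global Lipschitz bound on $\beta=\beta_\vartheta$: since $\big|\beta(v(0,x)-u_0^\eps(y))-\beta(v(0,x)-u_0(y))\big|\le|u_0^\eps(y)-u_0(y)|$, integrating against the compactly supported kernel $\psi(0,y)\varrho_\delta(x-y)$ and invoking $u_0^\eps\to u_0$ in $L^2$ (with the moment bounds to justify the expectation) delivers the third arrow. For the final joint limit I would send $\vartheta\to0$ first and $\delta\to0$ afterwards. By \eqref{eq:approx to abosx} one has $0\le|r|-\beta_\vartheta(r)\le M_1\vartheta$, so replacing $\beta_\vartheta$ by $|\cdot|$ costs at most $M_1\vartheta\,E\big[\int\psi(0,y)\,dy\big]$, an error uniform in $\delta$. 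To remove $\varrho_\delta$ I would substitute $z=x-y$, writing the inner integral as $\int_{\R^d_z}|v(0,y+z)-u(0,y)|\varrho_\delta(z)\,dz$, and then bound the discrepancy from $|v(0,y)-u(0,y)|$ by $\int_{\R^d_z}|v(0,y+z)-v(0,y)|\varrho_\delta(z)\,dz$; continuity of translations in $L^1$ (valid since $v(0,\cdot)\in L^1_{\loc}$ and $\psi(0,\cdot)$ is compactly supported), together with dominated convergence in $\omega$, forces this to $0$ and leaves $E\big[\int_{\R_x^d}|v(0,x)-u(0,x)|\psi(0,x)\,dx\big]$.

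The main obstacle is the very first step. Because the L\'{e}vy noise renders $u_\eps$ only right-continuous (c\'{a}dl\'{a}g) in time rather than continuous, $\rho_{\delta_0}$ cannot be treated as a symmetric mollifier; it is precisely the backward support $\supp(\rho)\subset[-1,0]$ that simultaneously annihilates $J_1$ and selects the correct one-sided value $u_\eps(0^+,y)=u_0^\eps(y)$ in $I_1$. Turning this one-sided convergence into a limit that may be interchanged with the expectation and the spatial integrals---as opposed to the standard mollifier passages in the later steps---is where the genuine care is required.
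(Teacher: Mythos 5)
Your proposal is correct in outline and coincides with the paper's proof in every step except the crucial first one, where you take a genuinely different route. For the $\delta_0\goto 0$ limit you exploit the one-sided support of $\rho$ to kill $J_1$ and to select the value at $s=0^+$, and then argue \emph{pathwise}: since $u_\eps$ is c\'{a}dl\'{a}g, $s\mapsto \varsigma_l(u_\eps(s,y)-k)$ is right-continuous at $s=0$, and dominated convergence does the rest. The paper instead reduces the whole discrepancy to the single quantity
$E\big[\tfrac{1}{\delta_0}\int_0^{\delta_0}\int |u_\eps(s,y)-u_\eps(0,y)|\,\psi(0,y)\,dy\,ds\big]$
and sends it to zero by observing that Lemma \ref{lem:initial-cond} (the time-averaged attainment of the initial datum, itself a consequence of the entropy inequality) applies verbatim to $u_\eps$. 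What the paper's route buys is robustness: it needs no pathwise regularity at $t=0$ at all. Your route is shorter but rests on a claim that deserves care in this setting: the paper actually constructs $u_\eps$ as the \emph{predictable, c\'{a}gl\'{a}d} version of the mild solution (Lemma \ref{lem:convergence_proof}; Lemma \ref{lem:classical-solution} says c\'{a}dl\'{a}g, an internal inconsistency), and a c\'{a}gl\'{a}d path is not right-continuous in general, so $u_\eps(0^+,y)=u_\eps(0,y)$ is not automatic. It can be rescued — the stochastic convolution is stochastically continuous, so the jump at the fixed time $t=0$ vanishes almost surely and the right limit agrees with $u_0^\eps$ a.s. in $L^2(\rd)$ — but you should state this (and note that the right-continuity you then obtain is as an $L^2(\rd)$-valued path, which suffices since $\varsigma_l$ is Lipschitz and everything is tested against $\varrho_\delta(x-y)\psi(s,y)$). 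Likewise, "dominated convergence" over the shrinking measures $\rho_{\delta_0}(-s)\,ds$ is cleanest if you substitute $s=\delta_0 r$ and reduce to continuity at $0^+$ of $s\mapsto E\big[\int\!\!\int\!\!\int(\cdots)\,dk\,dx\,dy\big]$, using the uniform moment bounds \eqref{uni:moment-esti} for uniform integrability. The remaining limits — the approximate identity in $k$ for $l\goto 0$, the Lipschitz bound $\|\beta'\|_\infty$ together with $u_0^\eps\goto u_0$ for $\eps\goto 0$, the uniform-in-$\delta$ error $|\beta_\vartheta(r)-|r||\le M_1\vartheta$, and continuity of translations in $L^1$ with bounded convergence in $\omega$ for the joint $(\vartheta,\delta)$ limit — are exactly the paper's arguments.
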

\begin{proof}
The first part of the proof is divided into three steps, and we note that $J_1 = 0$. 

{\textbf{Step 1:}}  In this step, we want 
to let $\delta_0 \goto 0$. For this, let
\begin{align}
 \mathcal{A}_1:=& E \Big[\int_{\Pi_T}\int_{\R_x^d}\int_{\R_k} \beta(v(0,x)-k)\psi(s,y)\varrho_\delta(x-y)\,\rho_{\delta_0}(-s) \varsigma_l(u_\eps(s,y)-k)\,dk \,dx\,dy\,ds\Big] \notag \\
&- E \Big[\int_{\R_y^d}\int_{\R_x^d}\int_{\R_k} 
  \beta(v(0,x)-k)\psi(0,y)\varrho_{\delta} (x-y) \varsigma_l(u_\eps(0,y)-k)\,dk\,dx\,dy\Big]\notag\\
  =&E\Big[\int_{\Pi_T}\int_{\R_y^d}\int_{\R_k} \beta(v(0,x)- u_\eps(s,y)+k)\Big(\psi(s,y)-\psi(0,y)\Big)
  \varrho_\delta(x-y)\notag \\
  &\hspace{6cm}\times \rho_{\delta_0}(-s) \varsigma_l(k)\,dk \,dx\,dy\,ds\Big] \notag \\
&+ E \Big[\int_{\Pi_T}\int_{\R_x^d}\int_{\R_k} 
  \Big(\beta(v(0,x)- u_\eps(s,y)+k) - \beta(v(0,x)- u_\eps(0,y)+k)\Big) \psi(0,y)\notag \\
  &\hspace{4cm} \times \varrho_{\delta} (x-y)
 \rho_{\delta_0}(-s) \varsigma_l(k)\,dk\,dx\,dy\, ds \Big]\notag.
 \end{align}
Since support $ \psi(s,\cdot)\subset K$, we have
\begin{align}
   \big|\mathcal{A}_1\big| & 
   \le ||\partial_t \psi||_{\infty} E \Big[ \int_{\Pi_T}\int_{\R_x^d}
   \int_{\R_k} \chi_K(y) \beta(v(0,x)- u_\eps(s,y)+k)
   \varrho_\delta(x-y)\notag \\
  &\hspace{6cm}\times s\rho_{\delta_0}(-s) \varsigma_l(k)
  \,dk \,dx\,dy\,ds\Big] \notag \\
  & \quad 
  + ||\beta^\prime ||_{\infty} E \Big[\int_{\Pi_T}\int_{\R_x^d}  
  |u_\eps(s,y)-u_\eps(0,y)|\psi(0,y)\varrho_{\delta} (x-y)
 \rho_{\delta_0}(-s) \,dx\,dy\, ds\Big]\notag \\
 & \le ||\partial_t \psi||_{\infty} \delta_0\,||\beta^\prime ||_{\infty} 
 E \Big[\int_{\Pi_T}\int_{\R_x^d}
  \chi_K(y) |v(0,x)- u_\eps(s,y)| 
   \rho_{\delta_0}(-s) \varrho_\delta(x-y) \, dx\,dy\,ds\Big]\notag \\ 
  & \quad 
  + ||\partial_t \psi||_{\infty} \delta_0\,||\beta^\prime ||_{\infty}\,l\, 
  E \Big[\int_{\R_y^d}\int_{\R_x^d} \chi_K(y)
  \varrho_\delta(x-y)\, dx\,dy \Big]\notag \\
  & \quad + ||\beta^\prime ||_{\infty} E \Big[\int_{\Pi_T}\int_{\R_x^d} 
   |u_\eps(s,y)-u_\eps(0,y)|\psi(0,y)\varrho_{\delta} (x-y)
 \rho_{\delta_0}(-s) \,dx\,dy\, ds\Big]\notag \\
 &\le ||\partial_t \psi||_{\infty} \delta_0\,||\beta^\prime ||_{\infty} C(\psi)
 \,\Big( E \big[|| v(0,\cdot,\cdot)||_2\big] +
 \sup_{\eps>0}\sup_{0\le s\le T} E \big[||u_\eps(s,\cdot)||_2\big] \Big) 
\notag \\
 & \quad 
 +  ||\partial_t \psi||_{\infty} \delta_0\,||\beta^\prime ||_{\infty}\,l\,
 C(\psi)+C ||\beta^\prime ||_{\infty} E\Big[ \frac{1}{\delta_0}
 \int_{s=0}^{\delta_0}\int_{\R^d}  |u_\eps(s,y)-u_\eps(0,y)|\psi(0,y)
 \,dy\, ds\Big].\notag 
\end{align}
Clearly, the results of  Lemma \ref{lem:initial-cond} continue to 
hold if we replace $u$ by $u_\eps$. Hence the last term 
vanishes as $\delta_0 \rightarrow 0$. 
Therefore $\mathcal{A}_1 \goto 0 $ as $\delta_0 \goto 0$.
 
\textbf{Step 2:} In this step,  we verify the 
passage to the limit as $l \goto 0$. To this end, let
\begin{align}
& \mathcal{A}_2\notag\\& :=   E \Big[\int_{\R_y^d}\int_{\R_x^d}\int_{\R_k} 
  \beta(v(0,x)-k)\psi(0,y)\varrho_{\delta} (x-y) 
  \varsigma_l(u_\eps(0,y)-k)\,dk\,dx\,dy\Big]\notag\\
  & \quad - E\Big[\int_{\R_y^d}\int_{\R_x^d}
  \beta(v(0,x)-u_\eps(0,y))\psi(0,y)\varrho_{\delta} (x-y)\,dx\,dy\Big]\notag\\
  & =  E\Big[\int_{\R_y^d}\int_{\R_x^d}\int_{\R_k} 
  \Big(\beta(v(0,x)+ k-u_\eps(0,y))-\beta(v(0,x)-u_\eps(0,y))\Big) 
  \psi(0,y) \varrho_{\delta} (x-y) \varsigma_l(k)\,dk\,dx\,dy\Big].\notag
\end{align}
Hence
\begin{align}
\big|\mathcal{A}_2\big| & \le ||\beta^\prime||_{\infty}  
E \Big[\int_{\R^d_y}\int_{\R^d_x}\int_{\R_k} k \psi(0,y)
\varrho_{\delta} (x-y) \varsigma_l(k)\,dk\,dx\,dy\Big]
\le C(\psi) ||\beta^\prime||_{\infty}\,l \notag \\
& \quad
 \goto 0 \quad \text{as}\quad l \goto 0\notag.
\end{align}

\textbf{Step 3:} In this step,  we verify the passage 
to the limit as $\eps \goto 0$. Let
\begin{align*}
 \mathcal{A}_3
 &:= E\Big[\int_{\R_y^d}\int_{\R_x^d} \beta(v(0,x)-u_\eps(0,y))\psi(0,y)\varrho_{\delta}(x-y)\,dx \,dy\Big]
\\ & \quad
- E \Big[\int_{\R_y^d}\int_{\R_x^d} \beta(v(0,x)-u(0,y))\psi(0,y)\varrho_{\delta}(x-y)\,dx \,dy\Big]\notag\\
& = E \Big[\int_{\R_y^d}\int_{\R_x^d}\Big( \beta(v(0,x)-u_\eps(0,y))- \beta(v(0,x)-u(0,y))\Big)
\psi(0,y)\varrho_{\delta}(x-y)\,dx \,dy\Big].\notag
\end{align*} 
Therefore,
\begin{align*}
\big|\mathcal{A}_3\big| & \le 
||\beta^\prime||_{\infty}\int_{\R_y^d} 
|u_\eps(0,y)-u(0,y)|\psi(0,y)\,dy
\rightarrow 0  \quad \text{as $\eps  \goto 0$.}
\end{align*}

For the second part of the lemma, consider 
\begin{align*}
\mathcal{A}_4(\vartheta,\delta) & 
:=  E\Big[\int_{\R_y^d}\int_{\R_x^d} \Big(\beta_{\vartheta}(v(0,x)-u(0,y))- |v(0,x)-u(0,y)|\Big)
\psi(0,y)\varrho_{\delta} (x-y)\,dx \,dy\Big].
\end{align*}
Note that $(\beta_\vartheta)_{\vartheta>0}$ is a 
sequence of functions satisfying 
$\big| \beta_\vartheta(r)-|r| \big|\le C\vartheta$ 
for any $r\in \R$. Therefore,
\begin{align*}
   \big|\mathcal{A}_4(\vartheta,\delta)\big|  
   \le \text{Const}(\psi)\vartheta  \goto 0  \quad \text{as $\vartheta \goto 0$.}
\end{align*}
Furthermore, let
\begin{align*}
\mathcal{A}_5(\delta)& := \Big|  E\Big[\int_{\R_y^d}\int_{\R_x^d} 
|v(0,x)-u(0,y)|\psi(0,y)\varrho_{\delta}(x-y)\,dx \,dy
- \int_{\R_y^d} |v(0,y)-u(0,y)|\psi(0,y)\,dy\Big] \Big |\notag\\
& \le E \Big[\int_{\R_y^d}\int_{\R_x^d} |v(0,y)-v(0,x)|\psi(0,y) 
\varrho_{\delta}(x-y) \,dx\,dy\Big]\\
& \le ||\psi(0,\cdot)||_\infty E \Big[\int_{|z|\le 1}
\int_{\R_y^d} |v(0,y)-v(0,y+\delta z)| \varrho(z) \,dy\,dz\Big].
\end{align*} 
Note that $ \underset{\delta \downarrow 0}\lim\,\int_{\R_x^d} 
|v(0,x)-v(0,x+\delta z)|\,dx \rightarrow 0 $ for $||z||\le 1$,
and therefore by the bounded convergence theorem  
we have  $ \underset{\delta \downarrow 0}\lim\,  
E\Big[\int_{\R_z^d}\int_{\R_x^d} |v(0,x)-v(0,x+\delta z)|
\varrho(z) \,dx\,dz\Big] = 0 $ and hence  $\mathcal{A}_5(\delta)\goto 0$ as 
$\delta\goto 0$.  Finally, since
\begin{align*}
  &\Big|E\Big[\int_{\R_y^d}\int_{\R_x^d} 
  \beta_{\vartheta}(v(0,x)-u(0,y))\psi(0,y)\varrho_{\delta} (x-y)\,dx \,dy\Big]
  - E\Big[\int_{\R^d} |v(0,x)-u(0,x)|\psi(0,x)\,dx\Big]\Big|\\
  & \qquad 
  \le \mathcal{A}_4(\vartheta,\delta) + \mathcal{A}_5(\delta),
\end{align*} 
we can conclude the proof of the second part of the lemma.
 \end{proof}

We now turn our attention to $(I_2 + J_2)$:
\begin{align*}
 I_2 + J_2
  =&  
  E\Big[\int_{\Pi_T\times \Pi_T} \int_{\R_k} \int_{\alpha=0}^1 
 \beta(v(t,x,\alpha)-k)\psi(s,y) \partial_t \rho_{\delta_0}(t-s)\\
&\hspace{6cm} \times
  \varrho_\delta(x-y) \varsigma_l(u_\eps(s,y)-k) \,d\alpha\,dk\,dx\,dt\,dy\,ds\Big] \\
    &~+ E\Big[\int_{\Pi_T\times \Pi_T}\int_{\R_k} \int_{\alpha=0}^1 
  \beta(u_\eps(s,y)-k)\partial_s \psi(s,y) \rho_{\delta_0}(t-s)\\
&\hspace{6cm} \times
 \varrho_\delta(x-y) \varsigma_l(v(t,x,\alpha)-k) \,d\alpha\,dk\,dy\,ds\,dx\,dt\Big] \\
  & ~+
   E\Big[\int_{\Pi_T\times\Pi_T} \int_{\R_k} \int_{\alpha=0}^1 \beta(u_\eps(s,y)-k) \psi(s,y)  
   \partial_s\rho_{\delta_0}(t-s)\\
&\hspace{6cm} \times
   \varrho_\delta(x-y) \varsigma_l(v(t,x,\alpha)-k) \,d\alpha\,dk\,dy\,ds\,dx\,dt\Big] \\
   =& -E\Big[\int_{\Pi_T\times \Pi_T} \int_{\R_k} 
  \int_{\alpha=0}^1 \beta(v(t,x,\alpha)- u_\eps(s,y)+ k)\psi(s,y) \partial_s \rho_{\delta_0}(t-s)\\
&\hspace{8cm} \times \varrho_\delta(x-y) \varsigma_l(k) \,d\alpha\,dk\,dx\,dt\,dy\,ds \Big]\\
  &~+ E\Big[\int_{\Pi_T\times \Pi_T}\int_{\R_k} \int_{\alpha=0}^1 
  \beta((v(t,x,\alpha)- u_\eps(s,y)+ k)) \psi(s,y) \partial_s \rho_{\delta_0}(t-s)\\
&\hspace{8cm} \times
  \varrho_\delta(x-y)  \varsigma_l(k) \,d\alpha\,dk\,dy\,ds\,dx\,dt\Big] \\
  &~+ E\Big[\int_{\Pi_T\times\Pi_T} \int_{\R_k} \int_{\alpha=0}^1 
  \beta(u_\eps(s,y)-k) \partial_s\psi(s,y)\, \rho_{\delta_0}(t-s)\\
&\hspace{7cm} \times
  \varrho_\delta(x-y) \varsigma_l(v(t,x,\alpha)-k) \,d\alpha\,dk\,dy\,ds\,dx\,dt\Big],
 \end{align*}
as $\beta$, $\varsigma_l$ are even functions. Hence, we are left with
 \begin{align*}
I_2+ J_2  =&E\Big[\int_{\Pi_T\times \Pi_T} \int_{\R_k}  
 \int_{\alpha=0}^1 \beta(u_\eps(s,y)-k) \partial_s\psi(s,y)\, \rho_{\delta_0}(t-s)
 \varrho_\delta(x-y)\\
 &\hspace{6cm}\times \varsigma_l(v(t,x,\alpha)-k) \,d\alpha\,dk\,dy\,ds\,dxdt\Big]. 
 \end{align*}

\begin{lem}\label{stochastic_lemma_2}
It holds that
\begin{align*}
    I_2 + J_2  &\underset{\delta_0 \goto 0}\longrightarrow  
    E\Big[ \int_{\Pi_T}\int_{\R_y^d}\int_{\R_k} \int_{\alpha=0}^1 
    \beta(u_\eps(s,y)-k) \partial_s\psi(s,y)\,
    \varrho_\delta(x-y)\varsigma_l(v(s,x,\alpha)-k) \,d\alpha\,dk\,dy\,dx\,ds\Big]\\
    &\underset{l \goto 0}\longrightarrow  
    E \Big[\int_{\Pi_T}\int_{\R_y^d}\int_{\alpha=0}^1 \beta(u_\eps(s,y)-v(s,x,\alpha)) 
    \partial_s\psi(s,y)\, \varrho_\delta(x-y) \,d\alpha\,dy\,dx\,ds\Big]\\
    &\underset{\eps \goto 0}\longrightarrow  E\Big[ \int_{\Pi_T}\int_{\R_y^d}
    \int_{\gamma=0}^1 \int_{\alpha=0}^1 \beta(u(s,y,\gamma)-v(s,x,\alpha)) \partial_s\psi(s,y)
    \varrho_\delta(x-y) \,d\alpha\,d\gamma \,dy\,dx\,ds\Big],
\end{align*} 
and 
\begin{align*}
  \lim_{(\vartheta,\delta)\goto (0,0)} &  
  E \Big[\int_{\Pi_T}\int_{\R_y^d}\int_{\gamma=0}^1 \int_{\alpha=0}^1 
  \beta_{\vartheta}(u(s,y,\gamma)-v(s,x,\alpha)) \partial_s\psi(s,y)
  \varrho_\delta(x-y) \,d\alpha\,d\gamma \,dy\,dx\,ds\Big] \\
  & =E \Big[\int_{s=0}^T \int_{\R^d}\int_{\alpha=0}^1 \int_{\gamma=0}^1 
  |u(s,y,\gamma)-v(s,y,\alpha)| \partial_s\psi(s,y)
  d\gamma\,\,d\alpha\,dy\,ds\Big].
\end{align*}
\end{lem}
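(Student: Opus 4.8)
The plan is to start from the reduced expression for $I_2+J_2$ derived just above the lemma, namely
\begin{align*}
I_2+J_2 =&\, E\Big[\int_{\Pi_T\times \Pi_T} \int_{\R_k} \int_{\alpha=0}^1 \beta(u_\eps(s,y)-k)\, \partial_s\psi(s,y)\, \rho_{\delta_0}(t-s)\, \varrho_\delta(x-y)\\
&\hspace{4cm}\times \varsigma_l(v(t,x,\alpha)-k)\,d\alpha\,dk\,dy\,ds\,dx\,dt\Big],
\end{align*}
and then to pass to the four limits in the stated order, each time keeping the remaining mollification parameters frozen. First I would send $\delta_0\goto 0$. The only $t$-dependence sits in $\rho_{\delta_0}(t-s)$ and in $\varsigma_l(v(t,x,\alpha)-k)$, and since $\supp\rho\subset[-1,0]$ makes $\rho_{\delta_0}(\cdot-s)$ a one-sided approximate identity in $t$, for fixed $(s,y,x,\alpha,k,\omega)$ one has $\int_{t}\rho_{\delta_0}(t-s)\varsigma_l(v(t,x,\alpha)-k)\,dt\goto \varsigma_l(v(s,x,\alpha)-k)$ at every left Lebesgue point $s$, hence for a.e.\ $s$. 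This replaces $v(t,x,\alpha)$ by $v(s,x,\alpha)$ and collapses the $t$-integral, producing the first displayed limit. To upgrade the pointwise statement to convergence of the full expectation I would dominate the integrand using $|\varsigma_l|\le\|\varsigma_l\|_\infty$, the bound $|\beta(u_\eps(s,y)-k)|\le\|\beta'\|_\infty(|u_\eps(s,y)|+|k|)$ on the (compact, by $\supp\varsigma_l$) $k$-range, the compact support of $\partial_s\psi\,\varrho_\delta$, and the uniform moment estimate \eqref{uni:moment-esti}, so that dominated convergence applies.

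Next I would send $l\goto 0$. Here $\varsigma_l(v(s,x,\alpha)-\cdot)$ is a symmetric approximate identity in $k$, whence $\int_{\R_k}\beta(u_\eps(s,y)-k)\varsigma_l(v(s,x,\alpha)-k)\,dk\goto \beta\big(u_\eps(s,y)-v(s,x,\alpha)\big)$ by continuity of $\beta$, giving the second limit after another dominated-convergence argument. The passage $\eps\goto 0$ is the conceptual heart and is where the Young-measure machinery enters. I would view $\theta=(\omega,s,y)$ as the Young-measure variable and set
\begin{align*}
h(\theta,\xi)=\int_{\R_x^d}\int_{\alpha=0}^1 \beta\big(\xi-v(s,x,\alpha)\big)\,\partial_s\psi(s,y)\,\varrho_\delta(x-y)\,d\alpha\,dx,
\end{align*}
which is a Carath\'eodory function whose $\theta$-support lies in some $\Theta_M\times\R$ because $\psi$ and $\varrho_\delta$ are compactly supported in $y$. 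Since $\beta=\beta_\vartheta$ is globally Lipschitz with $|\beta_\vartheta(r)|\le|r|$, one bounds $|h(\theta,u_{\eps_n}(\theta))|$ by $C(1+|u_{\eps_n}(s,y)|)$ plus a term depending only on $v$, both controlled in $L^2(\mu_M)$ by the moment estimates; this makes $\{h(\theta,u_{\eps_n}(\theta))\}_n$ uniformly integrable. Lemmas \ref{conv-young-measure} and \ref{lem:measure-conversion} then convert $u_{\eps_n}(s,y)$ into an average $\int_0^1(\cdot)\,d\gamma$ against the limit Young measure represented by $u(s,y,\gamma)$, producing the third limit.

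Finally I would treat the joint limit $(\vartheta,\delta)\goto(0,0)$ by splitting the difference into a $\vartheta$-part and a $\delta$-part, exactly as in the proof of Lemma \ref{stochastic_lemma_1}. Using \eqref{eq:approx to abosx}, $\big|\beta_\vartheta(r)-|r|\big|\le M_1\vartheta$ uniformly, so replacing $\beta_\vartheta$ by $|\cdot|$ costs $O(\vartheta)$ after integration against the bounded, compactly supported weight $\partial_s\psi\,\varrho_\delta$. For the $\delta$-part I would use the collapse of $\varrho_\delta(x-y)$ together with the $L^1$-continuity of spatial translations of $v$, reproducing the estimate on $\mathcal{A}_5(\delta)$ from Lemma \ref{stochastic_lemma_1}, so that $\int_{\R_x^d}|u(s,y,\gamma)-v(s,x,\alpha)|\varrho_\delta(x-y)\,dx\goto|u(s,y,\gamma)-v(s,y,\alpha)|$ in the averaged sense, and combining the two parts yields the last display. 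The main obstacle I anticipate is the $\delta_0\goto 0$ step: because the L\'evy noise forces $v$ and $u_\eps$ to have only c\`adl\`ag time paths, there is no genuine pointwise time continuity, and the argument must rely on the jump times forming a Lebesgue-null set so that the one-sided approximate identity still converges for a.e.\ $s$; arranging a domination that is uniform in $\delta_0$ and compatible with the later $\eps\goto0$ passage is the delicate bookkeeping.
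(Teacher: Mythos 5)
Your proposal is correct and follows the same overall route as the paper: the same reduced form of $I_2+J_2$, the same order of limits, the same use of Lemmas \ref{conv-young-measure} and \ref{lem:measure-conversion} for the $\eps\goto 0$ passage, and the same splitting into an $O(\vartheta)$ error plus a spatial-translation error for the final joint limit. The one substantive difference is the $\delta_0\goto 0$ step, which you yourself flag as the delicate one. The paper does not argue pointwise at Lebesgue points of the c\`adl\`ag path; it bounds the error $\mathcal{B}_1$ by $C(\beta^\prime)\,E\big[\int |v(t,x,\alpha)-v(s,x,\alpha)|\,\varrho_\delta\,\rho_{\delta_0}\cdots\big]+o(\delta_0)$ using the Lipschitz bound on $\beta$, applies Cauchy--Schwarz with respect to the measure $\varrho_{\delta}(x-y)\rho_{\delta_0}(t-s)\,d\alpha\,dt\,dx\,dy\,ds\,dP(\omega)$, and reduces everything to $E\big[\int_{r=0}^1\int_{\Pi_T}\int_0^1|v(t+\delta_0 r,x,\alpha)-v(t,x,\alpha)|^2\rho(-r)\big]\goto 0$, which follows from the $L^2$-continuity of time translations (valid for any $L^2$ path, no path regularity needed) plus bounded convergence in $(r,\omega)$. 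This sidesteps exactly the bookkeeping you worry about: your Lebesgue-point argument is workable, but producing a $\delta_0$-independent dominating function requires first integrating out $t$ and then controlling the moving $k$-support of the resulting kernel (the $k$-range is only bounded by $|v(t,x,\alpha)|+l$, not uniformly compact as you assert), which in effect forces a generalized dominated convergence argument; the paper's $L^2$ estimate needs only the moment bounds already in hand. A second, harmless variation: in the $\eps\goto 0$ step you apply the Young-measure convergence once to the aggregated Carath\'eodory function $h$, whereas the paper applies it to $\Gamma_{(x,\alpha)}$ for each fixed $(x,\alpha)$ and then invokes Fubini and bounded convergence; both are valid.
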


\begin{proof}
The proof of the first part of the lemma is divided into three steps.

\textbf{Step 1:} In this step we justify passing to the limit $\delta_0 \goto 0$. Let
\begin{align*}
 &\mathcal{B}_1:=
    \Big| E\Big[\int_{\Pi_T}\int_{\Pi_T} \int_{\R_k} 
 \int_{\alpha=0}^1 \beta(u_\eps(s,y)-k) \partial_s\psi(s,y)\, \rho_{\delta_0}(t-s)
  \varrho_\delta(x-y)\\&\hspace{7cm} \times \varsigma_l(v(t,x,\alpha)-k) \,d\alpha\,dk\,dx\,dt\,dy\,ds\Big]  \\
&\quad\quad 
-E \Big[\int_{\Pi_T}\int_{\R_x^d}\int_{\R_k}\int_{\alpha=0}^1 
\beta(u_\eps(s,y)-k) \partial_s\psi(s,y)\, \varrho_\delta(x-y)
 \varsigma_l(v(s,x,\alpha)-k) \,d\alpha\,dk\,dx\,dy\,ds\Big]\Big| \\
& = \Big| E\Big[\int_{s=\delta_0}^T \int_{\R_y^d} 
\int_{\Pi_T} \int_{\R_k} \int_{\alpha=0}^1 
\Big(\beta(u_\eps(s,y)- v(t,x,\alpha) +k)-\beta(u_\eps(s,y)-v(s,x,\alpha)+k )\Big) \\
&\hspace{5cm}
\times \partial_s\psi(s,y)\, \rho_{\delta_0}(t-s)
\varrho_\delta(x-y)  \varsigma_l(k)  \,d\alpha\,dk\,dx\,dt\,dy\,ds\Big]  \Big |
+o(\delta_0).
\end{align*}
Observe that  
\begin{align*}
&\mathcal{B}_1 \\ &\le 
C(\beta^\prime)  E\Big[ \int_{s=\delta_0}^T\int_{\R_y^d} \int_{\Pi_T} \int_{\alpha=0}^1
|v(t,x,\alpha)-v(s,x,\alpha)| \varrho_{\delta}(x-y)|\partial_s \psi(s,y)|
\rho_{\delta_0}(t-s)\,d\alpha \,dt\,dx\,dy\,ds \Big] \\&\hspace{8cm}+o(\delta_0) \\
& \le C(\beta^\prime, \partial_s \psi)  \Big(E\Big[ \int_{s=\delta_0}^T\int_{\R_y^d} \int_{\Pi_T} \int_{\alpha=0}^1
|v(t,x,\alpha)-v(s,x,\alpha)|^2 \varrho_{\delta}(x-y)\rho_{\delta_0}(t-s)\,d\alpha \,dt\,dx\,dy\,ds\Big] \Big)^{\frac 12} \\&\hspace{8cm}+o(\delta_0)\\
&\big(\text{used Cauchy-Schwartz's inequality 
w.r.t. }~ \varrho_{\delta}(x-y)\rho_{\delta_0}(t-s)
\,d\alpha \,dt\,dx\,dy\,ds\,dP(\omega)\big)\\
& \le  C(\beta^\prime) ||\partial_s \psi||_{\infty}\Big( E \Big[\int_{r=0}^1 \int_{\Pi_T} \int_{\alpha=0}^1
|v(t+ \delta_0\,r ,x,\alpha)-v(t,x,\alpha)|^2 \rho(-r)
\,d\alpha \,dt\,dx\,dr\Big] \Big)^{\frac 12}  +o(\delta_0).
\end{align*}
Note that $\underset{\delta_0\downarrow 0 }\lim\,  
\int_{t=0}^T\int_{\R_x^d} \int_{\alpha=0}^1 
|v(t+\delta_0 r,x,\alpha)-v(t,x,\alpha)|^2\,
 \,d\alpha\,dx\,dt = 0$, almost surely, for 
 every fixed  $ r\in [0,1] $. 
 Therefore, by the bounded convergence theorem,
\begin{align*}
\lim_{\delta_0\downarrow 0}E\Big[\int_{t=0}^T\int_{r=0}^1
\int_{\R_x^d} \int_{\alpha=0}^1 |v(t+\delta_0 r,x,\alpha)-v(t,x,\alpha)|^2
\rho(-r)\,d\alpha\,dx\,dr\,dt\Big]=0.
\end{align*}
This concludes the first step.

\textbf{Step 2:} Let
\begin{align*}
\mathcal{B}_2:&= \Big |E \Big[\int_{\Pi_T}\int_{\R_y^d}\int_{\R_k}\int_{\alpha=0}^1 
\beta(u_\eps(s,y)-k) \partial_s\psi(s,y)
\varrho_\delta(x-y)  
\varsigma_l(v(s,x,\alpha)-k) \,d\alpha\,dk\,dy\,dx\,ds\Big]\\
&\qquad- E\Big[ \int_{\Pi_T}\int_{\R_y^d}\int_{\alpha=0}^1 
\beta(u_\eps(s,y)-v(s,x,\alpha)) \partial_s\psi(s,y)
\varrho_\delta(x-y) \,d\alpha\,dy\,dx\,ds\Big]\Big|\\
&= \Big|E\Big[ \int_{\Pi_T}\int_{\R_x^d}\int_{\R_k} 
\int_{\alpha=0}^1 \Big(\beta(u_\eps(s,y)+k-v(s,x,\alpha))-\beta(u_\eps(s,y)-v(s,x,\alpha))\Big) 
\partial_s\psi(s,y) \\
  & \hspace {5cm}\times \varrho_\delta(x-y)  \varsigma_l(k) \,d\alpha\,dk\,dx\,dy\,ds\Big]\Big|,
\end{align*}
and note that
\begin{align*}
\big|\mathcal{B}_2\big| & \le ||\beta^\prime||_{\infty} E \Big[\int_{\Pi_T}\int_{\R_k}  |k|
|\partial_s\psi(s,y)|\, \varsigma_l(k)\,dk\,dy\,ds\Big]
\le ||\partial_s \psi||_{\infty}\, l\, ||\beta^\prime||_{\infty} \,C(\psi)\goto 0,
\quad \text{as $l\goto 0$.}
\end{align*}  
   
\textbf{Step 3:} Note that $u(s,y,\gamma)$ is the 
$L^2(\R^d\times (0,1))$-valued process that was 
recovered from the Young measure valued 
narrow limit of the sequence $\{u_\eps(s,y)\}_{\eps>0}$ and 
it satisfies Lemmas \ref{conv-young-measure} 
and \ref{lem:measure-conversion}.   
Let 
\begin{align*}
\Gamma_{(x,\alpha)}(s,y,\omega; \xi) 
= \beta(\xi -v(s,x,\alpha)) \partial_s\psi(s,y)\, \varrho_\delta(x-y).
\end{align*} 
Clearly, for every fixed $(x,\alpha)\in \rd\times(0,1)$,  $\Gamma_{(x,\alpha)}$ 
is a Caratheodory function and \newline$\big\{\Gamma_{(x,\alpha)}(s,y,\omega; u_{\eps_n}(s,y))\big\}_n$ 
is uniformly integrable in $L^1\big((\Theta, \Sigma, \mu), \R\big)$ and 
satisfies the conditions of Lemma \ref{conv-young-measure}. 
Hence, for every $(x,\alpha)\in \R^d\times (0,1)$,
\begin{align}
&\lim_{\eps_n\goto 0} \int_{\Omega} \int_{\Pi_T} 
\Gamma_{(x,\alpha)}(s,y,\omega; u_{\eps_n}(s,y))\,ds\,dy\,dP(\omega)\notag \\
=&\int_{\Omega} \int_{\Pi_T} \int_{\gamma=0}^1  
\Gamma_{(x,\alpha)}(s,y,\omega; u(s,y,\gamma))\,d\gamma \,ds\,dy\, dP(\omega). \label{stochastic_estimate_young_1}
\end{align} 
In view of \eqref{stochastic_estimate_young_1}, we now invoke the 
bounded convergence and Fubini theorems to conclude 
\begin{align*}
\lim_{\eps_n \goto 0} &E \Big[\int_{\Pi_T}\int_{\R_y^d}\int_{\alpha=0}^1 
\beta(u_{\eps_n}(s,y)-v(s,x,\alpha)) \partial_s\psi(s,y)
\varrho_\delta(x-y) \,d\alpha\,dy\,dx\,ds\Big]\\
& = E \Big[\int_{\Pi_T}\int_{\R_y^d}\int_{\alpha=0}^1 \int_{\gamma=0}^1 
\beta(u(s,y,\gamma)-v(s,x,\alpha)) \partial_s\psi(s,y)
\varrho_\delta(x-y) \,d\gamma\,d\alpha\,dy\,dx\,ds\Big].
\end{align*} 
This concludes the proof of the first part of the lemma.

\vskip0.2cm
For the second part we proceed as follows:
\begin{align*}
\mathcal{B}_3(\vartheta,\delta) &:= 
\Big| E \Big[ \int_{\Pi_T}\int_{\R_x^d}\int_{\gamma=0}^1 \int_{\alpha=0}^1  
\beta_{\vartheta}(u(s,y,\gamma)-v(s,x,\alpha)) \partial_s\psi(s,y)
\varrho_\delta(x-y) \,d\alpha\,d\gamma\,dx\,dy\,ds\Big]\\
& \qquad -E \Big[\int_{\Pi_T}\int_{\R_x^d}\int_{\gamma=0}^1 \int_{\alpha=0}^1 
|u(s,y,\gamma)-v(s,x,\alpha)| \partial_s\psi(s,y)
\varrho_\delta(x-y) \,d\alpha\,d\gamma\,dx\,dy\,ds\Big]\Big|\\
&\le ||\partial_s\psi||_{\infty}\,E \Big[\int_{|y|\le C_{\psi}}
\int_{\Pi_T}\int_{\gamma=0}^1 \int_{\alpha=0}^1
\big|  \beta_{\vartheta}(u(s,y,\gamma)-v(s,x,\alpha))- |u(s,y,\gamma)-v(s,x,\alpha)|\big| \\
&\hspace{6cm} \times \varrho_{\delta}(x-y)\,d\alpha \,d\gamma \,dx\,ds\,dy\Big].
\end{align*}
Since $ \big|\beta_\vartheta(r)-|r|\big|\le C\vartheta $, for any $ r\in \R$, it follows 
that $\mathcal{B}_3\le ||\partial_s\psi||_{\infty}\,\vartheta\, C(\psi,T)$.  
Observe that
\begin{align*}
\mathcal{B}_4(\delta) &:= 
\Big| E \Big[\int_{\Pi_T}\int_{\R_y^d}\int_{\alpha=0}^1 \int_{\gamma=0}^1
|u(s,y,\gamma)-v(s,x,\alpha)| \partial_s\psi(s,y)
\varrho_\delta(x-y) \,d\gamma\,d\alpha\,dy\,dx\,ds\Big]\\
& \qquad - E \Big[\int_{s=0}^T \int_{\R_y^d}\int_{\alpha=0}^1 \int_{\gamma=0}^1 
|u(s,y,\gamma)-v(s,y,\alpha)| \partial_s\psi(s,y)\, d\gamma\,d\alpha\,dy\,ds\Big]\Big| \\
& \le E \Big[\int_{\Pi_T}\int_{\R_y^d}\int_0^1  |v(s,y,\alpha)-v(s,x,\alpha)|\, |\partial_s\psi(s,y)|
\varrho_\delta(x-y)\,d\alpha\,dy\,dx\,ds\Big]\\
& \le C(\psi)\Big(E \Big[\int_{\Pi_T}\int_{\R_y^d}\int_0^1  |v(s,y,\alpha)-v(s,x,\alpha)|^2
\varrho_\delta(x-y)\,d\alpha\,dy\,dx\,ds\Big]\Big)^{\frac 12} \\
&\qquad (\text{here we used Cauchy-Schwartz's inequality})\\
&   \text{$\goto 0$ as $\delta \goto 0$},
\end{align*} 
where the $\delta\goto 0$ limit follows by arguments similar to 
those used to prove the last part 
of Lemma \ref{stochastic_lemma_1}. 

Since
\begin{align*}
 &\Big| E \Big[\int_{\Pi_T}\int_{\R_x^d}\int_{\gamma=0}^1 \int_{\alpha=0}^1 
\beta_{\vartheta}(u(s,y,\gamma)-v(s,x,\alpha)) \partial_s\psi(s,y)
\varrho_\delta(x-y) \,d\alpha\,d\gamma \,dx\,dy\,ds \Big]\\
& \quad -E \Big[\int_{s=0}^T \int_{\R_y^d}\int_{\alpha=0}^1 \int_{\gamma=0}^1 
|u(s,y,\gamma)-v(s,y,\alpha)| \partial_s\psi(s,y)\,d\gamma\,d\alpha\,dy\,ds\Big]\Big| \\
& \quad \quad 
\le  \mathcal{B}_3(\vartheta,\delta)  
+\mathcal{B}_4(\delta) \goto 0, \quad \text{as $(\vartheta,\delta)\goto (0,0)$,}
\end{align*} 
the second part of the lemma follows.
\end{proof}

Next we consider the stochastic term 
$ I_3 + J_3$; we begin with the following assertion:
\begin{lem}
For any two constants $T_1, T_2\ge 0$ with $T_1<T_2$,
\begin{align}
E\Big[X_{T_1}\int_{T_1}^{T_2} \int_{|z|>0}
\zeta(t,z)\,\tilde{N}(dz,dt)\Big] = 0,
\label{eq:conditional_indep}
\end{align}
where $\zeta$ is a predictable integrand with 
$E \Big[\int_0^T \int_{|z|>0}\zeta^2(t,z)\, m(dz)\, dt\Big] < \infty$ 
and $X$ is an adapted process. 
\end{lem}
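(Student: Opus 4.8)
The plan is to recognize the stochastic integral appearing in \eqref{eq:conditional_indep} as the increment of a mean-zero, square-integrable martingale, and then to exploit the $\mathcal{F}_{T_1}$-measurability of $X_{T_1}$. To this end I would introduce the process
\[
M_t = \int_{0}^{t}\int_{|z|>0}\zeta(s,z)\,\tilde{N}(dz,ds), \qquad 0\le t\le T.
\]
The square-integrability hypothesis $E\big[\int_0^T\int_{|z|>0}\zeta^2(t,z)\,m(dz)\,dt\big]<\infty$, combined with the It\^o-L\'evy isometry, guarantees that $(M_t)_{0\le t\le T}$ is a well-defined, square-integrable martingale with respect to $\{\mathcal{F}_t\}_{t\ge 0}$. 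In particular, the integral over $(T_1,T_2]$ in the statement is exactly the martingale increment $M_{T_2}-M_{T_1}$.

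First I would record the martingale identity $E\big[M_{T_2}-M_{T_1}\mid \mathcal{F}_{T_1}\big]=0$ almost surely. Since $X$ is adapted, $X_{T_1}$ is $\mathcal{F}_{T_1}$-measurable, so by the tower property of conditional expectation, followed by pulling the $\mathcal{F}_{T_1}$-measurable factor outside the inner conditional expectation,
\begin{align*}
E\Big[X_{T_1}\big(M_{T_2}-M_{T_1}\big)\Big]
&= E\Big[X_{T_1}\,E\big[M_{T_2}-M_{T_1}\mid\mathcal{F}_{T_1}\big]\Big]\\
&= E\big[X_{T_1}\cdot 0\big]=0,
\end{align*}
which is precisely \eqref{eq:conditional_indep}. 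Equivalently, one may absorb $X_{T_1}$ into the integrand: because $X_{T_1}$ is $\mathcal{F}_{T_1}$-measurable and constant in time on $(T_1,T_2]$, the map $(t,z,\omega)\mapsto X_{T_1}(\omega)\,\mathbf{1}_{(T_1,T_2]}(t)\,\zeta(t,z)$ is predictable, and the claim reduces to the vanishing of the mean of an It\^o-L\'evy integral against the compensated measure $\tilde{N}(dz,dt)$.

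The only genuine obstacle is integrability: to justify pulling $X_{T_1}$ out of the conditional expectation (or into the stochastic integral), the product $X_{T_1}\big(M_{T_2}-M_{T_1}\big)$ must be integrable. If $X_{T_1}$ is bounded or square-integrable, this follows at once from Cauchy-Schwarz together with the isometry bound $E\big[(M_{T_2}-M_{T_1})^2\big]=E\big[\int_{T_1}^{T_2}\int_{|z|>0}\zeta^2(t,z)\,m(dz)\,dt\big]<\infty$. In the application at hand $X_{T_1}$ arises as a uniformly bounded, spatially integrated functional, so this integrability holds automatically; in a more general setting I would first establish the identity for bounded $X_{T_1}$ (or for the truncations $X_{T_1}\wedge n$) and then remove the truncation by dominated convergence, localizing by stopping times if $X$ is only locally integrable.
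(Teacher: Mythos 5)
Your proposal is correct and follows essentially the same route as the paper: identify the stochastic integral as the martingale increment $\mathcal{M}(T_2)-\mathcal{M}(T_1)$, then use the tower property together with the $\mathcal{F}_{T_1}$-measurability of $X_{T_1}$ to conclude. Your additional remarks on the integrability needed to pull $X_{T_1}$ out of the conditional expectation are a welcome refinement that the paper's proof leaves implicit.
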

\begin{proof}
Let $\mathcal{M}(t)= \int_{0}^{t} \int_{|z|>0}\zeta(s,z)\,\tilde{N}(dz,ds)$. 
Clearly, $\mathcal{M}(t)$ is a martingale, and thus
\begin{align*}
 &E\Big[X_{T_1}\int_{T_1}^{T_2} \int_{|z|>0}\zeta(t,z)\,\tilde{N}(dz,dt)\Big] \\
 & \quad = E\Big[ X_{T_1}\Big(\mathcal{M}(T_2)-\mathcal{M}(T_1) \Big)\Big]\\
 &\quad = E\Big[E\big( X_{T_1} M(T_2)|\mathcal{F}_{T_1}\big)\Big]
 - E\Big[ X_{T_1} \mathcal{M}(T_1)\Big]\\
  &\quad = E\Big[X_{T_1}E\big( M(T_2)|\mathcal{F}_{T_1}\big)\Big]
  - E\Big[ X_{T_1} \mathcal{M}(T_1)\Big]\\
  & \quad= E\Big[ X_{T_1} \mathcal{M}(T_1)\Big]
  - E\Big[ X_{T_1} \mathcal{M}(T_1)\Big] = 0.
\end{align*}
\end{proof}

For any $\beta \in C^\infty(\R)$ with $\beta^\prime, 
\beta^{\prime\prime}\in C_b(\R)$ and 
any nonnegative $\phi\in C_c^\infty(\Pi_{\infty}\times \Pi_\infty)$, we  define 
\begin{align*}
J[\beta, \phi](s; y, v) :=&
\int_{r=0}^T\int_{|z|>0}\int_{\rd_x} \int_0^1 
\Big(\beta\big(v(r,x,\alpha) +\eta(x,v(r,x,\alpha); z)-v\big)
 -\beta\big(v(r,x,\alpha)-v\big)\Big) \\
&\hspace{6cm}\times\phi(r,x,s,y) \,d\alpha \,dx\, \tilde{N}(dz,dr), 
\end{align*} 
where  $0\le s\le T$ and $(y,v)\in \R^d \times \R$.  Note that $\phi$ has compact 
support, i.e., there exists a constant $c_\phi > 0$ 
such that  $\phi(\cdot,\cdot, \cdot, y) = 0$
for $|y| \ge c_\phi$. As a result $ J[\beta, \phi](s; y, v)= 0$ if $|y| > c_\phi$ and $0\le s\le T$.
Furthermore, we extend the process $u_\eps(\cdot,y)$ for negative times by setting 
$u_\eps(s,y) = u_{\eps} (0,y)$ if $s< 0$.
With this convention, by It\^{o}-L\'{e}vy product rule
\begin{align*}
J[\beta,\phi_{\delta,\delta_0}](s;y,v) = &\int_{s-\delta_0}^s\int_{|z|>0}\int_{\rd_x} \int_0^1 
\Big(\beta\big(v(r,x,\alpha) +\eta(x,v(r,x,\alpha); z)-v\big)
 -\beta\big(v(r,x,\alpha)-v\big)\Big) \\&\hspace{6cm}\times
 \phi_{\delta,\delta_0}(r,x,s,y) \,d\alpha \,dx\, \tilde{N}(dz,dr)\\
   =& -\psi(s,y)\int_{s-\delta_0}^s \Big[ \int_{s-\delta_0}^r \int_{|z|> 0} 
   \zeta_{(y, v)}(\sigma,z)\tilde{N}(\,dz,\,d\sigma)\Big] \rho_{\delta_0}^{\prime}(r-s)\,dr
\end{align*} 
where
$$ 
\zeta_{(y, v)}(r,z) = \int_{\rd_x} \int_0^1 
\Big(\beta\big(v(r,x,\alpha) +\eta(x,v(r,x,\alpha); z)-v\big)
 -\beta\big(v(r,x,\alpha)-v\big)\Big)
\varrho_{\delta}(x-y)  \,d\alpha \,dx.
$$ 
Therefore, by Fubini's theorem and \eqref{eq:conditional_indep},
\begin{align}
&E\Big[ J[\beta,\phi_{\delta,\delta_0}](s;y,v)\, 
\varsigma_l(u_\eps(s-\delta_0,y)-v)\Big]\notag \\
& =- \psi(s,y)\int_{s-\delta_0}^sE \Big[ \varsigma_l(u_\eps(s-\delta_0,y)-v) 
\int_{s-\delta_0}^r \int_{|z|> 0} \zeta_{(y, v)}(\sigma,z)\tilde{N}(\,dz,\,d\sigma)\Big] 
\rho_{\delta_0}^{\prime}(r-s)\,dr\notag\\
&=0 \label{eq:martingale-expect}
\end{align} 
for all $(s,y,v)$. Finally, we apply Fubini's theorem along 
with \eqref{eq:martingale-expect} and obtain

\begin{align*}
E\Big[\int_{\R_v}\int_{\Pi_{T}} J[\beta,\phi_{\delta,\delta_0}](s;y,v)\,
\varsigma_l(u_\eps(s-\delta_0,y)-v)\,dy\,ds\,dv\Big]=0.
\end{align*}   
Therefore
\begin{align}
I_3  &= E\Big[\int_{\R_v}\int_{\Pi_{T}} 
J[\beta,\phi_{\delta,\delta_0}](s;y,v)
\Big(\varsigma_l(u_\eps(s,y)-v)-\varsigma_l(u_\eps(s-\delta_0,y)-v)\Big)
\,dy\,ds\,dv\Big]. 
\label{stochastic_estimate_6}
\end{align}

\begin{lem}\label{lem:differentiation} 
The following identities hold:
\begin{align*}
&\partial_v  J[\beta, \phi](s; y, v) =  J[-\beta^\prime, \phi](s; y, v)\\
& \partial_{y_k}  J[\beta, \phi](s; y, v) =  J[\beta, \partial_{y_k}\phi](s; y, v).
\end{align*}
\end{lem}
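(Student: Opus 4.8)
The plan is to read both identities as differentiation under the It\^o--L\'evy integral sign, reducing each to an $L^2(\Omega)$ convergence of difference quotients controlled by the It\^o--L\'evy isometry. Throughout, recall that $\beta$ is one of the functions $\beta_\vartheta$ from Section \ref{technical}, so that $\beta'$ and $\beta''$ are bounded, and abbreviate $w=w(r,x,\alpha;z)=v(r,x,\alpha)+\eta(x,v(r,x,\alpha);z)$. All integrands below are predictable because $v(r,x,\alpha)$ is, while $\beta,\eta,\phi$ are deterministic.

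For the first identity I would form $h^{-1}\big(J[\beta,\phi](s;y,v+h)-J[\beta,\phi](s;y,v)\big)$. By linearity of the stochastic integral this is again an integral against $\tilde N(dz,dr)$, and its integrand differs from that of $J[-\beta',\phi](s;y,v)$ by $\int_{\rd}\int_0^1 e_h\,\phi\,d\alpha\,dx$, where $e_h=F_h(w-v)-F_h(v(r,x,\alpha)-v)$ and $F_h(a):=h^{-1}\big(\beta(a-h)-\beta(a)\big)+\beta'(a)$. By the It\^o--L\'evy isometry it therefore suffices to prove
\[
E\Big[\int_{0}^{T}\int_{|z|>0}\Big|\int_{\rd}\int_{0}^{1} e_h\,\phi\,d\alpha\,dx\Big|^2 m(dz)\,dr\Big]\longrightarrow 0 \quad (h\to0).
\]
The key point is to dominate $e_h$ so as to be simultaneously $o(1)$ pointwise and $m(dz)$-integrable. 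A one-line Taylor estimate gives $|F_h(a)|\le\frac{h}{2}\|\beta''\|_\infty$ and $\mathrm{Lip}(F_h)\le 2\|\beta''\|_\infty$; combining the Lipschitz bound with \ref{A4}, namely $|w-v(r,x,\alpha)|=|\eta(x,v(r,x,\alpha);z)|\le g(x)(1+|v(r,x,\alpha)|)(|z|\wedge1)$, yields the $h$-independent estimate $|e_h|^2\le 4\|\beta''\|_\infty^2\,g(x)^2\,(1+|v(r,x,\alpha)|)^2\,(|z|^2\wedge1)$. Since $\phi$ has compact $x$-support, $g\in L^2\cap L^\infty$, $v$ obeys the uniform $L^p$ moment bounds, and $\int(|z|^2\wedge1)\,m(dz)<\infty$ by \ref{A3}, this dominating function is integrable over $(\omega,r,z,x,\alpha)$ (after a Cauchy--Schwarz step in $(x,\alpha)$ using $\int|\phi|\,dx<\infty$); as $e_h\to0$ pointwise because $F_h(a)\to0$ for each fixed $a$, the dominated convergence theorem gives the claimed limit.

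For the second identity the $y$-dependence lies entirely in $\phi$, so $h^{-1}\big(J[\beta,\phi](s;y+he_k,v)-J[\beta,\phi](s;y,v)\big)$ is the stochastic integral whose integrand differs from that of $J[\beta,\partial_{y_k}\phi](s;y,v)$ by $\int_{\rd}\int_0^1\big(\beta(w-v)-\beta(v(r,x,\alpha)-v)\big)\Delta_h\phi\,d\alpha\,dx$, where $\Delta_h\phi:=h^{-1}\big(\phi(\cdot,y+he_k)-\phi(\cdot,y)\big)-\partial_{y_k}\phi$. Since $\phi\in C_c^\infty$, one has $\|\Delta_h\phi\|_\infty\le\frac{h}{2}\|\partial_{y_k}^2\phi\|_\infty$ with support in a fixed compact set, while $|\beta(w-v)-\beta(v(r,x,\alpha)-v)|\le\|\beta'\|_\infty\,g(x)(1+|v(r,x,\alpha)|)(|z|\wedge1)$ exactly as before. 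The It\^o--L\'evy isometry then bounds the $L^2(\Omega)$-error by $Ch$, and letting $h\to0$ proves the identity.

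The only genuine obstacle is the $z$-integrability in the first identity: the bare $O(h)$ bound on $F_h$ is useless because the L\'evy measure $m$ need not be finite, and the remedy is precisely to trade the smallness in $h$ for the factor $(|z|^2\wedge1)$ furnished by the Lipschitz estimate together with \ref{A4} and \ref{A3}; everything else is routine. Both derivatives are understood in the $L^2(\Omega)$ sense, which is what the subsequent estimates require.
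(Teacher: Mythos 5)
Your argument is correct and is exactly the ``classical argument validating differentiation under the integral sign'' that the paper invokes without detail: difference quotients, reduction via the It\^{o}--L\'{e}vy isometry, and a dominated-convergence step in which the trade-off between the $O(h)$ Taylor bound and the $(|z|^2\wedge 1)$-producing Lipschitz/\ref{A4} bound is precisely the right way to handle a possibly infinite L\'{e}vy measure. Nothing is missing, and the $L^2(\Omega)$ interpretation of the derivatives is adequate for the integrations by parts in which the lemma is subsequently used.
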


\begin{proof}
The proof follows by a classical argument 
validating differentiation under the integral sign. 
\end{proof}

\begin{lem} \label{lem:L-infinity estimate}
Let $\beta\in C^\infty(\R)$ be function such 
that $\beta^\prime, \beta^{\prime\prime}\in C_c^\infty(\R) $ and $p$ be a 
positive integer of the form $p= 2^k$ for some $k\in \mathbb{N}$.
If $p\geq d+3$, then there exists a 
constant $C=C(\beta^\prime, \psi,\delta)$ such that
\begin{align}
\sup_{0\le s\le T}
\Big( 
E\Big[|| J[\beta,\phi_{\delta,\delta_0}](s;\cdot,\cdot)||_{L^\infty(\R^d\times \R)}^2\Big]
\Big) 
\le \frac{C(\beta', \psi,\delta )}{\delta_0^\frac{2(p-1)}{p}}.
\label{eq:l-infinity bound}
\end{align}
 \end{lem}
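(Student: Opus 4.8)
The plan is to control the $L^\infty$-norm in the variables $(y,v)\in\rd\times\R$ by a first-order Sobolev norm, and then to exploit the martingale representation of $J[\beta,\phi_{\delta,\delta_0}]$ together with the Burkholder--Davis--Gundy (BDG) inequality for compensated Poisson integrals. Since $p\ge d+3>d+1$, the Morrey embedding $W^{1,p}(\rd\times\R)\hookrightarrow L^\infty(\rd\times\R)$ holds, so $\|J[\beta,\phi_{\delta,\delta_0}](s;\cdot,\cdot)\|_{L^\infty}\le C\|J[\beta,\phi_{\delta,\delta_0}](s;\cdot,\cdot)\|_{W^{1,p}}$, and since $p\ge2$, Jensen's inequality on $(\Omega,P)$ gives $E[\|J\|_{L^\infty}^2]\le (E[\|J\|_{L^\infty}^p])^{2/p}$. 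Thus it suffices to prove, uniformly in $s\in[0,T]$,
\begin{align}
E\Big[\int_{\rd\times\R}\big(|J[\beta,\phi_{\delta,\delta_0}]|^p+|\grad_y J[\beta,\phi_{\delta,\delta_0}]|^p+|\partial_v J[\beta,\phi_{\delta,\delta_0}]|^p\big)\,dy\,dv\Big]\le \frac{C(\beta',\psi,\delta)}{\delta_0^{\,p-1}},\notag
\end{align}
because raising this to the power $2/p$ reproduces the claimed exponent $2(p-1)/p$.

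By Lemma \ref{lem:differentiation}, $\partial_v J[\beta,\phi]=J[-\beta',\phi]$ and $\partial_{y_k}J[\beta,\phi]=J[\beta,\partial_{y_k}\phi]$. Since $\beta',\beta''\in C_c^\infty(\R)$, the function $-\beta'$ is again smooth with compactly supported derivatives, while $\partial_{y_k}\phi_{\delta,\delta_0}$ retains the product structure $\rho_{\delta_0}(r-s)\,w(r,x,s,y)$ (the $y$-derivative falling either on $\varrho_\delta(x-y)$, which produces the $\delta$-dependence of the constant, or on $\psi(s,y)$). Hence each of the three terms above is of the form $J[\tilde\beta,\tilde\phi]$ with $\tilde\beta$ smooth, compactly supported derivatives, and $\tilde\phi$ of the same product type, so it is enough to establish the single estimate $E[\int_{\rd\times\R}|J[\beta,\phi_{\delta,\delta_0}]|^p\,dy\,dv]\le C\delta_0^{-(p-1)}$ for such data.

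For that core estimate I would start from the representation already derived,
\begin{align}
J[\beta,\phi_{\delta,\delta_0}](s;y,v)=-\psi(s,y)\int_{s-\delta_0}^s M_{(y,v)}(r)\,\rho_{\delta_0}'(r-s)\,dr,\qquad M_{(y,v)}(r)=\int_{s-\delta_0}^r\int_{|z|>0}\zeta_{(y,v)}(\sigma,z)\,\tilde N(dz,d\sigma).\notag
\end{align}
As $\rho_{\delta_0}'(\cdot)=\delta_0^{-2}\rho'(\cdot/\delta_0)$ is supported in $[-\delta_0,0]$ with $\int_{s-\delta_0}^s|\rho_{\delta_0}'(r-s)|\,dr=C_\rho\delta_0^{-1}$, I obtain the pointwise bound $|J[\beta,\phi_{\delta,\delta_0}](s;y,v)|\le C_\rho\delta_0^{-1}|\psi(s,y)|\sup_{s-\delta_0\le r\le s}|M_{(y,v)}(r)|$. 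Taking $p$-th moments and applying BDG,
\begin{align}
E\big[\sup_{r}|M_{(y,v)}(r)|^p\big]\le C\Big(E\Big[\Big(\int_{s-\delta_0}^s\!\int_{|z|>0}|\zeta_{(y,v)}|^2 m(dz)\,d\sigma\Big)^{p/2}\Big]+E\Big[\int_{s-\delta_0}^s\!\int_{|z|>0}|\zeta_{(y,v)}|^p m(dz)\,d\sigma\Big]\Big).\notag
\end{align}
Using $|\zeta_{(y,v)}(\sigma,z)|\le\|\beta'\|_\infty(|z|\wedge1)\int_{\rd}\int_0^1 g(x)(1+|v(\sigma,x,\alpha)|)\varrho_\delta(x-y)\,d\alpha\,dx$, assumption \ref{A3}, Hölder in time over the interval of length $\delta_0$, and the uniform moment bounds \eqref{uni:moment-esti}, both right-hand terms are $O(\delta_0)$ for $\delta_0\le1$ and $p\ge2$; hence $E[\sup_r|M_{(y,v)}|^p]\le C\delta_0$ and $E[|J|^p]\le C\delta_0^{-(p-1)}|\psi(s,y)|^p\,K(y,v)$ pointwise, with a prefactor $K$ controlled by the moment bounds.

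The remaining point, and the main technical obstacle, is integrability in $v\in\R$: the pointwise bound must be integrated over all $(y,v)$, and the estimate of $\zeta_{(y,v)}$ above is essentially $v$-independent. Here I would use that $\beta'$ is compactly supported, so for each $(\sigma,x,\alpha)$ the integrand $\beta(v(\sigma,x,\alpha)+\eta-v)-\beta(v(\sigma,x,\alpha)-v)$ defining $\zeta_{(y,v)}$ vanishes unless $v$ lies in a bounded interval centred at $v(\sigma,x,\alpha)$; combined with \ref{A4} and the $L^p$ moment bounds this makes $\int_{\rd\times\R}|\psi(s,y)|^p K(y,v)\,dy\,dv$ finite and independent of $\delta_0$, so the factor $\delta_0^{-(p-1)}$ survives the spatial integration. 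The restriction $p=2^k$ is what lets the jump-martingale moment estimates be iterated down to the base exponent $p=2$, where the It\^o--L\'evy isometry applies directly, while $p\ge d+3$ secures the Morrey embedding. Collecting the three terms and raising to the power $2/p$ then yields $\sup_{0\le s\le T}E[\|J[\beta,\phi_{\delta,\delta_0}](s;\cdot,\cdot)\|_{L^\infty}^2]\le C(\beta',\psi,\delta)\,\delta_0^{-2(p-1)/p}$, as claimed.
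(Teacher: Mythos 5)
Your proposal is correct in outline and shares the paper's overall architecture (reduce the $L^\infty$ bound to a $W^{1,p}(\R^d\times\R)$ bound via Morrey/Sobolev embedding with $p\ge d+3$, handle $\partial_v J$ and $\partial_{y_k}J$ through Lemma \ref{lem:differentiation}, use the compact support of $\beta'$ to control the $v$-integration, and invoke the moment estimates), but the core quantitative step is organized differently. The paper keeps the kernel $\rho_{\delta_0}(r-s)$ inside the stochastic integral, estimates $E[\|J\|_p^p]$ by applying the BDG inequality $k$ times (this is where $p=2^k$ enters) interleaved with H\"{o}lder inequalities with respect to the measure $\varrho_\delta(x-y)\min(1,|z|^2)\,d\lambda\,d\alpha\,dx\,m(dz)\,dr$, and extracts the singular factor at the very end from $\int_0^T\rho_{\delta_0}^p(r-s)\,dr\le\|\rho_{\delta_0}\|_\infty^{p-1}\int\rho_{\delta_0}(r-s)\,dr\le C\delta_0^{-(p-1)}$. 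You instead integrate by parts in time first (using the representation $J=-\psi\int_{s-\delta_0}^s M_{(y,v)}(r)\rho_{\delta_0}'(r-s)\,dr$, which the paper derives for the martingale-orthogonality argument), pull out $\int|\rho_{\delta_0}'|=C\delta_0^{-1}$, and recover the compensating factor $\delta_0$ from a maximal inequality for $M_{(y,v)}$ over the short window $[s-\delta_0,s]$. Both routes give $\delta_0^{-(p-1)}$ and hence the stated exponent $2(p-1)/p$ after Jensen; your version is arguably cleaner in that it makes transparent where each power of $\delta_0$ comes from.

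The one place where your argument needs to be tightened is exactly the point you flag: the bound $E[\sup_r|M_{(y,v)}(r)|^p]\le C\delta_0$ as you derive it is uniform in $v$ and therefore not integrable over $\R_v$. To repair this you cannot simply multiply by an integrable prefactor a posteriori; you must carry the indicator $\mathbf{1}_{\{|v|\le C_{\beta'}+|v(\sigma,x,\alpha)|+|\eta|\}}$ through the BDG and H\"{o}lder steps (Jensen with respect to the normalized measure $\varrho_\delta(x-y)\,d\alpha\,dx$ inside $\zeta_{(y,v)}$, then Fubini in $v$ so that $\int_{\R_v}\mathbf{1}\,dv\le C(1+|v(\sigma,x,\alpha)|+g(x)(1+|v(\sigma,x,\alpha)|))$ is absorbed into a $\|v(r,\cdot,\cdot)\|_{p+1}^{p+1}$ moment). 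This is precisely the long chain of inequalities that constitutes the bulk of the paper's computation, and it is why the final constant involves $\sup_r E[\|v(r,\cdot,\cdot)\|_{p+1}^{p+1}]$ rather than the $p$-th moment. Since the mechanism you propose is the correct one and the required moments are available, this is an incompleteness of execution rather than a flaw in the approach.
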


 \begin{proof}
We estimate as follows:
\begin{align}
&E\Big[|| J[\beta,\phi_{\delta,\delta_0}](s;\cdot,\cdot)||_p^p\Big]\notag \\
& \quad 
=E\Big[\int_{\R_v} \int_{\R^d_y} \Big|J[\beta,\phi_{\delta,\delta_0}](s;y,v)\Big|^p \,dy\,dv\Big] \notag \\
&\quad
=  E\Big[\int_{\R_v} \int_{\R^d_y} \Big|\int_{r=0}^T\int_{|z|>0} \int_{\R^d_x}
\int_{\lambda=0}^1\int_{\alpha=0}^1 
\eta(x,v(r,x,\alpha);z) \beta^{\prime}(v(r,x,\alpha)-v +\lambda \eta(x,v(r,x,\alpha);z)) 
\notag \\
& \hspace{5cm}\times 
\rho_{\delta_0}(r-s) \varrho_{\delta}(x-y)\psi(s,y)\, d\alpha \,d\lambda \,dx\,\tilde{N}(dz,dr)
\Big|^p\,\,dy\,dv \Big]\notag \\
& \qquad \big(\text{by the BDG inequality}\big) \notag \\
& \quad  \le C \int_{\R_v} \int_{\R_y^d} 
E\Big[\Big (\int_{r=0}^T\int_{|z|>0} \Big|\int_{\R_x^d} \int_{\alpha=0}^1
\int_{\lambda=0}^1
\beta^{\prime}(v(r,x,\alpha)-v +\lambda \eta(x,v(r,x,\alpha);z)) \notag \\
& \hspace{3cm} \times \eta(x,v(r,x,\alpha);z) \rho_{\delta_0}(r-s) \psi(s,y)\, \varrho_{\delta}(x-y)
\,d\lambda\,d\alpha \,dx \Big|^2\,N(dz,dr)\Big)^\frac{p}{2} \Big]\,dy\,dv\notag \\
& \qquad \big(  \text{by Schwartz's inequality w.r.t.~the measure}
\, \varrho_{\delta}(x-y)\,d\lambda\,d\alpha \,dx\big)\notag\\
& \quad \le C \int_{\R_v} \int_{|y|<C_\psi} 
E\Big[\Big (\int_{r=0}^T\int_{|z|>0}\int_{\rd_x}  \int_{\alpha=0}^1\int_{\lambda=0}^1 
 {\beta^{\prime}}^2(v(r,x,\alpha) -v +\lambda \eta(x,v(r,x,\alpha);z)) \notag \\
& \hspace{3cm} \times \eta^2(x,v(r,x,\alpha);z) \rho_{\delta_0}^2(r-s) \psi^2 (s,y)\varrho_{\delta} (x-y)\,d\lambda \,d\alpha\,dx \,N(dz,dr)\Big)^\frac{p}{2} \Big]\,dy\,dv\notag \\
& \quad \le C \int_{\R_v} \int_{|y|<C_\psi} 
E\Big[\Big (\int_{r=0}^T\int_{|z|>0}\int_{\rd_x}  \int_{\alpha=0}^1\int_{\lambda=0}^1 
 {\beta^{\prime}}^2(v(r,x,\alpha)-v +\lambda \eta(x,v(r,x,\alpha);z)) \notag \\
& \hspace{3cm}\times \eta^2(x,v(r,x,\alpha);z)\rho_{\delta_0}^2(r-s) \psi^2 (s,y)
\,\varrho_{\delta} (x-y)\,d\lambda \,d\alpha\,dx\,\tilde{N}(dz,dr)\Big)^\frac{p}{2}\Big]\,dy\,dv\notag \\ 
& \quad\quad 
+  C \int_{\R_v} \int_{|y|<C_\psi} E\Big[\Big (\int_{r=0}^T\int_{|z|>0}\int_{\rd_x}  
\int_{\alpha=0}^1\int_{\lambda=0}^1 
 {\beta^{\prime}}^2(v(r,x,\alpha)-v +\lambda \eta(x,v(r,x,\alpha);z)) \notag \\
& \hspace{3cm}\times \eta^2(x,v(r,x,\alpha);z) \rho_{\delta_0}^2(r-s) \varrho_{\delta} (x-y)
\psi^2 (s,y)\,d\lambda\,d\alpha \,dx\,m(dz)\,dr \Big)^\frac{p}{2} \Big]\,dy\,dv\notag \\
&\qquad  \Big(\text{noting that} \, p=2^k, \,\text{ and applying the BDG inequality 
followed by the Cauchy-Schwartz's  } \notag\\ & \qquad
\text{inequality another }(k-1)~ \text{ times, we obtain}\Big)\notag \\
&\quad \le \sum_{j=0}^{k-1} C_j  \int_{\R_v} 
\int_{|y|<C_\psi} E\Big[\int_{r=0}^T\int_{|z|>0}
\int_{\rd_x}  \int_{\alpha=0}^1\int_{\lambda=0}^1 
\big |  {\beta^{\prime}}(v(r,x,\alpha)-v +\lambda \eta(x,v(r,x,\alpha);z)) \notag \\
& \hspace{3cm}\times \eta(x,v(r,x,\alpha);z) \rho_{\delta_0}(r-s) \psi (s,y)\big|^{\frac{p}{2^j}}
\,\varrho_{\delta} (x-y)\,d\lambda\,d\alpha \,dx\,m(dz)\,dr \Big]^{2^j}\,dy\,dv\notag \\
& \quad \le \sum_{j=0}^{k-1} C_j  \int_{\R_v} \int_{|y|<C_\psi} 
E\Big[\int_{r=0}^T\int_{|z|>0}\int_{\rd_x}  \int_{\alpha=0}^1\int_{\lambda=0}^1 
\big |  {\beta^{\prime}}(v(r,x,\alpha)-v +\lambda \eta(x,v(r,x,\alpha);z)) g(x)\notag \\
& \hspace{1.5cm}\times(1+|v(r,x,\alpha)|) \rho_{\delta_0}(r-s) \psi (s,y)\big|^{\frac{p}{2^j}}
\,\varrho_{\delta} (x-y)\min(1, |z|^2)\,d\lambda\,d\alpha \,dx\,m(dz)\,dr \Big]^{2^j}dy\,dv\notag \\
& \qquad \big(\text{applying  H\"{o}lder's inequality w.r.t. the measure} 
\, \varrho_{\delta} (x-y)\min(1, |z|^2)\,d\lambda\,d\alpha \,dx\,m(dz)\,dr\big)\notag\\
& \quad \le \sum_{j=0}^{k-1} C_j  \int_{\R_v} \int_{|y|<C_\psi} 
E\Big[\int_{r=0}^T\int_{|z|>0}\int_{\rd_x}  \int_{\alpha=0}^1\int_{\lambda=0}^1 
 \big | g(x) {\beta^{\prime}}(v(r,x,\alpha)-v +\lambda \eta(x,v(r,x,\alpha);z)) \notag \\
& \hspace{1.5cm}\times (1+|v(r,x,\alpha)|) \rho_{\delta_0}(r-s) \psi (s,y)\big|^{p}
\,\varrho_{\delta} (x-y)\min(1, |z|^2)\,d\lambda\,d\alpha \,dx\,m(dz)\,dr\Big]\,\,dy\,dv\notag \\
& \quad \le C E\Big[\int_{|y|<C_\psi}\int_{r=0}^T\int_{|z|>0}
\int_{\rd_x}\int_{\alpha=0}^1  \int_{|v|\le C_{\beta^\prime}+|v(r,x,\alpha)|+|\eta(x,v(r,x,\alpha);z)|}
(1+|v(r,x,\alpha)|^p)||\beta^\prime||_\infty^p \notag \\
& \hspace{1.5cm}\times g^p(x)\rho_{\delta_0}^p(r-s) ||\psi||_\infty^p
\varrho_{\delta} (x-y) \min(1, |z|^2)\,dv \,d\alpha\,dx\,m(dz)\,dr\,dy\Big]\notag \\
& \quad \le C(\beta, \psi) E\Big[\int_{r=0}^T\int_{\rd_x}
\int_{\alpha=0}^1 g^p(x)(1+|v(r,x,\alpha)|^p)\notag\\
&\hspace{3cm}\times(C_{\beta^\prime}+ |g(x)|(1+|v(r,x,\alpha)|))
\rho_{\delta_0}^p(r-s)\,d\alpha \,dx \,dr\,\Big]\notag \\
& \quad \le C(\beta, \psi) \int_{r=0}^T\Big(1+E \big[||v(r,\cdot,\cdot)||^{p+1}_{p+1}\big]\Big)\, \rho_{\delta_0}^p(r-s) \,dr
\notag \\ 
& \quad \le C(\beta,\psi)\Big(1+\sup_{0\le r\le T}
E\big[||v(r,\cdot,\cdot)||_{p+1}^{p+1}\big] \Big) \int_{r=0}^T 
\rho_{\delta_0}^p(r-s) \,dr\notag\\ 
&\quad \le C(\beta,\psi)
\Big(1+\sup_{0\le r\le T}E\Big[||v(r,\cdot)||_{p+1}^{p+1}\Big]\Big) \,
||\rho_{\delta_0}||_\infty^{p-1}\int_{r=0}^T \rho_{\delta_0}(r-s) \,dr\notag \\
& \quad 
\le \frac{C(\beta,\psi)\Big(1+\sup_{0\le r\le T}
E\Big[||v(r,\cdot)||_{p+1}^{p+1}\Big]\Big)}{\delta_0^{p-1}}.
\label{l-1}
\end{align}

Similarly, we can derive the following bounds:
\begin{align}
& E\Big[|| \partial_v J[\beta,\phi_{\delta,\delta_0}](s;\cdot,\cdot)||_p^p\Big] \le
\frac{C(\beta^{\prime\prime}, \psi )}{\delta_0^{p-1}},
\label{l-2}  \\
& E\Big[|| \partial_{y_k} J[\beta,\phi_{\delta,\delta_0}](s;\cdot,\cdot)||_p^p\Big] \le
\frac{C(\beta', \partial_{y_k} \psi, \delta )}{\delta_0^{p-1}}.
 \label{l-3}
\end{align}
Therefore, in view of \eqref{l-1}, \eqref{l-2}, 
and \eqref{l-3}, we have arrived at
\begin{align*}
E\Big[|| J[\beta,\phi_{\delta,\delta_0}](s;\cdot,\cdot)||_{W^{1,p}(\R^d\times \R)}^p\Big] 
\le \frac{C(\beta', \psi,\delta )}{\delta_0^{p-1}}.
\end{align*}
Finally, we use the Sobolev embedding along with  
Cauchy-Schwartz's inequality to arrive at 
\eqref{eq:l-infinity bound}.
\end{proof}

\begin{lem}\label{stochastic_lemma_3} 
It holds that $J_3 = 0$ and 
\begin{align} 
\lim_{l\goto 0}\lim_{\delta_0 \goto 0} I_3  
& = E\Big[\int_{\Pi_{T}}\int_{\R_x^d}\int_{|z|>0} \int_{\alpha=0}^1 
\Big( \beta(v(r,x,\alpha)+ \eta(x,v(r,x,\alpha);z)
-u_\eps(r,y)-\eta_\eps(y,u_\eps(r,y);z)) \notag \\
& \hspace{3cm}
-\beta(v(r,x,\alpha)-u_\eps(r,y)-\eta_\eps(y,u_\eps(r,y);z))+ \beta(v(r,x,\alpha)-u_\eps(r,y)) \notag \\
&  \hspace{4cm}
-\beta(v(r,x,\alpha)+ \eta(x,v(r,x,\alpha);z)-u_\eps(r,y)) \Big)  \notag \\ 
& \hspace{5cm}\times \psi(r,y)\,\varrho_{\delta}(x-y)\,d\alpha\,m(dz)\,dx\,dy\,dr\Big]. \notag
\end{align}
\end{lem}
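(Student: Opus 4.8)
The plan is to dispatch the two assertions separately, both resting on the martingale identity \eqref{eq:conditional_indep} together with the It\^{o}-L\'{e}vy isometry. To see that $J_3=0$, I would fix $(t,x,\alpha,k)$ and note that, since $\supp(\rho)\subset[-1,0]$, the factor $\rho_{\delta_0}(t-s)$ restricts the inner $\tilde N$-integral to $t\le s\le t+\delta_0$; consequently the integrand
\[
\zeta_{(t,x,k)}(s,z):=\int_{\R_y^d}\Big(\beta\big(u_\eps(s,y)+\eta_\eps(y,u_\eps(s,y);z)-k\big)-\beta\big(u_\eps(s,y)-k\big)\Big)\rho_{\delta_0}(t-s)\varrho_\delta(x-y)\psi(s,y)\,dy
\]
is predictable in $s$ and supported on $[t,t+\delta_0]$. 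The remaining factor $\varsigma_l(v(t,x,\alpha)-k)$ is $\mathcal{F}_t$-measurable because the generalized entropy solution is predictable, so \eqref{eq:conditional_indep} with $T_1=t$, $T_2=t+\delta_0$ and $X_{T_1}=\varsigma_l(v(t,x,\alpha)-k)$ annihilates the $s$-integral. Integrating the resulting zero over $(t,x,\alpha,k)$, with Fubini justified by \eqref{uni:moment-esti} and Lemma \ref{lem:L-infinity estimate}, yields $J_3=0$.

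The substance is the limit of $I_3$, for which I would start from the representation \eqref{stochastic_estimate_6}. The key structural fact is that $J[\beta,\phi_{\delta,\delta_0}](s;y,v)$ and the increment $\varsigma_l(u_\eps(s,y)-v)-\varsigma_l(u_\eps(s-\delta_0,y)-v)$ are both increments, over the shrinking interval $[s-\delta_0,s]$, of processes driven by the \emph{same} compensated Poisson measure. Indeed the first is $A_s$, where $A_r=\int_{s-\delta_0}^r\int_{|z|>0} G(\sigma,z)\,\tilde N(dz,d\sigma)$ and $G$ carries the factor $\rho_{\delta_0}(\cdot-s)$; while It\^{o}-L\'{e}vy applied to $r\mapsto\varsigma_l(u_\eps(r,y)-v)$ (using \eqref{eq:levy_stochconservation_laws-viscous-new}, which has no Brownian component, via Lemma \ref{lem:classical-solution}) writes the second increment as $B_s$, where $B$ splits into a finite-variation part $B^{\mathrm{fv}}$ (drift plus compensator) and a pure-jump martingale whose jump at a mark $(r,z)$ equals $\varsigma_l(u_\eps(r-,y)+\eta_\eps(y,u_\eps(r-,y);z)-v)-\varsigma_l(u_\eps(r-,y)-v)$. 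The semimartingale product rule, since $A_{s-\delta_0}=B_{s-\delta_0}=0$ and the martingale-against-predictable terms have zero expectation, gives
\[
E[A_sB_s]=E\Big[\int_{s-\delta_0}^s A_{r-}\,dB^{\mathrm{fv}}_r\Big]+E\Big[\int_{s-\delta_0}^s\int_{|z|>0} G(r,z)\big(\varsigma_l(u_\eps(r-,y)+\eta_u-v)-\varsigma_l(u_\eps(r-,y)-v)\big)\,m(dz)\,dr\Big],
\]
with $\eta_u=\eta_\eps(y,u_\eps(r-,y);z)$. I would then show the first term is $O(\sqrt{\delta_0})$ after integrating in $(s,y,v)$: because $\int\rho_{\delta_0}^2=O(\delta_0^{-1})$ one has $\|A_{r-}\|_{L^2}=O(\delta_0^{-1/2})$, whereas $B^{\mathrm{fv}}$ has total variation $O(\delta_0)$ with $L^2$-density controlled via \ref{A4} and \eqref{uni:moment-esti}, so the two bounds combine to a vanishing contribution.

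It remains to pass to the limits in the cross-variation term. As $\delta_0\downarrow0$ the factor $\rho_{\delta_0}(r-s)$ inside $G$ concentrates $r$ at $s$ from below; using the $L^p$-time continuity of the integrands exactly as in Step~1 of Lemma \ref{stochastic_lemma_2}, together with $\int\rho_{\delta_0}=1$, the term converges to its value at $r=s$, which after renaming $s$ as $r$ reads
\[
E\Big[\int_{\R_v}\int_{\Pi_T}\int_{|z|>0}\Phi_r(v)\,\psi(r,y)\big(\varsigma_l(u_\eps(r,y)+\eta_u-v)-\varsigma_l(u_\eps(r,y)-v)\big)\,m(dz)\,dy\,dr\,dv\Big],
\]
where $\Phi_r(v)=\int_{\R_x^d}\int_0^1\big(\beta(v(r,x,\alpha)+\eta_v-v)-\beta(v(r,x,\alpha)-v)\big)\varrho_\delta(x-y)\,d\alpha\,dx$, with $\eta_v=\eta(x,v(r,x,\alpha);z)$ and $\eta_u=\eta_\eps(y,u_\eps(r,y);z)$. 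Finally, letting $l\downarrow0$ the kernels $\varsigma_l(u_\eps(r,y)+\eta_u-v)$ and $\varsigma_l(u_\eps(r,y)-v)$ act as Dirac masses in $v$, so $\int_{\R_v}\Phi_r(v)(\varsigma_l(\cdots)-\varsigma_l(\cdots))\,dv\to\Phi_r(u_\eps(r,y)+\eta_u)-\Phi_r(u_\eps(r,y))$; expanding this difference produces the four-term expression $\beta(v+\eta_v-u_\eps-\eta_u)-\beta(v-u_\eps-\eta_u)+\beta(v-u_\eps)-\beta(v+\eta_v-u_\eps)$ inside the $(x,\alpha)$-integral, which is precisely the claimed integrand.

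The main obstacle throughout is the cross-variation extraction: one must rigorously justify the product-rule decomposition for the pairing of the two compensated Poisson integrals and, above all, control the finite-variation remainder against the $\delta_0^{-1/2}$ blow-up of $A_{r-}$. This is exactly where the uniform moment bounds \eqref{uni:moment-esti}, the growth bound \ref{A4}, and the $W^{1,p}$/$L^\infty$ estimates of Lemma \ref{lem:L-infinity estimate} are indispensable, both for the remainder estimate and for the dominated/bounded-convergence arguments that legitimize the two passages to the limit.
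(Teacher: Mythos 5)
Your treatment of $J_3$ and the overall architecture for $I_3$ coincide with the paper's: both start from \eqref{stochastic_estimate_6}, expand $\varsigma_l(u_\eps(\cdot,y)-v)$ by the It\^{o}--L\'{e}vy formula, isolate the martingale--martingale cross term (the paper's $B^{\eps,l}$, your quadratic covariation), show the drift and compensator remainders vanish as $\delta_0\to 0$, and then send $\delta_0\to 0$ and $l\to 0$ in the cross term exactly as in the paper's Claim~4, arriving at the same four-term integrand. Where you genuinely diverge is in how the remainders are killed: the paper controls $\sup_{(y,v)}$ of the stochastic convolution via the $W^{1,p}\hookrightarrow L^\infty$ machinery of Lemma \ref{lem:L-infinity estimate} (yielding $\delta_0^{-(p-1)/p}$ against the $O(\delta_0)$ window) and, for the $\operatorname{div}F_\eps(u_\eps)$ term, performs the $X_\eps[\partial_{y_k}\phi_{\delta,\delta_0}]$ integration by parts in $y$ to obtain an $\eps$-uniform bound; you instead use the raw It\^{o}--L\'{e}vy isometry bound $\|A_{r-}\|_{L^2(\Omega)}=O(\delta_0^{-1/2})$ and a crude $L^2$ bound on the finite-variation density. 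Your route is more elementary and suffices because the $\delta_0\to 0$ limit is taken at fixed $(\eps,l)$, but two points need more care than you give them: (i) the finite-variation density contains $F'_\eps(u_\eps)\nabla_y u_\eps$, $\eps\Delta u_\eps$ and $\varsigma_l''$, so its $L^2$ control comes from the $\eps$-dependent regularity estimates of Lemmas \ref{lem:convergence_proof} and \ref{lem:classical-solution} (and an $l^{-2}$ factor), not merely from \ref{A4} and \eqref{uni:moment-esti}; and (ii) after integrating over $v$ against $\varsigma_l$ you need the bound on $A_{r-}$ to be usable uniformly near $v=u_\eps(\sigma,y)$, which is precisely the uniformity the paper's $L^\infty(\R^d\times\R)$ estimate is designed to deliver. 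Neither point breaks your argument, but both must be made explicit for the proof to close.
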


\begin{proof}
Note that 
\begin{align*}
J_3 & = 
\int_{\Pi_T}\int_{\alpha=0}^1\int_{\R_v} 
E \Big[\varsigma_l(v(t,x,\alpha)-v) \\ &\hspace{2cm}\times\int_{s=t}^{t+\delta_0}\int_{|z|>0} 
\int_{\R_y^d}\Big(\beta(u_\eps(s,y) +\eta_\eps(y,u_\eps(s,y);z)-v)
-\beta(u_\eps(s,y)-v)\Big)\\
& \hspace{7cm}\times \phi_{\delta,\delta_0}\,\,dy\,\tilde{N}(dz,ds)\Big]\,dv\,d\alpha \,dx\,dt\\
        & \qquad  = 0, \quad
        \text{thanks to \eqref{eq:conditional_indep}.}
\end{align*}
For all $y\in \R^d$, $u_\eps(\cdot,y)$ solves
\begin{align*}
du_\eps(s,y)=-\text{div}F_\eps(u_\eps(s,y))ds 
+ \eps  \Delta u_\eps(s,y)\,ds+\int_{|z|>0}\eta_\eps(y,u_\eps(s,y);z)\, \tilde{N}(dz,ds). 
 \end{align*}  
Now we apply the It\^{o}-L\'{e}vy formula to 
$\varsigma_l(u_\eps(s,y)-v)$: 
\begin{align*}
& \varsigma_l(u_\eps(s,y)-v)-\varsigma_l(u_\eps(s-\delta_0,y)-v)\\
& \quad 
= \int_{s-\delta_0}^s \varsigma_l^{\prime} (u_\eps(\sigma,y)-v)\big(-\text{div}F_\eps(u_\eps(\sigma,y)) 
+ \eps  \Delta u_\eps(\sigma,y)\big) \,d\sigma \\
& \quad\quad
+ \int_{s-\delta_0}^s \int_{|z|>0} \Big( \varsigma_l(u_\eps(\sigma,y)+\eta_\eps(y,u_\eps(\sigma,y);z)-v)
- \varsigma_l(u_\eps(\sigma,y)-v)\Big)\,  \tilde{N}(dz,d\sigma)\\
&\quad\quad + \int_{s-\delta_0}^s \int_{|z|>0} 
\int_{\lambda=0}^1(1-\lambda) |\eta_\eps(y, u_\eps(\sigma,y);z)|^2 
\varsigma_{l}^{\prime\prime}( u_\eps(\sigma,y)-v+\lambda \eta_\eps(y, u_\eps(\sigma,y);z) ) 
\,d\lambda \,m(dz)\,d\sigma\\
& \quad 
= - \frac{\partial} {\partial v}\int_{s-\delta_0}^s 
\varsigma_l(u_\eps(\sigma,y)-v)\big(-\text{div}F_\eps(u_\eps(\sigma,y)) 
+ \eps  \Delta u_\eps(\sigma,y)\big)  \,d\sigma \\
& \quad\quad
+ \int_{s-\delta_0}^s \int_{|z|>0} \Big( \varsigma_l(u_\eps(\sigma,y)
+\eta_\eps(y,u_\eps(\sigma,y);z)-v)- \varsigma_l(u_\eps(\sigma,y)-v)\Big)
\, \tilde{N}(dz,d\sigma)\\
& \quad\quad 
+ \int_{s-\delta_0}^s \int_{|z|>0} \int_{\lambda=0}^1
(1-\lambda) |\eta_\eps(y, u_\eps(\sigma,y);z)|^2 
\varsigma_{l}^{\prime\prime}( u_\eps(\sigma,y)-v
+\lambda \eta_\eps(y, u_\eps(\sigma,y);z) )\,d\lambda \,m(dz)\,d\sigma.
 \end{align*}

Therefore, from \eqref{stochastic_estimate_6}, we have
\begin{align}
& I_3 = \notag 
\\ & \quad
E\Big[\int_{\R_v}\int_{\Pi_{T}} J[\beta,\phi_{\delta,\delta_0}](s;y,v)
\Big\{ - \frac{\partial} {\partial v}
\int_{s-\delta_0}^s \varsigma_l(u_\eps(\sigma,y)-v)\big(-\text{div}F_\eps(u_\eps(\sigma,y)) 
+ \eps  \Delta u_\eps(\sigma,y)\big) \,d\sigma  \notag \\
& \quad\quad 
+ \int_{s-\delta_0}^s \int_{|z|>0} 
\Big( \varsigma_l(u_\eps(\sigma,y)+\eta_\eps(y,u_\eps(\sigma,y);z)-v)
- \varsigma_l(u_\eps(\sigma,y)-v)\Big)\, \tilde{N}(dz,d\sigma) \notag \\
&\quad\quad 
+ \int_{s-\delta_0}^s \int_{|z|>0}  \int_{\lambda=0}^1
(1-\lambda) |\eta_\eps(y, u_\eps(\sigma,y);z)|^2 
\varsigma_{l}^{\prime\prime}( u_\eps(\sigma,y)-v
+\lambda \eta_\eps(y, u_\eps(\sigma,y);z) ) \notag \\
& \hspace{5cm} 
\times\,d\lambda\,m(dz)\,d\sigma\,\Big\}\,dy\,ds\,dv\Big] \notag \\
&\qquad (\text{by the It\^{o}-L\'{e}vy product rule and integration by parts})\notag \\
&= E\Big[\int_{\R_v}\int_{\Pi_{T}} 
J[\beta^{\prime},\phi_{\delta,\delta_0}](s;y,v)\Big(\int_{s-\delta_0}^s 
\varsigma_l(u_\eps(\sigma,y)-v)\,\text{div}F_\eps(u_\eps(\sigma,y)) 
\,d\sigma\Big)\,ds\,dy\,dv\Big]\notag  \\
& \quad -E\Big[\int_{\R_v}\int_{\Pi_{T}} J[\beta^{\prime},\phi_{\delta,\delta_0}](s;y,v)
\Big(\int_{s-\delta_0}^s \varsigma_l(u_\eps(\sigma,y)-v)
\, \eps  \Delta u_\eps(\sigma,y)\big) \,d\sigma\Big)\,ds\,dy\,dv\Big]\notag  \\
& 
\quad + E\Big[\int_{\Pi_{T}}\int_{\R_v}\int_{r=s-\delta_0}^s
\int_{\R_x^d}\int_{|z|>0} \int_{\alpha=0}^1 \Big( \beta(v(r,x,\alpha)+ \eta(x,v(r,x,\alpha);z)-v)
-\beta(v(r,x,\alpha)-v)\Big) \notag \\
& \hspace{4cm}\times\Big( \varsigma_l(u_\eps(r,y)+\eta_\eps(y,u_\eps(r,y);z)-v)
-\varsigma_l(u_\eps(r,y)-v)\Big) \notag \\
& \hspace{5cm}\times\rho_{\delta_0}(r-s)\,\psi(s,y)\,\varrho_{\delta}(x-y) 
\,d\alpha \,m(dz)\,dx\,dr\,dv\,dy\,ds\Big] \notag \\
&\quad +E\Big[\int_{\R_v}\int_{\Pi_{T}} 
J[\beta,\phi_{\delta,\delta_0}](s;y,v)\Big\{ \int_{s-\delta_0}^s \int_{|z|>0}
\int_{\lambda=0}^1(1-\lambda) |\eta_\eps(y, u_\eps(\sigma,y);z)|^2\notag\\
&\hspace{4cm}\times \varsigma_{l}^{\prime\prime}( u_\eps(\sigma,y)-v
+\lambda \eta_\eps(y, u_\eps(\sigma,y);z) )\,d\lambda\,m(dz)
\,d\sigma \,\Big\}\,dy\,ds\,dv\Big] \notag \\
&\quad =: A_1^{l,\eps}(\delta,\delta_0) 
+ A_2^{l,\eps}(\delta,\delta_0)+  B^{\eps, l} + A_3^{l,\eps}(\delta,\delta_0). 
\notag
 \end{align}
 
\textbf{Claim 1:}
\begin{align*}
A_1^{l,\eps}(\delta,\delta_0) \goto 0 \quad \text{as $\delta_0 \goto 0$.}
\end{align*}
\noindent{\it Justification:}  Let 
\begin{align}
   G_\eps(u,v)=\int_{r=0}^v \beta^{\prime\prime}(u-r) 
   F_{\eps,k}^\prime(r)dr\quad \text{for}~~u,v \in \R.\notag
\end{align}
It is easy to check that there is a 
positive integer $p$ such that
\begin{align}
  \sup_{\eps>0}| G_\eps(u,v)| \le C_\beta (1+ |u|^p)
  \quad \text{for all $u,v \in \R$.} 
  \label{eq:uniform-growth-estimate}
\end{align}
Furthermore, define
\begin{align}
X_\eps[\phi_{\delta,\delta_0}](s;y,v)& 
:= \int_{\rd_x}\int_{r=0}^T  \int_{|z|>0} \int_{\lambda=0}^1 
\int_{\alpha=0}^1\eta(x,v(r,x,\alpha);z))  
G_\eps(v(r,x,\alpha)+ \lambda \eta(x,v(r,x,\alpha);z),v) \notag \\
& \hspace{5cm}\times\phi_{\delta,\delta_0}(r,x;s,y)
\,d\alpha\,d\lambda\, \tilde{N}(dz,dr)\,dx.\notag
\end{align}
Once again by differentiating under the integral sign,
\begin{align}
&\partial_v X_\eps[\phi_{\delta,\delta_0}](s;y,v) \notag\\
& \quad 
=\int_{\rd_x} \int_{r=0}^T  \int_{|z|>0} \int_{\lambda=0}^1 
\int_{\alpha=0}^1\eta(x,v(r,x,\alpha);z)) \partial_v G_\eps(v(r,x,\alpha)
+ \lambda \eta(x,v(r,x,\alpha);z),v) \notag \\
& \hspace{8cm}\times\phi_{\delta,\delta_0}(r,x;s,y)
\,d\alpha\,d\lambda\,\tilde{N}(dz,dr)\,dx.\notag \\
&\partial_{y_k} X_\eps[\phi_{\delta,\delta_0}](s;y,v) 
= X_\eps[\partial_{y_k} \phi_{\delta,\delta_0}](s;y,v).\notag
\end{align}
One can argue as in Lemma \ref{lem:L-infinity estimate} (with the 
aid of \eqref{eq:uniform-growth-estimate} and moment estimates) to 
arrive at the conclusion that there exists a 
constant $C=C(\beta,\psi)$ and $p \in \mathbb{N}$ such that
\begin{align}
\sup_{\eps > 0}  \sup_{0\le s\le T}\Big(
E\Big[|| X_\eps[ \partial_{y_k} 
\phi_{\delta,\delta_0}](s;\cdot,\cdot)||_{L^\infty(\R^d\times \R)}^2\Big]\Big) 
\le \frac{C(\beta,  \psi )}{\delta_0^\frac{2(p-1)}{p}}. \label{eq:delta_0 estimate}
\end{align}
 
Now we repeatedly use integration by parts to obtain
\begin{align}
&\int_{\R_v} \int_{\Pi_T} J[\beta^\prime,\phi_{\delta,\delta_0}](s,y,v)
\Big(\int_{s-\delta_0}^s \varsigma_l(u_\eps(\sigma,y)-v)F_{\eps,k}^{\prime}(v)
\partial_{y_k} u_\eps(\sigma,y)\,d\sigma\Big) \,ds\,dy\,dv \notag \\
& 
=\int_{\R_v} \int_{\Pi_T} \int_{\sigma=s-\delta_0}^s \int_{\rd_x}
\int_{r=0}^T  \int_{|z|>0} \int_{\lambda=0}^1 \int_{\alpha=0}^1
\beta^{\prime\prime}(v(r,x,\alpha)+ \lambda \eta(x,v(r,x,\alpha);z)-v)\notag \\
&\hspace{5cm} \times \eta(x,v(r,x,\alpha);z) F_{\eps,k}^\prime(v)\phi_{\delta,\delta_0}(r,x;s,y)
\varsigma_l(u_\eps(\sigma,y)-v) \notag \\
& \hspace{6cm}\times \partial_{y_k} u_\eps(\sigma,y)\,d\alpha \,d\lambda\,\tilde{N}(dz,dr)\,dx
\,d\sigma \,ds\,dy\,dv \notag \\
& \quad 
=\int_{\R_v} \int_{\Pi_T} \int_{\sigma=s-\delta_0}^s 
\partial_v X_\eps[\phi_{\delta,\delta_0}](s;y,v)
\varsigma_l(u_\eps(\sigma,y)-v)\partial_{y_k} u_\eps(\sigma,y)
 \,d\sigma \,ds\,dy\,dv \notag \\
 & \quad
 = \int_{\R_v} \int_{\Pi_T} \int_{\sigma=s-\delta_0}^s  
 X_\eps[\phi_{\delta,\delta_0}](s;y,v)\varsigma_l^\prime (u_\eps(\sigma,y)-v)
 \partial_{y_k} u_\eps(\sigma,y)
 \,d\sigma \,ds\,dy\,dv \notag \\
&\quad
= \int_{\R_v} \int_{\Pi_T} \int_{\sigma=s-\delta_0}^s  
X_\eps[\phi_{\delta,\delta_0}](s;y,v) \partial_{y_k} \varsigma_l (u_\eps(\sigma,y)-v)
\,d\sigma \,ds\,dy\,dv \notag \\
&\quad 
=- \int_{\R_v} \int_{\Pi_T} \int_{\sigma=s-\delta_0}^s \partial_{y_k}  
X_\eps[\phi_{\delta,\delta_0}](s;y,v)  \varsigma_l (u_\eps(\sigma,y)-v)
 \,d\sigma \,ds\,dy\,dv \notag \\
& \quad
=- \int_{\R_v} \int_{\Pi_T} \int_{\sigma=s-\delta_0}^s 
X_\eps[\partial_{y_k} \phi_{\delta,\delta_0}](s;y,v)  \varsigma_l (u_\eps(\sigma,y)-v)
\,d\sigma \,ds\,dy\,dv. 
\label{l-4}
\end{align} 
Therefore, from \eqref{l-4} 
and \eqref{eq:delta_0 estimate}, we have
\begin{align}
\big|A_1^{l,\eps}(\delta,\delta_0)\big| &\le \sum_k 
\Big|E\Big[\int_{\R_v} \int_{\Pi_T} \int_{\sigma=s-\delta_0}^s 
X_\eps[\partial_{y_k} \phi_{\delta,\delta_0}](s;y,v)  \varsigma_l (u_\eps(\sigma,y)-v)
 \,d\sigma \,ds\,dy \,dv\Big]\Big| \notag \\
& \le C\delta_0  \frac{C(\beta, \phi,\delta )}{\delta_0^\frac{p-1}{p}} 
= C_1(\beta,\phi, \delta) \delta_0^\frac{1}{p} 
\rightarrow 0\quad \text{as $\delta_0 \rightarrow 0$.}
\notag
\end{align}
   
\textbf{Claim 2:}
\begin{align*}
A_2^{l,\eps}(\delta,\delta_0) \goto 0 \quad \text{as $\delta_0 \goto 0$.}
\end{align*}
\noindent{\it Justification:} Clearly,
\begin{align*}
& | A_2^{l,\eps}(\delta,\delta_0)|
\\ & \quad 
\le E\Big[ \int_v \int_{s=0}^T \int_{|y|\le C_{\phi}}\int_{s-\delta_0}^s 
||J[\beta^{\prime},\phi_{\delta,\delta_0}](s;\cdot,\cdot)||_{L^\infty(\R^d\times \R)} 
\varsigma_l(v-u_\eps(\sigma,y))\,\eps| \Delta u_\eps(\sigma,y)|
\,d\sigma\, dy\,ds\,dv\Big]\notag \\
&  \quad =E\Big[\int_{s=0}^T \int_{|y|\le C_{\phi}}
\int_{s-\delta_0}^s ||J[\beta^{\prime},
\phi_{\delta,\delta_0}](s;\cdot,\cdot)||_{L^\infty(\R^d\times \R)} 
\,\eps |\Delta u_\eps(\sigma,y)|\,d\sigma \,dy\,ds\Big]\notag \\
&\quad
\le \eps C(\psi)\int_{s=0}^T \int_{s-\delta_0}^s 
\Big(E\Big[||J[\beta^{\prime},\phi_{\delta,\delta_0}]
(s;\cdot,\cdot)||_{L^\infty(\R^d\times \R)}^2
\Big]\Big)^\frac{1}{2}
\Big( E \Big[  \int_{|y|\le C_{\phi}}|\Delta u_\eps(\sigma,y)|^2\Big]
\Big)^\frac{1}{2}\,d\sigma\, dy\,ds \notag \\
&\quad
\le C(\beta,\psi,\delta) \Big( \sup_{0\le s\le T} 
E \Big[||J[\beta^{\prime},\phi_{\delta,\delta_0}](s;\cdot,\cdot)||_{L^\infty(\R^d\times \R)}^2
\Big]\Big)^\frac{1}{2} \\
&\hspace{5cm}\times
\eps  \int_{s=0}^T\int_{\sigma=s-\delta_0}^s
\Big( E \Big[ \int_{|y|\le C_{\phi}} |\Delta u_\eps(\sigma,y)|^2\,dy\Big]\Big)^\frac{1}{2}
\,d\sigma\, ds\notag \\
&\quad
\le \frac{C(\beta,\eps,\psi,T)}{\delta_0^\frac{p-1}{p}} \,\delta_0\,\sup_{0\le r\le T} 
E \Big[|| \Delta u_\eps(r)||_2\Big]\notag \\
& \quad 
\le C(\beta,\psi,\eps,T) \delta_0^\frac{1}{p}
\quad \big(\text{as}~\sup_{0\le t\le T}E \Big[||\Delta u_\eps(t,\cdot)||_{L^2(\R^d)}\Big] 
\le C(\eps, T)~ \text{by Lemma \ref{lem:convergence_proof}}  \big)\notag\\
& \quad  \text{$\goto 0$ as $\delta_0\goto 0$.}
\end{align*} 
 
\textbf{Claim 3:}
\begin{align*}
A_3^{l,\eps}(\delta,\delta_0) \goto 0 \quad \text{as $\delta_0 \goto 0$.}
\end{align*}
\noindent{\it Justification:} First, we use integration by parts to conclude
\begin{align*}
&A_3^{l,\eps}(\delta,\delta_0)
\\ & = E\Big[\int_{\R_v} \int_{\Pi_{T}} J[\beta,\phi_{\delta,\delta_0}](s;y,v)
\Big( \int_{s-\delta_0}^s \int_{|z|>0}
\int_{\lambda=0}^1 \varsigma_l^{\prime\prime}(u_\eps(\sigma,y)-v 
+\lambda \eta_\eps(y,u_\eps(\sigma,y);z))
\\ &\hspace{5cm}\times (1-\lambda)\eta_\eps^2(y,u_\eps(\sigma,y);z)
\,d\lambda\,m(dz)\,d\sigma \Big) \,ds\,dy\,dv \Big]\notag \\
&= E\Big[\int_{\R_v} \int_{\Pi_{T}} \int_{\sigma=s-\delta_0}^s \int_{|z|>0}
\int_{\alpha=0}^1 J[\beta^{\prime\prime},\phi_{\delta,\delta_0}](s;y,v)  
\varsigma_l(u_\eps(\sigma,y)-v
+\lambda \eta_\eps(y,u_\eps(\sigma,y);z))  \notag \\
& \hspace{5cm} \times(1-\lambda)\ 
\eta_\eps^2(y,u_\eps(\sigma,y);z)
\,d\lambda\,m(dz)\,d\sigma  \,ds\,dy\,dv \Big],
\notag 
\end{align*} 
and therefore
\begin{align*}
&\big| A_3^{l,\eps}(\delta,\delta_0)\big| \\
& \le E\Big[ \int_{s=0}^T\int_{{|y|<C_\psi}}
\int_{\sigma=s-\delta_0}^s \int_{|z|>0}
||J[\beta^{\prime\prime},\phi_{\delta,\delta_0}](s;\cdot,\cdot)||_\infty 
\eta_\eps^2(y,u_\eps(\sigma,y);z)
\,m(dz)\,d\sigma  \,dy\,ds \Big]\notag \\
& \le C E\Big[\int_{s=0}^T\int_{{|y|<C_\psi}}
\int_{\sigma=s-\delta_0}^s  ||J[\beta^{\prime\prime},
\phi_{\delta,\delta_0}](s;\cdot,\cdot)||_\infty
g^2(y)(1+|u_\eps(\sigma,y)|^2)\,d\sigma \,dy\,ds \Big]\notag \\
& \le C\int_{s=0}^T\int_{\sigma=s-\delta_0}^s 
\Big(E \Big[||J[\beta^{\prime\prime},\phi_{\delta,\delta_0}](s;\cdot,\cdot)||_\infty^2\Big]
\Big)^\frac{1}{2} \Big( E\Big[\int_{|y|<C_\psi} g^4(y)(1+|u_\eps(\sigma,y)|^4
\,dy\Big] \Big)^\frac{1}{2}\,d\sigma \,ds \notag \\
& \le \frac{ C(\beta,\psi,\delta)}{\delta_0^\frac{p-1}{p}} 
\int_{s=0}^T \int_{\sigma=s-\delta_0}^s\Big( 1 +E\Big[||u_\eps(\sigma)||_4^4\Big] 
\Big)^{\frac 12}\,d\sigma \,ds \notag \\
& \le C(\beta,\psi,\delta) \delta_0^\frac{1}{p} 
T \Big( 1 + \sup_{\eps>0}\sup_{0\le t \le T} 
E \Big[||u_\eps(t,\cdot)||_4^4\Big]  \Big)^\frac{1}{2}  \goto 0 \quad 
\text{as $\delta_0\goto 0$.}
 \end{align*}
 
The next claim is about  $B ^{l,\eps}(\delta,\delta_0)$. 

{\bf Claim 4:}
\begin{align}
& \lim_{l\goto 0} \lim_{\delta_0 \goto 0} B ^{l,\eps}(\delta,\delta_0)\notag\\
 =& E\Big[\int_{\Pi_{T}}\int_{\R_x^d}
 \int_{|z|>0} \int_{\alpha=0}^1 \Big\{ \beta(v(r,x,\alpha)+ \eta(x,v(r,x,\alpha);z)
  -u_\eps(r,y)-\eta_\eps(y,u_\eps;z)) \notag \\
  & \hspace{3.5cm}-\beta(v(r,x,\alpha)-u_\eps(r,y)-\eta_\eps(y,u_\eps(r,y);z)) \notag \\
  &  \hspace{3.5cm}-\beta(v(r,x,\alpha)+ \eta(x,v(r,x,\alpha);z)-u_\eps(r,y))  \notag \\ 
  & \hspace{3.5cm}  + \beta(v(r,x,\alpha)-u_\eps(r,y))\Big\}\psi(r,y)\,\varrho_{\delta}(x-y)
  \,d\alpha\,m(dz)\,dx\,dy\,dr\Big] \notag
 \end{align}

\noindent{\it Justification:}  Note that, using integration 
by parts, $B ^{l,\eps}(\delta,\delta_0)$ can be written as
\begin{align*}
  &B ^{l,\eps}(\delta,\delta_0)\\
  &= E\Big[\int_{\Pi_{T}}\int_{\R_v}\int_{r=s-\delta_0}^s\int_{\R_x^d}\int_{|z|>0}
  \int_{\alpha=0}^1 \int_{\lambda=0}^1 \int_{\theta=0}^1 
 \beta^{\prime\prime}(v(r,x,\alpha)-v + \lambda \eta(x,v(r,x,\alpha);z))  \notag \\
 & \hspace{2.5cm} \times  \eta(x,v(r,x,\alpha);z) \eta_\eps(y,u_\eps(r,y);z) 
   \varsigma_l(u_\eps(r,y)+ \theta \eta_\eps(y,u_\eps(r,y);z)-v)  \notag \\
   & \hspace{4cm}\times\rho_{\delta_0}(r-s)\,\psi(s,y)\,\varrho_{\delta}(x-y)
   \,d\theta\,d\lambda\,d\alpha\,m(dz)\,dx\,dr\,dv\,dy\,ds\Big]\\
    &= E\Big[\int_{s=\delta_0}^T\int_{\R_y^d}\int_{\R_v}\int_{r=s-\delta_0}^s\int_{\R_x^d}\int_{|z|>0} 
     \int_{\alpha=0}^1 \int_{\lambda=0}^1 \int_{\theta=0}^1 
 \beta^{\prime\prime}(v(r,x,\alpha)-v + \lambda \eta(x,v(r,x,\alpha);z))  \notag \\
 & \hspace{2.5cm} \times  \eta(x,v(r,x,\alpha);z) \eta_\eps(y,u_\eps(r,y);z) 
   \varsigma_l(u_\eps(r,y)+ \theta \eta_\eps(y,u_\eps(r,y);z)-v)  \notag \\
   & \hspace{4cm}\times\rho_{\delta_0}(r-s)\,\psi(s,y)\,\varrho_{\delta}(x-y)
   \,d\theta\,d\lambda\,d\alpha\,m(dz)\,dx\,dr\,dv\,dy\,ds\Big]\\
   & \quad +E\Big[\int_{s=0}^{\delta_0}\int_{\R_y^d}\int_{\R_v}
   \int_{r=s-\delta_0}^s\int_{\R_x^d}\int_{|z|>0} 
    \int_{\alpha=0}^1 \int_{\lambda=0}^1 \int_{\theta=0}^1 
 \beta^{\prime\prime}(v(r,x,\alpha)-v + \lambda \eta(x,v(r,x,\alpha);z))  \notag \\
 & \hspace{2.5cm} \times  \eta(x,v(r,x,\alpha);z)\eta_\eps(y,u_\eps(r,y);z) 
   \varsigma_l(u_\eps(r,y)+ \theta \eta_\eps(y,u_\eps(r,y);z)-v)  \notag \\
   & \hspace{4cm}\times\rho_{\delta_0}(r-s)\,\psi(s,y)\,\varrho_{\delta}(x-y)
   \,d\theta\,d\lambda\,d\alpha\,m(dz)\,dx\,dr\,dv\,dy\,ds\Big]\\
    &= E\Big[\int_{r=0}^T\int_{\R_y^d}\int_{\R_v}\int_{s=r}^{r+\delta_0}\int_{\R_x^d}\int_{|z|>0}
     \int_{\alpha=0}^1 \int_{\lambda=0}^1 \int_{\theta=0}^1 
\beta^{\prime\prime}(v(r,x,\alpha)-v + \lambda \eta(x,v(r,x,\alpha);z))  \notag \\
 & \hspace{2.5cm} \times  \eta(x,v(r,x,\alpha);z) \eta_\eps(y,u_\eps(r,y);z) 
   \varsigma_l(u_\eps(r,y)+ \theta \eta_\eps(y,u_\eps(r,y);z)-v)  \notag \\
   & \hspace{3cm}\times\rho_{\delta_0}(r-s)\,\psi(s,y)\,\varrho_{\delta}(x-y)
   \,d\theta\,d\lambda\,d\alpha\,m(dz)\,dx\,ds\,dv\,dy\,dr\Big]+o(\delta_0),
 \end{align*}  
where we have used Fubini's theorem to infer the last line. 
Hence
\begin{align*}
& \Big|  B ^{l,\eps}(\delta,\delta_0) 
  - E\Big[\int_{\Pi_{T}}\int_{\R_v}\int_{\R_x^d}\int_{|z|>0} 
   \int_{\alpha=0}^1 \int_{\lambda=0}^1 \int_{\theta=0}^1 
  \beta^{\prime\prime}(v(r,x,\alpha)-v  
 + \lambda \eta(x,v(r,x,\alpha);z)) \\
 &\hspace{3cm} \times  \eta(x,v(r,x,\alpha);z) \eta_\eps(y,u_\eps(r,y);z)  
 \varsigma_l(u_\eps(r,y)+ \theta \eta_\eps(y,u_\eps(r,y);z)-v)   \\
 & \hspace{6cm}\times \,\psi(r,y)\,\varrho_{\delta}(x-y)
 \,d\theta\,d\lambda\,d\alpha\,m(dz)\,\,dx\,dv\,dy\,dr\Big]\Big|\\
&= \Big|  B ^{l,\eps}(\delta,\delta_0)
 - E\Big[\int_{\Pi_{T}}\int_{\R_v}
\int_{s=r}^{r+\delta_0}\int_{\R_x^d}\int_{|z|>0} 
    \int_{\alpha=0}^1 \int_{\lambda=0}^1 \int_{\theta=0}^1 
  \beta^{\prime\prime}(v(r,x,\alpha)-v  + \lambda \eta(x,v(r,x,\alpha);z))  \\
 &\hspace{2cm} 
 \times  \eta(x,v(r,x,\alpha);z)\eta_\eps(y,u_\eps(r,y);z)  \varsigma_l(u_\eps(r,y)+ \theta \eta_\eps(y,u_\eps(r,y);z)-v)   \\
   & \hspace{3.5cm}\times \,\rho_{\delta_0}(r-s)\,\psi(r,y)
   \,\varrho_{\delta}(x-y)\,d\theta\,d\lambda\,d\alpha\,m(dz)\,\,dx\,ds\,dv\,dy\,dr\Big]\Big|  \\
   &\le E\Big[\int_{r=0}^T\int_{\R_y^d}\int_{\R_v}\int_{s=r}^{r+\delta_0}
   \int_{\R_x^d}\int_{|z|>0}  \int_{\alpha=0}^1 \int_{\lambda=0}^1 \int_{\theta=0}^1 
\beta^{\prime\prime}(v(r,x,\alpha)-v + \lambda \eta(x,v(r,x,\alpha);z))   \\
 & \hspace{2.5cm} \times| \eta(x,v(r,x,\alpha);z)|\,  |\eta_\eps(y,u_\eps(r,y);z) |
   \varsigma_l(u_\eps(r,y)+ \theta \eta_\eps(y,u_\eps(r,y);z)-v) \rho_{\delta_0}(r-s) \notag \\
   & \hspace{3cm}\times|\psi(s,y)-\psi(r,y)|\,\varrho_{\delta}(x-y)
   \,d\theta\,d\lambda\,d\alpha\,m(dz)\,dx\,ds\,dv\,dy\,dr\Big]
    + o(\delta_0)\notag\\
   &\le E\Big[\int_{r=0}^T\int_{\R_y^d}\int_{s=r}^{r+\delta_0}\int_{\R_x^d}\int_{|z|>0} \int_{\alpha=0}^1
| \eta(x,v(r,x,\alpha);z)|\, ||\beta^{\prime\prime}||_{\infty}  |\eta_\eps(y,u_\eps(r,y);z) |\notag\\
&\hspace{2.5cm}\times\rho_{\delta_0}(r-s)\,|r-s|\, ||\partial_t\psi||_{\infty}
\,\varrho_{\delta}(x-y)\,d\alpha\,m(dz)\,dx\,ds\,dv\,dy\,dr\Big]
    + o(\delta_0)\notag\\
 & \le \delta_0 C(\beta^{\prime\prime},\psi) E\Big[\int_{\Pi_{T}}\int_{\R_x^d}\int_{|z|>0} \int_{\alpha=0}^1
  |\eta(x,v(r,x,\alpha);z) \eta_\eps(y,u_\eps(r,y);z)| \notag
   \\&\hspace{7cm} \times\varrho_{\delta}(x-y)\,d\alpha\,m(dz)\,dx\,dy\,dr\Big] + o(\delta_0) \notag \\
   & \le \delta_0C(\beta^{\prime\prime},\psi) 
   \Big( 1+ \sup_{0\le r\le T} E\Big[||v(r,\cdot,\cdot)||_2^2\Big]
   + \sup_{\eps>0}\sup_{0\le r \le T} E\Big[||u_\eps(r,\cdot)||_2^2\Big] \Big) 
  + o(\delta_0). \notag 
\end{align*}
and so
\begin{align}
& \lim_{\delta_0 \goto 0} B ^{l,\eps}(\delta,\delta_0)\notag \\
 &=E\Big[\int_{\Pi_{T}}\int_{\R_v}\int_{\R_x^d}\int_{|z|>0} 
 \int_{\alpha=0}^1 \Big( \beta(v(r,x,\alpha)+ \eta(x,v(r,x,\alpha);z)-v)
 -\beta(v(r,x,\alpha)-v)\Big) \notag \\
  & \hspace{4cm}
  \times\Big( \varsigma_l(u_\eps(r,y)+\eta_\eps(y,u_\eps(r,y);z)-v)-\varsigma_l(u_\eps(r,y)-v)\Big) \notag \\
 & \hspace{5cm}\times\,\psi(r,y)\,\varrho_{\delta}(x-y)
 \,d\alpha\,m(dz)\,dx\,dv\,dy\,dr\Big] \notag \\
  &\equiv E\Big[\int_{\Pi_{T}}\int_{\R_v}\int_{\R_x^d}
  \int_{|z|>0}\int_{\alpha=0}^1 \int_{\lambda=0}^1 \int_{\theta=0}^1
 \eta(x,v(r,x,\alpha);z)  
 \beta^{\prime\prime}(v(r,x,\alpha)-v + \lambda \eta(x,v(r,x,\alpha);z)) \notag \\
 &\hspace{5cm} \times \eta_\eps(y,u_\eps(r,y);z)  
 \varsigma_l(u_\eps(r,y)+ \theta \eta_\eps(y,u_\eps(r,y);z)-v)  \notag \\
 & \hspace{5.5cm}
 \times\psi(r,y)\,\varrho_{\delta}(x-y)
 \,d\theta\,d\lambda\,d\alpha\,m(dz)\,dx\,dv\,dy\,dr\Big],
 \label{limit in delta_0}
\end{align} 
where we have first re-written the terms using 
the fundamental theorem of integral calculus and then applied integration 
by parts with respect to $v$. It is now routine to 
pass to the limit $l \rightarrow 0$ in \eqref{limit in delta_0}, and 
hence the conclusion follows.
\end{proof}
 
Next, we consider the term $I_5 + J_5$ and prove the following lemma.
\begin{lem}\label{stochastic_lemma_4}
Assume that $ \vartheta \goto 0, \delta \goto 0$ 
and $\frac{\vartheta}{\delta} \goto 0$. Then
\begin{align*} 
\lim_{\frac{\vartheta}{\delta} \downarrow 0, \, \, 
\vartheta \downarrow 0,\, \,\delta\downarrow 0}
\big[\lim_{\eps_n \downarrow 0} 
\lim_{l\downarrow 0}\lim_{\delta_0\downarrow 0}
\, \, (I_5 + J_5)\big] = 0. 
 \end{align*}
 \end{lem}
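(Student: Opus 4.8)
The terms $I_5$ and $J_5$ are the convective (flux) contributions in the doubled entropy inequality, and they are the stochastic analogue of the flux terms that cancel in Kru\v{z}kov's classical doubling argument. The plan is to first carry out the three inner limits $\delta_0\downarrow0$, $l\downarrow0$, $\eps_n\downarrow0$ exactly as in Lemma \ref{stochastic_lemma_2}, reducing $I_5+J_5$ to a single expression in which the only surviving dependence on the doubling parameters is through the mollifier $\varrho_\delta$ and the smoothing parameter $\vartheta$ hidden in $\beta=\beta_\vartheta$; then I will show that this expression is $O(\vartheta/\delta)$ by combining the symmetry of the Kru\v{z}kov entropy flux with the singularity of $\grad_x\varrho_\delta$.

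For the inner limits I would proceed as follows. Since $I_5$ and $J_5$ carry no time derivative, the limit $\delta_0\downarrow0$ simply collapses $\rho_{\delta_0}(t-s)\,dt$ onto $t=s$; this is justified, as in Step~1 of Lemma \ref{stochastic_lemma_2}, by the mean $L^2$-time-continuity of $v$ (valid despite the c\`adl\`ag jumps, since $\int_0^T\|v(t+\delta_0 r,\cdot,\cdot)-v(t,\cdot,\cdot)\|_2^2\,dt\to0$ a.s.) together with the uniform moment bounds \eqref{uni:moment-esti} and the polynomial growth of $F^\beta$. The limit $l\downarrow0$ is the standard mollifier collapse $\int_{\R_k}(\cdot)\,\varsigma_l(w-k)\,dk\to(\cdot)|_{k=w}$, replacing $k$ by $u_\eps(s,y)$ in $I_5$ and by $v(s,x,\alpha)$ in $J_5$. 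Finally, for each fixed $(x,\alpha)$ the integrand is a Carath\'eodory function of $u_\eps(s,y)$ with polynomial growth; its uniform integrability follows from \eqref{uni:moment-esti}, so Lemmas \ref{conv-young-measure} and \ref{lem:measure-conversion} let me pass $\eps_n\downarrow0$ and introduce the extra variable $\gamma$. Using $\grad_y\varrho_\delta(x-y)=-\grad_x\varrho_\delta(x-y)$, the outcome is
\begin{align*}
\lim_{\eps_n\downarrow0}\lim_{l\downarrow0}\lim_{\delta_0\downarrow0}(I_5+J_5)
&=E\Big[\int_{s=0}^T\int_{\R_x^d}\int_{\R_y^d}\int_{\alpha=0}^1\int_{\gamma=0}^1
\big(F^\beta(v(s,x,\alpha),u(s,y,\gamma))\\
&\hspace{1.2cm}-F^\beta(u(s,y,\gamma),v(s,x,\alpha))\big)
\cdot\grad_x\varrho_\delta(x-y)\,\psi(s,y)\,d\gamma\,d\alpha\,dx\,dy\,ds\Big].
\end{align*}

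The heart of the matter is then the elementary bound
\begin{align*}
\big|F^\beta(a,b)-F^\beta(b,a)\big|\le C\vartheta\,(1+|a|^p+|b|^p),\qquad a,b\in\R,
\end{align*}
for a suitable integer $p$. To see this, recall that the genuine Kru\v{z}kov flux $F(a,b)=\sgn(a-b)(F(a)-F(b))$ is symmetric, $F(a,b)=F(b,a)$, so that $F^\beta(a,b)-F^\beta(b,a)=[F^\beta(a,b)-F(a,b)]-[F^\beta(b,a)-F(b,a)]$. For each bracket, $F^\beta(a,b)-F(a,b)=\int_b^a[\beta'(\sigma-b)-\sgn(\sigma-b)]F'(\sigma)\,d\sigma$, and by \eqref{eq:approx to abosx} the factor $\beta'(\sigma-b)-\sgn(\sigma-b)$ is bounded by $2$ and supported in $\{|\sigma-b|<\vartheta\}$; since $|F'|$ grows at most polynomially by \ref{A1}, this integral is $O(\vartheta(1+|b|^p))$, and symmetrically for the other bracket. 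Inserting this bound, integrating $x$ first in the term carrying $|u(s,y,\gamma)|^p$ and $y$ first in the term carrying $|v(s,x,\alpha)|^p$, and using $\|\grad\varrho_\delta\|_{L^1}=C\delta^{-1}$ together with the compact support of $\psi$ and the moment bounds \eqref{uni:moment-esti}, I obtain
\begin{align*}
\Big|\lim_{\eps_n\downarrow0}\lim_{l\downarrow0}\lim_{\delta_0\downarrow0}(I_5+J_5)\Big|
&\le C\,\frac{\vartheta}{\delta}\Big(1+\sup_{0\le s\le T}E\big[\|v(s,\cdot,\cdot)\|_p^p\big]\\
&\hspace{2cm}+\sup_{\eps>0}\sup_{0\le s\le T}E\big[\|u_\eps(s,\cdot)\|_p^p\big]\Big),
\end{align*}
which tends to $0$ under the hypothesis $\vartheta/\delta\downarrow0$, proving the lemma.

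The main obstacle lies in the bookkeeping of the iterated limits, particularly the $\eps_n\downarrow0$ passage, where one must verify that the Young-measure convergence of Lemmas \ref{conv-young-measure} and \ref{lem:measure-conversion} applies uniformly enough in the frozen variables $(x,\alpha)$ to survive the subsequent $x$- and $\alpha$-integrations (handled by bounded convergence and Fubini, as in Step~3 of Lemma \ref{stochastic_lemma_2}), and that the polynomially growing integrand $F^\beta$ remains uniformly integrable along the approximating sequence. The conceptual cancellation itself --- symmetry of the Kru\v{z}kov flux beating the $\delta^{-1}$ singularity of $\grad\varrho_\delta$ by the margin $\vartheta/\delta\to0$ --- is clean once the reduction is in place.
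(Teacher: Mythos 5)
Your proposal is correct and follows essentially the same route as the paper: collapse $\rho_{\delta_0}$ and $\varsigma_l$ using the mean $L^2$ time-continuity of $v$ and the local Lipschitz bound \eqref{eq:local-lip-flux}, then exploit the symmetry $F(a,b)=F(b,a)$ of the Kru{\v{z}}kov flux together with $|F^{\beta_\vartheta}-F|\le C\vartheta(1+|a|^p+|b|^p)$ (the paper's \eqref{eq:mod_approx_1}) to bound the surviving term by $C\vartheta/\delta$. The only cosmetic difference is that the paper applies this bound directly to the $\eps$-dependent expression, which is uniform in $\eps$ by \eqref{uni:moment-esti}, so the $\eps_n$-limit never needs to be computed explicitly, whereas you pass to the Young-measure limit first; both orderings work.
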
 
 
\begin{proof}
Note that 
\begin{align}
& \Big| I_5 -E\Big[\int_{s=0}^T \int_{\R_y^d}\int_{\R_x^d}
\int_{\alpha=0}^1  \int_{\R_k}  F^\beta(v(s,x,\alpha),k) 
\cdot\grad_x \varrho_\delta(x-y)\,\psi(s,y)
\varsigma_l(u_\eps(s,y)-k)
\,dk\,d\alpha\,dx\,dy\,ds\Big] \Big| \notag \\
& = \Big| E\Big[\int_{s=0}^T \int_{t=0}^T\int_{\R_y^d}\int_{\R_x^d}\int_{\alpha=0}^1  \int_{\R_k} 
\Big(F^\beta(v(t,x,\alpha),k)-F^\beta(v(s,x,\alpha),k) \Big)\cdot
\grad_x \varrho_\delta(x-y)\,\psi(s,y) \rho_{\delta_0}(t-s) \notag \\
&  \hspace{6cm} \times \varsigma_l(u_\eps(s,y)-k)\,dk\,d\alpha\,dx\,dy\,dt\,ds\Big] \notag \\
& \quad 
+ E\Big[\int_{s=0}^T \int_{t=0}^T\int_{\R_y^d}\int_{\R_x^d} \int_{\alpha=0}^1  \int_{\R_k} 
 F^\beta(v(s,x,\alpha),k) \cdot
\grad_x \varrho_\delta(x-y)\,\psi(s,y) \rho_{\delta_0}(t-s) \notag \\
&  \hspace{6cm} \times \varsigma_l(u_\eps(s,y)-k)
\,dk\,d\alpha\,dx\,dy\,dt\,ds\Big] \notag \\
 & \quad 
-E\Big[\int_{s=0}^T \int_{\R_y^d}\int_{\R_x^d} \int_{\alpha=0}^1  
\int_{\R_k}  F^\beta(v(s,x,\alpha),k)\cdot \grad_x \varrho_\delta(x-y)\,\psi(s,y)\,
\varsigma_l(u_\eps(s,y)-k)\,dk\,d\alpha\,dx\,dy\,ds\Big] \Big|  \notag \\
& \le  E\Big[\int_{s=0}^T \int_{t=0}^T\int_{\R_y^d}\int_{\R_x^d} 
\int_{\alpha=0}^1  \int_{\R_k} \big|F^\beta(v(t,x,\alpha),k)
-F^\beta(v(s,x,\alpha),k) \big|
|\grad_x \varrho_\delta(x-y)|\psi(s,y) \rho_{\delta_0}(t-s) \notag \\
& \hspace{6cm} \times \varsigma_l(u_\eps(s,y)-k)
\,dk\,d\alpha\,dx\,dy\,dt\,ds\Big] \notag \\
& \quad + \Big| E\Big[\int_{s=0}^T \int_{\R_y^d}\int_{\R_x^d} 
\int_{\alpha=0}^1  \int_{\R_k}  F^\beta(v(s,x,\alpha),k)\cdot \grad_x \varrho_\delta(x-y)\,\psi(s,y)
 \Big( 1- \int_{t=0}^T \rho_{\delta_0}(t-s)\,dt \Big) \notag \\
 & \hspace{6cm} \times \varsigma_l(u_\eps(s,y)-k)\,dk\,d\alpha\,dx\,dy\,ds\Big] \Big| \notag \\
 & \le E\Big[\int_{s=\delta_0}^T \int_{t=0}^T\int_{\R_y^d}\int_{\R_x^d}
 \int_{\alpha=0}^1  \int_{\R_k} \big|F^\beta(v(t,x,\alpha),k)-F^\beta(v(s,x,\alpha),k) \big|
|\grad_x \varrho_\delta(x-y)|\psi(s,y) \rho_{\delta_0}(t-s) \notag \\
&  \hspace{6cm} \times \varsigma_l(u_\eps(s,y)-k)
\,dk\,d\alpha\,dx\,dy\,dt\,ds\Big] + o(\delta_0)\notag \\
& \quad +  E\Big[\int_{s=0}^{\delta_0} \int_{\R_y^d}\int_{\R_x^d}
\int_{\alpha=0}^1  \int_{\R_k} \big| F^\beta(v(s,x,\alpha),k)\cdot 
\grad_x \varrho_\delta(x-y)\big|\psi(s,y)
\varsigma_l(u_\eps(s,y)-k)\,dk\,d\alpha\,dx\,dy\,ds\Big] \notag \\
&\qquad  (\text{we have used the fact that}~  \int_{t=0}^T \rho_{\delta_0}(t-s)\,dt \le1, 
\,\text{equality holds if}\, s\ge \delta_0)\notag\\
 & \le  C E\Big[\int_{s=\delta_0}^T \int_{t=0}^T\int_{\R_y^d}
 \int_{\R_x^d} \int_{\alpha=0}^1  \int_{\R_k}  \big|v(t,x,\alpha)-
 v(s,x,\alpha)\big|\big(1 + |v(t,x,\alpha)|^p + |v(s,x,\alpha)|^p\big)
|\grad_x \varrho_\delta(x-y)| \notag \\
  &  \hspace{6cm} \times  \psi(s,y) \rho_{\delta_0}(t-s)
 \varsigma_l(u_\eps(s,y)-k)\,dk\,d\alpha\,dx\,dy\,dt\,ds\Big]\notag +o(\delta_0) \\
& \quad +  E\Big[\int_{s=0}^{\delta_0} \int_{\R_y^d}
\int_{\R_x^d} \int_{\alpha=0}^1  \int_{\R_k} \big| F^\beta(v(s,x,\alpha),k)\cdot
\grad_x \varrho_\delta(x-y)\big|\psi(s,y)
  \varsigma_l(u_\eps(s,y)-k)\,dk\,d\alpha\,dx\,dy\,ds\Big] \notag \\
 &\qquad  (\text{we have used the Lipschitz 
 continuity of}\, F^{\beta}(\cdot, k)\, \text{in above} )\notag\\
 & \le  C \Big(E\Big[ \int_{s=\delta_0}^T \int_{t=0}^T\int_{\R_x^d} 
\int_{\alpha=0}^1    |v(t,x,\alpha)
-v(s,x,\alpha)|^2 \rho_{\delta_0}(t-s)
\,d\alpha\,dx\,dt\,ds\Big] \Big)^\frac{1}{2} + o(\delta_0) \notag \\
&  \le  C \Big( E \Big[\int_{r=0}^1 \int_{\Pi_T} \int_{\alpha=0}^1 
|v(t+ \delta_0\,r ,x,\alpha)-v(t,x,\alpha)|^2 
\rho(-r)\,d\alpha \,dt\,dx\,dr\Big]\Big)^\frac{1}{2} + o(\delta_0) \notag.
\end{align}
Note that $\underset{\delta_0\downarrow 0 }\lim\,\int_{t=0}^T\int_{\R_x^d} 
\int_{\alpha=0}^1 |v(t+\delta_0 r,x,\alpha)-v(t,x,\alpha)|^2\,
\,d\alpha\,dx\,dt \rightarrow 0$ almost surely for all $r\in [0,1]$. 
Therefore, by the bounded convergence theorem,
\begin{align*}
\lim_{\delta_0\downarrow 0}E\Big[\int_{t=0}^T\int_{r=0}^1
\int_{\R_x^d} \int_{\alpha=0}^1  
|v(t+\delta_0 r,x,\alpha)-v(t,x,\alpha)|^2
\rho(-r)\,d\alpha\,dx\,dr\,dt\Big]=0.
\end{align*} 
This implies that
\begin{align*}
&\lim_{\delta_0 \goto 0} I_5\\
& = E\Big[\int_{s=0}^T 
\int_{\R_y^d}\int_{\R_x^d}\int_{\alpha=0}^1 
\int_{\R_k}  F^\beta(v(s,x,\alpha),k)\cdot \grad_x 
\varrho_\delta(x-y)\,\psi(s,y)
\varsigma_l(u_\eps(s,y)-k)\,dk\,d\alpha\,dx\,dy\,ds\Big]\notag\\
&= -E\Big[\int_{s=0}^T \int_{\R_y^d}\int_{\R_x^d} 
\int_{\alpha=0}^1  \int_{\R_k}  
F^\beta(v(s,x,\alpha),u_{\eps}(s,y)-k)\cdot \grad_y \varrho_\delta(x-y)
\,\psi(s,y) \varsigma_l(k)\,dk\,d\alpha\,dx\,dy\,ds\Big].
\end{align*} 
In a similar manner, we find
\begin{align*}
\lim_{\delta_0 \goto 0} J_5= E \Big[\int_{s=0}^T
\int_{\R_x^d}\int_{\R_y^d} \int_{\R_k}\int_{\alpha=0}^1 
F^\beta(u_\eps(s,y),v(s,x,\alpha)-k)\cdot
\grad_y \varrho_\delta(x-y)
\, \psi(s,y)\varsigma_l(k) \,d\alpha\,dk\,dx\,dy\,ds\Big].
\end{align*}
Note that 
\begin{align*}
&I_5 + J_5 \\= &
E\Big[ \int_{\Pi_T}\int_{\Pi_T} \int_{\R_k}\int_{\alpha=0}^1 
\Big( - F^\beta(v(t,x,\alpha),u_\eps(s,y)-k) 
+ F^\beta(u_\eps(s,y),v(t,x,\alpha)-k)\Big)\cdot \grad_y \varrho_\delta(x-y) \notag \\
& \hspace{4.5cm} \times 
\psi(s,y)\rho_{\delta_0}(t-s) \varsigma_l(k)\,d\alpha\,dk\,dx\,dt\,dy\,ds\Big].
\end{align*}
Hence,
\begin{align}
&\lim_{\delta_0 \goto 0} (I_5 + J_5) \notag\\
= & E \Big[\int_{\Pi_T}\int_{\R_x^d} \int_{\R_k}\int_{\alpha=0}^1 
\Big( -F^\beta(v(t,x,\alpha),u_\eps(t,y)-k) 
+ F^\beta(u_\eps(t,y),v(t,x,\alpha)-k)\Big) \cdot \grad_y \varrho_\delta(x-y) \notag \\
& \hspace{5cm}  \times \psi(t,y) 
\varsigma_l(k)\,d\alpha\,dk\,dx\,dt\,dy\Big].\label{eq:limit-in-time} 
\end{align}
There exists $p\in N$ such that for all $a, b,c\in \R$ 
\begin{align}\label{eq:local-lip-flux} 
|F^{\beta}(a,b)-F^{\beta}(a,c)|\le K |b-c|(1+|b|^p+|c|^p) 
~\text{and}~|F^{\beta}(b,a)-F^{\beta}(c,a)|\le K |b-c|(1+|b|^p+|c|^p).
\end{align}
In view of \eqref{eq:local-lip-flux}, we can routinely 
pass to the limit $l\goto 0$ in \eqref{eq:limit-in-time} and conclude 
\begin{align*}
&\lim_{l\goto 0} \lim_{\delta_0 \goto 0} (I_5 + J_5)\\
&=E \Big[\int_{\Pi_T}\int_{\R_x^d} \int_{\alpha=0}^1  \Big(  
F^\beta\big(u_\eps(t,y),v(t,x,\alpha)\big)
-F^\beta\big(v(t,x,\alpha),u_\eps(t,y)\big)\Big)\cdot \grad_y \varrho_\delta(x-y)  \\
& \hspace{4.5cm} \times  \psi(t,y)\,d\alpha\,dx\,dt\,dy\Big].
\end{align*} 
Note that
\begin{align}
|F^{\beta_\vartheta}_k(a,b)-F^{\beta_\vartheta}_k(b,a)|
& \le  |F_k^{\beta_\vartheta}(a,b)-\text{sign}(a-b)(F_k(a)-F_k(b))| \notag\\
&\quad
+ |F_k^{\beta_\vartheta}(b,a)-\text{sign}(b-a)(F_k(b)-F_k(a))| \label{eq:mod_approx_1}
\le C \vartheta (1+|a|^p+|b|^p),
\end{align} 
and therefore 
\begin{align*}
&\Big|  E\Big[\int_{\R_y^d}\int_{\Pi_T} \int_{\alpha=0}^1  
\Big\{F^\beta(u_\eps(t,y),v(t,x,\alpha))- F^\beta(v(t,x,\alpha),u_\eps(t,y))\,\Big\}
\cdot \grad_y \varrho_{\delta}(x-y) 
\psi(t,y)\,d\alpha\,dx\,dt\,dy \Big]\Big |
\\ & \quad
\le  \vartheta \, C  E\Big[\int_{\R_y^d}\int_{\Pi_T} 
\int_{\alpha=0}^1  \big(1+|u_\eps(t,y)|^p+|v(t,x,\alpha)|^p\big) 
|\nabla_y \varrho_{\delta} (x-y)|\psi(t,y) 
\,d\alpha\,dx\,dt\,dy\Big]\\
& \quad \le \frac{\vartheta}{\delta} C\goto 0 
\quad \text{when $(\vartheta, \frac \vartheta\delta,\delta)\rightarrow (0,0,0)$.}
\end{align*}
Hence the lemma follows.
\end{proof}
\begin{lem} \label{stochastic_lemma_5}
It holds that
\begin{align*}
J_6  &\underset{\delta_0 \goto 0}{\rightarrow}  E \Big[\int_{\Pi_T}\int_{\R_x^d}\int_{\alpha=0}^1 
\int_{\R_k}  F^\beta(u_\eps(s,y),k)\cdot\grad_y \psi(s,y)\, \varrho_\delta(x-y) 
 \varsigma_l(v(s,x,\alpha)-k)
 \,dk \,d\alpha\,dx\,dy\,ds\Big] \\
&\underset{l \goto 0}{\rightarrow}  
E \Big[\int_{\Pi_T}\int_{\R_x^d} \int_{\alpha=0}^1 F^\beta(u_\eps(s,y),v(s,x,\alpha))\cdot \grad_y \psi(s,y)
\, \varrho_\delta(x-y)\,d\alpha\,dx\,dy\,ds\Big] \\
 &\underset{\eps \goto 0}{\rightarrow}  E \Big[\int_{\Pi_T}\int_{\R_x^d} 
 \int_{\alpha=0}^1 \int_{\gamma=0}^1  F^\beta(u(s,y,\gamma),v(s,x,\alpha)) \cdot \grad_y \psi(s,y)\,
\varrho_\delta(x-y) \,d\gamma\,d\alpha\,dx\,dy\,ds\Big],
\end{align*} 
and
\begin{align*}
\lim_{(\vartheta,\delta)\goto (0,0)}& 
E \Big[\int_{\Pi_T}\int_{\R_x^d} \int_{\alpha=0}^1 \int_{\gamma=0}^1  
F^\beta(u(s,y,\gamma),v(s,x,\alpha)) \cdot \grad_y \psi(s,y)\,
 \varrho_\delta(x-y) \,d\gamma\,d\alpha\,dx\,dy\,ds\Big]\\
& = E \Big[\int_{\Pi_T} \int_{\alpha=0}^1 \int_{\gamma=0}^1 
F(u(s,y,\gamma),v(s,y,\alpha)) \cdot \grad_y \psi(s,y)
 \,d\gamma\,d\alpha\,dy\,ds\Big].
 \end{align*}
\end{lem}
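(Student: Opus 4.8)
The plan is to follow verbatim the architecture of the proof of Lemma \ref{stochastic_lemma_2}, since $J_6$ is structurally the ``benign'' flux term: the spatial gradient falls on the smooth, compactly supported test function $\psi$ (not on the mollifier $\varrho_\delta$), so no $1/\delta$ blow-up arises and the four limits can be taken successively in the order $\delta_0\goto 0$, $l\goto 0$, $\eps\goto 0$, and finally $(\vartheta,\delta)\goto(0,0)$. Throughout, the recurring device is a change of variables in the $k$-integral that moves the $k$-derivative onto $F^\beta$ rather than onto $\varsigma_l$: writing $\int_{\R_k}F^\beta(u_\eps(s,y),k)\varsigma_l(w-k)\,dk=\int_{\R_k}F^\beta(u_\eps(s,y),w-k)\varsigma_l(k)\,dk$ lets me exploit the local Lipschitz bound \eqref{eq:local-lip-flux}, $|F^\beta(a,b)-F^\beta(a,c)|\le K|b-c|(1+|b|^p+|c|^p)$, instead of the $O(1/l)$ Lipschitz constant of $\varsigma_l$.

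For the $\delta_0\goto 0$ step I would estimate the difference between $J_6$ and the first target by the above substitution, reducing it to a bound involving $|v(t,x,\alpha)-v(s,x,\alpha)|$ weighted by $\rho_{\delta_0}(t-s)$. After splitting off the polynomial growth factor with the Cauchy--Schwartz inequality and controlling it by the uniform moment estimates \eqref{uni:moment-esti}, the surviving factor is $\big(E\int_0^1\int_{\Pi_T}\int_0^1|v(t+\delta_0 r,x,\alpha)-v(t,x,\alpha)|^2\rho(-r)\,d\alpha\,dt\,dx\,dr\big)^{1/2}$, which vanishes by the $L^2$ time-continuity of $v$ and the bounded convergence theorem, exactly as in Step 1 of Lemmas \ref{stochastic_lemma_2} and \ref{stochastic_lemma_4}. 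The $l\goto 0$ step is then immediate: after the same substitution the integrand difference is $F^\beta(u_\eps(s,y),v(s,x,\alpha)-k)-F^\beta(u_\eps(s,y),v(s,x,\alpha))$, bounded by $C(1+\cdots)|k|$, and $\int_{\R_k}|k|\,\varsigma_l(k)\,dk\le Cl\goto 0$.

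The $\eps\goto 0$ step is where the Young measure machinery enters. I would fix $(x,\alpha)$ and set $\Gamma_{(x,\alpha)}(s,y,\omega;\xi)=F^\beta(\xi,v(s,x,\alpha))\cdot\grad_y\psi(s,y)\,\varrho_\delta(x-y)$, a Caratheodory function on $\Theta\times\R$ whose evaluations $\{\Gamma_{(x,\alpha)}(s,y,\omega;u_{\eps_n}(s,y))\}_n$ are uniformly integrable thanks to the polynomial growth of $F^\beta$, the uniform moment bounds \eqref{uni:moment-esti}, and the compact support of $\grad_y\psi\,\varrho_\delta$; Lemma \ref{conv-young-measure} and Lemma \ref{lem:measure-conversion} then give convergence to the $\gamma$-averaged expression for each $(x,\alpha)$, and a further application of the bounded convergence and Fubini theorems integrates over $(x,\alpha)$. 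For the concluding $(\vartheta,\delta)\goto(0,0)$ limit I would insert two intermediate terms: the $\vartheta$-error is controlled by $|F^{\beta_\vartheta}(a,b)-F(a,b)|\le C\vartheta(1+|a|^p+|b|^p)$ (the same mechanism as \eqref{eq:mod_approx_1}), while the $\delta$-error replaces $v(s,x,\alpha)$ by $v(s,y,\alpha)$ and vanishes by the local Lipschitz continuity of $F^\beta$ together with the $L^2$ translation-continuity argument already used for $\mathcal{B}_4$ in Lemma \ref{stochastic_lemma_2}.

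The hard part is not conceptual but bookkeeping: every step must carry along the polynomial-growth weights produced by $F^\beta$ and by the flux components $F_k$, and one has to verify at each stage that these weights are absorbed by the uniform moment estimates \eqref{uni:moment-esti} (and by the compact supports of $\psi$ and $\varrho_\delta$) before the genuinely small quantities---$|v(t)-v(s)|$, $l$, $\vartheta$, or the translation increment---are extracted via Cauchy--Schwartz. The one spot demanding real care is the $\delta_0\goto 0$ step, where the $k$-substitution and the growth bookkeeping must be combined correctly so that the polynomial factor stays uniformly integrable while the time-increment of $v$ supplies the decay.
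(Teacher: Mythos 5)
Your proposal is correct and follows essentially the same route as the paper's own proof: the $k$-shift combined with the local Lipschitz bound \eqref{eq:local-lip-flux} for the $\delta_0$ and $l$ limits, the Caratheodory/uniform-integrability argument via Lemmas \ref{conv-young-measure} and \ref{lem:measure-conversion} for the $\eps$ limit, and the splitting into a $\vartheta$-error controlled by \eqref{eq:mod_approx_1} plus a $\delta$-error controlled by $L^2$ translation continuity for the final limit. The only cosmetic difference is that you freeze $(x,\alpha)$ before invoking the Young measure convergence and integrate afterwards, whereas the paper's Caratheodory function $G_x$ already carries the $(x,\alpha)$-integration.
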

  
\begin{proof} The first part of the proof is divided into three steps.

\textbf{Step 1:} We will justify the $\delta_0\to 0$ limit.  Define 
\begin{align}
\mathcal{B}_1& :=\Big| E \Big[\int_{\Pi_T}\int_{\Pi_T}\int_{\R_k} 
\int_{\alpha=0}^1 F^\beta(u_\eps(s,y), v(t,x,\alpha)-k) \cdot\grad_y\psi(s,y)\,\rho_{\delta_0}(t-s)
 \varrho_{\delta}(x-y) \notag \\
 & \hspace{4cm} \times \varsigma_l(k)\,d\alpha\,dk\,dy\,ds\,dx\,dt\Big]\notag \\
& \quad 
- E\Big[\int_{\Pi_T}\int_{\R_y^d} \int_{\R_k} \int_{\alpha=0}^1  
F^\beta(u_\eps(s,y),k)\cdot \grad_y \psi(s,y)\,\varrho_{\delta}(x-y)
 \varsigma_l(v(s,x,\alpha)-k) d\alpha\,dk \,dy\,dx\,ds \Big] \Big| \notag \\
 & = \Big| E \Big[\int_{\Pi_T}\int_{\Pi_T}\int_{\R_k} 
 \int_{\alpha=0}^1  \Big( F^{\beta}(u_{\eps}(s,y), v(t,x,\alpha)-k)
 - F^{\beta}(u_{\eps}(s,y), v(s,x,\alpha)-k)\Big)\cdot  \grad_y\psi(s,y)
  \notag \\
 & \hspace{4cm} \times  \rho_{\delta_0}(t-s) \varrho_{\delta}(x-y) \varsigma_l(k) \,d\alpha\,dk\,dy\,ds\,dx\,dt\Big]\notag \\
& \quad
- E \Big[\int_{\Pi_T}\int_{\R_y^d} \int_{\R_k} \int_{\alpha=0}^1  
F^\beta(u_\eps(s,y),v(s,x,\alpha)-k)\cdot \grad_y \psi(s,y)\,\varrho_{\delta}(x-y)  
\notag \\ & \hspace{4cm} \times
\Big( 1- \int_{t=0}^T \rho_{\delta_0}(t-s)\,dt \Big) 
\varsigma_l(k) d\alpha\,dk \,dy\,dx\,ds \Big]  \Big| \notag\\
 &\le C E\Big[\int_{\Pi_T}\int_{\Pi_T}\int_{\R_k} 
 \int_{\alpha=0}^1 \varsigma(k)\,|\grad_y\psi(s,y)| \,\rho_{\delta_0}(t-s)
 \varrho_{\delta}(x-y)\notag\\
 & \hspace{3cm}\times 
 |v(s,x,\alpha)-v(t,x,\alpha)|\big(1+|v(s,x,\alpha)|^p +
| v(t,x,\alpha)|^p + |k|^p\big) \,d\alpha\,dk\,dy\,ds\,dx\,dt\Big]\notag \\
&\quad +E \Big[\int_{s=0}^{\delta_0} \int_{\R_x^d}\int_{\R_y^d} \int_{\R_k} 
\int_{\alpha=0}^1 \big| F^\beta(u_\eps(s,y),k)\cdot\grad_y \psi(s,y)\big|
\,\varrho_{\delta}(x-y) \varsigma_l(v(s,x,\alpha)-k) 
d\alpha\,dk \,dy\,dx\,ds\Big]\notag\\
& \big(\text{we used the inequality} \eqref{eq:local-lip-flux}\big)\notag \\
&\le C E \Big[\int_{s=\delta_0}^T \int_{\R_x^d}\int_{t=0}^T
\int_{\R_k}\int_{\alpha=0}^1 |v(s,x,\alpha)-v(t,x,\alpha)|
\big(1+|v(s,x,\alpha)|^p +| v(t,x,\alpha)|^p + |k|^p\big) \notag \\
& \hspace{4cm} \times \varsigma_l(k)\rho_{\delta_0}(t-s)\,d\alpha\,dk\,dt\,dx\,ds\Big]\notag \\
& \quad
+ CE \Big[\int_{s=0}^{\delta_0} \int_{\R_x^d}\int_{t=0}^T\int_{\R_k} 
\int_{\alpha=0}^1  |v(s,x,\alpha)-v(t,x,\alpha)|\big(1+|v(s,x,\alpha)|^p 
+| v(t,x,\alpha)|^p + |k|^p\big) \notag \\
& \hspace{4cm} \times \varsigma_l(k)\rho_{\delta_0}(t-s)\,d\alpha\,dk\,dt\,dx\,ds\Big]
+o(\delta_0)\notag\\
&\le C(\beta, \psi, l) \Big(E \Big[\int_{s=\delta_0}^T
\int_{t=0}^T \int_{\R^d} \int_{\alpha=0}^1 |v(s,x,\alpha)-v(t,x,\alpha)|^2  
\rho_{\delta_0}(t-s) d\alpha\,\,dx\,dt\,ds\Big] \Big)^\frac{1}{2} + o(\delta_0)\notag\\
& = C(\beta, \psi, l) \Big(E \Big[\int_{r=0}^1 \int_{\R^d}
\int_{t=0}^T \int_{\alpha=0}^1  
|v(t+\delta_0\,r,x,\alpha)-v(t,x,\alpha)|^2  
\rho(-r) d\alpha\,dt\,dx\,dr\Big]\Big)^{\frac 12} 
+o(\delta_0) \notag ,
\end{align} 
where we have used the Schwartz's inequality with 
respect to the measure $\rho_{\delta_0}(t-s)\, d\alpha\, dx\, dt\, ds\,dP(\omega)$.
We recall that $\underset{\delta_0\downarrow 0 }\lim\,  
\int_{t=0}^T\int_{\R_x^d}\int_{\alpha=0}^1 
 |v(t+\delta_0 r,x,\alpha)-v(t,x,\alpha)|^2\,
\,d\alpha\,dx\,dt \rightarrow 0$ almost surely, for all $ r\in [0,1]$. 
Therefore, by the bounded convergence theorem,
\begin{align*}
\lim_{\delta_0\downarrow 0} E\Big[\int_{r=0}^1 
\int_{\R_x^d}\int_{t=0}^T \int_{\alpha=0}^1  
|v(t+\delta_0\,r,x,\alpha)-v(t,x,\alpha)|^2 
\rho(-r) d\alpha\,dt\,dx\,dr\Big] =0,
\end{align*}
and therefore the first step follows.

\textbf{Step 2:} We will justify the $l\to 0$ limit. Let
\begin{align*}
\mathcal{B}_2 :=& E \Big[\int_{\Pi_T}\int_{\R_x^d} 
\int_{\alpha=0}^1  \int_{\R_k}  F^\beta(u_\eps(s,y),k)\cdot \grad_y \psi(s,y) \,
\varrho_\delta(x-y)  \varsigma_l(v(s,x,\alpha)-k)\,dk\,d\alpha\,dx\,dy\,ds\Big] \\
 & \qquad
-E \Big[\int_{\Pi_T}\int_{\R_x^d} \int_{\alpha=0}^1  
 F^\beta(u_\eps(s,y),v(s,x,\alpha))\cdot \grad_y \psi(s,y)\,
\varrho_\delta(x-y)\,d\alpha\,dx\,dy\,ds\Big] \\
&=E \Big[\int_{\Pi_T}\int_{\R_x^d} \int_{\alpha=0}^1 
\int_{\R_k} \Big( F^\beta(u_\eps(s,y),k) 
- F^\beta(u_\eps(s,y),v(s,x,\alpha))\Big)\cdot  \grad_y \psi(s,y) \\
&\hspace{5cm}\times \varrho_\delta(x-y) 
\varsigma_l(v(s,x,\alpha)-k)\,dk\,d\alpha\,dx\,dy\,ds\Big].
\end{align*}
Therefore, by \eqref{eq:local-lip-flux}, there exists a 
natural number $p$ such that
\begin{align*}
|\mathcal{B}_2|& \le C  E \Big[\int_{\Pi_T}\int_{\R_y^d}
\int_{\alpha=0}^1  \int_{\R_k}  |v(s,x,\alpha)-k|\big( 1+ |v(s,x,\alpha)|^p 
+ | v(s,x,\alpha)-k|^p\big)|\grad_y \psi(s,y)|  \\
 &\hspace{5cm} 
\times \varrho_\delta(x-y) \varsigma_l(v(s,x,\alpha)-k)
\,dk\,d\alpha\,dx\,dy\,ds\Big] \\
& \le C ||\grad_y\psi(s,\cdot)||_{\infty}  
E \Big[\int_0^T\int_{\R_x^d}\int_{\alpha=0}^1  \int_{\R_k}  |v(s,x,\alpha)-k|( 1+ |v(s,x,\alpha)|^p )\varsigma_l(v(s,x,\alpha)-k)\,dk\,d\alpha\,dx\,ds \Big] \\
&\quad  + C ||\grad_y\psi(s,\cdot)||_{\infty}E \Big[\int_0^T\int_{\R_x^d} 
\int_{|y|\le C_\psi}\int_{\alpha=0}^1 \int_{\R_k}  
|v(s,x,\alpha)-k|^{p+1}\varsigma_l(v(s,x,\alpha)-k) \\
 & \hspace{6cm} \times
\varrho_\delta(x-y)\,dk\,d\alpha\,dy\,dx\,ds \Big] \\
& \le C ||\grad_y\psi(s,\cdot)||_{\infty}  E \Big[\int_0^T\int_{\R_x^d}\int_{\alpha=0}^1  
\int_{\R_k}  |v(s,x,\alpha)-k|
( 1+ |v(s,x,\alpha)|^p )\varsigma_l(v(s,x,\alpha)-k)\,dk\,d\alpha\,dx\,ds \Big] \\
&\hspace{6cm}   + C\,T\, ||\grad_y\psi(s,\cdot)||_{\infty}\,l^{p+1} \int_{|y|
\le C_\psi}\int_{\R_x^d}\varrho_{\delta}(x-y)\,dx\,dy\\
& \le  C\,T\, ||\grad_y\psi(s,\cdot)||_{\infty}\,l 
\Big( l^p + \sup_{0\le s\le T}E\Big[||v(s,\cdot,\cdot)||_p^p\Big] \Big)
\goto 0\quad \text{as $l\goto 0$.}
\end{align*}

\textbf{Step 3:} We now justify the passage to the limit $\eps_n \goto 0$. 
Let 
$$ G_x(s,y,\omega,\xi)=\int_{\R_x^d}\int_{\alpha=0}^1  
F^\beta(\xi,v(s,x,\alpha))\cdot \grad_y \psi(s,y)\,
\varrho_\delta(x-y)\,d\alpha\,dx. 
$$
As in Lemma \ref{stochastic_lemma_2}, $G_x(s,y,\omega,\xi)$ is 
a Caratheodory function for every $x\in \R^d$ and 
$\{ G_x(s,y,\omega, u_{\eps_n}(s,y))\}_n$ is bounded and 
uniformly integrable. This allows us to conclude that
\begin{align}
& \lim_{\eps_n\rightarrow 0}E \Big[\int_{\R_y^d}
\int_{\Pi_T}\int_{\alpha=0}^1  F^\beta(u_{\eps_n}(s,y),v(s,x,\alpha))\cdot 
\grad_y \psi(s,y)\,\varrho_\delta(x-y)\,d\alpha\,dx\,ds\,dy\Big]\notag \\
&= E\Big[\int_{\R_y^d}\int_{\Pi_T}\int_{\alpha=0}^1 
\int_{\gamma=0}^1 F^\beta(u(s,y,\gamma),v(s,x,\alpha))\cdot \grad_y \psi(s,y)\,
 \varrho_\delta(x-y)\,d\gamma\,d\alpha\,dx\,ds\,dy\Big].\notag 
\end{align} 
This completes the  proof of the first half of the lemma.
\vspace{.3cm}

To prove the second half of the lemma, let us denote
\begin{align*}
\mathcal{B}(\vartheta,\delta):=
&\Big|E\Big[ \int_{\Pi_T}\int_{\R_y^d} \int_{\alpha=0}^1  
\int_{\gamma=0}^1 F^{\beta_\vartheta}(u(s,y,\gamma),v(s,x,\alpha))\cdot
\grad_y \psi(s,y)\,\varrho_{\delta}(x-y)\, d\gamma \,d\alpha\,dy\,dx\,ds\Big]\notag \\
&\quad -E \Big[\int_{\Pi_T}\int_{\R_x^d}\int_{\alpha=0}^1 
\int_{\gamma=0}^1 F(u(s,y,\gamma),v(s,x,\alpha))\cdot \grad_y \psi(s,y)\,
\varrho_\delta(x-y)\,d\gamma\,d\alpha\,dx\,dy\,ds\Big]\Big|\notag  \\
&\le E\Big[\int_{\Pi_T}\int_{\R_y^d} \int_{\alpha=0}^1 
\int_{\gamma=0}^1 \sum_{k=1}^d\Big|F_k^{\beta_\vartheta}(u(s,y,\gamma),v(s,x,\alpha)) 
- F_k(u(s,y,\gamma),v(s,x,\alpha))\Big|\notag\\
&\hspace{4cm} \times  |\partial_{y_k} \psi(s,y)|
\,\varrho_{\delta}(x-y)\,d\gamma \,d\alpha\,dy\,dx\,ds\Big].
\end{align*}
 By \eqref{eq:mod_approx_1}, we conclude
\begin{align}
\mathcal{B}(\vartheta,\delta) & \le 
C\vartheta\Big( 1 + \sup_{0\le s\le T}E \Big[||v(s,\cdot,\cdot)||_p^p\Big] 
+ \sup_{\eps>0}\sup_{0\le s\le T}E \Big[||u_\eps(s)||_p^p\Big] \Big). \notag
\end{align} 
Now we estimate as follows: 
\begin{align*}
&\Big| E \Big[\int_{\Pi_T}\int_{\R_x^d}
\int_{\alpha=0}^1 \int_{\gamma=0}^1  F(u(s,y,\gamma),v(s,x,\alpha))\cdot \grad_y \psi(s,y)\,
\varrho_\delta(x-y) \,d\gamma\,d\alpha\,dx\,dy\,ds \Big]\\
& \quad
- E\Big[\int_{\Pi_T}\int_{\alpha=0}^1 \int_{\gamma=0}^1  
F(u(s,y,\gamma),v(s,y,\alpha))\cdot \grad_y \psi(s,y)
\,d\gamma\,d\alpha\,dy\,ds\Big]\Big|\\
& \le E \Big[\int_{s=0}^T \int_{\R_y^d}\int_{\R_x^d}\int_{\alpha=0}^1 
\int_{\gamma=0}^1  \big|F(u(s,y,\gamma),v(s,x,\alpha))-F(u(s,y,\gamma),v(s,y,\alpha))\big|
\\ & \hspace{5cm} \times
|\grad_y \psi(s,y)|\, \varrho_\delta(x-y) \,d\gamma\,d\alpha\,dx\,dy\,ds\Big]\\
&  \qquad \big(\text{by the  inequality}~ \eqref{eq:local-lip-flux} \big)\\
&\le C  E \Big[\int_{s=0}^T \int_{\R_y^d}\int_{\R_x^d}\int_{\alpha=0}^1 
\int_{\gamma=0}^1|v(s,y,\gamma)-v(s,x,\gamma) 
| \big(1+|v(s,y,\gamma)|^p+|u(s,y,\alpha)|^p\big) 
\\ & \hspace{5cm} \times
|\grad_y \psi(s,y)|
\, \varrho_\delta(x-y) \,d\alpha\,d\gamma\,dx\,dy\,ds\Big]\\
&  \qquad (\text{by Cauchy-Schwartz's inequality w.r.t.~the measure}~ 
\varrho_{\delta}(x-y)\,d\gamma\,d\alpha\,dx\,dy\,ds\,dP(\omega))\\
& \le C\Big( E\Big[ \int_{s=0}^T\int_{\R_y^d}\int_{\R_x^d} 
\int_{\gamma=0}^1|v(s,y,\gamma)-v(s,x,\gamma) |^2 
\varrho_{\delta}(x-y) \,d\gamma\,dx\,dy\,ds\Big] \Big)^{\frac 12}\\
& = C\Big( E\Big[ \int_{s=0}^T\int_{\R_z^d}\int_{\R_y^d}\int_{\gamma=0}^1
|v(s,y,\gamma)-v(s,y+\delta z,\gamma) |^2 \varrho(z) \,d\gamma\,dy\,dz\,ds\Big] \Big)^{\frac 12}
= o(\delta).
\end{align*}
Therefore,
\begin{align}
&\Big| E\Big[\int_{\Pi_T}\int_{\R_x^d}
\int_{\alpha=0}^1 \int_{\gamma=0}^1  
F^\beta(u(s,y,\gamma),v(s,x,\alpha))\cdot \grad_y \psi(s,y)\,
\varrho_\delta(x-y) \,d\gamma\,d\alpha\,dx\,dy\,ds \Big]\notag \\
& \qquad
- E\Big[\int_{\Pi_T}\int_{\alpha=0}^1 \int_{\gamma=0}^1 
F(u(s,y,\gamma),v(s,y,\alpha))\cdot \grad_y \psi(s,y)
\,d\gamma\,d\alpha\,dy\,ds\Big]\Big| \notag \\
& \quad \le \text{Const}(\psi)\, \vartheta + o(\delta) 
\goto 0\quad 
\text{as $(\vartheta,\delta)\goto (0,0)$.} \notag
\end{align} 
\end{proof}

\begin{lem}\label{stochastic_lemma_6} 
It holds that
\begin{align}
&\lim_{l\goto 0}\lim_{\delta_{0}\goto 0} J_4 = E \Big[\int_{\Pi_T} 
\int_{\R_x^d}\int_{|z|>0}\int_{\lambda =0}^1\int_{\alpha=0}^1  
(1-\lambda)\beta^{\prime\prime} \big(u_\eps(s,y)-v(s,x,\alpha)
+\lambda \eta_{\eps}(y,u_\eps(s,y);z)\big)\notag \\
&\hspace{5cm} \times|\eta_{\eps}(y,u_\eps(s,y);z)|^2\psi(s,y)
\varrho_{\delta}(x-y)\,d\alpha\,d\lambda \,m(dz)\,dx\,dy \,ds\Big],
\label{eq:J_4-delta} 
\end{align} 
and
\begin{align}
&\lim_{l\goto 0}\lim_{\delta_{0}\goto 0} I_4 
= E \Big[\int_{\Pi_T} \int_{\R_x^d}\int_{|z|>0} 
\int_{\lambda =0}^1\int_{\alpha=0}^1  (1-\lambda)
\beta^{\prime\prime} \big(v(s,x,\alpha)-u_\eps(s,y) 
+\lambda \eta(x,v(s,x,\alpha);z)\big)\notag \\
&\hspace{5cm} \times|\eta(x,v(s,x,\alpha);z)|^2
\psi(s,x)\varrho_{\delta}(x-y)
\,d\alpha \,d\lambda \,m(dz)\,dx\,dy \,ds\Big].\label{eq:I_4-delta}
\end{align}
\end{lem}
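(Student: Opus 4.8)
The plan is to reduce both identities to the second-order Taylor expansion with integral remainder and then to strip off the regularizing parameters $\delta_0$ and $l$ one at a time, exactly as in the $\delta_0\goto 0$ steps of Lemmas \ref{stochastic_lemma_2} and \ref{stochastic_lemma_4}. First I would record the pointwise identity
\begin{align*}
\beta(a+b-k)-\beta(a-k)-b\,\beta^\prime(a-k)
=\int_{\lambda=0}^1(1-\lambda)\,b^2\,\beta^{\prime\prime}(a-k+\lambda b)\,d\lambda ,
\end{align*}
valid for all $a,b,k\in\R$, and substitute it into the compensator differences appearing in $J_4$ and $I_4$ (taking $(a,b)=(u_\eps(s,y),\eta_\eps(y,u_\eps(s,y);z))$ for $J_4$ and $(a,b)=(v(t,x,\alpha),\eta(x,v(t,x,\alpha);z))$ for $I_4$). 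After this substitution $J_4$ carries the factor $\eta_\eps^2(y,u_\eps(s,y);z)\,\beta^{\prime\prime}(u_\eps(s,y)-k+\lambda\eta_\eps)$ and $I_4$ the factor $\eta^2(x,v(t,x,\alpha);z)\,\beta^{\prime\prime}(v(t,x,\alpha)-k+\lambda\eta)$, each multiplied by $\rho_{\delta_0}(t-s)\,\varrho_\delta(x-y)\,\psi(s,y)$ and the cut-off $\varsigma_l$. Since $\vartheta$ and $\delta$ are held fixed throughout, $\beta=\beta_\vartheta$ has bounded derivatives of every order, and assumption \ref{A4} together with the moment bounds \eqref{uni:moment-esti} renders these integrands dominated and integrable; this is what legitimizes every interchange of limit and integral below.

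Next I would carry out the limit $\delta_0\goto 0$ with $l$ held fixed. In $J_4$ the only $t$-dependence sits in $\rho_{\delta_0}(t-s)$ and in $\varsigma_l(v(t,x,\alpha)-k)$, so I would compare $J_4$ with the same expression in which $v(t,x,\alpha)$ is replaced by $v(s,x,\alpha)$ and $t$ is integrated out using $\int_{t=0}^T\rho_{\delta_0}(t-s)\,dt=1$ for $s\ge\delta_0$, the contribution of $s<\delta_0$ being $o(\delta_0)$. The resulting error is controlled, via the Lipschitz continuity of $\varsigma_l$ (for fixed $l$) and the Cauchy--Schwarz inequality with respect to the measure $\rho_{\delta_0}(t-s)\,d\alpha\,dx\,dt\,ds\,dP(\omega)$, by a constant times
\begin{align*}
\Big(E\Big[\int_{r=0}^1\int_{\Pi_T}\int_{\alpha=0}^1
|v(t+\delta_0 r,x,\alpha)-v(t,x,\alpha)|^2\,\rho(-r)\,d\alpha\,dt\,dx\,dr\Big]\Big)^{\frac12},
\end{align*}
which tends to zero by the bounded convergence theorem, precisely as in Step~1 of Lemma \ref{stochastic_lemma_2}. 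This yields $\lim_{\delta_0\goto 0}J_4$ with $v(t,x,\alpha)$ replaced by $v(s,x,\alpha)$ and the $t$-integration removed. The term $I_4$ is handled identically, except that the $t$-dependence now sits inside $\beta^{\prime\prime}$ and inside $\eta$, so the error is governed by $\|\beta^{\prime\prime\prime}\|_\infty$ and the Lipschitz bound on $\eta$ from \ref{A2} rather than by $\varsigma_l$; the same temporal modulus of continuity of $v$ drives it to zero.

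Finally I would send $l\goto 0$. At this stage $\varsigma_l(v(s,x,\alpha)-k)\,dk$ (respectively $\varsigma_l(u_\eps(s,y)-k)\,dk$) is an approximate identity in $k$ concentrating at $k=v(s,x,\alpha)$ (respectively $k=u_\eps(s,y)$); integrating in $k$ and using the continuity of $\beta^{\prime\prime}$ together with $|k-v(s,x,\alpha)|\le l$ on the support of $\varsigma_l$, the error is $O(l)\,\|\beta^{\prime\prime\prime}\|_\infty$ times the moment bounds and hence vanishes. This replaces $k$ by $v(s,x,\alpha)$ in $J_4$ and by $u_\eps(s,y)$ in $I_4$, producing exactly \eqref{eq:J_4-delta} and \eqref{eq:I_4-delta}. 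The main obstacle is the $\delta_0\goto 0$ step: one must arrange the error estimate so that it collapses onto the temporal $L^2$-modulus of continuity of the generalized solution $v$, which is available only in the $x,\alpha$-averaged, almost-sure sense, and one must secure the requisite domination so that bounded convergence applies while $l$ is still present; by contrast the Taylor rewriting and the $l\goto 0$ passage are routine given \ref{A4} and \eqref{uni:moment-esti}.
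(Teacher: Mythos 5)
Your proposal is correct and follows essentially the same route as the paper: the Taylor expansion with integral remainder, then $\delta_0\to 0$ controlled by the temporal $L^2$-modulus of continuity of $v$ via Cauchy--Schwarz and bounded convergence (with the $s<\delta_0$ boundary contributing $o(\delta_0)$), then $l\to 0$ using the approximate-identity property of $\varsigma_l$ and the boundedness of $\beta'''$. The only cosmetic difference is that the paper first shifts the $k$-variable in $J_4$ (turning $\varsigma_l(v(t,x,\alpha)-k)$ into $\varsigma_l(k)$ with $\beta''\big(u_\eps(s,y)-v(t,x,\alpha)+k+\lambda\eta_\eps\big)$), so that the $\delta_0$-error is governed by $\Vert\beta'''\Vert_\infty$ rather than by the $l$-dependent Lipschitz constant of $\varsigma_l$ as in your version --- immaterial, since $l$ is held fixed during that passage.
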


\begin{proof}
We will establish \eqref{eq:J_4-delta} in detail. 
The proof of \eqref{eq:I_4-delta} is very similar, 
and thus left to the reader. Note that 
$J_4$ can be rewritten as
\begin{align*}
J_4& = E\Big[ \int_{\Pi_T\times \Pi_T}\int_{|z|>0}
\int_{\R_k} \int_{\alpha=0}^1 \int_{\lambda =0}^1 (1-\lambda)
\beta^{\prime\prime} \big(u_\eps(s,y)-k +\lambda \eta_{\eps}(y,u_\eps(s,y);z)\big)\notag\\
 &\hspace{1.5cm}\times |\eta_{\eps}(y,u_\eps(s,y);z)|^2 \psi(s,y)\rho_{\delta_0}(t-s)\varrho_{\delta}(x-y) \varsigma_l(v(t,x,\alpha)-k)\,
d\lambda\, d\alpha \,dk\,m(dz)\,dx\,dt \,dy\,ds\Big]\\
&=E \Big[\int_{\Pi_T\times \Pi_T}\int_{|z|>0}
\int_{\R_k} \int_{\alpha=0}^1 \int_{\lambda =0}^1 (1-\lambda) 
\beta^{\prime\prime} \big(u_\eps(s,y)-v(t,x,\alpha) + k +\lambda \eta_{\eps}(y,u_\eps(s,y);z)\big)\notag\\
&\hspace{2cm}\times |\eta_{\eps}(y,u_\eps(s,y);z)|^2\psi(s,y)\rho_{\delta_0}(t-s)\varrho_{\delta}(x-y) \varsigma_l(k)
\, d\lambda\, d\alpha \,dk\,m(dz)\,dx\,dt \,dy\,ds\Big].
\end{align*} 
Furthermore,
\begin{align*}
& \Big| J_4 -E \int_{\Pi_T} \int_{\R_x^d}\int_{|z|>0} 
\int_{\R_k}\int_{\alpha=0}^1 \int_{\lambda =0}^1 
(1-\lambda)
\beta^{\prime\prime} (u_\eps(s,y)-v(s,x,\alpha)+k +\lambda \eta_{\eps}(y,u_\eps(s,y);z))\notag\\
 &\hspace{5cm}\times|\eta_{\eps}(y,u_\eps(s,y);z)|^2 \psi(s,y)\varrho_{\delta}(x-y) 
 \varsigma_l(k)\,d\lambda\, d\alpha\,dk\,m(dz)\,dx \,dy\,ds \Big|\\
 =  & \Big| E \Big[\int_{\Pi_T\times \Pi_T}\int_{|z|>0}\int_{\R_k} 
 \int_{\alpha=0}^1 \int_{\lambda =0}^1 (1-\lambda)
\Big(\beta^{\prime\prime} \big(u_\eps(s,y)-v(t,x,\alpha)
+k +\lambda \eta_{\eps}(y,u_\eps(s,y);z)\big) \notag \\
& \hspace{6cm} -\beta^{\prime\prime} \big(u_\eps(s,y)-v(s,x,\alpha)+k+\lambda \eta_{\eps}(y,u_\eps(s,y);z)\big)\Big)
\notag \\& \hspace{2cm}
\times |\eta_{\eps}(y,u_\eps(s,y);z)|^2 
\psi(s,y)\rho_{\delta_0}(t-s)\varrho_{\delta}(x-y) \varsigma_l(k)
 \,d\lambda\, d\alpha \,dk\,m(dz)\,dx\,dt \,dy\,ds\Big]\\
&\quad - E \Big[\int_{s=0}^T \int_{\R_y^d\times \R_x^d}
\int_{|z|>0}\int_{\alpha=0}^1\int_{\R_k} \int_{\lambda =0}^1
 (1-\lambda)|\eta_{\eps}(y,u_\eps(s,y);z)|^2 
 \beta^{\prime\prime} \big(u_\eps(s,y)-k +\lambda \eta_\eps(y,u_\eps(s,y);z)\big)\notag\\
 &\hspace{2cm}\times
 \psi(s,y)\Big(1-\int_{t=0}^T\rho_{\delta_0}(t-s)\,dt\Big)
 \,\varrho_{\delta}(x-y)\varsigma_l(v(s,x,\alpha)-k)
 d\lambda\,dk\, d\alpha\,m(dz)\,dx\,dy\,ds\Big] \Big|\\
 & \le ||\beta^{\prime\prime\prime}||_{\infty}\,
 E \Big[ \int_{s=0}^T \int_{t=0}^T\int_{\R_y^d\times \R_x^d}
 \int_{|z|>0}\int_{\alpha=0}^1
|\eta_{\eps}(y,u_\eps(s,y);z)|^2 |v(t,x,\alpha)-v(s,x,\alpha)|\\
&\hspace{5cm} \times \psi(s,y)\rho_{\delta_0}(t-s)
\varrho_{\delta}(x-y) \,d\alpha \,m(dz)\,dx\,dy \,dt\,ds\Big] +o(\delta_0)\\
& \le \text{Const}(\beta,\eta)\,E \Big[ \int_{s=0}^T \int_{t=0}^T
\int_{\R_y^d  \times \R_x^d} \int_{\alpha=0}^1 g^2(y)(1+|u_\eps(s,y)|^2)
|v(t,x,\alpha)-v(s,x,\alpha)|\\
&\hspace{6cm} \times \psi(s,y)\rho_{\delta_0}(t-s)
\varrho_{\delta}(x-y) d\alpha\,dx\,dy \,dt\,ds\Big] +o(\delta_0)\\
& \le \text{Const}(\beta,\eta)\,E \Big[ \int_{s=\delta_0}^T \int_{t=0}^T
\int_{\R_y^d  \times \R_x^d}\int_{\alpha=0}^1 g^2(y)(1+|u_\eps(s,y)|^2)
|v(t,x,\alpha)-v(s,x,\alpha)|\\
&\hspace{6cm} \times \psi(s,y)\rho_{\delta_0}(t-s)
\varrho_{\delta}(x-y) d\alpha\,dx\,dy \,dt\,ds\Big] \\
&\quad 
+\text{Const}(\beta,\eta)\,E  \Big[\int_{s=0}^{\delta_0} \int_{t=0}^T
\int_{\R_y^d  \times \R_x^d}\int_{\alpha=0}^1 g^2(y)(1+|u_\eps(s,y)|^2)
|v(t,x,\alpha)-v(s,x,\alpha)|\\
&\hspace{6cm} \times \psi(s,y)\rho_{\delta_0}(t-s)
\varrho_{\delta}(x-y) d\alpha\,dx\,dy \,dt\,ds\Big] +o(\delta_0) \\
&\qquad (\text{by the Cauchy-Schwartz's inequality})\\
& \le \text{Const}(\beta,\eta)\sqrt{E  \Big[\int_{s=\delta_0}^T 
\int_{t=0}^T\int_{\R_y^d\times \R_x^d}g^4(y)(1+|u_\eps(s,y)|^4)
\psi(s,y)\rho_{\delta_0}(t-s)\varrho_{\delta}(x-y)\,dx\,dy \,dt\,ds\Big]}\notag \\
&\hspace{.1cm}\times \sqrt{E \Big[ \int_{s=\delta_0}^T 
\int_{t=0}^T\int_{\R_y^d\times \R_x^d}\int_{\alpha=0}^1 
|v(t,x,\alpha)-v(s,x,\alpha)|^2
\psi(s,y)\rho_{\delta_0}(t-s)\varrho_{\delta}(x-y)\,d\alpha dxdy \,dt\,ds\Big]}+o(\delta_0)\\
& \le  \text{Const}(\beta,\eta,\psi)  \sqrt{E \Big[ \int_{s=\delta_0}^T 
\int_{t=0}^T\int_{\R_x^d}\int_{\alpha=0}^1 |v(t,x,\alpha)-v(s,x,\alpha)|^2
\rho_{\delta_0}(t-s)\,d\alpha\,dx \,dt\,ds\Big]}~~ +o(\delta_0)\\
& \le \text{Const}(\beta,\eta,\psi)  \sqrt{E \Big[ \int_{r=0}^1 \int_{t=0}^T
\int_{\R_x^d}\int_{\alpha=0}^1|v(t,x,\alpha)-v(t+r\delta_0,x,\alpha)|^2
\rho(-r)\, d\alpha\,dx \,dt\,dr\Big]}~~ +o(\delta_0).
\end{align*}
As before $\underset{\delta_0\downarrow 0 }\lim\,  
\int_{r=0}^T\int_{\rd} \int_{\alpha=0}^1 |v(t+\delta_0 r,x,\alpha)-v(t,x,\alpha)|^2\,
d\alpha\,dx\,dt= 0$ for all $r$. Therefore, by the dominated convergence 
theorem,  $E \Big[\int_{r=0}^1 \int_{t=0}^T\int_{\R^d} 
\int_{\alpha=0}^1 |v(t,x,\alpha)-v(t+r\delta_0,x,\alpha)|^2
\rho(-r)\,d\alpha\,dx \,dt\,dr\Big] \rightarrow 0$ as $\delta_0\rightarrow 0$.

Let
\begin{align*}
\mathcal{N}& :=E \Big[\int_{\Pi_T\times \R_x^d}\int_{|z|>0}
\int_{\R_k} \int_{\alpha=0}^1\int_{\lambda =0}^1 (1-\lambda)|\eta_{\eps}(y,u_\eps(s,y);z)|^2 
\beta^{\prime\prime} \big(u_\eps(s,y)-k +\lambda \eta_{\eps}(y,u_\eps(s,y);z)\big)\notag\\
 &\hspace{5cm} \times\psi(s,y)\varrho_{\delta}(x-y) 
 \varsigma_l(v(s,x,\alpha)-k)
\,d\lambda\,d\alpha \,dk\,m(dz)\,dx\,dy \,ds\Big]\\
&\qquad-E \Big[\int_{\Pi_T\times \R_x^d}\int_{|z|>0} 
\int_{\alpha=0}^1\int_{\lambda =0}^1 (1-\lambda)
\beta^{\prime\prime} \big(u_\eps(s,y)-v(s,x,\alpha) +\lambda \eta_{\eps}(y,u_\eps(s,y);z)\big)\notag\\
 &\hspace{5cm}\times |\eta_{\eps}(y,u_\eps(s,y);z)|^2  \psi(s,y)\varrho_{\delta}(x-y)
 \,d\lambda\,d\alpha \,m(dz)\,dx\,dy \,ds\Big]\\
 &=E \Big[\int_{\Pi_T\times \R_x^d}\int_{|z|>0}\int_{\R_k}
 \int_{\alpha=0}^1 \int_{\lambda =0}^1 (1-\lambda)|\eta_{\eps}(y,u_\eps(s,y);z)|^2 
\Big(\beta^{\prime\prime} \big(u_\eps(s,y)-k +\lambda \eta_{\eps}(y,u_\eps(s,y);z)\big) \\
& \hspace{6cm}
-\beta^{\prime\prime} \big(u_\eps(s,y)-v(s,x,\alpha) 
+\lambda \eta_{\eps}(y,u_\eps(s,y);z)\big)\Big)\psi(s,y)\notag\\
&\hspace{5cm}\times\varrho_{\delta}(x-y) 
\varsigma_l(v(s,x,\alpha)-k)\,d\lambda\,d\alpha \,dk\,m(dz)\,dx\,dy \,ds\Big].
\end{align*}
We estimate $\mathcal{N}$ as follows: 
\begin{align*}
|\mathcal{N}| &\le C E \Big[\int_{\Pi_T\times \R_x^d}
\int_{|z|>0}\int_{\R_k}  \int_{\alpha=0}^1 |\eta_{\eps}(y,u_\eps(s,y);z)|^2 |v(s,x,\alpha)-k|
\psi(s,y) \varrho_{\delta}(x-y) \\
&\hspace{3.5cm}\times 
\varsigma_l(v(s,x,\alpha)-k)\,d\alpha \,dk\,m(dz)\,dx\,dy \,ds\Big]\\
&\le C\,l\,E \Big[\int_{\Pi_T\times \R_x^d}\int_{|z|>0}  |\eta_{\eps}(y,u_\eps(s,y);z)|^2 
\psi(s,y) \varrho_{\delta}(x-y)\,m(dz)\,dx\,dy \,ds\Big]\\
&\le C(\psi,\eta)\, l\goto 0 \quad \text{as $l\goto 0$.}
\end{align*}
Hence, \eqref{eq:J_4-delta} is established.
\end{proof}

\begin{lem}\label{stochastic_lemma_7} 
For fixed $\delta > 0$ and $\beta$, it holds that
$$
\limsup_{(\eps,\delta_0,l)\rightarrow 0}\,  |J_7| = 0.
$$
\end{lem}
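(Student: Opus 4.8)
The plan is to bound $|J_7|$ directly by a constant multiple of $\sqrt{\eps}$, where the constant depends only on $\delta,\psi,\beta,T$ but \emph{not} on $\delta_0$ or $l$; this at once yields $\limsup_{(\eps,\delta_0,l)\to 0}|J_7|=0$. Recall that
$$
J_7=-\eps\,E\Big[\int_{\Pi_T}\int_{\Pi_T}\int_{\alpha=0}^1\int_{\R_k}\beta'(u_\eps(s,y)-k)\,\grad_y u_\eps(s,y)\cdot\grad_y\phi_{\delta,\delta_0}\,\varsigma_l(v(t,x,\alpha)-k)\,dk\,d\alpha\,dy\,ds\,dx\,dt\Big],
$$
where $\grad_y\phi_{\delta,\delta_0}=\rho_{\delta_0}(t-s)\big[\grad_y(\varrho_\delta(x-y))\,\psi(s,y)+\varrho_\delta(x-y)\,\grad_y\psi(s,y)\big]$.

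First I would exploit the elementary bounds $|\beta'|\le\|\beta'\|_\infty$ and $\int_{\R_k}\varsigma_l(v(t,x,\alpha)-k)\,dk=1$ to eliminate the $k$- and $\alpha$-integrations. Since $\psi$ has compact support $K$ in the $y$-variable, both terms in $\grad_y\phi_{\delta,\delta_0}$ are supported in $K$. Integrating $\rho_{\delta_0}$ in $t$ (so that $\int_{0}^{T}\rho_{\delta_0}(t-s)\,dt\le 1$) and integrating $\varrho_\delta$, $|\grad\varrho_\delta|$ in $x$ (giving $\|\varrho_\delta\|_1=1$ and $\|\grad\varrho_\delta\|_1=:C_\delta$), I obtain
$$
|J_7|\le\eps\,\|\beta'\|_\infty\big(C_\delta\|\psi\|_\infty+\|\grad\psi\|_\infty\big)\,E\Big[\int_{s=0}^T\int_{K}|\grad_y u_\eps(s,y)|\,dy\,ds\Big],
$$
a bound that is manifestly free of $\delta_0$ and $l$.

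The crucial step is to convert this $\eps$-weighted $L^1$-norm of the gradient into a quantity governed by the energy estimate \eqref{gradient-esti}. I would apply the Cauchy--Schwarz inequality in $(s,y,\omega)$ over the finite-measure set $[0,T]\times K\times\Omega$, yielding
$$
E\Big[\int_{s=0}^T\int_{K}|\grad_y u_\eps|\,dy\,ds\Big]\le(T\,|K|)^{1/2}\Big(E\Big[\int_{s=0}^T\int_{\rd}|\grad_y u_\eps(s,y)|^2\,dy\,ds\Big]\Big)^{1/2}.
$$
By Proposition \ref{prop-moment-estimate}(2), taking $\phi(u)=\tfrac12 u^2$ (so $\phi''\equiv 1$) and $p=1$, there is a finite $M$, independent of $\eps$, with $\eps\,E\big[\int_0^T\int_{\rd}|\grad_y u_\eps|^2\,dy\,ds\big]\le M$; hence the last factor is at most $(M/\eps)^{1/2}$. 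Combining the displays gives $|J_7|\le C(\delta,\psi,\beta,T)\,\eps\,(M/\eps)^{1/2}=C(\delta,\psi,\beta,T)\sqrt{M}\,\sqrt{\eps}$, and letting $\eps\to 0$ closes the argument uniformly in $\delta_0,l$.

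The main obstacle — indeed the only subtlety — is that \eqref{gradient-esti} controls $\eps\int|\grad u_\eps|^2$ rather than $\eps\int|\grad u_\eps|$, so the bare $L^2$-norm of $\grad u_\eps$ blows up like $\eps^{-1/2}$. The Cauchy--Schwarz splitting is calibrated precisely so that the outer factor $\eps$ absorbs this $\eps^{-1/2}$ and leaves a net $\sqrt{\eps}$. One must also be careful to keep every constant independent of $\delta_0$ and $l$, which is ensured because $\beta'$ is bounded, $\varsigma_l$ integrates to one in $k$, and $\rho_{\delta_0}$ integrates to at most one in $t$, so neither mollification parameter survives into the final bound.
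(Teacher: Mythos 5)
Your proof is correct and follows essentially the same route as the paper: bound $|\beta'|$ by its sup norm, use that $\varsigma_l$ and $\rho_{\delta_0}$ integrate to (at most) one so the estimate is uniform in $l$ and $\delta_0$, then apply Cauchy--Schwarz so that the outer factor $\eps$ pairs with the uniform bound \eqref{gradient-esti} on $\eps\int|\grad_y u_\eps|^2$ to produce a net $O(\sqrt{\eps})$ bound. The only cosmetic difference is that the paper keeps the $x$-integral of $|\grad_y[\psi\varrho_\delta]|$ inside the second Cauchy--Schwarz factor rather than integrating it out first; the resulting constant and conclusion are the same.
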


\begin{proof}
Note that  
\begin{align}
|J_7| &\le \eps ||\beta^\prime||_{\infty} 
\Big|E \Big[\int_{\Pi_T}\int_{\R_x^d}|\grad_y u_\eps(s,y)| 
|\grad_y[\psi(s,y)\varrho_\delta(x-y)|\,dx\,dy\,ds\Big]\Big|\notag \\
&\le\eps \, ||\beta^\prime||_{\infty} E\Big[\int_{|y|\le K}\int_{t=0}^T\int_{\R_x^d}
|\grad_y u_\eps(t,y)|\,|\grad_y[\psi(t,y)\varrho_\delta(x-y)]|\,dx\,dt\,dy\Big]\notag \\
& \qquad (\text{by the Cauchy-Schwartz's inequality})\notag \\
& \le C(\beta, \psi) \eps^{\frac 12}\Big(E\Big[\int_{\Pi_T} \eps |\grad_y u_\eps(t,y)|^2 
\,dy\,dt\Big]\Big)^\frac{1}{2} \Big(E \Big[\int_{|y|\le K}\int_0^T \big |\int_{\R_x^d}
\grad_y[\psi(t,y)\varrho_\delta(x-y)]\,dx\big|^2\,dt\,dy\Big]\Big)^\frac{1}{2}\notag \\
& \le  C(\beta,\psi,\delta)\,\eps^{\frac{1}{2}} \Big(\sup_{\eps>0} 
E\Big[|\eps \int_{t=0}^T \int_{\R_y^d} |\grad_y u_\eps(t,y)|^2
 \,dy\,dt|\Big]\Big)^\frac{1}{2}\notag \\
&\le  C(\beta,\psi,\delta)\,\eps^{\frac{1}{2}} 
\quad (\text{by \eqref{gradient-esti}})\notag\\
& \goto 0 \quad \text{as $\eps\goto 0$.}
\notag
\end{align}
Thus $ \underset{(\eps,\delta_0,l)\rightarrow 0}{\limsup}\,  |J_7| = 0$, which 
completes the proof.
\end{proof}
\begin{lem} \label{stochastic_lemma_8} 
Assume that $ \vartheta \goto 0 $, $ \delta\goto 0$, 
and $\vartheta^{-1}\delta^{2}\goto 0 $. Then
\begin{align*}
& \limsup_{\vartheta \goto 0,\delta\goto 0,\vartheta^{-1}
\delta^2\goto 0^+ }\limsup_{\eps \rightarrow 0}
\Big[ \lim_{l\goto 0}\lim_{\delta_0 \goto 0}
\Big((I_3 +J_3)+ (I_4 + J_4)\Big)\Big]=0.
\end{align*}
\end{lem}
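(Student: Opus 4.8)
The plan is to combine the three surviving pieces---recall $J_3=0$ by Lemma~\ref{stochastic_lemma_3}---into a single ``doubled'' nonlocal compensator and then exploit the near-local structure forced by $\varrho_\delta$ together with the contraction hypothesis $\lambda^*<1$. First I would record, from Lemmas~\ref{stochastic_lemma_3} and~\ref{stochastic_lemma_6}, the values of $\lim_{l\goto 0}\lim_{\delta_0\goto 0}$ applied to $I_3$, $I_4$ and $J_4$; after these limits the time variables are identified ($t=s$), the variable $k$ is eliminated, and each integrand becomes an ordinary (no longer stochastic) integral against $\psi(s,y)\,\varrho_\delta(x-y)$, the common weight $\psi(s,y)$ being inherited from the single test function $\phi_{\delta,\delta_0}(t,x,s,y)=\rho_{\delta_0}(t-s)\varrho_\delta(x-y)\psi(s,y)$. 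Writing $w=v(s,x,\alpha)-u_\eps(s,y)$, $\eta_1=\eta(x,v(s,x,\alpha);z)$ and $\eta_2=\eta_\eps(y,u_\eps(s,y);z)$, the second-order Taylor remainder identity shows that the $I_4$ and $J_4$ integrands are exactly $\beta(w+\eta_1)-\beta(w)-\eta_1\beta'(w)$ and, using that $\beta$ is even and $\beta'$ odd, $\beta(w-\eta_2)-\beta(w)+\eta_2\beta'(w)$, while the $I_3$ integrand is $\beta(w+\eta_1-\eta_2)-\beta(w-\eta_2)-\beta(w+\eta_1)+\beta(w)$.

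The key algebraic step is that these three integrands telescope: their sum collapses to
\begin{align*}
\beta\big(w+(\eta_1-\eta_2)\big)-\beta(w)-(\eta_1-\eta_2)\,\beta'(w)=(\eta_1-\eta_2)^2\int_0^1(1-\lambda)\,\beta''\big(w+\lambda(\eta_1-\eta_2)\big)\,d\lambda\ge 0 .
\end{align*}
Next I would pass to the limit $\eps\goto 0$. Using $|\eta_\eps-\eta|\le C\eps(1+|y|+|u_\eps|)(|z|\wedge 1)$ from \eqref{eq:regu_error-eta}, together with the uniform moment estimates \eqref{uni:moment-esti} and \ref{A3}, I replace $\eta_2$ by $\eta(y,u_\eps(s,y);z)$ up to an $o_\eps(1)$ error; the resulting integrand is a Caratheodory function of $u_\eps(s,y)$, dominated by $2(|\eta_1|+|\eta_2|)$ and uniformly integrable, so Lemmas~\ref{conv-young-measure} and~\ref{lem:measure-conversion} convert $u_\eps(s,y)$ into the Young-measure process $u(s,y,\gamma)$ and introduce an extra $\int_0^1 d\gamma$. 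With $\tilde w=v(s,x,\alpha)-u(s,y,\gamma)$ and $\tilde h=\eta(x,v(s,x,\alpha);z)-\eta(y,u(s,y,\gamma);z)$, this produces the nonnegative quantity
\begin{align*}
E\Big[\int_{\Pi_T}\!\int_{\R^d_x}\!\int_{|z|>0}\!\int_0^1\!\int_0^1 \tilde h^2\!\int_0^1(1-\lambda)\beta''(\tilde w+\lambda\tilde h)\,d\lambda\;\psi(s,y)\varrho_\delta(x-y)\,d\gamma\,d\alpha\,m(dz)\,dx\,dy\,ds\Big].
\end{align*}

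The crucial estimate is then to show this is $O(\vartheta+\delta^2/\vartheta)$. By \ref{A2}, $|\tilde h|\le(\lambda^*|\tilde w|+K|x-y|)(|z|\wedge 1)\le(\lambda^*|\tilde w|+K\delta)(|z|\wedge 1)$ on $\supp\varrho_\delta$. Since $\beta=\beta_\vartheta$ satisfies $|\beta''_\vartheta(r)|\le(M_2/\vartheta)\,\mathbf 1_{|r|\le\vartheta}$, on the support of $\lambda\mapsto\beta''(\tilde w+\lambda\tilde h)$ one has $|\tilde w|\le\vartheta+|\tilde h|\le\vartheta+\lambda^*|\tilde w|+K\delta$, whence the contraction $\lambda^*<1$ yields $|\tilde w|\le(\vartheta+K\delta)/(1-\lambda^*)$; consequently $\tilde h^2\le C(\lambda^*,K)\big((\vartheta+\delta)^2+\delta^2\big)(|z|\wedge1)^2$ there. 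Inserting this bound and $\int_0^1(1-\lambda)\beta''\,d\lambda\le M_2/(2\vartheta)$ controls the integrand by $C(\vartheta+\delta^2/\vartheta)(|z|\wedge1)^2$, a quantity with no remaining dependence on $v$ or $u$; integrating and using $\int(|z|^2\wedge1)\,m(dz)<\infty$ (\ref{A3}), $\int\varrho_\delta=1$ and the compact support of $\psi$, I obtain the bound $C'(\vartheta+\delta^2/\vartheta)$ with $C'$ independent of $\vartheta,\delta,\eps$. Taking $\limsup$ as $\vartheta\goto0$, $\delta\goto0$, $\vartheta^{-1}\delta^2\goto0$ gives the claim. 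The main obstacle is precisely this last estimate: the telescoping must be \emph{exact} (which is why matching the weight $\psi(s,y)$ across all three terms is essential), and confining $|\tilde w|$ to size $O(\vartheta+\delta)$ on the support of $\beta''_\vartheta$---the only mechanism that defeats the $1/\vartheta$ blow-up of the second derivative---hinges on both $\lambda^*<1$ and the scaling $\vartheta^{-1}\delta^2\goto0$.
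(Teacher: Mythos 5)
Your proposal is correct and follows essentially the same route as the paper: after identifying the limits from Lemmas \ref{stochastic_lemma_3} and \ref{stochastic_lemma_6}, the three integrands telescope to the Taylor remainder $\beta(a+b)-\beta(a)-b\,\beta'(a)$ with $b=\eta(x,v;z)-\eta(y,u_\eps;z)$ (after trading $\eta_\eps$ for $\eta$ at $o(\eps)$ cost), which is nonnegative and is then bounded by $C(\vartheta+\delta^2/\vartheta)$ using \ref{A2}, the contraction $\lambda^*<1$, and the support/size properties \eqref{eq:approx to abosx} of $\beta_\vartheta''$. The only cosmetic difference is that you pass to the Young-measure limit in $\eps$ before making the key estimate, whereas the paper proves the bound uniformly in $\eps$ and only then takes $\limsup_{\eps\to 0}$; both are valid.
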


\begin{proof}
We combine Lemmas \ref{stochastic_lemma_3} and 
\ref{stochastic_lemma_6} to conclude that
\begin{align}
& \lim_{l\goto 0}\lim_{\delta_0 \goto 0} 
\Big((I_3 +J_3)+ (I_4 + J_4)\Big) \notag \\
&=E \Big[\int_{\Pi_T}\int_{\R_x^d}\Big(\int_{|z|>0}
\int_{\alpha=0}^1 \Big\{\beta \big(v(t,x,\alpha)
-u_\eps(t,y)+\eta(x,v(t,x,\alpha);z)-\eta_{\eps}(y,u_\eps(t,y);z)\big)
\notag \\ & \hspace{4.5cm} 
-\beta \big(v(t,x,\alpha)-u_\eps(t,y)\big)-\big(\eta(x,v(t,x,\alpha);z)-\eta_{\eps}(y,u_\eps(t,y);z)\big) 
\notag \\ 
&\hspace{5cm} \times \beta^\prime \big(v(t,x,\alpha)-u_\eps(t,y)\big)\Big\}
\, d\alpha\,m(dz)\Big) \psi(t,y)\varrho_\delta(x-y)\,dx\,dy\,dt\Big] \notag \\
&\le E \Big[\int_{\Pi_T}\int_{\R_x^d}\Big(\int_{|z|>0}
\int_{\alpha=0}^1 \Big\{\beta \big(v(t,x,\alpha)
-u_\eps(t,y)+\eta(x,v(t,x,\alpha);z)-\eta(y,u_\eps(t,y);z)\big)
\notag \\ & \hspace{4.5cm} 
-\beta \big(v(t,x,\alpha)-u_\eps(t,y)\big)-\big(\eta(x,v(t,x,\alpha);z)-\eta(y,u_\eps(t,y);z)\big) 
\notag \\ 
&\hspace{5cm} \times \beta^\prime \big(v(t,x,\alpha)-u_\eps(t,y)\big)\Big\}
\, d\alpha\,m(dz)\Big) \psi(t,y)\varrho_\delta(x-y)\,dx\,dy\,dt\Big] \notag \\
&\qquad+ C(\beta, \psi)(\eps+ \eps^2) \notag\\
&= E \Big[\int_{\Pi_T}\int_{\R_x^d}\Big( \int_{|z|>0} 
\int_{\alpha=0}^1 \big(\beta(a+b)-\beta(a)-b\beta^{\prime}(a)\big)
\,d\alpha\,m(dz)\Big)\,\psi(t,y)\varrho_{\delta}(x-y)\,dx\,dy\,dt\Big]\notag\\
&\hspace{8cm}+ C(\beta, \psi)o(\eps)\notag\\
&=E \Big[\int_{\Pi_T}\int_{\R_x^d}\Big( \int_{|z|>0}
\int_{\alpha=0}^1\int_{\theta=0}^1 b^2
(1-\theta)\beta^{\prime\prime}(a+\theta\,b)
\,d\theta\,d\alpha\,m(dz)\Big)\,\psi(t,y)\varrho_{\delta}(x-y)\,dx\,dy\,dt\Big]\notag \\
&\hspace{8cm}+C(\beta, \psi)o(\eps)
\label{eq:endgame}
\end{align}
where $ a=v(t,x,\alpha)-u_\eps(t,y)$ 
and $b=\eta(x,v(t,x,\alpha);z)-\eta(y,u_\eps(t,y);z)$. 
Note that 
\begin{align}
b^2\beta^{\prime\prime}(a+\theta\,b)
&=\big(\eta(x,v(t,x,\alpha);z)-\eta(y,u_\eps(t,y);z)\big)^2\,
\beta^{\prime\prime}\Big(a+\theta\,\big(\eta(x,v(t,x,\alpha);z)-\eta(y,u_\eps(t,y);z)\big)\Big)\notag \\
& \leq \Big(|v(t,x,\alpha)-u_\eps(t,y)|^2 + K|x-y|^2\Big)
(1\wedge | z|^2)\,\beta^{\prime\prime}(a+\theta\,b)\notag \\ 
&= \Big( a^2 + K^2|x-y|^2\Big) 
\,\beta^{\prime\prime}(a+\theta\,b)\, (1\wedge | z|^2). 
\label{eq:nonlocal-estim}
\end{align}

We need to find a suitable upper bound on $a^2\,\beta^{\prime\prime}(a+\theta\,b)$. 
Note that $\beta^{\prime\prime}$ is nonnegative and symmetric around zero. 
Thus, we can assume without loss of generality that $a \ge 0$. 
Then, by assumption \ref{A2},
\begin{align*}
v(t,x,\alpha)-u_\eps(t,y) +\theta \,b \ge  -K|x-y|
+(1- \lambda^*) ( v(t,x,\alpha)-u_\eps(t,y))
\end{align*}
for $\theta \in [0,1]$. 
In other words    
\begin{align}
0 \le a \le (1-\lambda^*)^{-1}(a+ \theta b+ K|x-y|). 
\label{eq:nonlocal-estim-1}
\end{align}
We substitute $\beta= \beta_{\vartheta}$ in 
\eqref{eq:nonlocal-estim}, and 
use \eqref{eq:nonlocal-estim-1} to obtain
\begin{align}
b^2 \beta''_\vartheta (a+\theta\, b) 
&  \le (1-\lambda^*)^{-2}(a+\theta\,b+ K|x-y|)^2 
\, \beta''_\vartheta (a+\theta b)
\,(|z|^2\wedge 1) + \frac{ K|x-y|^2}{\vartheta}\,(|z|^2\wedge 1) \notag\\
& \le  2(1-\lambda^*)^{-2} (a+ \theta b)^2 
\beta''_\vartheta(a+\theta b)(|z|^2\wedge 1) +C(K,\lambda^*)
\frac{|x-y|^2}{\vartheta}(|z|^2\wedge 1 )\notag\\
&  \le \Big[2(1-\lambda^*)^{-2} 
C\vartheta +C(K,\lambda^*) \frac{|x-y|^2}{\vartheta}\Big]
(|z|^2\wedge 1 ),
\label{eq:endgame2}
\end{align} 
as $\sup_{r\in \R}\, r^2 \beta^{\prime\prime}_\vartheta(r) 
\le \vartheta$ by \eqref{eq:approx to abosx}.

Now combine \eqref{eq:endgame}-\eqref{eq:endgame2} to write 
the following inequality:    
\begin{align*}
&E \Big[\int_{(0,T]}\int_{x,y}\int_{|z|>0} 
\int_{\alpha=0}^1 \Big( \beta_\vartheta(a+b)-\beta_\vartheta(a) -b\beta_\vartheta'(a)\Big)
\psi(t,y)\,\varrho_{\delta}(x-y)\,d\alpha \,m(dz)\,dx\,dy\,dt\Big]\notag \\
& \qquad \leq C_1\Big(\vartheta + \vartheta^{-1}\delta^2\Big)T,\notag
\end{align*}
where the constant $C_1$ depends 
only on $\psi$ and is in particular independent of $\eps$. 
We now let $\vartheta\goto 0$, $ \delta\goto 0$ 
and $\vartheta^{-1}\delta^{2}\goto0$, yielding
\begin{align*}
& \limsup_{\vartheta \goto 0,\delta\goto 0,\vartheta ^{-1}\delta^2\goto 0}
\limsup_{\eps \rightarrow 0}
\Big[ \lim_{l\goto 0}\lim_{\delta_0 \goto 0}
\Big((I_3 +J_3)+ (I_4 + J_4)\Big)\Big]\le0.
\end{align*} 
This wraps up the proof, once we observe in \eqref{eq:endgame} that
$\underset{l\goto 0} \lim\,\underset{\delta_0 \goto 0}
\lim\,\Big((I_3 +J_3)+ (I_4 + J_4)\Big)$ 
is nonnegative.  
\end{proof}
All of the above results can be combined into the following proposition.
\begin{prop} \label{kato_inequality}
Let $v(t,x,\alpha)$ be a given generalized entropy 
solution of \eqref{eq:levy_stochconservation_laws} with initial data
$v(0,x)$ and $u(t,x,\gamma)$ be the generalized entropy 
solution with initial data $u(0,x)$, which has been extracted 
out of a Young measure valued subsequential limit of the 
sequence $\{u_\eps(t,x)\}_{\eps>0}$ of viscous approximations. 
Then, for any nonnegative $H^1( [0,\infty) \times \R^d)$ 
function $\psi(t,x)$ with compact support, it holds that
\begin{align}
   0\le & E \Big[\int_{\R_x^d} | v(0,x)-u(0,x)|\psi(0,x)\,dx\Big] 
   + E \Big[\int_{\Pi_T}  \int_{\gamma=0}^1 
   \int_{\alpha=0}^1 |v(t,x,\alpha)-u(t,x,\gamma)|
   \partial_t \psi(t,x)\,d\alpha\,d\gamma \,dx\,dt\Big] \notag \\
   & \hspace{ 3cm}+ E \Big[\int_{\Pi_T}\int_{\gamma=0}^1 
   \int_{\alpha=0}^1 F\Big(v(t,x,\alpha),u(t,x,\gamma)\Big)
  \cdot \grad_x \psi(t,x) \,d\alpha\,d\gamma\,dx\,dt \Big]
\label{eq:kato} .
\end{align}
\end{prop}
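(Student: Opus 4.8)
The plan is to obtain \eqref{eq:kato} by adding the two ``doubled'' entropy inequalities \eqref{stochas_entropy_1} and \eqref{stochas_entropy_3} and then passing to the limit in the five auxiliary parameters $\delta_0$ (time doubling), $l$ (the level mollifier $\varsigma_l$), $\eps$ (viscosity), $\vartheta$ (the smoothing of $|\cdot|$ by $\beta_\vartheta$) and $\delta$ (space doubling), in exactly this nested order. The conceptual point, following the strategy announced at the start of Section \ref{uniqueness}, is that we never double two entropy solutions against each other; instead we test the entropy inequality for the arbitrary solution $v$ against the \emph{viscous approximations} $u_\eps$ (whose It\^{o}--L\'{e}vy evolution produces \eqref{stochas_entropy_3}), and only afterwards send $\eps\goto 0$, exploiting the Young-measure convergence of $\{u_\eps\}$ to $u$ established in Section \ref{sec:existence-of-gen-solution}. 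Since $B\in\mathcal{F}_T$ is arbitrary we may take $B=\Omega$, and the sum of the two inequalities reads
\begin{align*}
0\le (I_1+J_1)+(I_2+J_2)+(I_3+J_3)+(I_4+J_4)+(I_5+J_5)+J_6+J_7 .
\end{align*}

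First I would dispatch the terms whose limits are already recorded. Lemma \ref{stochastic_lemma_1} gives $(I_1+J_1)\to E[\int_{\rd}|v(0,x)-u(0,x)|\psi(0,x)\,dx]$, Lemma \ref{stochastic_lemma_2} gives $(I_2+J_2)\to E[\int_{\Pi_T}\int_0^1\int_0^1 |v-u|\,\partial_t\psi\,d\alpha\,d\gamma\,dx\,dt]$, and Lemma \ref{stochastic_lemma_5} gives $J_6\to E[\int_{\Pi_T}\int_0^1\int_0^1 F(u,v)\cdot\grad\psi\,d\alpha\,d\gamma\,dx\,dt]$. Because the Kru\v{z}kov objects are symmetric, $|v-u|=|u-v|$ and $F(a,b)=\mathrm{sign}(a-b)(F(a)-F(b))=F(b,a)$, these three limits coincide (after relabeling the collapsed spatial variable) with the three terms on the right of \eqref{eq:kato}. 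The two ``doubling'' flux contributions $I_5+J_5$ cancel in the limit by Lemma \ref{stochastic_lemma_4} (using the local Lipschitz bound \eqref{eq:local-lip-flux} and the constraint $\vartheta/\delta\goto 0$), while the residual viscous term $J_7\goto 0$ as $\eps\goto 0$ by Lemma \ref{stochastic_lemma_7}, thanks to the a priori bound \eqref{gradient-esti}. Denoting by $L$ the sum of these three surviving limits, we are left with the combined nonlocal term $T:=(I_3+J_3)+(I_4+J_4)$.

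The main obstacle is precisely $T$, handled by Lemma \ref{stochastic_lemma_8}, and it is the genuine new difficulty caused by the L\'{e}vy noise. Here $J_3=0$ by the martingale identity \eqref{eq:conditional_indep}, and $I_3$ is reduced through the It\^{o}--L\'{e}vy product rule together with the uniform $L^\infty$ bound of Lemma \ref{lem:L-infinity estimate}, which is exactly what tames the $\delta_0^{-(p-1)/p}$ blow-up of the time mollifier as $\delta_0\goto 0$; after the $\delta_0\goto 0$ and $l\goto 0$ passages the four pieces assemble into a single \emph{nonnegative} second-order Taylor remainder whose integrand is $b^2\beta_\vartheta''(a+\theta b)$ with $a=v(t,x,\alpha)-u_\eps(t,y)$ and $b=\eta(x,v(t,x,\alpha);z)-\eta(y,u_\eps(t,y);z)$. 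The non-locality forces the splitting $b^2\le (a^2+K^2|x-y|^2)(|z|^2\wedge 1)$ of \eqref{eq:endgame2}: assumption \ref{A2} with $\lambda^*<1$ lets the local part $a^2\beta_\vartheta''(a+\theta b)$ be absorbed into $\mathcal{O}(\vartheta)$ via the elementary bound $\sup_r r^2\beta_\vartheta''(r)\le C\vartheta$, while the nonlocal part $K|x-y|^2/\vartheta$, integrated against $\varrho_\delta(x-y)$, contributes only $\mathcal{O}(\vartheta^{-1}\delta^2)$. Letting $\vartheta\goto 0$, $\delta\goto 0$ with $\vartheta^{-1}\delta^2\goto 0$ therefore yields $\limsup T\le 0$. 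Finally, since $0\le L+T$ holds for every value of the parameters, while $L$ converges and $\limsup T\le 0$, passing to the $\limsup$ gives $0\le L$, which is \eqref{eq:kato}. I expect the delicate coupling of the three scales $\vartheta$, $\delta$ and the Lipschitz defect $1-\lambda^*$ in this last step to be the crux; the remaining passages are routine dominated-convergence arguments built on the uniform moment estimates \eqref{uni:moment-esti}. A standard density argument then extends the inequality from smooth $\psi$ to $\psi\in H^1$ with compact support, since the surviving integrands are continuous in $\psi$ for the $H^1$ norm.
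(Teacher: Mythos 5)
Your proposal follows the paper's own proof essentially verbatim: add \eqref{stochas_entropy_1} and \eqref{stochas_entropy_3}, pass to the limits in the order $\delta_0\to 0$, $l\to 0$, $\eps\to 0$ via Lemmas \ref{stochastic_lemma_1}--\ref{stochastic_lemma_8}, couple $\vartheta$ and $\delta$ so that both $\vartheta/\delta\to 0$ and $\vartheta^{-1}\delta^2\to 0$ (the paper fixes $\delta=\vartheta^{2/3}$, which achieves exactly this), and finish with the density argument for $H^1$ test functions. The identification of $(I_3+J_3)+(I_4+J_4)$ as the genuinely new difficulty, resolved by the splitting $b^2\le(a^2+K^2|x-y|^2)(|z|^2\wedge 1)$ and assumption \ref{A2} with $\lambda^*<1$, matches the paper's Lemma \ref{stochastic_lemma_8} precisely.
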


\begin{proof}
We add \eqref{stochas_entropy_1} and \eqref{stochas_entropy_3}
and then take the limits 
$$
\underset{\eps_n \downarrow 0} \lim \, \underset{l\rightarrow 0}
\lim\, \underset{\delta_0\downarrow 0} \lim,
$$ 
invoking Lemmas \ref{stochastic_lemma_1}, \ref{stochastic_lemma_2} , 
\ref{stochastic_lemma_3},   \ref{stochastic_lemma_4},  
\ref{stochastic_lemma_5}, \ref{stochastic_lemma_6}, \ref{stochastic_lemma_7}, 
and \ref{stochastic_lemma_8}.  In the resulting 
expression, we take $\delta = \vartheta^{\frac 23}$ 
and then send $\vartheta \goto 0+$ with the 
second parts of Lemmas \ref{stochastic_lemma_1}, 
\ref{stochastic_lemma_2}, \ref{stochastic_lemma_4},  
\ref{stochastic_lemma_5}, and \ref{stochastic_lemma_8} 
in mind, thereby arriving at
\begin{align}
0\le & E \Big[\int_{\R_x^d} | v(0,x)-u(0,x)|\psi(0,x)\,dx\Big] 
+ E \Big[\int_{\Pi_T}\int_{\gamma=0}^1 
\int_{\alpha=0}^1 |v(t,x,\alpha)-u(t,x,\gamma)|
\partial_t \psi(t,x)\,d\alpha\,d\gamma \,dx\,dt\Big] \notag \\
& \hspace{ 2cm} + E \Big[\int_{\Pi_T}\int_{\gamma=0}^1 
\int_{\alpha=0}^1 F\big(v(t,x,\alpha),u(t,x,\gamma)\big)\cdot
\grad_x \psi(t,x)\,d\alpha\,d\gamma\,dx\,dt\Big] ,\notag
\end{align} 
which holds for any nonnegative $\psi \in C_c^2\big([0,\infty)\times \R^d\big)$.
It now follows by a routine approximation 
argument that \eqref{eq:kato} holds for any $\psi$ with compact support such 
that $\psi\in H^1( [0,\infty) \times \R^d)$.
\end{proof}

\begin{proof}[Proof of Theorem \ref{thm:uniqueness}]
Let $v(t,x,\alpha)$ be a generalized entropy 
solution of \eqref{eq:levy_stochconservation_laws} with initial data
$v(0,x)$ and $u(t,x,\gamma)$ be the solution that has 
been obtained as the Young measure valued limit of 
the sequence $\{u_\eps(t,x)\}_{\eps>0}$, where 
$u_\eps$ solves \eqref{eq:levy_stochconservation_laws-viscous-new} 
with initial data $u_0^\eps(\cdot)$.  
Now from Proposition \ref{kato_inequality}, for
any nonnegative $\psi(t,x)\in H^1( [0,\infty) \times \R^d)$ 
with compact support, it holds that
\begin{align}
0\le & E \Big[\int_{\R_x^d} | v(0,x)-u(0,x)|\psi(0,x)\,dx\Big] 
+ E \Big[\int_{\Pi_T}\int_{\gamma=0}^1 
\int_{\alpha=0}^1 |v(t,x,\alpha)-u(t,x,\gamma)|
\partial_t \psi(t,x)\,d\alpha\,d\gamma \,dx\,dt\Big] \notag \\
& \hspace{ 2cm} 
+ E \Big[\int_{\Pi_T}\int_{\gamma=0}^1 
\int_{\alpha=0}^1 F\big(v(t,x,\alpha),u(t,x,\gamma)\big)\cdot
\grad_x \psi(t,x)\,d\alpha\,d\gamma \,dx\,dt\Big].
\label{stochastic_estimate_5}
\end{align}

For each $ n\in \mathbb{N} $, define
\begin{align*}
\phi_n(x)=
\begin{cases} 
1,\quad \text{if} ~ |x|\le n\\
2(1-\frac{|x|}{2n}), \quad \text{if} ~n < |x|\le 2n\\
0,\quad \text{if}~|x|> 2n.
\end{cases}
\end{align*}
For each $h>0 $ and fixed $t\geq 0$, define 
\begin{align*}
\psi_h^t(s)=
\begin{cases} 
1,\quad \text{if} ~ s\le t\\
1-\frac{s-t}{h}, \quad \text{if} ~t \le s\le t+h\\
0,\quad \text{if}~s> t+h.
\end{cases}
\end{align*}
Clearly \eqref{stochastic_estimate_5} holds 
with $\psi(s,x)=\phi_n(x)\psi_h^t(s)$. 

Let $\mathbb{T}$ be the set all points $t$ 
in $[0, \infty)$ such that $t$ is a right Lebesgue point of 
$$
A_n(s)= E\Big[\int_{\R_x^d}\int_{\gamma=0}^1 
\int_{\alpha=0}^1\phi_n(x) 
|v(s,x,\alpha)-u(s,x,\gamma)|\,d\alpha\,d\gamma\,dx\Big],
$$ 
for all $n$. Clearly, $\mathbb{T}^C$ has zero 
Lebesgue measure. Fix  $t\in \mathbb{T}$.
Thus, from \eqref{stochastic_estimate_5} we have
\begin{align}
&\frac{1}{h}\int_{t}^{t+h} E\Big[\int_{\R_x^d} 
\int_{\gamma=0}^1 \int_{\alpha=0}^1  |v(s,x,\alpha)-u(s,x,\gamma)|
\phi_n(x)\,d\alpha\,d\gamma \,dx\Big]\,ds \notag \\
& \qquad 
\le  E\Big[\int_{\Pi_{T}}\int_{\gamma=0}^1 
\int_{\alpha=0}^1  F\big(v(s,x,\alpha),u(s,x,\gamma)\big)
\cdot\grad_x \phi_n(x)\,\psi_h^t(s)
\,d\alpha\,d\gamma\,dx\,ds\Big] \notag \\
& \hspace{2cm}
+ E\Big[\int_{\R_x^d} |v(0,x)-u(0,x)|\phi_n(x)\,dx\Big].\notag 
\end{align}
Taking limit as $h\goto 0$, we obtain
\begin{align}
& E\Big[\int_{\R_x^d}\int_{\gamma=0}^1 
\int_{\alpha=0}^1|v(t,x,\alpha)-u(t,x,\gamma)|
\phi_n(x)\,d\alpha\,d\gamma\,dx\Big] \notag \\
& \qquad 
\le  E\Big[\int_{\R^d}\int_{s=0}^t \int_{\gamma=0}^1 
\int_{\alpha=0}^1  F\big(v(s,x,\alpha),u(s,x,\gamma)\big)\cdot\grad_x \phi_n(x)
\,d\alpha\,d\gamma\,ds\,dx\Big] \notag \\
& \hspace{2cm}
+ E\Big[\int_{\R_x^d} |v(0,x)-u(0,x)|\phi_n(x)\,dx\Big]\notag \\
& \qquad 
\le C(T) \frac{1}{n}\Big(1+ \sup_{0\le s \le T}E\Big[||v(s,\cdot,\cdot)||_p^p\Big] 
+  \sup_{0\le s \le T} E \Big[||u(s,\cdot,\cdot)||_p^p \Big] \Big)\notag \\
& \hspace{2cm}
+ E\Big[\int_{\R_x^d} |v(0,x)-u(0,x)|\phi_n(x)\,dx\Big].
\label{stochastic_estimate_final}
\end{align}
Letting $n\goto \infty$, we obtain from \eqref{stochastic_estimate_final} 
with $v(0,x)=u(0,x)$,
\begin{align}\notag
E \Big[\int_{\R^d} \int_{\gamma=0}^1 \int_{\alpha=0}^1 
|v(t,x,\alpha)-u(t,x,\gamma)|\,d\alpha\,d\gamma\,dx\Big]=0.
\end{align}
From this the claims of the theorem follow 
in a standard way (see \cite{Eymard1995,panov}).
 \end{proof}

\end{document}